\newtheorem{theorem}{Theorem}[section]
\newtheorem{lemma}[theorem]{Lemma}
\newtheorem{Proposition}[theorem]{Proposition}
\newtheorem{Remark}[theorem]{Remark}
\numberwithin{equation}{section}
\providecommand{\abs}[1]{\left\vert#1\right\vert}
\providecommand{\norm}[1]{\left\Vert#1\right\Vert}
\providecommand{\Rn}[1]{\mathbb{R}^{#1}}
\providecommand{\snormspace}[3]{\left\Vert#1\right\Vert_{H^{#2}({#3})}}
\providecommand{\sd}[1]{\mathcal{D}_{#1}}
\providecommand{\se}[1]{\mathcal{E}_{#1}}
\providecommand{\sdb}[1]{\bar{\mathcal{D}}_{#1}}
\providecommand{\snormspace}[3]{\left\Vert#1\right\Vert_{H^{#2}({#3})}}
\providecommand{\ns}[1]{\norm{#1}^2}
\def\Lbrack{\left \llbracket}
\def\Rbrack{\right \rrbracket}
\DeclareMathOperator{\trace}{tr}
\DeclareMathOperator{\diverge}{div}
\providecommand{\Rn}[1]{\mathbb{R}^{#1}}
\providecommand{\norm}[1]{\left\Vert#1\right\Vert}
\def\ls{\lesssim}
\def\gss{\gtrsim}
\def\dt{\partial_t}
\def\H{{}_0H^1}
\def\Hd{(\H)^\ast}
\def\na{\nabla}
\def\pa{\partial}
\def\rj{\Lbrack \bar{\rho} \Rbrack}
\providecommand{\jump}[1]{\left\llbracket #1 \right\rrbracket }
\def\RRvert2{\right \vert\! \right\vert}
\def\Lvert3{\left \vert\!\left\vert\!\left\vert}
\def\Rvert3{\right \vert\!\right\vert\!\right\vert}
\def\nab{\nabla}
\def\al{\alpha}
\def\dt{\partial_t}
\def\hal{\frac{1}{2}}
\def\ep{\varepsilon}
\def\ls{\lesssim}
\def\p{\partial}
\def\sg{\mathbb{D}}
\def\sgz{\mathbb{D}^0}
\def\da{\Delta_{\mathcal{A}}}
\def\naba{\nab_{\mathcal{A}}}
\def\diva{\diverge_{\mathcal{A}}}
\def\a{\mathcal{A}}
\def\f{\mathcal{F}_{2N}}
\def\gs{\mathcal{G}_{2N}^\sigma}
\def\fj1{\mathcal{J}^{-1}}
\def\n{\mathcal{N}}
\def\y{\mathcal{Y}}
\def\z{\mathcal{Z}}
\def\q{q}
\def\S{\mathbb{S}}
\def\ks{K(\sigma,\rj)}
\def\Ef{\mathfrak{E}_{2N}^\sigma}
\def\Df{\mathfrak{D}_{2N}^\sigma}
\def\Af{\mathfrak{A}_{2N}^{j,k}}
\def\Bf{\mathfrak{B}_{2N}^{j,k}}
\def\Hf{\mathfrak{H}_{2N}^{j,k}}
\title[Compressible viscous surface-internal waves]{The compressible viscous surface-internal wave problem: stability and vanishing surface tension limit}
\author{Juhi Jang}
\address{
Department of Mathematics\\
University of California, Riverside\\
Riverside, CA 92521, USA
}
\email[J. Jang]{juhijang@math.ucr.edu}
\thanks{J. Jang was supported in part by NSF grant DMS-1212142 and DMS-1351898.}
\author{Ian Tice}
\address{
Department of Mathematical Sciences\\
Carnegie Mellon University\\
Pittsburgh, PA 15213, USA
}
\email[I. Tice]{iantice@andrew.cmu.edu}
\author{Yanjin Wang}
\address{
School of Mathematical Sciences\\
Xiamen University\\
Xiamen, Fujian 361005, China}
\email[Y. J. Wang]{yanjin$\_$wang@xmu.edu.cn}
\thanks{Y. J. Wang was supported by the National Natural Science Foundation of China (No. 11201389)}
\subjclass[2010]{Primary 35Q30, 35R35, 76N10; Secondary 76E17, 76E19, 76N99 }
\keywords{Free boundary problems, Viscous surface-internal waves, Compressible fluids}
\begin{document}

\begin{abstract}

This paper concerns the dynamics of two layers of compressible, barotropic, viscous fluid lying atop one another. The lower fluid is bounded below by a rigid bottom, and the upper fluid is bounded above by a trivial fluid of constant pressure.  This is a free boundary problem: the interfaces between the fluids and above the upper fluid are free to move.  The fluids are acted on by gravity in the bulk, and at the free interfaces we consider both the case of surface tension and the case of no surface forces.  We establish a sharp nonlinear global-in-time stability criterion and give the explicit decay rates to the equilibrium.   When the upper fluid  is heavier than the lower fluid along the  equilibrium interface, we characterize the set of surface tension values in which the equilibrium is nonlinearly stable.  Remarkably, this set is non-empty, i.e. sufficiently large surface tension can prevent the onset of the Rayleigh-Taylor instability.  When the lower fluid is heavier than the upper fluid, we show that the equilibrium is stable for all non-negative surface tensions and we establish the zero surface tension limit.
\end{abstract}

\maketitle


\section{Introduction}
\subsection{Formulation in Eulerian coordinates}

We consider two distinct,
immiscible, viscous, compressible, barotropic fluids evolving in a moving domain $\Omega(t)=\Omega_+(t)\cup \Omega_-(t)$ for time $t\ge0$. One fluid $(+)$, called the ``upper fluid", fills the upper domain
\begin{equation}\label{omega_plus}
\Omega_+(t)=\{y\in  \mathrm{T}^2\times \mathbb{R}\mid \eta_-(y_1,y_2,t)<y_3< \ell +\eta_+(y_1,y_2,t)\},
\end{equation}
and the other fluid $(-)$, called the ``lower fluid", fills the lower domain
\begin{equation}\label{omega_minus}
\Omega_-(t)=\{y\in  \mathrm{T}^2\times \mathbb{R}\mid  -b <y_3<\eta_-(y_1,y_2,t)\}.
\end{equation}
Here we assume the domains are horizontally periodic by setting $\mathrm{T}^2=(2\pi L_1\mathbb{T}) \times (2\pi L_2\mathbb{T})$ for $\mathbb{T} = \mathbb{R}/\mathbb{Z}$ the usual 1--torus and $L_1,L_2>0$ the periodicity lengths.  We assume that $\ell,b >0$ are two fixed and given constants, but the two surface functions $\eta_\pm$ are free and unknown.  The surface $\Gamma_+(t) = \{y_3= \ell  + \eta_+(y_1,y_2,t)\}$ is the moving upper boundary of $\Omega_+(t)$ where the upper fluid is in contact with the atmosphere, $\Gamma_-(t) = \{y_3=\eta_-(y_1,y_2,t)\}$ is the moving internal interface between the two fluids, and $\Sigma_b = \{y_3=-b \}$ is the fixed lower boundary of $\Omega_-(t)$.

The two fluids are described by their density and velocity functions, which are given for each $t\ge0$ by $\tilde{\rho}_\pm
(\cdot,t):\Omega_\pm (t)\rightarrow \mathbb{R}^+$ and $\tilde{u}_\pm (\cdot,t):\Omega_\pm (t)\rightarrow \mathbb{R}^3$, respectively.  In each fluid the pressure is a function of density: $P_\pm =P_\pm(\tilde{\rho}_\pm)>0$, and the pressure function is assumed to be smooth, positive, and strictly increasing.  For a vector function $u\in \Rn{3}$ we define the symmetric gradient by $(\sg u)_{ij} =   \p_i u_j + \p_j u_i$ for $i,j=1,2,3$;  its deviatoric (trace-free) part  is then
\begin{equation}
 \sgz u = \sg u - \frac{2}{3} \diverge{u} I,
\end{equation}
where $I$ is the $3 \times 3$ identity matrix.  The viscous stress tensor in each fluid is then given by
\begin{equation}
\S_\pm(\tilde{u}_\pm) := \mu_\pm \sgz \tilde u_\pm +\mu'_\pm \diverge\tilde{u}_\pm I,
\end{equation}
where $\mu_\pm$ is the shear viscosity and  $\mu'_\pm$ is the bulk viscosity; we assume these satisfy the usual physical conditions
\begin{equation}\label{viscosity}
\mu_\pm>0,\quad \mu_\pm'\ge 0.
\end{equation}
The tensor $P_\pm(\tilde{\rho}_\pm) I-\S_\pm(\tilde{u}_\pm)$ is known as the stress tensor.  The divergence of a symmetric tensor $\mathbb{M}$ is defined to be the vector with components $(\diverge \mathbb{M})_i = \p_j \mathbb{M}_{ij}$.  Note then that
\begin{equation}
 \diverge\left( P_\pm(\tilde{\rho}_\pm) I-\S_\pm(\tilde{u}_\pm) \right) = \nab P_\pm(\tilde{\rho}_\pm) - \mu_\pm \Delta \tilde{u}_\pm - \left(\frac{\mu_\pm}{3} + \mu_\pm' \right)\nab \diverge{\tilde{u}_\pm}.
\end{equation}

For each $t>0$ we require that $(\tilde{\rho}_\pm,\tilde{u}_\pm,\eta_\pm)$ satisfy the following equations
\begin{equation}\label{ns_euler}
\begin{cases}
\partial_t\tilde{\rho}_\pm+\diverge (\tilde{\rho}_\pm \tilde{u}_\pm)=0 & \text{in }\Omega_\pm(t)
\\\tilde{\rho}_\pm   (\partial_t\tilde{u}_\pm  +   \tilde{u}_\pm \cdot \nabla \tilde{u}_\pm ) +\nab P_\pm(\tilde{\rho}_\pm) -  \diverge \S_\pm(\tilde{u}_\pm) =-g\tilde{\rho}_\pm e_3 & \text{in } \Omega_\pm(t)
\\\partial_t\eta_\pm=\tilde{u}_{3,\pm}-\tilde{u}_{1,\pm}\partial_{y_1}\eta_\pm-\tilde{u}_{2,\pm}\partial_{y_2}\eta_\pm &\hbox{on } \Gamma_\pm(t)
\\(P_+(\tilde{\rho}_+)I-\S_+(\tilde{u}_+))n_+=p_{atm}n_+-\sigma_+ \mathcal{H}_+ n_+ &\hbox{on }\Gamma_+(t)
\\ (P_+(\tilde{\rho}_+)I-\S_+( \tilde{u}_+))n_-=(P_-(\tilde{\rho}_-)I-\S_-( \tilde{u}_-))n_-+ \sigma_- \mathcal{H}_-n_- &\hbox{on }\Gamma_-(t) \\
\tilde{u}_+=\tilde{u}_-  &\text{on }\Gamma_-(t) \\
\tilde{u}_-=0 &\text{on }\Sigma_b.
\end{cases}
\end{equation}
In the equations $-g \tilde{\rho}_\pm e_3$ is the gravitational force with the constant $g>0$ the acceleration of gravity and $e_3$ the vertical unit vector. The constant $p_{atm}>0$ is the atmospheric pressure, and we take $\sigma_\pm\ge 0$ to be the constant coefficients of surface tension. In this paper, we let $\nabla_\ast$ denote the horizontal gradient, $\diverge_\ast$ denote the horizontal divergence and $\Delta_\ast$ denote the horizontal Laplace operator. Then the upward-pointing unit normal of $\Gamma_\pm(t)$, $n_\pm$,  is given by
\begin{equation}
n_\pm=\frac{(-\nabla_\ast\eta_\pm,1)}
{\sqrt{1+|\nabla_\ast\eta_\pm|^2}},
\end{equation}
and  $\mathcal{H}_\pm$, twice the mean curvature of the surface $\Gamma_\pm(t)$, is given by the formula
\begin{equation}
\mathcal{H}_\pm=\diverge_\ast\left(\frac{\nabla_\ast\eta_\pm}
{\sqrt{1+|\nabla_\ast\eta_\pm|^2}}\right).
\end{equation}
The third equation in \eqref{ns_euler} is called the kinematic boundary condition since it implies that the free surfaces are advected with the fluids. The boundary equations in \eqref{ns_euler} involving the stress tensor are called the dynamic boundary conditions. Notice that on $\Gamma_-(t)$, the continuity of velocity, $\tilde{u}_+ = \tilde{u}_-$,  means that it is the common value of $\tilde{u}_\pm$ that advects the interface. For a more physical description of the equations and the boundary conditions in \eqref{ns_euler}, we refer to \cite{3WL}.

To complete the statement of the problem, we must specify the initial conditions. We suppose that the initial surfaces $\Gamma_\pm(0)$ are given by the graphs of the functions $\eta_\pm(0)$, which yield the open sets $\Omega_\pm(0)$ on which we specify the initial data for the density, $\tilde{\rho}_\pm(0): \Omega_\pm(0) \rightarrow  \mathbb{R}^+$, and the velocity, $\tilde{u}_\pm(0): \Omega_\pm(0) \rightarrow  \mathbb{R}^3$. We will assume that $\ell+\eta_+(0)>\eta_-(0)>-b $ on $\mathrm{T}^2$, which means that at the initial time the boundaries do not intersect with each other.

\subsection{Equilibria}

We seek a steady-state equilibrium solution to \eqref{ns_euler} with $\tilde{u}_\pm=0, \eta_\pm =0$, and the equilibrium domains given by
\begin{equation}
\Omega_+=\{y\in   \mathrm{T}^2\times \mathbb{R}\mid 0 < y_3< \ell \} \text{ and }
\Omega_-=\{y\in  \mathrm{T}^2\times \mathbb{R}\mid  -b <y_3< 0 \}.
\end{equation}
Then \eqref{ns_euler} reduces to an ODE for the equilibrium densities  $\tilde\rho_\pm = \bar{\rho}_\pm(y_3)$:
\begin{equation}\label{steady}
\begin{cases}
\displaystyle\frac{d(P_+ (\bar{\rho}_+ ))}{dy_3} = -g\bar{\rho}_+, & \text{for }y_3 \in (0,\ell), \\
\displaystyle\frac{d(P_- (\bar{\rho}_- ))}{dy_3} = -g\bar{\rho}_-, & \text{for } y_3 \in (-b,0), \\
P_+(\bar{\rho}_+(\ell)) = p_{atm}, \\
P_+(\bar{\rho}_+(0))  =P_-(\bar{\rho}_-(0)).
\end{cases}
\end{equation}
The system \eqref{steady} admits a solution $\bar{\rho}_\pm >0$ if and only if we assume that the equilibrium heights $b,\ell>0$, the pressure laws $P_\pm$, and the atmospheric pressure $p_{atm}$ satisfy a collection of admissibility conditions, which  we enumerate below.

First, in order for the third equation in \eqref{steady} to be achievable, we must have that $p_{atm} \in P_+(\Rn{+})$.  This then allows us to uniquely define
\begin{equation}
 \bar{\rho}_1 := P_+^{-1}(p_{atm}) > 0.
\end{equation}
We then introduce the enthalpy function $h_+:(0,\infty) \to \Rn{}$ given by
\begin{equation}\label{h'}
 h_+(z) = \int_{\bar{\rho}_1}^z \frac{P'_+(r)}{r}dr,
\end{equation}
which is smooth, strictly increasing, and positive on $(\bar{\rho}_1, \infty)$.  The first ODE in \eqref{steady} then reduces to $d h_+(\bar{\rho}_+)/dy_3 = -g$, subject to the condition that $\bar{\rho}_+(\ell) = \bar{\rho}_1$, and hence
\begin{equation}\label{rho_plus_def}
 \bar{\rho}_+(y_3) = h_+^{-1}(h_+(\bar{\rho}_1) + g(\ell-y_3)) =  h_+^{-1}( g(\ell-y_3)).
\end{equation}
This gives a well-defined, smooth, and decreasing function $\bar{\rho}_+:[0,\ell] \to [\bar{\rho}_1,\infty)$ if and only if
\begin{equation}
 g \ell \in h_+((\bar{\rho}_1,\infty)) \Leftrightarrow 0 < \ell < \frac{1}{g} \int_{\bar{\rho}_1}^\infty \frac{P'_+(r)}{r}dr.
\end{equation}
The equation \eqref{rho_plus_def} yields the value of the upper density at the equilibrium internal interface:
\begin{equation}
 \bar{\rho}^+ := \bar{\rho}_+(0) = h_+^{-1}(g\ell).
\end{equation}
Then the last equation in \eqref{steady} is achievable if and only if $P_+(\bar{\rho}_+(0)) = P_+(h_+^{-1}(g\ell)) \in P_-(\Rn{+})$ and in this case yields the value of the lower density at the equilibrium internal interface:
\begin{equation}
  \bar{\rho}^- := \bar{\rho}_-(0) = P_-^{-1}(P_+(\bar{\rho}^+))  = P_-^{-1}(h_+^{-1}(g\ell)).
\end{equation}
Then we define $h_-:(0,\infty) \to \Rn{}$ via
\begin{equation}
  h_-(z) = \int_{\bar{\rho}^-}^z \frac{P'_-(r)}{r}dr
\end{equation}
and solve the  ODE for $\bar{\rho}_-(y_3)$ as before to obtain
\begin{equation}
 \bar{\rho}_-(y_3) = h_-^{-1}(h_-(\bar{\rho}^-) - g y_3 ) =  h_-^{-1}( -gy_3).
\end{equation}
This gives a well-defined, smooth, and decreasing function $\bar{\rho}_-:[-b,0] \to [\bar{\rho}^-,\infty)$ if and only if
\begin{equation}
 g b \in h_-((\bar{\rho}^-,\infty)) \Leftrightarrow 0 < b < \frac{1}{g} \int_{\bar{\rho}^-}^\infty \frac{P'_-(r)}{r}dr.
\end{equation}

For the sake of clarity we now summarize the admissibility conditions that are necessary and sufficient for the existence of an equilibrium:
\begin{enumerate}
 \item $p_{atm} \in P_+(\Rn{+})$, which defines $\bar{\rho}_1 := P_+^{-1}(p_{atm})$.
 \item $\ell$ satisfies
\begin{equation}
  0 < \ell < \frac{1}{g} \int_{\bar{\rho}_1}^\infty \frac{P'_+(r)}{r}dr,
\end{equation}
which defines $\bar{\rho}^+ := h_+^{-1}(g\ell)$.
 \item $\ell$ also satisfies  $P_+(h_+^{-1}(g\ell)) \in P_-(\Rn{+})$, which defines $\bar{\rho}^- := P_-^{-1}(h_+^{-1}(g\ell))$.
 \item $b$ satisfies
\begin{equation}
 0 < b < \frac{1}{g} \int_{\bar{\rho}^-}^\infty \frac{P'_-(r)}{r}dr.
\end{equation}
\end{enumerate}

As an example, consider the case
\begin{equation}
 P_\pm(z) = K_\pm z^{\alpha_\pm} \text{ for } K_\pm >0 \text{ and } \alpha_\pm >1.
\end{equation}
Then for any $p_{atm}, \ell, b>0$ we may solve to see that
\begin{equation}
\begin{split}
 \bar{\rho}_+(y_3) &= \left[\left(\frac{p_{atm}}{K_+} \right)^{(\alpha_+-1)/\alpha_+} +\frac{g(\alpha_+-1)}{K_+ \alpha_+}(\ell-y_3) \right]^{1/(\alpha_+ -1)} \\
 \bar{\rho}_-(y_3) &= \left[ \left( \frac{K_+ (\bar{\rho}_+(0))^{\alpha_+} }{K_-}  \right)^{(\alpha_--1)/\alpha_-}  +\frac{g(\alpha_--1)}{K_- \alpha_-}(-y_3)  \right]^{1/(\alpha_- - 1)}
\end{split}
\end{equation}
are the desired solutions to \eqref{steady}.  Note in particular that in this case the admissibility conditions are satisfied trivially.

Throughout the rest of the paper we will assume that the above admissibility conditions are satisfied for a fixed $\ell, b>0$, which then uniquely determine the equilibrium density $\rho_\pm(y_3)$.  In turn these fix the equilibrium masses
\begin{equation}\label{eq_mass}
\begin{split}
M_+ &= \int_{\Omega_+} \bar{\rho}_+ = (4 \pi^2 L_1 L_2) \int_0^\ell \bar{\rho}_+(y_3) dy_3 = \frac{4 \pi^2 L_1 L_2}{g} (P_+(h_+^{-1}(g\ell)) - p_{atm}) ,\\
 M_- &= \int_{\Omega_-} \bar{\rho}_- = (4 \pi^2 L_1 L_2) \int_{-b}^0 \bar{\rho}_-(y_3) dy_3 = \frac{4 \pi^2 L_1 L_2}{g} ( P_-(h_-^{-1}(gb))- P_+(h_+^{-1}(g\ell))).
\end{split}
\end{equation}
It is perhaps more natural to specify $M_+, M_- >0$ and then to determine $\ell, b >0$ in terms of $M_+, M_-, p_{atm}$ and the pressure laws by way of \eqref{eq_mass}.  This can be done under the assumption that $M_+, M_-, p_{atm}$ satisfy certain admissibility conditions similar to those written above for $\ell, b, p_{atm}$.  For the sake of brevity we will neglect to write these explicitly.

Notice that the equilibrium density can jump across the internal interface. The jump, which we denote by
\begin{equation}\label{rho+-}
\rj := \bar{\rho}_+(0)-\bar{\rho}_-(0)= \bar{\rho}^+ - \bar{\rho}^-,
\end{equation}
is of fundamental importance in the the analysis of solutions to \eqref{ns_euler} near equilibrium.  Indeed, if $\rj > 0$ then the upper fluid is heavier than the lower fluid along the equilibrium interface, and the fluid is susceptible to the well-known Rayleigh-Taylor gravitational instability.

\subsection{History and known results}

 Free boundary problems in fluid mechanics have attracted much interest in the mathematical community.  A thorough survey of the literature would prove impossible here, so we will primarily mention the work most relevant to our present setting, namely that related to layers of viscous fluid.   We refer to the review of Shibata and Shimizu \cite{ShSh} for a more proper survey of the literature.

The dynamics of a single layer of viscous incompressible fluid lying above a rigid bottom, i.e. the incompressible viscous surface wave problem, have attracted the attention of many mathematicians since the pioneering work of Beale \cite{B1}.  For the case without surface tension, Beale \cite{B1} proved the local well-posedness in the Sobolev spaces.   Hataya \cite{H} obtained the global existence of small, horizontally periodic solutions with an algebraic decay rate in time.    Guo and Tice \cite{GT_per, GT_inf,GT_lwp} developed a two-tier energy method to prove global well-posedness and decay of this problem.  They proved that if the free boundary is horizontally infinite, then the solution decays to equilibrium at an algebraic rate; on the other hand, if the free boundary is horizontally periodic, then the solution decays at an almost exponential rate.  The proofs were subsequently refined by the work of Wu \cite{Wu}.  For the case with surface tension, Beale \cite{B2} proved global well-posedness of the problem, while Allain \cite{A} obtained a local existence theorem in two dimensions using a different method.  Bae \cite{B} showed the global solvability in Sobolev spaces via energy methods.  Beale and Nishida \cite{BN} showed that the solution obtained in \cite{B2} decays in time with an optimal algebraic decay rate.   Nishida, Teramoto and Yoshihara \cite{NTY}  showed the global existence of periodic solutions with an exponential decay rate in the case of a domain with a flat fixed lower boundary.  Tani \cite{Ta} and Tani and Tanaka \cite{TT} also discussed the solvability of the problem with or without surface tension by using methods developed by Solonnikov  in \cite{So,So_2,So_3}.  Tan and Wang \cite{TW} studied the vanishing surface tension limit of the problem.

There are fewer results on  two-phase incompressible problems, i.e. the incompressible viscous surface-internal wave or internal wave problems. Hataya \cite{H2} proved an existence result for a periodic free interface problem with surface tension, perturbed around Couette flow; he showed the local  existence of small  solution for any physical constants, and the existence of exponentially decaying small solution if the viscosities of the two fluids are sufficiently large and their difference is small. Pr\"uss and Simonett \cite{PS} proved the local well-posedness of a free interface problem with surface tension in which two layers of viscous fluids fill  the whole space and are separated by a horizontal interface.  For two horizontal fluids of finite depth with surface tension,  Xu and Zhang \cite{XZ} proved  the local solvability for general  data and global solvability for data near the equilibrium state using Tani and Tanaka's method.  Wang and Tice \cite{WT} and Wang, Tice and Kim \cite{WTK} adapted the two-tier energy methods of \cite{GT_per, GT_inf,GT_lwp} to develop the nonlinear Rayleigh-Taylor instability theory for the problem, proving the existence of a sharp stability criterion given in terms of the surface tension coefficient, gravity, periodicity lengths, and $\rj$.

The free boundary problems corresponding to a single horizontally periodic layer of compressible viscous fluid with surface tension have been studied by several authors.  Jin \cite{jin} and Jin-Padula \cite{jin_padula} produced global-in-time solutions using Lagrangian coordinates, and Tanaka and Tani \cite{tanaka_tani} produced global solutions with temperature dependence.  However, to the best of our knowledge, the problem for two layers of compressible viscous fluids remains open except for the linear analysis of Guo and Tice \cite{GT_RT}.

The two-layer problem is important because it allows for the development of the classical Rayleigh-Taylor instability \cite{3R,3T}, at least when the equilibrium has a heavier fluid on top and a lighter one below and there is a downward gravitational force.   In this paper we identify a stability criterion and show that global solutions exist and decay to equilibrium when the criterion is met.  In our companion paper \cite{JTW_nrt} we show that the stability criterion is sharp, as in the incompressible case \cite{WT, WTK}, and that the Rayleigh-Taylor instability persists at the nonlinear level (the linear analysis was developed in \cite{GT_RT}).

\section{Reformulation and main results}

In this section, we reformulate the system \eqref{ns_euler} by using a special flattening coordinate
transformation and deliver the precise statements of the main results.

\subsection{Reformulation in flattened coordinates}

The movement of the free surfaces $\Gamma_\pm(t)$ and the subsequent change of the domains $\Omega_\pm(t)$ create numerous mathematical difficulties. To circumvent these, we will switch to coordinates in which the boundaries and the domains stay fixed in time. Since we are interested in the nonlinear stability of the equilibrium state, we will use the equilibrium domain.  We will not use a Lagrangian coordinate transformation, but rather utilize a special flattening coordinate transformation motivated by Beale \cite{B2}.

To this end, we define the fixed domain
\begin{equation}
\Omega = \Omega_+\cup\Omega_-\text{ with }\Omega_+:=\{0<x_3<\ell \} \text{ and } \Omega_-:=\{-b<x_3<0\},
\end{equation}
for which we have written the coordinates as $x\in \Omega$. We shall write $\Sigma_+:=\{x_3= \ell\}$ for the upper boundary, $\Sigma_-:=\{x_3=0\}$ for the internal interface and $\Sigma_b:=\{x_3=-b\}$ for the lower boundary.  Throughout the paper we will write $\Sigma = \Sigma_+ \cup \Sigma_-$.   We think of $\eta_\pm$ as a function on $\Sigma_\pm$ according to $\eta_+: (\mathrm{T}^2\times\{\ell\}) \times \mathbb{R}^{+} \rightarrow\mathbb{R}$ and $\eta_-:(\mathrm{T}^2\times\{0\}) \times \mathbb{R}^{+} \rightarrow \mathbb{R}$, respectively. We will transform the free boundary problem in $\Omega(t)$ to one in the fixed domain $\Omega $ by using the unknown free surface functions $\eta_\pm$. For this we define
\begin{equation}
\bar{\eta}_+:=\mathcal{P}_+\eta_+=\text{Poisson extension of }\eta_+ \text{ into }\mathrm{T}^2 \times \{x_3\le \ell\}
\end{equation}
and
\begin{equation}
\bar{\eta}_-:=\mathcal{P}_-\eta_-=\text{specialized Poisson extension of }\eta_-\text{ into }\mathrm{T}^2 \times \mathbb{R},
\end{equation}
where $\mathcal{P}_\pm$ are defined by \eqref{P+def} and \eqref{P-def}. Motivated by Beale \cite{B2}, the Poisson extensions $\bar{\eta}_\pm$ allow us to flatten the coordinate domains via the following special coordinate transformation:
\begin{equation}\label{cotr}
\Omega_\pm \ni x\mapsto(x_1,x_2, x_3+ \tilde{b}_1\bar{\eta}_++\tilde{b}_2\bar{\eta}_-):=\Theta (t,x)=(y_1,y_2,y_3)\in\Omega_\pm(t),
\end{equation}
where we have chosen $\tilde{b}_1=\tilde{b}_1(x_3), \tilde{b}_2=\tilde{b}_2(x_3)$ to be two smooth functions in $\mathbb{R}$ that satisfy
\begin{equation}\label{b function}
\tilde{b}_1(0)=\tilde{b}_1(-b)=0, \tilde{b}_1(\ell)=1\text{ and }\tilde{b}_2(\ell)=\tilde{b}_2(-b)=0, \tilde{b}_2(0)=1.
\end{equation}
Note that $\Theta(\Sigma_+,t)=\Gamma_+(t),\ \Theta (\Sigma_-,t)=\Gamma_-(t)$ and $\Theta(\cdot,t) \mid_{\Sigma_b} = Id \mid_{\Sigma_b}$.

Note that if $\eta $ is sufficiently small (in an appropriate Sobolev space), by Lemma \ref{Poi} and usual Sobolev embeddings, then the mapping $\Theta $ is a diffeomorphism.  This allows us to transform the problem \eqref{ns_euler} to one in the fixed spatial domain $\Omega$ for each $t\ge 0$. In order to write down the equations in the new coordinate system, we compute
\begin{equation}\label{A_def}
\begin{array}{ll} \nabla\Theta  =\left(\begin{array}{ccc}1&0&0\\0&1&0\\A  &B  &J  \end{array}\right)
\text{ and }\mathcal{A}  := \left(\nabla\Theta
^{-1}\right)^T=\left(\begin{array}{ccc}1&0&-A   K  \\0&1&-B   K  \\0&0&K
\end{array}\right)\end{array}.
\end{equation}
Here the components in the matrix are
\begin{equation}\label{ABJ_def}
A  =\p_1\theta ,\
B  =\p_2\theta,\
J = 1 + \p_3\theta,\  K  =J^{-1},
\end{equation}
where we have written
\begin{equation}\label{theta}
\theta:=\tilde{b}_1\bar{\eta}_++\tilde{b}_2\bar{\eta}_-.
\end{equation}
Notice that $J={\rm det}\, \nabla\Theta $ is the Jacobian of the coordinate transformation. It is straightforward to check that, because of how we have defined $\bar{\eta}_-$ and $\Theta $, the matrix $\mathcal{A}$ is regular across the interface $\Sigma_-$.

We now define the density $\rho_\pm$ and the velocity $u_\pm$ on $\Omega_\pm$ by the compositions $\rho_\pm(x,t)=\tilde \rho_\pm(\Theta_\pm(x,t),t)$ and $  u_\pm(x,t)=\tilde u_\pm(\Theta_\pm(x,t),t)$. Since the domains $\Omega_\pm$ and the boundaries $\Sigma_\pm$ are now fixed, we henceforth consolidate notation by writing $f$ to refer to $f_\pm$ except when necessary to distinguish the two; when we write an equation for $f$ we assume that the equation holds with the subscripts added on the domains $\Omega_\pm$ or $\Sigma_\pm$. To write the jump conditions on $\Sigma_-$, for a quantity $f=f_\pm$, we define the interfacial jump as
\begin{equation}
\jump{f} := f_+ \vert_{\{x_3=0\}} - f_- \vert_{\{x_3=0\}}.
\end{equation}
Then in the new coordinates, the PDE \eqref{ns_euler} becomes the following system for $(\rho,u,\eta)$:
\begin{equation}\label{ns_geometric}
\begin{cases}
\partial_t \rho-K\p_t\theta\p_3\rho +\diverge_\a (  {\rho}   u)=0 & \text{in }
\Omega  \\
\rho (\partial_t    u -K\p_t\theta\p_3 u+u\cdot\nabla_\a u  ) + \nabla_\a P ( {\rho} )    -\diva \S_\a (u) =- g\rho e_3 & \text{in }
\Omega
\\ \partial_t \eta = u\cdot \n &
\text{on }\Sigma
\\ (P ( {\rho} ) I- \S_{\a}(u))\n =p_{atm}\n -\sigma_+  \mathcal{H} \n  &\hbox{on }\Sigma_+ \\
\jump{P ( {\rho} ) I- \S_\a(u)}\n
= \sigma_-  \mathcal{H} \n  &\hbox{on }\Sigma_- \\
\jump{u}=0   &\hbox{on }\Sigma_-\\  {u}_- =0 &\text{on }\Sigma_b.
\end{cases}
\end{equation}
Here we have written the differential operators $\naba$, $\diva$, and $\da$ with their actions given by
\begin{equation}
 (\naba f)_i := \a_{ij} \p_j f, \diva X := \a_{ij}\p_j X_i, \text{ and }\da f := \diva \naba f
\end{equation}
for appropriate $f$ and $X$.  We have also written
\begin{equation}\label{n_def}
\n := (-\p_1 \eta, - \p_2 \eta,1)
\end{equation}
for the non-unit normal to $\Sigma(t)$, and we have written
\begin{multline}
\S_{\a,\pm}(u): =\mu_\pm \sgz_{\a} u+ \mu_\pm' \diva u I, \qquad \sgz_{\a} u = \sg_{\a} u - \frac{2}{3} \diva u I,\\
 \text{and } (\sg_{\a} u)_{ij} =  \a_{ik} \p_k u_j + \a_{jk} \p_k u_i.
\end{multline}
Note that if we extend $\diva$ to act on symmetric tensors in the natural way, then $\diva \S_{\a} u =\mu\Delta_\a u+(\mu/3+\mu')\nabla_\a \diverge_\a u$. Recall that $\a$ is determined by $\eta$ through \eqref{A_def}. This means that all of the differential operators in \eqref{ns_geometric} are connected to $\eta$, and hence to the geometry of the free surfaces.

\begin{Remark}
The equilibrium state given by \eqref{steady} corresponds to the static solution $(\rho, u, \eta)=(\bar{\rho},0,0)$ of \eqref{ns_geometric}.
\end{Remark}

\subsection{Local well-posedness}

As mentioned in the introduction, the purpose of this article is to establish  a sharp nonlinear stability criterion for the equilibrium state $(\bar{\rho},0,0)$ in the compressible viscous surface-internal wave problem \eqref{ns_geometric}.  Since the equilibrium velocity and free surfaces are trivial, it suffices to study $u$ and $\eta$ directly.  However, since the equilibrium density is nontrivial, we must define a perturbation of it.  We do so by defining
\begin{equation}\label{q_def}
 q := \rho - \bar{\rho} - \p_3 \bar{\rho} \theta.
\end{equation}
Note that if we consider the natural density perturbation $\varrho=\rho-\bar\rho$, then the resulting PDE has a linear term $-g\varrho e_3$ in the right hand side of the second momentum equation, which is not convenient for employing elliptic regularity theory; our special density perturbation $q$ defined by \eqref{q_def}  eliminates this linear term (see \eqref{adv3.3}). For the sake of brevity, we will not record here the equations for $(q,u,\eta)$ obtained from plugging this perturbation into \eqref{ns_geometric}; they may be found later in the paper in \eqref{ns_perturb}.

Before discussing our main results we mention the local well-posedness theory for \eqref{ns_geometric}, perturbed around the equilibrium state.  The energy space in which local solutions exist is defined in terms of certain functionals, which are sums of Sobolev norms.  We refer to  Section \ref{def-ter} for details on our notation for Sobolev spaces and norms.  For a generic integer $n\ge 3$, we define the  energy as
\begin{multline}\label{p_energy_def}
 \se{n}^\sigma := \sum_{j=0}^{n}  \ns{\dt^j u}_{2n-2j} + \ns{\q}_{2n}  + \sum_{j=1}^{n} \ns{ \dt^j \q}_{2n-2j+1} + \sum_{j=1}^n \ns{\dt^j \eta}_{2n-2j+3/2}
\\
+\mathcal{H}(\rj) \min\{1,\sigma_+,\sigma_- - \sigma_c\}  \ns{ \eta}_{2n+1}
+\mathcal{H}(-\rj)\left[   \ns{\eta}_{2n} + \min\{1,\sigma\}  \ns{\eta}_{2n+1}  \right]
\end{multline}
and the corresponding dissipation as
\begin{multline}\label{p_dissipation_def}
  \sd{n}^\sigma :=  \sum_{j=0}^{n} \ns{\dt^j u}_{2n-2j+1}  +   \ns{\q}_{2n} + \ns{\dt \q}_{2n-1} + \sum_{j=2}^{n+1} \ns{\dt^j \q}_{2n-2j+2}
\\
+  \mathcal{H}(\rj) \min\{1,\sigma_+, \sigma_--\sigma_c, \sigma_+^2,(\sigma_- - \sigma_c)^2\} \ns{ \eta}_{2n+3/2}
\\
+ \mathcal{H}(-\rj) \left(\ns{\eta}_{2n-1/2} + \min\{1,\sigma^2\} \ns{ \eta}_{2n+3/2} \right)
\\
+\ns{\partial_t \eta}_{2n-1/2} + \sigma^2\ns{\partial_t \eta}_{2n+1/2}
 + \sum_{j=2}^{n+1} \ns{\dt^j \eta}_{2n-2j+5/2},
\end{multline}
where here $\mathcal{H} = \chi_{(0,\infty)}$ is the Heaviside function.  Throughout the paper we will consider both $n=2N$ and $n=N+2$ for the integer $N\ge 3$.  We also define
\begin{equation}\label{fff}
\f:=  \ns{\eta}_{4N+1/2}.
\end{equation}
Note that we have included the surface tension coefficient $\sigma_\pm$ in the definitions \eqref{p_energy_def} and \eqref{p_dissipation_def} so that we will be able to treat the cases with and without surface tension together.  Throughout the paper, we assume that $\sigma_\pm$ have the same sign: either $\sigma_+=\sigma_-=0$ or else $\sigma_\pm>0$. Note that this assumption excludes the cases of $\sigma_-=0, \;\sigma_+>0$ and $\sigma_->0, \;\sigma_+=0$.

In order to produce local solutions on $[0,T]$ for which $\sup_{0\le t \le T} \se{2N}^\sigma(t)$ remains bounded, we must assume that the initial data $(q_0,u_0,\eta_0)$ satisfy $2N$ systems of compatibility conditions (a system for each time derivative of order $0$ to $2N-1$).  These are natural for solutions to \eqref{ns_geometric} in our energy space, but they are cumbersome to write.  We shall neglect to record them in this paper and instead refer to our companion paper \cite{JTW_lwp} for their precise enumeration.

Our local existence results for \eqref{ns_geometric} is then as follows.

\begin{theorem}\label{lwp}
Assume that either $\sigma_\pm =0$ or else $\sigma_\pm >0$.  Let $N \ge 3$ be an integer.  Assume that the initial data $(q_0, u_0, \eta_0)$ satisfy $\ns{u_0}_{4N} + \ns{q_0}_{4N}+ \ns{\eta_0}_{4N+1/2} + \sigma\ns{ \nab_\ast \eta_0}_{4N} < \infty$ as well as the compatibility conditions enumerated in \cite{JTW_lwp}.  There exist $0 < \delta_0,T_0 < 1$ so that if $\ns{u_0}_{4N} + \ns{q_0}_{4N}+ \ns{\eta_0}_{4N+1/2} + \sigma\ns{ \nab_\ast \eta_0}_{4N}\le \delta_0$ and $0 < T \le T_0$, then there exists a unique triple $(q,u,\eta)$, achieving the initial data, so that $(\rho=\bar{\rho}+q + \p_3 \bar{\rho} \theta,u,\eta)$ solve \eqref{ns_geometric}.  The solution obeys the estimates
\begin{multline}\label{lwp_01}
 \sup_{0 \le t \le T} \se{2N}^\sigma(t) +\sup_{0 \le t \le T}\f(t)   + \int_0^T \sd{2N}^\sigma(t)dt + \int_0^T \ns{\rho(t)J(t) \dt^{2N+1} u(t)}_{\Hd} dt \\
\ls \ks\left(\ns{u_0}_{4N} + \ns{q_0}_{4N}+ \ns{\eta_0}_{4N+1/2} + \sigma\ns{ \nab_\ast \eta_0}_{4N}\right),
\end{multline}
where $\ks$ is a constant of the form \eqref{ks_def}.

\end{theorem}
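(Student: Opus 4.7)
The plan is to construct local solutions via a Picard-type iteration applied to the nonlinear system \eqref{ns_geometric} in its perturbed form using $q = \rho - \bar{\rho} - \p_3 \bar{\rho} \theta$, adapting the strategy developed in \cite{WT,WTK} for the incompressible two-layer problem and in \cite{jin, tanaka_tani} for the single-layer compressible problem. The preparatory step is to use the compatibility conditions to generate the initial values $\p_t^j q(0), \p_t^j u(0), \p_t^j \eta(0)$ for $j = 0, \ldots, 2N$ by differentiating the equations in time and inverting the resulting static problems. This requires the ellipticity of the viscous operator $-\diva \S_\a$ (via \eqref{viscosity}), the positivity of $P'_\pm$, and---when $\sigma_\pm > 0$---the elliptic character of the linearized mean-curvature term at each interface. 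From this one verifies that $\se{2N}^\sigma(0) \ls \ns{u_0}_{4N} + \ns{q_0}_{4N} + \ns{\eta_0}_{4N+1/2} + \sigma\ns{\nab_\ast \eta_0}_{4N}$, so that the right-hand side of \eqref{lwp_01} does control the starting energy.

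The iteration then reads: given $(q^k, u^k, \eta^k)$ in a small ball in the $\se{2N}^\sigma$-topology, compute $\theta^k, \a^k, J^k, \n^k$ via \eqref{theta}--\eqref{n_def} and solve the linearization of \eqref{ns_geometric} for the next iterate $(q^{k+1}, u^{k+1}, \eta^{k+1})$. The linear problem splits conceptually into a symmetric hyperbolic transport equation for $q^{k+1}$ driven by $u^{k+1}$, and a parabolic system for $u^{k+1}$ coupled through the dynamic and kinematic boundary conditions to an evolution for $\eta^{k+1}$. The hyperbolic piece is handled by the method of characteristics together with a Friedrichs-type energy estimate, using the fact that on $\Sigma$ the flow matches $\p_t \eta^k$ so characteristics do not cross the boundary transversally. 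The coupled parabolic-kinematic piece, together with the jump conditions on $\Sigma_-$ and the stress condition on $\Sigma_+$, is solved first at a base regularity level by a Galerkin scheme in $\Hsig$-type function spaces, then upgraded to the full $\se{2N}^\sigma$ regularity by differentiating in time and invoking elliptic regularity in $\Omega_\pm$ (for the viscous operator) and on $\Sigma_\pm$ (for the surface tension operator when $\sigma_\pm > 0$).

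With the iteration in place, the task is to close uniform-in-$k$ a priori estimates on $[0, T_0]$. The dissipation $\sd{2N}^\sigma$ comes from the parabolic smoothing of $u$ together with the lower-order hyperbolic control of $q$---explaining why $q$ itself appears at the full $2N$ spatial regularity in both $\se{2N}^\sigma$ and $\sd{2N}^\sigma$, but the improvement of one spatial derivative per time derivative is visible only starting from $\p_t q$. A standard difference argument for $(q^{k+1}-q^k, u^{k+1}-u^k, \eta^{k+1}-\eta^k)$, measured in the lower $\se{N+2}^\sigma$ norm, then gives contraction and hence a unique fixed point. The main obstacle is to unify the argument across the four regimes determined by the signs of $\sigma_\pm - \sigma_c$ and $\rj$: the surface regularity of $\eta$, encoded by the $\mathcal{H}(\pm \rj)$ and $\min\{1,\sigma_+,\sigma_- - \sigma_c,\ldots\}$ factors in \eqref{p_energy_def}--\eqref{p_dissipation_def}, changes character across these regimes, and the constant in \eqref{lwp_01} must be tracked in the explicit form $\ks$ so as to remain compatible with the vanishing surface tension limit analyzed later in the paper. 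Because the full argument is long and parallels \cite{JTW_lwp}, the detailed verification is deferred to that companion work.
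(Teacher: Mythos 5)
Your proposal should be compared with what the paper actually does for this statement: it does not prove Theorem \ref{lwp} internally at all, but simply observes that Theorem 2.1 of \cite{JTW_lwp} is a stronger result (smallness is required only of $\norm{\eta_0}_{4N-1/2}$, with no smallness imposed on $u_0$ or $q_0$) and records the specialized version here so that it can be fed directly into the global argument. Thus the only content of the paper's ``proof'' is the observation that the hypotheses of Theorem \ref{lwp} imply those of the companion theorem and that the companion estimate yields \eqref{lwp_01}. Your text instead sketches the machinery one expects \emph{inside} \cite{JTW_lwp}: construction of $\dt^j q(0)$, $\dt^j u(0)$, $\dt^j\eta(0)$ from the compatibility conditions, an iteration on linearized problems solved by a Galerkin scheme plus elliptic regularity, uniform bounds in the $\se{2N}^\sigma$ topology, and contraction in a weaker norm. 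That is the standard Guo--Tice-type architecture and is a reasonable roadmap, but since you too defer all the actual estimates to \cite{JTW_lwp}, your proposal ultimately rests on exactly the same citation as the paper; it is a plausible outline of the cited proof rather than an alternative proof.

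Two points in the outline would need repair if you carried it out. First, a Galerkin scheme in $\Hsig$-type (solenoidal) spaces is not appropriate here: the fluid is compressible, there is no divergence-free constraint, and the natural space is $\H(\Omega)$ with the continuity equation for $q$ coupled in; the dual-norm term $\int_0^T \ns{\rho J \dt^{2N+1}u}_{\Hd}$ in \eqref{lwp_01} is precisely the trace of such a weak formulation for the highest time-differentiated problem, and your sketch never accounts for how that term is produced or controlled. Second, the ``four regimes determined by the signs of $\sigma_\pm-\sigma_c$ and $\rj$'' are not an obstacle at the local level: Theorem \ref{lwp} assumes only $\sigma_\pm=0$ or $\sigma_\pm>0$, makes no reference to $\sigma_c$, and local well-posedness holds even in the unstable regimes (as the paper's table records); the Heaviside and $\min\{1,\sigma_+,\sigma_--\sigma_c\}$ structure in \eqref{p_energy_def}--\eqref{p_dissipation_def} matters only for the positivity and decay arguments of the global theory. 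What must be tracked locally is the $\sigma$-dependence of constants in the form $\ks$, which you do note, but not the stability-regime split you describe.
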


Theorem \ref{lwp} can be deduced readily from Theorem 2.1 of \cite{JTW_lwp}. Indeed, Theorem 2.1 of \cite{JTW_lwp} is stated in more general form, where we only require $\norm{\eta_0}_{4N-1/2}$ to be small, and no smallness condition is imposed on $u_0$ or $q_0$.  We record the version of local well-posedness in Theorem \ref{lwp} so that it can be employed directly in proving our global well-posedness result.

\subsection{Main results}
To state our results, we now define some quantities.  For a given jump value in the equilibrium density $\rj $, we first define the critical surface tension value by
\begin{equation}\label{sigma_c}
\sigma_c:= \rj g\max\{L_1^2,L_2^2\},
\end{equation}
where we recall that $L_1,L_2$ are the periodicity lengths.  Note that this is the same critical value identified for the incompressible problem in \cite{WT,WTK}.

For the global well-posedenss theory, we assume further that the initial masses are equal to the equilibrium masses:
\begin{equation}\label{masscon}
\int_{\Omega_+} \rho_{0,+} J_{0,+}  = \int_{\Omega_+} \bar{\rho}_+\text{ and }
\int_{\Omega_-} \rho_{0,-} J_{0,-} = \int_{\Omega_-} \bar{\rho}_- .
\end{equation}
Note that for sufficiently regular solutions to the problem \eqref{ns_geometric} (and in particular our global solutions), the condition \eqref{masscon} persists in time, i.e.
\begin{equation}\label{masscont}
\int_{\Omega_+} \rho_+(t) J_+(t)  = \int_{\Omega_+} \bar{\rho}_+\text{ and }
\int_{\Omega_-} \rho_-(t) J_-(t) = \int_{\Omega_-} \bar{\rho}_- .
\end{equation}
Indeed, from the first continuity equation and the equations $\dt \eta=u\cdot \n$ on $\Sigma$ and $u_-=0$ on $\Sigma_b$, we find, after integrating by parts, that
\begin{equation}\label{masscontt}
\begin{split}
\frac{d}{dt}\int_{\Omega_+} \rho_+  J_+  & =\int_{\Omega_+}\rho_+  \dt J_+ +\dt \rho_+  J_+ =\int_{\Omega_+} \rho_+\dt\pa_3\theta_++\p_t\theta_+\p_3\rho_+ +J_+\diverge_{\a_+} (  {\rho}_+   u_+)
\\
&=\int_{\Sigma_+}\rho_+ (\dt\eta_+-u_+\cdot\n_+ )-\int_{\Sigma_-}\rho_+ (\dt\eta_--u\cdot\n_-)=0,
\end{split}
\end{equation}
and
\begin{equation}\label{masscontt2}
\frac{d}{dt}\int_{\Omega_-} \rho_-  J_- =\int_{\Sigma_-}\rho_- (\dt\eta_--u\cdot\n_-)=0.
\end{equation}
The conservation of masses \eqref{masscont} plays a key role when $\rj>0$, as it allows us to show the positivity of the energy in the stability regime. Moreover,  we are interested in showing $\eta(t)\rightarrow0$ and $q(t)\rightarrow0$ (and hence $\rho(t)\rightarrow\bar\rho$) as $t\rightarrow\infty$ in a strong sense; due to the conservation of \eqref{masscont}, we cannot expect this unless \eqref{masscon} is satisfied at initial time.

We will employ a functional that combines control of the energy functionals given by \eqref{p_energy_def}, the dissipation functionals given by \eqref{p_dissipation_def}, and the auxiliary energy given by \eqref{fff}.  We define
\begin{equation}\label{G_def}
\gs(t) := \sup_{0 \le r \le t} \se{2N}^\sigma(r) + \int_0^t \sd{2N}^\sigma(r) dr + \sup_{0 \le r \le t} (1+r)^{4N-8} \se{N+2}^\sigma(r) + \sup_{0 \le r \le t} \frac{\f(r)}{(1+r)}.
\end{equation}

We can now state our global well-posedness result for \eqref{ns_geometric} with or without surface tension.

\begin{theorem}\label{th_gwp}
Suppose that one of the following three cases holds:
\begin{align}
  \rj &<0,  \sigma_+ =0, \sigma_- = 0 \label{th_gwp_01} \\
  \rj &<0,  \sigma_+ >0, \sigma_- > 0 \label{th_gwp_02} \\
  \rj &>0,  \sigma_+ >0, \sigma_- > \sigma_c, \label{th_gwp_03}
\end{align}
where in the last case $\sigma_c>0$ is defined by \eqref{sigma_c}.  Let $N \ge 3$ be an integer.  Assume that the initial data $(q_0, u_0, \eta_0)$ satisfy $\se{2N}^\sigma(0) + \f(0) < \infty$ as well as the compatibility conditions of Theorem \ref{lwp} and the mass condition \eqref{masscon}. Let $\delta_0 >0$ be the constant from Theorem \ref{lwp}.

There exists a constant $0 < \kappa \le \delta_0$ such that $1/\kappa = \ks$, where $\ks$ is a constant of the form \eqref{ks_def}, such that if $\se{2N}^\sigma(0) + \f(0) \le \kappa$, then there exists a unique triple  $(q,u,\eta)$, achieving the initial data, such that $(\rho=\bar{\rho}+q + \p_3 \bar{\rho} \theta,u,\eta)$ solve \eqref{ns_geometric} on the time interval $[0,\infty)$.  The solution obeys the estimate
\begin{equation}\label{th_gwp_04}
 \gs(\infty) \le \ks\left( \se{2N}^\sigma(0) + \f(0)\right).
\end{equation}

With surface tension, i.e. cases \eqref{th_gwp_02} and \eqref{th_gwp_03}, there exists a constants $\ks$, of the form \eqref{ks_def}, and $M(\sigma,\rj)$, defined by \eqref{ex_dec_01} and \eqref{ex_dec_02}, so that
\begin{equation}\label{th_gwp_05}
\sup_{t \ge 0} \left[ \exp\left(\frac{t}{\ks  M(\sigma,\rj)} \right) \mathcal{E}_{N+2}^\sigma(t) \right] \lesssim \ks \mathcal{E}_{N+2}^\sigma(0).
\end{equation}

\end{theorem}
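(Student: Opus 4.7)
The plan is to run a standard continuation argument on top of the local well-posedness result of Theorem \ref{lwp}, driven by a closed a priori estimate for the composite functional $\gs(t)$ on any interval $[0,T]$ on which a local solution exists. First I would establish a twin pair of two-tier energy-dissipation estimates, one at the top level $n=2N$ and one at the intermediate level $n=N+2$: there exist constants (of the form \eqref{ks_def}) such that whenever $\se{2N}^\sigma(t) \le \kappa$ is sufficiently small,
\begin{equation}
 \se{2N}^\sigma(t) + \int_0^t \sd{2N}^\sigma + \sup_{[0,t]} \frac{\f}{1+r} \;\lesssim\; \se{2N}^\sigma(0) + \f(0),
\end{equation}
while at the intermediate level $\frac{d}{dt}\se{N+2}^\sigma + \sd{N+2}^\sigma \le 0$ up to absorbable nonlinear errors. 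These estimates come from applying $\dt^j$ to the perturbed version of \eqref{ns_geometric}, testing against the natural energy multiplier, and combining with elliptic/Stokes regularity for $u$, $q$ and for $\eta$ (through the dynamic boundary conditions, which encode surface tension when $\sigma_\pm>0$). The mass conservation \eqref{masscont} is used crucially in the Rayleigh--Taylor stable regime $\rj>0$ with $\sigma_->\sigma_c$ to upgrade the quadratic form coming from gravity plus surface tension into a positive definite energy via the Poincar\'e-type inequality that defines $\sigma_c$ in \eqref{sigma_c}.

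Next I would handle the interaction between the two tiers and the auxiliary quantity $\f$. The non-decaying norm $\ns{\eta}_{4N+1/2}$ is controlled only by allowing linear-in-$t$ growth; this is why $\f(t)/(1+t)$ appears in $\gs$. The strategy is the two-tier interpolation of Guo--Tice: bound the pieces of $\se{N+2}^\sigma$ that are missing from $\sd{N+2}^\sigma$ by interpolating between $\sd{N+2}^\sigma$ and $\se{2N}^\sigma$, producing the algebraic inequality $\se{N+2}^\sigma \lesssim (\sd{N+2}^\sigma)^{\theta} (\se{2N}^\sigma)^{1-\theta}$ for a suitable $\theta\in(0,1)$. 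Combined with the differential inequality at level $N+2$ this yields the polynomial decay $(1+t)^{4N-8}\se{N+2}^\sigma(t)\lesssim \se{2N}^\sigma(0)+\f(0)$, which in turn feeds back and closes the top-level estimate because the nonlinear terms involving time integrals of $\sd{2N}^\sigma$ are controlled using the decay of $\se{N+2}^\sigma$. Together these yield the bound \eqref{th_gwp_04} on $\gs(\infty)$, and a standard continuation argument (comparing $\gs$ against $\kappa$ and invoking Theorem \ref{lwp} repeatedly) produces the global-in-time solution.

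Finally, for the estimate \eqref{th_gwp_05} in the surface tension cases \eqref{th_gwp_02}--\eqref{th_gwp_03}, the observation is that when $\sigma_\pm>0$ (and $\sigma_->\sigma_c$ in the heavier-on-top case) the energy is controlled by the dissipation at the intermediate level: one has a coercive estimate of the form $\se{N+2}^\sigma \lesssim M(\sigma,\rj)\,\sd{N+2}^\sigma$, the constant $M(\sigma,\rj)$ being the one appearing in \eqref{th_gwp_05}. The mechanism is that surface tension upgrades $\eta$ by one spatial derivative and the stable gravity-tension quadratic form is coercive in the appropriate space; in particular every term of $\se{N+2}^\sigma$ involving $\eta$, $\dt\eta$ or $q$ at the relevant order is dominated by the corresponding term in $\sd{N+2}^\sigma$ with a constant proportional to $M(\sigma,\rj)$. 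Plugging this into the differential inequality $\frac{d}{dt}\se{N+2}^\sigma + \sd{N+2}^\sigma \le 0$ (nonlinear errors absorbed by smallness) gives $\frac{d}{dt}\se{N+2}^\sigma + \frac{1}{\ks M(\sigma,\rj)}\se{N+2}^\sigma \le 0$, and Gronwall produces the exponential decay \eqref{th_gwp_05}.

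The hardest step, and the one I would expect to carry most of the technical weight, is closing the top-level a priori estimate at $n=2N$. Here one must simultaneously (i) handle temporal derivatives up to order $2N$ of a quasilinear, free-boundary, two-phase system with jump conditions, (ii) control the geometric nonlinearities coming from $\a=(\nabla\Theta^{-1})^T$ and the Poisson extension $\theta$ in spatial regularity one order higher than what pure energy estimates give (this is where Stokes/elliptic regularity in the moving domain enters), and (iii) accommodate the slow growth of $\f$ without spoiling the dissipation integral. In the stable Rayleigh--Taylor regime $\rj>0$ the additional obstacle is verifying that the combination of mass conservation, surface tension above the threshold $\sigma_c$, and the horizontal-periodic Poincar\'e inequality genuinely produces positivity of the full energy; this is where the precise definition \eqref{sigma_c} is used.
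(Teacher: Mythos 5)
Your proposal is correct and follows essentially the same architecture as the paper's proof: local existence (Theorem \ref{lwp}), the two-tier a priori estimate package (Propositions \ref{Dgle}, \ref{decaylm}, summarized as Theorem \ref{aprioris}), the interpolation inequality $\se{N+2}^\sigma \lesssim (\sd{N+2}^\sigma)^\theta (\se{2N}^\sigma)^{1-\theta}$ of Lemma \ref{en_interp} driving polynomial decay, a continuation argument bootstrapping $\gs \le \delta$, and in the surface-tension cases the coercive bound $\se{N+2}^\sigma \lesssim M(\sigma,\rj)\,\sd{N+2}^\sigma$ feeding a Gronwall argument (Proposition \ref{exp_decay}). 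The one place you were slightly imprecise is in stating the a priori estimate as holding "whenever $\se{2N}^\sigma(t)\le\kappa$": the paper's Propositions \ref{Dgle}--\ref{exp_decay} actually require smallness of the full functional $\gs(t)$, since the $2N$-level estimate already needs the decay of $\se{N+2}^\sigma$ and the sub-linear growth of $\f$ to absorb the terms $\int_0^t \se{N+2}^\sigma \f$ and $\se{N+2}^\sigma(t)\f(t)$; you do invoke $\gs$ correctly in the continuation step, so this reads more as loose wording than a genuine gap.
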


Theorem \ref{th_gwp} is proved later in the paper in Section \ref{section_proof}.  Before stating our second main result, some remarks are in order.

\begin{Remark}
Without surface tension (case \eqref{th_gwp_01}), the estimate \eqref{th_gwp_04} guarantees that
\begin{equation}
\sup_{t \ge 0 } (1+t)^{4N-8} \left[\ns{u(t)}_{2N+4}  +\ns{q(t)}_{2N+4}+ \ns{\eta(t)}_{2N+4}   \right] \ls (\se{2N}^\sigma(0) + \f(0)) \ls \kappa.
\end{equation}
Since $N$ may be taken to be arbitrarily large, this decay result can be regarded as an ``almost exponential'' decay rate.

With surface tension (cases \eqref{th_gwp_02} and \eqref{th_gwp_03}), the estimate \eqref{th_gwp_05} implies that
\begin{multline}
 \sup_{t \ge 0 } \left[ \exp\left(\frac{t}{\ks  M(\sigma,\rj)} \right)\left(  \ns{u(t)}_{2N+4} +\ns{q(t)}_{2N+4} +C(\sigma,\rj)\ns{ \eta(t)}_{2N+5}  \right)      \right] \\
\ls \ks (\se{2N}^\sigma(0) + \f(0)) \ls \ks \kappa,
\end{multline}
where  $C(\sigma,\rj) = \mathcal{H}(\rj) \min\{1,\sigma_+,\sigma_- - \sigma_c\}
+\mathcal{H}(-\rj) \min\{1,\sigma_+,\sigma_- \}$ and  $\mathcal{H} = \chi_{(0,\infty)}$ is the usual Heaviside function.  This is exponential decay.   We then see that one of the main effects of surface tension is that it induces a faster decay to equilibrium.
\end{Remark}

\begin{Remark}
In our companion paper \cite{JTW_nrt} we show that if the stability criterion is not satisfied, then the local solutions are nonlinearly unstable.  This establishes  sharp nonlinear stability criteria for the equilibrium state in the compressible viscous surface-internal wave problem.   We summarize these and the rates of decay to equilibrium in the following table.

\begin{displaymath}
\begin{array}{| c | c | c |  c |}
\hline
 & \rj < 0 & \rj =0 & \rj >0  \\ \hline
 \sigma_\pm =0 &
\begin{array}{c}
\textnormal{nonlinearly stable} \\
\textnormal{almost exponential decay}
\end{array} &
\textnormal{locally well-posed} &
\textnormal{nonlinearly unstable}     \\ \hline
\begin{array}{c}
0 < \sigma_+  \\ 0 < \sigma_- < \sigma_c
\end{array} &
\begin{array}{c}
\textnormal{nonlinearly stable} \\
\textnormal{exponential decay}
\end{array} &
\begin{array}{c}
\textnormal{nonlinearly stable} \\
\textnormal{exponential decay}
\end{array} &
\textnormal{nonlinearly unstable}      \\ \hline
\begin{array}{c}
0 < \sigma_+  \\  \sigma_c = \sigma_-
\end{array} &
\begin{array}{c}
\textnormal{nonlinearly stable} \\
\textnormal{exponential decay}
\end{array} &
\begin{array}{c}
\textnormal{nonlinearly stable} \\
\textnormal{exponential decay}
\end{array} &
\textnormal{locally well-posed}
\\ \hline
\begin{array}{c}
0 < \sigma_+  \\  \sigma_c < \sigma_-
\end{array} &
\begin{array}{c}
\textnormal{nonlinearly stable} \\
\textnormal{exponential decay}
\end{array} &
\begin{array}{c}
\textnormal{nonlinearly stable} \\
\textnormal{exponential decay}
\end{array} &
\begin{array}{c}
\textnormal{nonlinearly stable} \\
\textnormal{exponential decay}
\end{array}
\\ \hline
\end{array}
\end{displaymath}

\end{Remark}

\begin{Remark}
If we introduce the critical density jump value by
\begin{equation}
\rj _c:=\frac{\sigma_-}{g\max\{L_1^2,L_2^2\}}\ge 0
\end{equation}
for a given lower surface tension value $\sigma_-$, then Theorem \ref{th_gwp} characterizes  the stable equilibria in terms of $\rj _c$: the equilibrium is nonlinear stable if $\rj < \rj _c$.  In particular, this immediately implies that if
$\rj <0$,  the equilibrium is stable for all surface tension values $\sigma_\pm\geq 0$.
\end{Remark}

\begin{Remark}
 Our methods could be readily applied to the internal-wave problem, i.e. the problem \eqref{ns_geometric} posed with a rigid top in place of the upper free surface.   In this case Theorem \ref{th_gwp} again holds, and we identify a nonlinear stability criterion for the compressible viscous internal-wave problem.  This agrees with the linear stability criterion identified in \cite{GT_RT}.
\end{Remark}

\begin{Remark}
The high regularity context of Theorem \ref{th_gwp} is not entirely necessary in the case with surface tension.  Indeed, the theorem shows in this case that the integer $N$ plays no significant role in determining the decay rate, which is exponential.  We believe that we could refine the analysis in the case with surface tension to match the lower regularity context used in the incompressible case \cite{WT,WTK}.    We have chosen to forgo this approach here for two reasons.  First, it allows us to analyze the case with surface tension and the case without simultaneously.  Second, it allows us to establish the vanishing surface tension limit.
\end{Remark}

In our analysis leading to the proof of Theorem \ref{th_gwp} we have made a serious effort to track the dependence of various constants on the surface tension parameters.  The upshot of this is that our estimates are sufficiently strong to allow us to pass to the vanishing surface tension limit, establishing a sort of continuity between the problems with and without surface tension.  We state this result  now.

\begin{theorem}\label{th_vanish_st}
Let $N \ge 3$ be an integer.  Suppose that $\rj <0$ and $\sigma_\pm >0$, and assume that $(q_0^\sigma,u_0^\sigma,\eta_0^\sigma)$ satisfy the compatibility conditions of Theorem \ref{th_gwp} and the mass condition \eqref{masscon} in addition to the bound $\se{2N}^\sigma(0) + \f(0) \le \kappa$, where $\kappa$ is as in the theorem.  Further assume that $\sigma_\pm \to 0$ and that
\begin{multline}
q_0^\sigma \to q_0 \text{ in }H^{4N}(\Omega), \; u_0^\sigma \to u_0 \text{ in }H^{4N}(\Omega), \\
 \eta_0^\sigma \to \eta_0 \text{ in } H^{4N+1/2}(\Sigma),  \text{ and } \sqrt{\sigma} \nab_\ast \eta_0^\sigma \to 0 \text{ in }H^{4N}(\Sigma)
\end{multline}
as $\sigma_\pm \to 0$.

Then the following hold.
\begin{enumerate}
 \item The triple $(q_0,u_0,\eta_0)$ satisfy the compatibility conditions of Theorem \ref{th_gwp} with $\sigma_\pm =0$.
 \item Let $(q^\sigma, u^\sigma,\eta^\sigma)$ denote the global solutions to \eqref{ns_geometric} produced by Theorem \ref{th_gwp} from the data $(q_0^\sigma,u_0^\sigma,\eta_0^\sigma)$.  Then as $\sigma_\pm \to 0$, the triple  $(q^\sigma, u^\sigma,\eta^\sigma)$ converges to  $(q, u,\eta)$, where the latter triple is the unique solution to \eqref{ns_geometric} with $\sigma_\pm=0$ and initial data $(q_0,u_0,\eta_0)$.  The convergence occurs in any space into which the space of triples $(q,u, \eta)$ obeying $\mathcal{G}^0_{2N}(\infty) < \infty$ compactly embeds.
\end{enumerate}
\end{theorem}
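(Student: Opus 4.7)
The plan is a compactness-uniqueness argument built on the $\sigma$-uniform a priori bound \eqref{th_gwp_04}. First I would verify the $\sigma_\pm = 0$ compatibility conditions for the limiting data; then use the uniform bound to extract a limit on compact time intervals; then pass to the limit in the weak form of \eqref{ns_geometric}; and finally invoke the uniqueness part of Theorem \ref{th_gwp} under condition \eqref{th_gwp_01} to promote subsequential convergence to convergence of the full family.

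For part (1), the compatibility conditions enumerated in \cite{JTW_lwp} are obtained by recursively applying $\partial_t^j$ to the system \eqref{ns_geometric} at $t=0$ and using the boundary equations to determine $\partial_t^j u(0)$, $\partial_t^j q(0)$, $\partial_t^j \eta(0)$ from the data. For $\sigma_\pm>0$ these carry surface-tension contributions proportional to $\sigma_\pm \mathcal{H}(\eta)$, with the requisite derivatives of $\mathcal{H}$ being smooth nonlinear functions of $(\eta_0, \nabla_\ast \eta_0, \nabla_\ast^2 \eta_0,\ldots)$. The hypothesis $\sqrt{\sigma}\,\nabla_\ast\eta_0^\sigma \to 0$ in $H^{4N}(\Sigma)$, combined with the strong convergence of $(q_0^\sigma,u_0^\sigma,\eta_0^\sigma)$ in their respective norms, ensures that every occurrence of $\sigma_\pm$ multiplied by a bounded-order quantity in $\eta_0^\sigma$ tends to zero, while the non-capillary terms pass to their limiting counterparts; thus $(q_0,u_0,\eta_0)$ satisfies the $\sigma_\pm=0$ compatibility conditions.

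For part (2), the convergence hypothesis together with \eqref{p_energy_def} forces $\se{2N}^\sigma(0) + \f(0) \le \kappa$ uniformly in $\sigma$: the $\sigma$-weighted top-order piece is controlled by $\sigma\,\ns{\nabla_\ast\eta_0^\sigma}_{4N}\to 0$, and the remaining pieces by strong convergence in $H^{4N}$ and $H^{4N+1/2}$. Granted that $\ks$ in \eqref{th_gwp_04} remains uniformly bounded as $\sigma_\pm\to 0$, this produces a uniform bound on $\gs(\infty)$, and hence uniform control on every $\sigma$-independent component appearing in $\gs$. Aubin-Lions compactness applied on any finite slab $[0,T]$ then yields a subsequence converging in every space that compactly embeds into the $\mathcal{G}^0_{2N}$-regular class, and a diagonal argument in $T$ lifts this to $[0,\infty)$. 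The coefficient matrix $\a^\sigma$ and Jacobian $J^\sigma$, depending smoothly on the Poisson extension of $\eta^\sigma$, converge in strong norms, so every nonlinearity in \eqref{ns_geometric} passes to the limit; the only explicitly $\sigma$-dependent boundary terms $\sigma_\pm \mathcal{H}(\eta^\sigma)\mathcal{N}$ vanish because $\mathcal{H}(\eta^\sigma)\mathcal{N}$ is uniformly bounded in a sufficiently strong norm and $\sigma_\pm\to 0$. The limit therefore solves \eqref{ns_geometric} with $\sigma_\pm=0$ and initial data $(q_0,u_0,\eta_0)$, and uniqueness of the global $\sigma=0$ solution implies convergence of the full family, not merely a subsequence.

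The principal obstacle is the $\sigma$-uniform bound on $\ks = \ks$. The various constants arising in the paper's nonlinear energy estimates depend on the surface tension through the minimum expressions in \eqref{p_energy_def} and \eqref{p_dissipation_def}, and each such dependence must enter in a direction that does not blow up when $\sigma_\pm$ degenerates to zero. The paper's deliberate tracking of $\sigma$-dependence, emphasized in the preceding remarks, is precisely what makes this step viable. A secondary, softer issue is that top-order $\eta^\sigma$ regularity at the $2N+1$ level cannot be recovered in the limit, because $\sigma\,\ns{\eta}_{2N+1}$ is only bounded and not $o(1)$; this is consistent with the statement, which restricts the target spaces to those into which $\mathcal{G}^0_{2N}$-regularity compactly embeds.
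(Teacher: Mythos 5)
Your proposal is correct and follows essentially the same route as the paper's own (much briefer) proof: uniform-in-$\sigma$ control of $\gs(\infty)$ via the bound \eqref{th_gwp_04} with the constant $\ks = B(\sigma)$ remaining bounded as $\sigma_\pm \to 0$ (the whole point of the paper's tracking of $\sigma$-dependence when $\rj<0$), followed by compactness/interpolation on time slabs, passage to the limit in \eqref{ns_geometric} with the capillary terms vanishing, and identification of the limit with the unique $\sigma_\pm=0$ solution to upgrade subsequential to full convergence. Your treatment of the compatibility conditions and of the loss of the top-order $\eta$ regularity is consistent with, and somewhat more detailed than, the paper's sketch.
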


\subsection{Strategy and plan of the paper}

Note that sufficiently regular solutions of \eqref{ns_geometric}  obey the physical energy-dissipation law
\begin{multline}\label{basic_law}
\frac{d}{dt} \left( \int_{\Omega} \frac{\rho J}{2} \abs{v}^2 +  R(\rho) J +g\rho (x_3 + \theta )J  +  \int_{\Sigma_+} p_{atm} \eta + \sigma_+ \sqrt{1+ \abs{\nab_\ast \eta}^2}  \right.
 \\
\left. +  \int_{\Sigma_-}  \sigma_- \sqrt{1+ \abs{\nab_\ast \eta}^2} \right)
+ \int_{\Omega}\frac{\mu}{2} J\abs{\sgz_\mathcal{A} v}^2+\mu'J\abs{\diverge_\mathcal{A} v}^2 dx =0,
\end{multline}
where
\begin{equation}
 R_\pm(z) = z \int_{c_\pm}^z \frac{P_\pm(s)}{s^2} ds
\end{equation}
for some choice of $c_\pm>0$ such that $R_\pm \ge 0$.   This energy-dissipation equation is too weak on its own to even control the Jacobian of our coordinate transformation, and so we must employ a higher-regularity framework for global-in-time analysis.  Our framework is based on the energy and dissipation functionals obtained from linearizing \eqref{basic_law} around the equilibrium.

With these functionals in hand, we develop a nonlinear energy method for studying the problem \eqref{ns_geometric}.  This method is similar to that employed in the incompressible setting \cite{GT_per,GT_inf,WTK}, though of course the compressibility adds many new complications.  Below we summarize some of the principal features of our analysis.

\textbf{Horizontal energy estimates:} The basic strategy in any high order energy method is to apply derivatives to the equations and use the basic energy-dissipation structure to get estimates.  This is complicated by boundary conditions, which impose restrictions on which derivatives may be applied; indeed, the differentiated problem must have the same structure as the original in order to produce the same energy-dissipation estimate.  In a moving domain this is particularly annoying, as the acceptable directions vary not only in space but also in time.  While it's possible to get around this difficulty in a moving domain (see for example the work of Bae \cite{B}), the more common approach is to change coordinates into a flattened (or at the very least unmoving) domain.  This is what we have done in \eqref{ns_geometric}.  In the flattened framework, the admissible derivatives are the temporal and horizontal spatial ones because they preserve the structure of the boundary conditions.  As such, the basic energy estimates can only control temporal and horizontal derivatives of the solution.  Control of vertical derivatives must be acquired from other methods, namely elliptic estimates and intermediate energy estimates coming from special structure in the equations.

The trade-off for flattening the coordinate domain is that the operators become far more complicated and fail to be constant-coefficient.  We are then faced with a choice: work directly with the system \eqref{ns_geometric}, or rewrite the problem as a perturbation of the corresponding constant-coefficient problem.  The benefit of the latter approach is that the differential operators commute with all derivatives, making the basic energy estimates cleaner.  The downside of this approach is that the forcing terms are of higher regularity and may be outside of what can be controlled by the energy method.  Unfortunately, this is what occurs for \eqref{ns_geometric}; estimates of the highest-order temporal derivatives do not close in the perturbed form because we cannot control the highest temporal derivatives of $u$ on the internal interface.   Because of this, we are forced to work directly with the ``geometric form'' \eqref{ns_geometric} for some of the energy estimates.  Note, though, that we can use the ``perturbed form'' for some of the energy estimates, and indeed, this is more convenient for some.

\textbf{Energy positivity:} Ensuring the positivity of the energy is one of key issues in using an energy method for global-in-time analysis.  In the case of a possible Rayleigh-Taylor instability, when $\rj >0$, the energy appearing in our analysis does not have a definite sign a priori.  It is only by imposing a condition on $\sigma_-$ that we can restore this positivity, and this actually reveals the stability criterion: it is exactly the threshold at which the energy can be guaranteed to be definite.  Actually proving this positivity amounts to using the sharp Poincar\'{e} inequality on $\mathrm{T}^2$.  This in turn requires that we control the average of $\eta$.  In the incompressible problem this is trivial since the average is constant in time, but in the compressible case, we must use a more delicate argument, employing the conservation of mass to control the average with other controlled terms.

\textbf{Intermediate energy estimates:} To go from control of horizontal derivatives to control of full derivatives we will employ elliptic estimates.  Unfortunately, the horizontal energy and dissipation do not control everything necessary for such an elliptic estimate, so we are forced to seek intermediate energy estimates.  These come from the special dissipative structure that arises by taking an appropriate linear combination of the density equation and the third component of the velocity equation in \eqref{ns_geometric}.  To the best of our knowledge, this structure was first exploited by Matsumura and Nishida \cite{MN83}.  Using this trick, we get an energy-dissipation equation for vertical derivatives of the density perturbation and derive estimates.  When combined with the basic horizontal estimates and elliptic regularity estimates for a certain one-phase Stokes problem, these yield control of intermediate energy and dissipation functionals that control nearly all the desired derivatives.

\textbf{Comparison estimate:} We gain control of all desired derivatives by way of comparison estimates.  In these we use elliptic regularity estimates to show that the intermediate energy and dissipation control the ``full'' energy and dissipation, up to some ultimately harmless error terms. The comparison estimate for the energy follows by applying the elliptic regularity theory for a two-phase Lam\'e problem.  The comparison estimate for the dissipation is particularly tricky because the horizontal dissipation itself provides no control of the free surface functions. Indeed, the hardest terms to control are $\norm{q}_0$ and $\norm{\eta}_0$.  For them, we again use the conservation of mass to construct an auxiliary function $w$ that allows us to derive the control of $\norm{q}_0$ and $\norm{\eta}_0$ together. The energy positivity mentioned above again plays a crucial role here.

\textbf{Decay estimates and the two-tier scheme:}  The basic goal in a nonlinear energy method is to derive an energy-dissipation equation of the form
\begin{equation}\label{strat_1}
 \frac{d}{dt} \mathcal{E} + \mathcal{D} \ls \mathcal{E}^\theta \mathcal{D} \text{ for some } \theta>0.
\end{equation}
In a small-energy regime (verified by the local theory and choice of small data), the right-hand term can be absorbed onto the left, resulting in an equation of the form
\begin{equation}\label{strat_2}
 \frac{d}{dt} \mathcal{E} + \hal \mathcal{D} \le 0.
\end{equation}
In the event that the dissipation is coercive over the energy space, i.e. $\mathcal{D} \ge C \mathcal{E}$, we may immediately deduce exponential decay of $\mathcal{E}$.   When we consider \eqref{ns_geometric} with surface tension and $\sigma_- > \sigma_c$ in the case $\rj >0$, we can essentially prove an estimate of the form \eqref{strat_1} and the coercivity of the dissipation, and we can deduce exponential decay of solutions.

However, when we neglect surface tension, we run into two major difficulties.  The first is that we cannot derive an estimate of the form \eqref{strat_1} because the regularity demands for $\eta$ in  the nonlinear forcing terms are half a derivative beyond the control of the energy or dissipation.  This would be disastrous if it were not for auxiliary estimate for $\eta$ arising from the kinematic transport equation.  This estimate provides enough regularity to guarantee that the nonlinearity is well-defined, but the estimate can only say that this term grows no faster than linearly in time.  This prevents us from using a small-energy argument to absorb the nonlinear term as above.   The second difficulty is that the dissipation is not coercive over the energy space.  Indeed, we find that there is a half-derivative gap between the dissipation and energy for $\eta$.  This prevents us from deducing exponential decay of solutions, even if we manage to derive \eqref{strat_2}.

Our solution to this problem is to implement the two-tier energy method devised by Guo and Tice in the analysis of the one-phase incompressible problem \cite{GT_per,GT_inf}.  The idea is to  employ two tiers of energies and dissipation, $\se{l}$, $\sd{l}$, $\se{h}$, and $\sd{h}$, where $l$ stands for low regularity and $h$ stands for high regularity.  We then aim to prove estimates of the form
\begin{equation}
 \frac{d}{dt} \se{h} + \sd{h} \ls (\se{h})^\theta \sd{h} + t (\se{l})^{1/2} \sd{h} \text{ and } \frac{d}{dt} \se{l} + \sd{l} \ls (\se{h})^\theta \sd{l},
\end{equation}
where the $t$ term results from the transport estimate.  If we know that $\se{l}$ decays at a sufficiently fast polynomial rate and that $\se{h}$ is bounded and small, we can deduce from these that
\begin{equation}
 \sup_{t} \se{h} + \int_0^\infty \sd{h} \ls \se{h}(0) \text{ and } \frac{d}{dt} \se{l} + \hal \sd{l} \le 0.
\end{equation}
We still don't have that $\sd{l} \ge C \se{l}$, but we can use an interpolation argument to bound $\se{l} \ls (\se{h})^{1-\gamma} (\sd{l})^{\gamma}$ for some $\gamma \in (0,1)$.  Plugging this in leads to an algebraic decay estimate for $\se{l}$ with the rate determined by $\gamma$.  If the high and low regularity scales are appropriately chosen, this scheme of a priori estimates closes and leads to a method of producing global-in-time solutions that decay to equilibrium.  An interesting feature of this is that the existence of global solutions is predicated on the decay of the solutions.

The rest of the paper is organized as follows.  In the remainder of this section we record some definitions and bits of terminology. Section \ref{sec_prelim} contains some preliminary estimates and energy-dissipation identities that are useful throughout the paper.  Section \ref{sec_horiz} produces the estimates for temporal and horizontal derivatives.  Section \ref{sec_aux} produces energy-dissipation estimates for vertical derivatives of the density perturbation.  Section \ref{sec_combo} combines the horizontal and temporal estimates with the density perturbation estimates to produce estimates for the intermediate energy and dissipation.  Section \ref{sec_comparison} shows that the intermediate energy and dissipation control the full energy and dissipation up to error terms.  Section \ref{sec_apriori} records the finished a priori estimates.  Section \ref{section_proof} contains the proofs of Theorems \ref{th_gwp} and \ref{th_vanish_st}.

\subsection{Definitions and terminology} \label{def-ter}

We now mention some of the definitions, bits of notation, and conventions that we will use throughout the paper.

{ \bf Einstein summation and constants:}  We will employ the Einstein convention of summing over  repeated indices for vector and tensor operations.  Throughout the paper $C>0$ will denote a generic constant that can depend $N$, $\Omega$, or any of the parameters of the problem except for surface tension.  We refer to such constants as ``universal.''  They are allowed to change from one inequality to the next.   We will employ the notation $a \ls b$ to mean that $a \le C b$ for a universal constant $C>0$.

To track the appearance of constants depending on the surface tension coefficients, we employ the notation
\begin{equation}\label{ks_def}
 \ks =
\begin{cases}
C(\sigma) & \text{if }\rj >0  \\
B(\sigma) & \text{if } \rj <0,
\end{cases}
\end{equation}
where $C(\sigma) >0$ is a constant depending on $\sigma$ (and the other parameters of the problem) and $B(\sigma)>0$ denotes a constant (also depending on the other parameters) satisfying
\begin{equation}
 \liminf_{(\sigma_+,\sigma_-)\to 0} B(\sigma) \in (0,\infty)
\end{equation}
and
\begin{equation}
 \limsup_{(\sigma_+,\sigma_-)\to \infty} \frac{B(\sigma)} {\max\{\sigma_+,\sigma_- \}^j } < \infty \text{ for some integer } j >0.
\end{equation}
Notice in particular that linear combinations of powers of constants of the form $\ks$ remain of the form $\ks$ so that, for example, we may write ``equalities'' of the form $1 + \ks^5 + \sqrt{\ks} = \ks$.

{ \bf Norms:} We write $H^k(\Omega_\pm)$ with $k\ge 0$ and and $H^s(\Sigma_\pm)$ with $s \in \Rn{}$ for the usual Sobolev spaces.   We will typically write $H^0 = L^2$.  If we write $f \in H^k(\Omega)$, the understanding is that $f$ represents the pair $f_\pm$ defined on $\Omega_\pm$ respectively, and that $f_\pm \in H^k(\Omega_\pm)$.  We employ the same convention on $\Sigma_\pm$.  We will refer to the space $\H(\Omega)$ defined as follows:
\begin{equation}
 \H(\Omega) = \{ v \in H^1(\Omega) \; \vert \; \jump{v}=0 \text{ on } \Sigma_- \text{ and } v_- = 0 \text{ on } \Sigma_b\}.
\end{equation}

To avoid notational clutter, we will avoid writing $H^k(\Omega)$ or $H^k(\Sigma)$ in our norms and typically write only $\norm{\cdot}_{k}$, which we actually use to refer to  sums
\begin{equation}
 \ns{f}_k = \ns{f_+}_{H^k(\Omega_+)} + \ns{f_-}_{H^k(\Omega_-)}   \text{ or }  \ns{f}_k = \ns{f_+}_{H^k(\Sigma_+)} + \ns{f_-}_{H^k(\Sigma_-)}.
\end{equation} Since we will do this for functions defined on both $\Omega$ and $\Sigma$, this presents some ambiguity.  We avoid this by adopting two conventions.  First, we assume that functions have natural spaces on which they ``live.''  For example, the functions $u$, $\rho$, $q$, and $\bar{\eta}$ live on $\Omega$, while $\eta$ lives on $\Sigma$.  As we proceed in our analysis, we will introduce various auxiliary functions; the spaces they live on will always be clear from the context.  Second, whenever the norm of a function is computed on a space different from the one in which it lives, we will explicitly write the space.  This typically arises when computing norms of traces onto $\Sigma$ of functions that live on $\Omega$.

Occasionally we will need to refer to the product of a norm of $\eta$ and a constant that depends on $\pm$.  To denote this we will write
\begin{equation}
\gamma \ns{\eta}_{k} = \gamma_+ \ns{\eta_+}_{H^k(\Sigma_+)} + \gamma_- \ns{\eta_-}_{H^k(\Sigma_-)}.
\end{equation}

{ \bf Derivatives }

We write $\mathbb{N} = \{ 0,1,2,\dotsc\}$ for the collection of non-negative integers.  When using space-time differential multi-indices, we will write $\mathbb{N}^{1+m} = \{ \alpha = (\alpha_0,\alpha_1,\dotsc,\alpha_m) \}$ to emphasize that the $0-$index term is related to temporal derivatives.  For just spatial derivatives we write $\mathbb{N}^m$.  For $\alpha \in \mathbb{N}^{1+m}$ we write $\partial^\alpha = \dt^{\alpha_0} \p_1^{\alpha_1}\cdots \p_m^{\alpha_m}.$ We define the parabolic counting of such multi-indices by writing $\abs{\alpha} = 2 \alpha_0 + \alpha_1 + \cdots + \alpha_m.$  We will write $\nab_{\ast}f$ for the horizontal gradient of $f$, i.e. $\nab_{\ast}f = \p_1 f e_1 + \p_2 f e_2$, while $\nab f$ will denote the usual full gradient.

For a \textit{given norm} $\norm{\cdot}$ and an integer $k\ge 0$, we introduce the following notation for sums of spatial derivatives:
\begin{equation}
 \norm{\nab_{\ast}^k f}^2 := \sum_{\substack{\alpha \in \mathbb{N}^2 \\  \abs{ \alpha}\le k} } \norm{\pa^\al  f}^2 \text{ and }
\norm{\nab^k f}^2 := \sum_{\substack{\alpha \in \mathbb{N}^{3} \\   \abs{\alpha}\le k} } \norm{\pa^\al  f}^2.
\end{equation}
The convention we adopt in this notation is that $\nab_{\ast}$ refers to only ``horizontal'' spatial derivatives, while $\nab$ refers to full spatial derivatives.   For space-time derivatives we add bars to our notation:
\begin{equation}
 \norm{\bar{\nab}_{\ast}^k f}^2 := \sum_{\substack{\alpha \in \mathbb{N}^{1+2} \\   \abs{\alpha}\le k} } \norm{\pa^\al  f}^2 \text{ and }
\norm{\bar{\nab}^k f}^2 := \sum_{\substack{\alpha \in \mathbb{N}^{1+3} \\   \abs{\alpha}\le k} } \norm{\pa^\al  f}^2.
\end{equation}
We allow for composition of derivatives in this counting scheme in a natural way; for example, we write
\begin{equation}
 \norm{\nab_{\ast} \nab_{\ast}^{k} f}^2 = \norm{ \nab_{\ast}^k \nab_{\ast} f}^2 = \sum_{\substack{\alpha \in \mathbb{N}^{2} \\   \abs{\alpha}\le k} } \norm{\pa^\al  \nab_{\ast} f}^2  = \sum_{\substack{\alpha \in \mathbb{N}^{2} \\  1\le \abs{\alpha}\le k+1} } \norm{\pa^\al   f}^2.
\end{equation}

\section{Preliminaries}\label{sec_prelim}

In this section we record some preliminary results that are useful throughout the paper.  We rewrite the problem \eqref{ns_geometric} as a perturbation around the equilibrium, and then we present various energy equalities.  We conclude with the estimate of certain nonlinear terms.

\subsection{Perturbed formulation around the steady-state -- geometric form }

We will now rephrase the PDE \eqref{ns_geometric} in a perturbation formulation around the steady state solution $(\bar\rho,0,0)$. We recall our special density perturbation defined by \eqref{q_def}. In order to deal with the pressure term $P(\rho)=P(\bar\rho+\q+ \p_3\bar\rho\theta)$ we introduce the Taylor expansion, by \eqref{steady},
\begin{equation}\label{R1}
P (\bar\rho+\q+\p_3\bar\rho\theta)=P (\bar{\rho} )+P '(\bar{\rho} )(\q+\p_3\bar\rho\theta)+\mathcal{R}=P (\bar{\rho} )+P '(\bar{\rho} ) \q -g\bar\rho\theta+\mathcal{R},
\end{equation}
where the remainder term is defined by the integral form
\begin{equation}\label{R_def}
\mathcal{R} =\int_{\bar{\rho} }^{\bar{\rho} +\q+\p_3\bar\rho\theta}(\bar{\rho} +\q+\p_3\bar\rho\theta-z)  P ^{\prime\prime}(z)\,dz.
\end{equation}
Then one can see that the perturbation $\q$ is defined in its special way enjoying the following advantage that in $\Omega$,
\begin{equation}
\begin{split}\label{adv3.3}
&\a_{ij}\p_j P ( {\rho} )+g\rho \delta_{i3}=\a_{ij}\p_j P ( {\rho} )+g\rho \a_{ij}\p_j\Theta_3
\\&\quad=\a_{ij}\p_j (P (\bar{\rho} )+P '(\bar{\rho} ) \q -g\bar\rho\theta+\mathcal{R})+g(\bar\rho+\q+\p_3\bar\rho\theta) \a_{ij}\p_j(x_3+\theta)
\\&\quad=\a_{ij}\p_j ( P '(\bar{\rho} ) \q )-g\a_{ij}\p_j  (\bar\rho \theta )+\a_{ij}\p_j\mathcal{R}+g \bar\rho  \a_{ij}\p_j\theta +g(\q+\p_3\bar\rho\theta) \a_{i3} +g( \q+\p_3\bar\rho\theta) \a_{ij}\p_j\theta
\\&\quad=\a_{ij}\p_j ( P '(\bar{\rho} ) \q ) +\a_{ij}\p_j\mathcal{R}+g \q  \a_{i3}+g( \q+\p_3\bar\rho\theta) \a_{ij}\p_j\theta
\\&\quad=\bar\rho\a_{ij}\p_j ( h '(\bar{\rho} ) \q ) +\a_{ij}\p_j\mathcal{R}+g( \q+\p_3\bar\rho\theta) \a_{ij}\p_j\theta,
\end{split}%
\end{equation}
where we have used \eqref{steady} and \eqref{h'}. Recalling also \eqref{rho+-}, \eqref{theta} and \eqref{b function}, we have
\begin{equation}
 -g\bar\rho_+\theta=-\rho_1  g\eta_+\text{ on }\Sigma_+,\text{ and } \jump{-g\bar\rho\theta}  =-\rj g \eta_-\text{ on }\Sigma_-.
\end{equation}

By perturbing the density as above, we are led from \eqref{ns_geometric} to the following system:
\begin{equation}\label{geometric}
\begin{cases}
\partial_t \q +\diverge_\a ( \bar{\rho}   u)=F^1 & \text{in }
\Omega  \\
( \bar{\rho} +  \q+\p_3\bar\rho\theta)\partial_t    u    + \bar{\rho}\nabla_\a \left(h'(\bar{\rho})\q\right)   -\diva \S_{\a} u =F^2 & \text{in }
\Omega  \\
\partial_t \eta = u\cdot \n &
\text{on }\Sigma  \\
(  P'(\bar\rho)\q I- \S_{\a}(  u))\n  =  \rho_1  g \eta_+ \n-\sigma_+ \Delta_\ast\eta_+  \n +F_+^3
 & \text{on } \Sigma_+
 \\ \jump{P'(\bar\rho)\q I- \S_\a(u)}\n_-
= \rj g\eta_-\n +\sigma_- \Delta_\ast\eta_-\n-F_-^3&\hbox{on }\Sigma_-
 \\\jump{u}=0 &\hbox{on }\Sigma_-
\\ u_-=0 &\hbox{on }\Sigma_b,
\end{cases}%
\end{equation}
where
\begin{equation}
 F^{1}=\p_3^2\bar\rho K \theta \p_t\theta+K\p_t\theta \pa_3  \q -\diverge_\a((\q+\p_3\bar\rho\theta) u ),
\end{equation}
\begin{equation}
\begin{split}
 F^2 =  -( \bar{\rho} +  \q+\p_3\bar\rho\theta)
(-K\p_t\theta \pa_3  u
 +   u \cdot \nab_\a  u )
       - \nabla_\a\mathcal{R}-g( \q+\p_3\bar\rho\theta ) \nabla_\a \theta ,
      \end{split}
\end{equation}
 \begin{equation}
F^3_{+}= - \mathcal{R} \n-\sigma_+\diverge_\ast(((1+|\nab_\ast\eta_+|^2)^{-1/2}-1)\nab_\ast\eta_+) \n,
\end{equation}
and
 \begin{equation}
 -F^3_-= - \jump{ \mathcal{R} }\n
 +\sigma_-\diverge_\ast(((1+|\nab_\ast\eta_-|^2)^{-1/2}-1)\nab_\ast\eta_-) \n.
\end{equation}

We will employ the form of the equations \eqref{geometric} primarily for estimating the temporal derivatives of the solutions.  Applying the temporal differential operator $\dt^j$ for $j=0,\dots,2N$ to \eqref{geometric}, we find that
\begin{equation}\label{linear_geometric}
\begin{cases}
\partial_t (\dt^j \q )+\diverge_\a ( \bar{\rho}  \dt^j u)=F^{1,j} & \text{in }
\Omega  \\
( \bar{\rho} +  \q+\p_3\bar\rho\theta)\partial_t   (\dt^j u )  + \bar{\rho}\nabla_\a  (h'(\bar{\rho}) \dt^j\q )   -\diva\S_{\a}(\dt^j u) = F^{2,j}  & \text{in } \Omega \\
  \dt (\dt^j \eta)   = \dt^j u\cdot \n+F^{4,j} & \text{on } \Sigma\\
(  P'(\bar{\rho}) \dt^j \q I- \S_{\a}(\dt^j u))\n  =  \rho_1  g \dt^j \eta_+ \n-\sigma_+ \Delta_\ast(\dt^j \eta_+)  \n +F_+^{3,j}
 & \text{on } \Sigma_+
 \\ \jump{P'(\bar\rho)\dt^j \q I- \S_\a(\dt^j u)}\n
= \rj g \dt^j \eta_-\n +\sigma_- \Delta_\ast(\dt^j \eta_-)\n-F_-^{3,j}&\hbox{on }\Sigma_-
 \\\jump{\dt^j u}=0 &\hbox{on }\Sigma_-\\
 \dt^j u_- =0 & \text{on } \Sigma_b,
\end{cases}
\end{equation}
where
\begin{equation}\label{F1j_def}
 F^{1,j}  =  \dt^j F^1-\sum_{0 < \ell \le j }  C_j^\ell \dt^{ \ell}\mathcal{A}_{lk}\p_k( \bar{\rho}  \dt^{j-\ell} u_l),
 \end{equation}
\begin{equation}\label{F2j_def}
\begin{split}
 F_i^{2,j}  &=\dt^j F_i^2+\sum_{0 < \ell \le j }  C_j^\ell\left\{\mu\mathcal{A}_{lk} \p_k (\dt^{ \ell}\mathcal{A}_{lm}\dt^{j - \ell}\p_m u_i)
+\mu\dt^{ \ell}\mathcal{A}_{lk} \dt^{j - \ell}\p_k (\mathcal{A}_{lm}\p_m u_i)
  \right.
\\
 &\quad+   (\mu/3+\mu')\mathcal{A}_{ik} \p_k (\dt^{ \ell}\mathcal{A}_{lm}\dt^{j - \ell}\p_m u_l)
+(\mu/3+\mu')\dt^{ \ell}\mathcal{A}_{ik} \dt^{j - \ell}\p_k (\mathcal{A}_{lm}\p_m u_l)
\\&\quad\left.-  \bar{\rho} \dt^{\ell} \mathcal{A}_{ik} \p_k
(h'(\bar{\rho})\dt^{j - \ell} \q)-\dt^\ell(\q+\p_3\bar\rho\theta)\partial_t   (\dt^{j-\ell} u ) \right\} , \ i=1,2,3,
 \end{split}
\end{equation}
\begin{equation}\label{F3j+_def}
\begin{split}
F_{i,+}^{3,j} &= \dt^jF_{i,+}^{3}+ \sum_{0 < \ell \le j} C_j^\ell \left\{\mu_+\dt^{ \ell} ( \n_{l} \mathcal{A}_{ik} ) \dt^{j - \ell} \p_k u_{l} + \mu_+ \dt^{ \ell} ( \n_{l} \mathcal{A}_{lk} ) \dt^{j - \ell} \p_k u_{i}\right.
\\&\quad \left.+(\mu_+'-2\mu_+/3)\dt^{ \ell} ( \n_{i}\mathcal{A}_{lk} ) \dt^{j - \ell} \p_k u_{l}+ \dt^{\ell} \n_{i} \dt^{j - \ell}(\rho_1 g \eta_+- P'(\bar{\rho})   \q-\sigma_+   \Delta_\ast\eta_+ )\right\}, \  i=1,2,3,
\end{split}
\end{equation}
\begin{equation}\label{F3j-_def}
\begin{split}
-F_{i,-}^{3,j} &= -\dt^jF_{i,-}^{3}+ \sum_{0 < \ell \le j} C_j^\ell \left\{\dt^{ \ell} ( \n_{l} \mathcal{A}_{ik} ) \dt^{j - \ell} \jump{\mu \p_k u_{l}} + \dt^{ \ell} ( \n_{l} \mathcal{A}_{lk} ) \dt^{j - \ell} \jump{\mu\p_k u_{i}}\right.
\\&\quad\left.+\dt^{ \ell} ( \n_{i}\mathcal{A}_{lk} ) \dt^{j - \ell} \jump{(\mu'-2\mu/3)\p_k u_{l}}+ \dt^{\ell} \n_{i} \dt^{j - \ell}( \rj g\eta_-- \jump{P'(\bar\rho)   \q}+\sigma_-  \Delta_\ast\eta_-) \right\},
\end{split}
\end{equation}
for  $i=1,2,3,$ and
\begin{equation}\label{F4j_def}
 F^{4,j} =  \sum_{0 < \ell \le j}  C_j^\ell \dt^{ \ell} \n\cdot \dt^{j - \ell}  u.
\end{equation}

Note that the equations \eqref{linear_geometric} for $(\partial_t^j q, \partial_t^ju,  \partial_t^j \eta)$ for $j=0,\dots,2N$ have the same structure in their left-hand sides. We present their energy identities more general way as follows:

\begin{Proposition}\label{geo_en_evolve}
Suppose that $(q,u,\eta)$ solve \eqref{ns_geometric} and that $\a, J,$ and $\n$ are determined through $\eta$ as in \eqref{A_def}, \eqref{ABJ_def}, and \eqref{n_def}.  Further suppose that $(Q,v,\zeta)$ solve
\begin{equation}\label{gee_0}
\begin{cases}
\partial_t Q +\diverge_\a ( \bar{\rho}   v)=\mathfrak{F}^1 & \text{in }
\Omega  \\
( \bar{\rho} +  \q+\p_3\bar\rho\theta)\partial_t    v    + \bar{\rho}\nabla_\a \left(h'(\bar{\rho})Q\right)   -\diva \S_{\a}(v) =\mathfrak{F}^2 & \text{in }
\Omega  \\
\partial_t \zeta = v\cdot \n + \mathfrak{F}^4 &
\text{on }\Sigma  \\
(  P'(\bar\rho)Q I- \S_{\a}(  v))\n  =  \rho_1  g \zeta_+ \n-\sigma_+ \Delta_\ast\zeta_+  \n +\mathfrak{F}_+^3
 & \text{on } \Sigma_+
 \\\jump{P'(\bar\rho)Q I- \S_\a(v)}\n
= \rj g\zeta_-\n +\sigma_- \Delta_\ast\zeta_-\n-\mathfrak{F}_-^3&\hbox{on }\Sigma_-
 \\\jump{v}=0 &\hbox{on }\Sigma_-
\\ v_-=0 &\hbox{on }\Sigma_b.
\end{cases}
\end{equation}
Then
\begin{multline}\label{gee_01}
 \frac{1}{2}\frac{d}{dt}\left(\int_\Omega   ( \bar{\rho} +  \q+\p_3\bar\rho\theta)J\abs{ v }^2+h'(\bar{\rho})J \abs{Q}^2+\int_{\Sigma_+} \rho_1  g\abs{ \zeta_+}^2+\sigma_+\abs{\nab_\ast  \zeta_+}^2\right. \\
\left.+\int_{\Sigma_-}- \rj g\abs{ \zeta_-}^2+ \sigma_-\abs{\nab_\ast  \zeta_-}^2\right)+ \int_{\Omega}\frac{\mu}{2}
J\abs{\sgz_\mathcal{A} v}^2+\mu'J\abs{\diverge_\mathcal{A} v}^2
\\ =\frac{1}{2} \int_\Omega \dt(J( \bar{\rho} +  \q+\p_3\bar\rho\theta)) \abs{ v }^2+h'(\bar{\rho})\dt J \abs{Q}^2+\int_\Omega J(h'(\bar{\rho}) Q   \mathfrak{F}^{1}+ v\cdot \mathfrak{F}^{2})
\\ +\int_{\Sigma}- v\cdot \mathfrak{F}^{3}+\int_{\Sigma_+} \rho_1  g \zeta_+\mathfrak{F}_+^{4} - \int_{\Sigma_-}\rj g \zeta_-\mathfrak{F}_-^{4} -\int_\Sigma\sigma\Delta_\ast  \zeta \mathfrak{F}^{4}.
\end{multline}

\end{Proposition}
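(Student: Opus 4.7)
The plan is to test the bulk equations in \eqref{gee_0} against $Jv$ and $Jh'(\bar\rho)Q$ respectively, integrate by parts using the geometric (Piola) identity $\p_j(J\a_{ij})=0$, and then combine everything so that (i) the pressure cross-term cancels, (ii) the viscous term reassembles into the stated dissipation, and (iii) the boundary contributions can be converted into the surface-energy time derivatives by invoking the dynamic boundary conditions together with the kinematic equation $\dt\zeta = v\cdot\n +\mathfrak{F}^4$.

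First I would multiply the momentum equation by $Jv$ and integrate. The inertial term produces $\frac12\frac{d}{dt}\int_\Omega(\bar\rho+q+\p_3\bar\rho\theta)J|v|^2$ minus the time-derivative of the coefficient, which is exactly one of the forcing terms on the RHS of \eqref{gee_01}. For the viscous term, Piola's identity converts the $J\diva$ into a true divergence $\p_k(J\a_{jk}(\S_\a v)_{ij})$, so integration by parts produces a bulk term $\int J\a_{jk}(\S_\a v)_{ij}\p_k v_i$ plus a boundary contribution $\int_{\Sigma_+}\S_\a v\,\n\cdot v-\int_{\Sigma_-}\jump{\S_\a v}\,\n\cdot v$, using that $J\a e_3=\n$ on $\Sigma_\pm$ and that $v=0$ on $\Sigma_b$; the bulk term collapses to $\int\tfrac\mu2 J|\sgz_\a v|^2+\mu' J|\diva v|^2$ using the symmetry of $\sg_\a v$ together with the fact that $\sgz_\a v$ is trace-free. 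The pressure term $\int J\bar\rho\nab_\a(h'(\bar\rho)Q)\cdot v$ is treated the same way: Piola plus one integration by parts converts it into $-\int Jh'(\bar\rho)Q\,\diva(\bar\rho v)$ together with boundary pieces $\int_{\Sigma_+}P'(\bar\rho)Q(v\cdot\n)-\int_{\Sigma_-}\jump{P'(\bar\rho)Q}(v\cdot\n)$, where the identity $\bar\rho h'(\bar\rho)=P'(\bar\rho)$ is used.

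The critical algebraic cancellation is next: from the continuity equation I substitute $\diva(\bar\rho v)=\mathfrak{F}^1-\dt Q$, which turns the bulk pressure term into $\int Jh'(\bar\rho)Q\dt Q-\int Jh'(\bar\rho)Q\mathfrak{F}^1 = \frac12\frac{d}{dt}\int Jh'(\bar\rho)|Q|^2-\frac12\int\dt J\,h'(\bar\rho)|Q|^2-\int Jh'(\bar\rho)Q\mathfrak{F}^1$. The first piece supplies the $Q$-energy on the LHS, the second is a forcing on the RHS, and the third is the $\mathfrak{F}^1$ forcing. The surviving boundary contributions from the pressure and stress terms combine into $\int_{\Sigma_+}\bigl[(P'(\bar\rho)Q I-\S_\a v)\n\bigr]\cdot v-\int_{\Sigma_-}\bigl[\jump{P'(\bar\rho)Q I-\S_\a v}\n\bigr]\cdot v$, at which point the dynamic boundary conditions of \eqref{gee_0} rewrite the brackets in terms of $\zeta_\pm$, $\Delta_\ast\zeta_\pm$, and the forcing $\mathfrak{F}_\pm^3$.

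Finally, I substitute $v\cdot\n=\dt\zeta-\mathfrak{F}^4$ in each of the resulting boundary integrands. The gravitational pieces $\rho_1 g\zeta_+\dt\zeta_+$ and $-\rj g\zeta_-\dt\zeta_-$ yield the surface energies $\frac12\frac{d}{dt}\int_{\Sigma_+}\rho_1g|\zeta_+|^2$ and $-\frac12\frac{d}{dt}\int_{\Sigma_-}\rj g|\zeta_-|^2$, while the surface-tension terms are handled by horizontal integration by parts, $-\int\sigma\Delta_\ast\zeta\,\dt\zeta=\tfrac12\frac{d}{dt}\int\sigma|\nab_\ast\zeta|^2$, producing the surface-tension surface energies; all the $\mathfrak{F}^{3,4}$ remainders are collected on the RHS with exactly the signs displayed in \eqref{gee_01}. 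The main technical hurdle in executing this plan is really just careful bookkeeping of signs and outward normals at the internal interface $\Sigma_-$ (where $\Omega_+$ contributes with outward normal $-e_3$ and $\Omega_-$ with $+e_3$, producing the jump), and the verification that $J\a_{i3}=\n_i$ on $\Sigma_\pm$ so that the abstract surface measures in $x$-coordinates match the prescribed boundary data; everything else is routine once the Piola identity is in hand.
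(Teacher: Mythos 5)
Your proposal is correct and follows essentially the same route as the paper's proof: test the momentum equation with $Jv$, integrate by parts (the Piola identity and the identification $J\a e_3=\n$ on $\Sigma_\pm$ are exactly what makes the paper's terse "integrate by parts" step work), use the continuity equation to convert the pressure cross-term into the $Q$-energy plus the $\mathfrak{F}^1$ forcing, and then apply the dynamic and kinematic boundary conditions together with a horizontal integration by parts for the surface-tension contribution. The sign bookkeeping at $\Sigma_-$ and the identity $\bar\rho h'(\bar\rho)=P'(\bar\rho)$ that you flag are precisely the details implicit in the paper's computation, so nothing is missing.
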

\begin{proof}
Taking the dot product of the second equation of $\eqref{gee_0}$ with $J   v$ and then integrating by parts over the domain $\Omega$, using the Dirichlet boundary conditions of $v$, we obtain
\begin{align} \label{gee_1}
& \frac{1}{2}\frac{d}{dt}\int_\Omega   ( \bar{\rho} +  \q +\p_3\bar\rho\theta) J\abs{v}^2
+ \int_{\Omega}\frac{\mu}{2}J\abs{\sgz_\mathcal{A} v}^2+\mu'J\abs{\diverge_\mathcal{A} v}^2\nonumber
\\&\quad = \frac{1}{2} \int_\Omega   \dt(J( \bar{\rho} +  \q+\p_3\bar\rho\theta))  \abs{v}^2   +\int_\Omega J h'(\bar{\rho})
Q \diva(\bar{\rho} v) +\int_\Omega J  v\cdot \mathfrak{F}^{2}
\\&\qquad - \int_{\Sigma_+}(  P'(\bar{\rho})  Q I- \S_{\a}( v))\n\cdot  v+\int_{\Sigma_-}\jump{P'(\bar\rho) Q I- \S_\a(v)}\n \cdot  v .\nonumber
\end{align}
Using the first equation of $\eqref{gee_0}$, we have
\begin{equation}\label{gee_2}
\begin{split}
&\int_\Omega Jh'(\bar{\rho}) Q \diva(\bar{\rho} v)
= \int_\Omega Jh'(\bar{\rho}) Q (-\partial_t ( Q )+\mathfrak{F}^{1})
\\&\quad= -\frac{1}{2}\frac{d}{dt}\int_\Omega h'(\bar{\rho})J \abs{Q}^2 +\frac{1}{2} \int_\Omega h'(\bar{\rho})\dt J \abs{Q}^2   +\int_\Omega Jh'(\bar{\rho}) Q  \mathfrak{F}^{1} .
\end{split}
 \end{equation}
 Using the fourth equation of $\eqref{gee_0}$, we have
 \begin{equation}
 - \int_{\Sigma_+}(  P'(\bar{\rho})  Q I- \S_{\a}( v))\n\cdot  v
= -  \int_{\Sigma_+}(\rho_1  g  \zeta_+ \n-\sigma_+ \Delta_\ast \zeta_+  \n +\mathfrak{F}_+^{3})\cdot  v.
 \end{equation}
 The third equation of $\eqref{gee_0}$ further implies
 \begin{equation}
\begin{split}
- &\int_{\Sigma_+} (\rho_1  g \zeta_+-\sigma_+\Delta_\ast  \zeta_+) \n\cdot  v
 \\&= - \int_{\Sigma_+}(\rho_1  g  \zeta_+-\sigma_+\Delta_\ast  \zeta_+)(\dt  \zeta_+ -\mathfrak{F}_+^{4})
\\&=-\hal \frac{d}{dt}\int_{\Sigma_+} \rho_1  g\abs{ \zeta_+}^2+\sigma_+\abs{\nab_\ast  \zeta_+}^2
+\int_{\Sigma_+}(\rho_1  g \zeta_+-\sigma_+\Delta_\ast  \zeta_+)\mathfrak{F}_+^{4}.
\end{split}
\end{equation}
Hence
 \begin{equation}\label{gee_3}
\begin{split}
 - &\int_{\Sigma_+}(  P'(\bar{\rho})  Q I- \S_{\a}( v))\n\cdot  v
\\& =-\hal \frac{d}{dt}\int_{\Sigma_+} \rho_1  g\abs{ \zeta_+}^2+\sigma_+\abs{\nab_\ast  \zeta_+}^2
+\int_{\Sigma_+}- v\cdot \mathfrak{F}_+^{3}+(\rho_1  g \zeta_+-\sigma_+\Delta_\ast  \zeta_+)\mathfrak{F}_+^{4}.
\end{split}
\end{equation}
Similarly, we use the fifth and third equations of $\eqref{gee_0}$ to have
 \begin{equation}\label{gee_4}
\begin{split}
\int_{\Sigma_-} & \jump{P'(\rho) Q I- \S_\a( v)}\n \cdot  v
\\&=-\hal \frac{d}{dt}\int_{\Sigma_+}- \rj g\abs{ \zeta_+}^2+\sigma_-\abs{\nab_\ast  \zeta_-}^2
   +\int_{\Sigma_-}- v\cdot \mathfrak{F}_-^{3}+(-\rj g \zeta_--\sigma_-\Delta_\ast  \zeta_-)\mathfrak{F}_-^{4}.
\end{split}
\end{equation}
Consequently, plugging \eqref{gee_2}, \eqref{gee_3} and \eqref{gee_4} into \eqref{gee_1}, we obtain \eqref{gee_01}.
\end{proof}

\subsection{Perturbed formulation around the steady-state -- linear form }

It turns out to be convenient to write the system \eqref{ns_geometric} in a linear form to derive the subsequent estimates.  The reason for this is that the operators become constant-coefficient, which is more convenient for  elliptic regularity.  We rewrite the PDE \eqref{geometric}  for $(\q,u,\eta)$ as
\begin{equation}\label{ns_perturb}
\begin{cases}
\partial_t \q +\diverge ( \bar{\rho}   u)=G^1 & \text{in }
\Omega  \\
 \bar{\rho} \partial_t    u   + \bar{\rho}\nabla \left(h'(\bar{\rho})\q\right)   -\diverge \S(u) =G^2 & \text{in }
\Omega  \\
\partial_t \eta = u_3+G^4 &
\text{on }\Sigma  \\
(  P'(\bar\rho)\q I- \S(  u))e_3  = (\rho_1  g \eta_+ -\sigma_+ \Delta_\ast \eta_+ ) e_3 +G_+^3
 & \text{on } \Sigma_+
 \\ \jump{P'(\bar\rho)\q I- \S(u)}e_3
=(\rj g\eta_- +\sigma_- \Delta_\ast \eta_-)e_3-G_-^3&\hbox{on }\Sigma_-
\\\jump{u}=0 &\hbox{on }\Sigma_-
\\ u_-=0 &\hbox{on }\Sigma_b,
\end{cases}%
\end{equation}
where we have written the function $G^1=G^{1,1}+G^{1,2}$ for
\begin{equation}\label{G1_def}
G^{1,1}= K  \p_t\theta\pa_3  \q-  u_l \mathcal{A}_{lk}\pa_k  \q,
\end{equation}
\begin{equation}
G^{1,2}= \p_3^2\bar\rho K \theta \p_t\theta- \q \mathcal{A}_{lk}\pa_k u_l-\mathcal{A}_{lk}\pa_k(\p_3\bar\rho\theta   u_l)-(\mathcal{A}_{lk}-\delta_{lk} )\pa_k ( \bar{\rho}   u_l),
\end{equation}
the vector $G^2$ for
\begin{equation}
\begin{split}
G^2_i=&-( \q+\p_3\bar\rho\theta )\partial_t    u_i+(\bar\rho+\q+\p_3\bar\rho\theta )
(K  \p_t\theta \pa_3  u_i
 -   u_l\mathcal{A}_{lk}\pa_k u_i)
  \\& +  \mu \mathcal{A}_{lk} \pa_k\mathcal{A}_{lm}\pa_mu_i  +
  \mu (\mathcal{A}_{lk}\mathcal{A}_{lm}- \delta_{lk}\delta_{lm}) \pa_{km} u_i
   \\&
  +(\mu/3+\mu') \mathcal{A}_{ik} \pa_k\mathcal{A}_{lm}\pa_mu_l
   +(\mu/3+\mu')(\mathcal{A}_{ik}\mathcal{A}_{lm}- \delta_{ik}\delta_{lm}) \pa_{km} u_l
   \\& - \bar{\rho}(\mathcal{A}_{il}-\delta_{il})\pa_l ( h'(\bar{\rho})\q)
       - \mathcal{A}_{il}\pa_l  \mathcal{R}-g( \q+\p_3\bar\rho\theta ) \a_{il}\p_l \theta,\  i=1,2,3,
\end{split}
\end{equation}
the vector $G^3_+ = G^{3,1}_+ + \sigma_+ G^{3,2}_+$ for
\begin{equation}
\begin{split}
G^{3,1}_{i,+}&=
  \mu_+  (\mathcal{A}_{il} \partial_l u_k+\mathcal{A}_{kl} \partial_l u_i)(\n_{k }-\delta_{k3})
 + \mu_+  (\mathcal{A}_{il}-\delta_{il}) \partial_l u_3+ \mu (\mathcal{A}_{3l}-\delta_{3l}) \partial_l u_i
 \\& +(\mu_+'-2\mu_+/3)  \mathcal{A}_{lk} \partial_k u_l (\n_{i }-\delta_{i3})
+ (\mu_+'-2\mu_+/3)   (\mathcal{A}_{lk}-\delta_{lk}) \partial_k u_l\delta_{i3}
+  \rho_1 g \eta_+ (\n_i-\delta_{i3}) \\
&-\mathcal{R}\n_{i }
 + P'(\bar{\rho})\q(\delta_{i3} - \n_i)
  \end{split}
\end{equation}
and
\begin{equation}
 G^{3,2}_{i,+} = -\Delta_\ast\eta_+ (\n_i-\delta_{i3}) - \diverge_\ast(((1+|\nab_\ast\eta_+|^2)^{-1/2}-1)\nab_\ast\eta_+)\n_i
\end{equation}
for $i=1,2,3,$ and the vector $G^3_- = G^{3,1}_- + \sigma_- G^{3,2}_-$ for
\begin{equation}
\begin{split}
-G^3_{i,-}&=
  (\mathcal{A}_{il} \jump{\mu  \partial_l u_k}+\mathcal{A}_{kl} \jump{\mu \partial_l u_i})(\n_{k}-\delta_{k3})
+ (\mathcal{A}_{il}-\delta_{il}) \jump{\mu \partial_l u_3}-(\mathcal{A}_{3l}-\delta_{3l}) \jump{\mu \partial_l u_i}
 \\&   + \mathcal{A}_{lk} \jump{(\mu'-2\mu/3) \partial_k u_l} (\n_{i }-\delta_{i3})
+ (\mathcal{A}_{lk}-\delta_{lk}) \jump{(\mu'-2\mu/3)\partial_k u_l}\delta_{i3}
\\&  + \rj g\eta_-(\n_i-\delta_{i3}) -\jump{\mathcal{R}}\n_{i }
  + \jump{P'(\bar{\rho})\q} (\delta_{i3} - \n_i),
  \end{split}
\end{equation}
and
\begin{equation}
 G^{3,2}_- =  \Delta_\ast\eta_-(\n_i-\delta_{i3})  + \diverge_\ast(((1+|\nab_\ast\eta_-|^2)^{-1/2}-1)\nab_\ast\eta_-)\n_i
\end{equation}
for  $i=1,2,3,$  and the function $G^4$ for
\begin{eqnarray}\label{G4_def}
 G^4= -u_1\pa_1\eta-u_2\pa_2\eta.
\end{eqnarray}
In all of these, $\mathcal{R}$ is as defined in \eqref{R_def}.

We now present the energy identities related to \eqref{ns_perturb}.

\begin{Proposition}\label{lin_en_evolve}
Suppose that $(Q,v,\zeta)$ solve
\begin{equation}\label{lee_0}
\begin{cases}
\partial_t Q +\diverge ( \bar{\rho}   v)=\mathfrak{G}^1 & \text{in }
\Omega  \\
 \bar{\rho} \partial_t    v   + \bar{\rho}\nabla \left(h'(\bar{\rho})Q\right)   -\diverge \S(v) =\mathfrak{G}^2 & \text{in }
\Omega  \\
\partial_t \zeta = v_3+\mathfrak{G}^4 &
\text{on }\Sigma  \\
(  P'(\bar\rho)Q I- \S(  v))e_3  = (\rho_1  g \zeta_+ -\sigma_+ \Delta_\ast \zeta_+ ) e_3 +\mathfrak{G}_+^3
 & \text{on } \Sigma_+
 \\ \jump{P'(\bar\rho)Q I- \S(v)}e_3
=(\rj g\zeta_- +\sigma_- \Delta_\ast \zeta_-)e_3-\mathfrak{G}_-^3&\hbox{on }\Sigma_-
 \\\jump{v}=0 &\hbox{on }\Sigma_-
\\ v_-=0 &\hbox{on }\Sigma_b.
\end{cases}
\end{equation}
Then
\begin{multline}\label{lee_01}
\frac{1}{2}\frac{d}{dt}\left(\int_\Omega h'(\bar{\rho})\abs{  Q}^2+\bar{\rho}\abs{ v}^2 +\int_{\Sigma_+} \rho_1  g\abs{ \zeta_+}^2+\sigma_+\abs{\nab_\ast  \zeta_+}^2\right.
\\
 \left. +\int_{\Sigma_-}- \rj g\abs{ \zeta_-}^2+ \sigma_-\abs{\nab_\ast \zeta_-}^2\right)
 +\int_\Omega \frac{\mu}{2} \abs{ \sgz v}^2+\mu' \abs{{\rm div } v}^2
\\
= \int_\Omega h'(\bar{\rho}) Q \mathfrak{G}^1 + v\cdot\pa^\al \mathfrak{G}^2+\int_\Sigma - v\cdot  \mathfrak{G}^3
\\
+ \int_{\Sigma_+} \rho_1  g  \zeta_+  \mathfrak{G}^4_++\int_{\Sigma_-}-\rj g  \zeta_-  \mathfrak{G}^4_- -\int_\Sigma\sigma\Delta_\ast   \zeta   \mathfrak{G}^4.
\end{multline}
\end{Proposition}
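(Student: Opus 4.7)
The proof will closely parallel that of Proposition \ref{geo_en_evolve}, with the key simplification that all differential operators in \eqref{lee_0} are constant-coefficient rather than geometric. This eliminates the Jacobian factors $J$ and the $\mathcal{A}$-dependent nonlinear contributions from the energy identity, but the boundary-term bookkeeping is structurally identical.

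The plan is to test the momentum equation by $v$ and simultaneously test the continuity equation by $h'(\bar{\rho}) Q$, then add. Testing the momentum equation, we integrate over $\Omega = \Omega_+ \cup \Omega_-$ piece by piece and integrate by parts using $v_- = 0$ on $\Sigma_b$ and $\jump{v}=0$ on $\Sigma_-$. The viscous term $-\int_\Omega v\cdot\diverge \S(v)$ produces the dissipation $\int_\Omega \tfrac{\mu}{2}|\sgz v|^2 + \mu'|\diverge v|^2$ (using symmetry of $\S(v)$ and $\mathrm{tr}(\sgz v)=0$) together with boundary contributions $-\int_{\Sigma_+} v\cdot\S(v)e_3 + \int_{\Sigma_-} v\cdot\jump{\S(v)}e_3$, while the kinetic term gives $\tfrac{1}{2}\tfrac{d}{dt}\int_\Omega \bar{\rho}|v|^2$. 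The remaining pressure-gradient contribution $\int_\Omega \bar{\rho} v\cdot\nabla(h'(\bar{\rho})Q)$ will cancel against the continuity test.

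Testing the first equation of \eqref{lee_0} against $h'(\bar{\rho}) Q$ yields
\begin{equation*}
\tfrac{1}{2}\tfrac{d}{dt}\int_\Omega h'(\bar{\rho})|Q|^2 + \int_\Omega h'(\bar{\rho}) Q\diverge(\bar{\rho} v) = \int_\Omega h'(\bar{\rho}) Q\,\mathfrak{G}^1.
\end{equation*}
Integrating the divergence by parts and using the identity $\bar{\rho} h'(\bar{\rho}) = P'(\bar{\rho})$, together with the continuity of $v_3$ across $\Sigma_-$ and $v_-=0$ on $\Sigma_b$, converts the middle term into $-\int_\Omega \bar{\rho} v\cdot\nabla(h'(\bar{\rho})Q) + \int_{\Sigma_+}P'(\bar{\rho}) Q v_3 - \int_{\Sigma_-}\jump{P'(\bar{\rho})Q}\,v_3$. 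Adding the two identities cancels the pressure-gradient term and leaves boundary integrands of the form $v\cdot(P'(\bar{\rho})QI - \S(v))e_3$ on $\Sigma_+$ and $-v\cdot\jump{P'(\bar{\rho})QI - \S(v)}e_3$ on $\Sigma_-$.

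The last step is to invoke the stress boundary conditions from \eqref{lee_0} together with the kinematic equation $v_3 = \partial_t\zeta - \mathfrak{G}^4$. On $\Sigma_+$ the stress condition converts $v\cdot(P'(\bar{\rho})QI - \S(v))e_3$ into $(\rho_1 g\zeta_+ - \sigma_+\Delta_\ast\zeta_+)v_3 + v\cdot \mathfrak{G}^3_+$, and analogously on $\Sigma_-$. Substituting the kinematic equation for $v_3$ and integrating by parts horizontally on $\Sigma$ to rewrite $-\sigma \Delta_\ast\zeta\,\partial_t\zeta = \tfrac{1}{2}\tfrac{d}{dt}\sigma|\nabla_\ast\zeta|^2$ produces precisely the surface energy terms $\tfrac{1}{2}\tfrac{d}{dt}\int_{\Sigma_+}(\rho_1 g|\zeta_+|^2 + \sigma_+|\nabla_\ast\zeta_+|^2)$ and $\tfrac{1}{2}\tfrac{d}{dt}\int_{\Sigma_-}(-\rj g|\zeta_-|^2 + \sigma_-|\nabla_\ast\zeta_-|^2)$ appearing in \eqref{lee_01}, as well as the forcing terms on the right-hand side. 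The only real obstacle is sign bookkeeping at $\Sigma_-$, where the outward normals of $\Omega_\pm$ point oppositely and the surface-tension sign convention in \eqref{lee_0} is reversed from that on $\Sigma_+$; tracking these minus signs carefully delivers \eqref{lee_01} exactly as stated.
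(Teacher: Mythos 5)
Your proposal is correct and takes essentially the same approach as the paper, which proves this proposition by repeating the argument of Proposition \ref{geo_en_evolve} with the geometric factors $J$, $\mathcal{A}$ removed. Your only organizational difference is to test the continuity equation against $h'(\bar{\rho})Q$ separately and add, rather than substituting the continuity equation into the integrated-by-parts pressure term, but this is algebraically equivalent and the sign bookkeeping at $\Sigma_\pm$ and the horizontal integration by parts for the surface tension terms are handled exactly as the paper does.
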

\begin{proof}
The proof proceeds along the same lines as that of Proposition \ref{geo_en_evolve}: we multiply the second equation in \eqref{lee_0} by $v$, integrate over $\Omega_+$ and $\Omega_-$, integrate by parts and the sum resulting equations.  After employing the other equations in \eqref{lee_0} as in Proposition \ref{geo_en_evolve} we arrive at \eqref{lee_01}.
\end{proof}

\subsection{Estimates of the nonlinearities}

We assume throughout this subsection that the solutions obey the estimate $\gs(T) \le \delta$, where $\delta \in (0,1)$ is given in  Lemma \ref{eta_small}.

We first present the estimates of the nonlinear terms $G^i$ (defined by \eqref{G1_def}--\eqref{G4_def}) at the $2N$ level.
\begin{lemma}\label{p_G2N_estimates}
 It holds that
\begin{multline}\label{p_G_e_0}
 \ns{ \bar{\nab}^{4N-2} G^1}_{1}  +  \ns{ \bar{\nab}^{4N-2}  G^2}_{0} +
 \ns{ \bar{\nab}_{\ast}^{4N-2}  G^3}_{1/2}
+ \ns{\bar{\nab}_{\ast }^{ 4N-1} G^4}_{1/2} \\
 \ls \ks \se{2N}^0\se{2N}^\sigma + \mathcal{E}_{N+2}^0 \f,
\end{multline}
and
\begin{multline}\label{p_G_e_00}
\ns{ \bar{\nab}^{4N-1}  G^{1,1}}_{0}+\ns{ \bar{\nab}^{4N-2}\dt G^{1,1}}_{0}  + \ns{ \bar{\nab}^{4N}  G^{1,2}}_{0}+  \ns{ \bar{\nab}^{4N-1}  G^2}_{0}
\\
+ \ns{ \bar{\nab}_{\ast}^{4N-1} G^3}_{1/2} + \ns{\bar{\nab}_{\ast }^{  4N-1} G^4}_{1/2}
   + \ns{\bar{\nab}_{\ast }^{ 4N-2} \dt G^4}_{1/2}
\\
+\sigma^2\ns{\bar{\nab}_{\ast }^{ 4N} G^4}_{1/2}
\ls \ks \se{2N}^0 \sd{2N}^\sigma + \mathcal{E}_{N+2}^0 \f.
\end{multline}
\end{lemma}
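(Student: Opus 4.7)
The plan is to reduce each $G^i$ to a finite sum of products of the form $f \cdot g$, where $f$ is built from the geometric quantities $\eta, \bar\eta, \theta, \mathcal{A}-I, \n-e_3, \mathcal{R}$ (and in the surface-tension pieces a smooth bounded function of $\nabla_\ast\eta$), while $g$ is built from $(\q, u)$, their temporal derivatives, and $\p_t\theta$. For each such product I would apply $\bar{\nabla}^{k}$ (or $\bar{\nabla}_\ast^k$) via Leibniz and then use the standard tame product estimate $\norm{fg}_{H^k} \ls \norm{f}_{H^{s_1}}\norm{g}_{H^{s_2}}$ (with one factor controlled in $L^\infty$ via Sobolev embedding, since $N+2\ge 5$). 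Throughout, one factor is placed in a high-regularity norm controlled by $\se{2N}^\sigma$ or $\sd{2N}^\sigma$ and the other in a low-regularity norm controlled by $\se{N+2}^0$.

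First I would collect the auxiliary control of the geometric quantities: Poisson-extension bounds for $\bar\eta_\pm$ and $\theta$ in terms of $\eta$ on $\Sigma$, which yield bounds on $\a, \n, J, K$ in both the $2N$ and $N+2$ energy scales (using that $\gs(T)\le\delta$ is small). A Taylor expansion of $\mathcal{R}$ about $\bar\rho$ and of $(1+|\nabla_\ast\eta|^2)^{-1/2}-1$ about $0$ reduces $\mathcal{R}$ and $G^{3,2}_\pm$ to expressions that are quadratic in $(\q+\p_3\bar\rho\theta)$ and $\nabla_\ast\eta$ respectively, with coefficients bounded by a smooth function of the small quantities. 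This shows in particular that the surface-tension pieces carry an explicit $\sigma_\pm$ prefactor that is absorbed into the $\ks$ class.

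Next I would run the derivative-splitting argument on each resulting bilinear product. In any piece of the form $(\bar{\nabla}^{k_1}f)(\bar{\nabla}^{k_2}g)$ with $k_1+k_2\le 4N-2$ (energy case), if $k_1 \le 2N-1$ I place $f$ in the low-regularity norm (bounded by $\se{N+2}^0$) and $g$ in the high-regularity norm (bounded by $\se{2N}^\sigma$), swapping roles otherwise; in the dissipation case \eqref{p_G_e_00} the same splitting is used with $\se{2N}^\sigma$ replaced by $\sd{2N}^\sigma$ for the factor carrying $(\q,u)$. Trace estimates $\norm{\cdot}_{H^{s-1/2}(\Sigma)}\ls\norm{\cdot}_{H^s(\Omega)}$ pass the bulk bounds onto $G^3$ and on $G^4$, while the extra temporal derivatives in $\dt G^{1,1}$ and $\dt G^4$ are handled by first commuting $\dt$ through the product before repeating the same splitting. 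Summing up gives the $\ks \se{2N}^0\se{2N}^\sigma$ (resp.\ $\ks \se{2N}^0\sd{2N}^\sigma$) contribution in both inequalities.

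The main obstacle is the top-order piece of $G^4 = -u_1\p_1\eta-u_2\p_2\eta$ (together with its $\dt$-version and the $\sigma^2\norm{\bar{\nabla}_\ast^{4N}G^4}_{1/2}^2$ term in \eqref{p_G_e_00}), because distributing $4N-1$ (or $4N$) horizontal derivatives forces either $u$ to carry essentially $4N$ derivatives — which is exactly the regularity controlled by $\sd{2N}^\sigma$ — or $\eta$ to carry $4N$ (or $4N+1$) horizontal derivatives, which exceeds what $\se{2N}^\sigma$ supplies when $\sigma=0$ but is still within $\f=\ns{\eta}_{4N+1/2}$. The point is that in this worst configuration the companion factor $u$ has at most one derivative and is thus trivially controlled in $L^\infty\cap H^{N+2}$ by $\se{N+2}^0$, which is precisely what produces the $\se{N+2}^0\f$ summand on the right-hand sides. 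Tracking the $\sigma_\pm$ prefactors arising in the surface-tension corrections, so that the resulting constants remain in the $\ks$ class rather than blowing up as $\sigma\to 0^+$ or $\sigma\to\infty$, is the bookkeeping step that must be executed most carefully.
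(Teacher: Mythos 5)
Your overall strategy is the same as the paper's: expand each $G^i$ by Leibniz into at-least-quadratic products, estimate one factor in a high norm controlled by $\se{2N}^\sigma$ or $\sd{2N}^\sigma$ and the other in a low norm controlled by $\se{N+2}^0$ (via Sobolev products, trace theory, and the Poisson-extension bounds), and use $\f$ together with a low-order companion factor for the over-the-top $\eta$ (and $\bar\eta$) derivatives. One small mis-attribution: the $\mathcal{E}_{N+2}^0\f$ terms arise not only from $G^4$ but also from $G^{1,2}$, $G^2$, and $G^{3,1}$, where $4N$ derivatives can fall on $\a$ or $\n$ and force $\ns{\bar\eta}_{4N+1}\ls \ns{\eta}_{4N+1/2}=\f$; your mechanism covers this, but your proposal names only $G^4$.

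The genuine gap is in the $\sigma$-weighted top-order terms of \eqref{p_G_e_00}. When all derivatives in $\ns{\bar{\nab}_{\ast}^{4N-1}G^3}_{1/2}$ fall on the $\sigma\nab_\ast^2\eta$ part of $\sigma_\pm G^{3,2}_\pm$, and likewise in $\sigma^2\ns{\bar{\nab}_{\ast}^{4N}G^4}_{1/2}$ when they fall on $\p_i\eta$, the $\eta$ factor appears as $\sigma^2\ns{\eta}_{4N+3/2}$ (resp. $\sigma^2\ns{\nab_\ast^{4N+1}\eta}_{1/2}\le\sigma^2\ns{\eta}_{4N+3/2}$). This is half a derivative beyond $\f=\ns{\eta}_{4N+1/2}$, and it is also not controlled by $\se{2N}^\sigma$, which tops out at $\min\{1,\sigma\}\,\ns{\eta}_{4N+1}$; so your claim that these contributions are ``still within $\f$'' is false, and merely ``absorbing the $\sigma_\pm$ prefactor into $\ks$'' does not help, since the problem is which norm of $\eta$ is available, not the size of the constant. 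The correct step (the paper's) is to route these terms through the $\sigma$-weighted dissipation: write
\begin{equation*}
\sigma^2\ns{\eta}_{4N+3/2}=\frac{\sigma^2}{\min\{1,\sigma^2\}}\,\min\{1,\sigma^2\}\ns{\eta}_{4N+3/2}\ls \ks\, \sd{2N}^\sigma,
\end{equation*}
with the analogous bound $\sigma^2\ns{\dt\eta}_{4N-1/2}\ls\ks\,\sd{2N}^\sigma$ for the time-differentiated piece of $G^{3,2}$ (and the $\rj>0$ variant using $\min\{1,\sigma_+,\sigma_--\sigma_c,\sigma_+^2,(\sigma_--\sigma_c)^2\}$). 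This is exactly why such terms are admissible in the dissipation-type estimate \eqref{p_G_e_00} but do not appear at that order in \eqref{p_G_e_0}; without this step your proof of \eqref{p_G_e_00} does not close.
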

\begin{proof}
We first prove the estimates in \eqref{p_G_e_00}. Note that all terms in the definitions of $G^i$ are at least quadratic. We apply these space-time differential operators to $G^i$ and then expand using the Leibniz rule; each term in the resulting sum is also at least quadratic. We then estimate one term in $H^k$ ($k = 0$ or $1/2$ depending on
$G^i$) and the other terms in $H^m$ for $m$ depending on $k$, using  trace
theory, and Lemmas \ref{Poi}--\ref{sobolev} along with the definitions of $\se{2N}^\sigma$ and $\sd{2N}^\sigma$ (\eqref{p_energy_def} and \eqref{p_dissipation_def}, respectively), and $\ks$.  With three exceptions, we can estimate the desired norms of all the resulting terms by  $\se{2N}^0 \sd{2N}^0$.

The first exceptional terms are ones involving either $\nab^{4N+1}\bar{\eta}$ in $\Omega$ or $\nab_\ast^{4N}\eta$ on $\Sigma$ when estimating  $\ns{ {\nab}^{4N} G^{1,2}}_{0}, \ns{  {\nab} ^{4N-1}  G^2}_{0} ,
 \ns{  {\nab}_{\ast }^{ 4N-1}  G^{3,1}}_{1/2}$ and $\ns{ {\nab}_{\ast }^{ 4N-1} G^4}_{1/2}$. But we can argue as Theorem 3.2 of \cite{GT_per} to estimate $\ns{ \bar{\eta}}_{4N+1}\ls \ns{  {\eta}}_{4N+1/2}\ls\f$ to bound these terms by $\mathcal{E}_{N+2}^0\f$.

The second exceptional terms result from the $\sigma G^{3,2}$ term  when estimating  $\ns{  {\nab}_{\ast }^{ 4N-1}  G^3}_{1/2}$ and $\ns{  {\nab}_{\ast }^{ 4N-3}  \dt G^3}_{1/2}$. The highest derivative appearing in  $\sigma G^{3,2}$ is $\sigma\nab_\ast^2\eta$. We make use of this $\sigma$ factor to estimate
\begin{multline}
\ns{\sigma \nab_{\ast}^{4N-1} \nab_\ast^2\eta}_{1/2}\le \sigma^2 \ns{  \eta}_{4N+3/2} = \frac{\sigma^2}{\min\{1,\sigma^2\}} \min\{1,\sigma^2\} \ns{\eta}_{4N+3/2} \\
\le \frac{\sigma^2}{\min\{1,\sigma^2\}} \sd{2N}^\sigma \ls \ks \sd{2N}^\sigma,
\end{multline}
and, similarly,
\begin{equation}
 \ns{\sigma \nab_{\ast}^{4N-3} \nab_\ast^2 \dt \eta}_{1/2} \ls \sigma^2 \ns{\dt \eta}_{4N-1/2} \ls \ks \sd{2N}^\sigma.
\end{equation}
Using these, we may estimate
\begin{equation}
 \ns{  \bar{\nab}_{\ast }^{ 4N-1}  \sigma G^{3,2}}_{1/2}  \ls \ks \se{2N}^0\sd{2N}^\sigma.
\end{equation}

The last exceptional term is the term $\sigma^2\ns{\nab_\ast^{4N+1}\eta}_{1/2}$ that appear when we estimate the term $\sigma^2\ns{\nab_{\ast }^{ 4N} G^4}_{1/2}$. Again, the factor $\sigma^2$ leads to the estimate $\sigma^2\ns{\nab_\ast^{4N+1}\eta}_{1/2}\ls \ks \sd{2N}^\sigma$.

Hence, in light of the above analysis, we may deduce \eqref{p_G_e_00}. The proof of \eqref{p_G_e_0} proceeds similarly.   We remark that the term $\mathcal{E}_{N+2}^0\f$  in \eqref{p_G_e_0} does not appear in Theorem 3.2 of \cite{GT_per}; it appears here  because we want to control $\ns{{\nab}_{\ast}^{4N-1} G^4}_{1/2}$.
\end{proof}

We then present the estimates of $G^i$ at the $N+2$ level.
\begin{lemma}\label{p_GN+2_estimates}
 It holds that
\begin{multline}\label{p_G_e_h_0}
\ns{ \bar{\nab}^{2(N+2)-2} G^1}_{1} +  \ns{ \bar{\nab}^{2(N+2)-2}  G^2}_{0} +
 \ns{ \bar{\nab}_{\ast }^{  2(N+2)-2} G^3}_{1/2}
+ \ns{\bar{\nab}_{\ast }^{2(N+2)-1} G^4}_{1/2}
\\
\ls \ks \se{2N}^0\se{N+2}^0,
\end{multline}
and
\begin{multline}\label{p_G_e_h_00}
\ns{ \bar{\nab}^{2(N+2)} G^1}_{0} +  \ns{ \bar{\nab}^{2(N+2)-1}  G^2}_{0}  +
 \ns{ \bar{\nab}_{\ast }^{2(N+2)-1} G^3}_{1/2}
+ \ns{\bar{\nab}_{\ast }^{ 2(N+2) } G^4}_{1/2}
\\
\ls \ks \se{2N}^0 \sd{N+2}^0.
\end{multline}
\end{lemma}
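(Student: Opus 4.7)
The plan is to mimic the strategy of Lemma \ref{p_G2N_estimates} almost verbatim, but now at the lower regularity level $2(N+2)$ and with a different ``splitting'' of factors. Concretely, every term in $G^1, G^2, G^3, G^4$ is at least quadratic in $(q, u, \eta, \theta)$ and their derivatives. I would apply the relevant space-time differential operator $\bar\nab^{k}$ or $\bar\nab_\ast^{k}$, expand via the Leibniz rule, and for each resulting product decide which factor receives the ``high'' derivatives (to be estimated in a low-order Sobolev norm) and which factor receives the ``low'' derivatives (to be estimated in a high-order Sobolev norm). Since $N+2 \le 2N$ for $N \ge 3$, at most half of the derivatives can fall on one factor while keeping the other factor estimable in the $\se{2N}^0$ (or $\sd{2N}^0$) scale. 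This is the standard division that produces bounds of the form $\se{2N}^0 \se{N+2}^0$ or $\se{2N}^0 \sd{N+2}^0$.

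For each product arising after Leibniz, I would estimate the factor carrying the higher number of derivatives in $L^2$ (or $H^{1/2}$ for boundary terms), and the other factor in $L^\infty$ (or another Sobolev space into which $H^s(\Omega)$ or $H^s(\Sigma)$ embeds). The relevant tools are trace theory, the Poisson extension bound (Lemma \ref{Poi}), the product and Sobolev embedding estimates (Lemma \ref{sobolev}), and the definitions \eqref{p_energy_def} and \eqref{p_dissipation_def}. Because $2(N+2) \le 4N$, the ``high'' factor always falls within the control provided by $\se{2N}^0$, and the ``low'' factor within $\se{N+2}^0$ for \eqref{p_G_e_h_0} or $\sd{N+2}^0$ for \eqref{p_G_e_h_00}; the assumption $\gs(T) \le \delta$ combined with Lemma \ref{eta_small} guarantees that $\mathcal{A}$, $J$, $K$, and $\mathcal{R}$ depend smoothly enough on $\theta$ that the standard Moser-type product estimates apply to composed nonlinearities.

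The key difference from Lemma \ref{p_G2N_estimates} is that here we do \emph{not} encounter the ``exceptional'' terms $\nab^{4N+1}\bar\eta$ on $\Omega$ or $\nab_\ast^{4N}\eta$ on $\Sigma$ whose control required invoking $\f$: the number of derivatives we apply is $2(N+2)$ rather than $4N$, and the corresponding highest derivatives of $\eta$ that appear in $G^{1,2}$, $G^2$, $G^{3,1}$, and $G^4$ are bounded in $\ns{\eta}_{2(N+2)}$ or $\ns{\eta}_{2(N+2)+1/2}$, both of which are directly controlled by $\se{N+2}^\sigma$ or $\sd{N+2}^\sigma$ after using Heaviside cutoffs. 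Hence the right-hand side involves only $\se{N+2}^0$ and $\sd{N+2}^0$, with no stray $\f$ factor.

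The one technical point that deserves care is the surface-tension contribution $\sigma G^{3,2}$, exactly as in the $2N$ case. The worst term there is $\sigma \nab_\ast^2 \eta$; when $2(N+2)-1$ (respectively $2(N+2)$) derivatives are applied, one obtains $\sigma \ns{\eta}_{2(N+2)+3/2}$ (respectively $\sigma^2 \ns{\eta}_{2(N+2)+3/2}$ after making the square explicit), which is absorbed into $\sd{N+2}^0$ (or $\se{N+2}^0$) up to a multiplicative constant of the form $\sigma^2/\min\{1,\sigma^2\}$, which is precisely a $\ks$-type constant by \eqref{ks_def}. I expect this bookkeeping with the $\ks$ constants, together with ensuring that the regularity balance in the Leibniz expansion is self-consistent for every single term in $G^i$, to be the only real obstacle; none of the individual estimates is deep, but there are many of them.
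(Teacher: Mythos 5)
Your proposal captures the paper's (quite terse) argument: expand each $G^i$ by the Leibniz rule, split every quadratic product into a high factor and a low factor, assign the high one to $\se{2N}^0$ and the low one to $\se{N+2}^0$ (respectively $\sd{N+2}^0$), and observe that no $\f$-controlled terms arise because only $2(N+2)$ derivatives are applied, so the worst $\eta$-norms that appear are within the range of the $2N$-level functionals. That is indeed what the paper does, and your explanation of why the exceptional terms from Lemma \ref{p_G2N_estimates} disappear here is correct.

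However, your handling of the $\sigma G^{3,2}$ term contains a concrete error. You claim that $\sigma^2\ns{\eta}_{2(N+2)+3/2}$ ``is absorbed into $\sd{N+2}^0$ up to a multiplicative constant $\sigma^2/\min\{1,\sigma^2\}$.'' Reading the definition \eqref{p_dissipation_def} with $\sigma=0$, one sees that $\sd{N+2}^0$ contains $\ns{\eta}_{2(N+2)-1/2}$ but \emph{not} $\ns{\eta}_{2(N+2)+3/2}$: that higher-order term appears in $\sd{N+2}^\sigma$ only with the weight $\min\{1,\sigma^2\}$, which vanishes at $\sigma=0$. Your $\sigma^2/\min\{1,\sigma^2\}$ trick therefore produces at best a bound by $\ks\sd{N+2}^\sigma$, which is strictly weaker than the stated $\ks\sd{N+2}^0$. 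The correct maneuver --- and the point of the paper's remark that ``we use $\se{2N}^0$ and absorb all appearances of $\sigma$ into $\ks$'' --- is to route the $\eta$-heavy factor into the \emph{first} slot of the product: since $2(N+2)+3/2 \le 4N$ for $N\ge3$, one has $\sigma^2\ns{\eta}_{2(N+2)+3/2}\le\sigma^2\ns{\eta}_{4N}\ls\ks\se{2N}^0$, with $\sigma^2$ absorbed into $\ks$, and the complementary factor (the one with fewer temporal derivatives) is then bounded by $\se{N+2}^0$ or $\sd{N+2}^0$ as claimed. With that reassignment the argument closes as you describe.
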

\begin{proof}
The estimates \eqref{p_G_e_h_0}--\eqref{p_G_e_h_00} follow from arguments similar to those used in the proof of Proposition \ref{p_G2N_estimates}.  In this case they are easier because when we estimate the terms appearing from the Leibniz rule expansions we do not have exceptional terms as in the proof of Proposition \ref{p_G2N_estimates}.  Indeed, we may write each term in the form $X Y$, where $X$ involves fewer temporal derivatives than $Y$; then we simply bound the various norms of $Y$ by $\ks \se{2N}^0$ and bound the various norms of $X$ by $\se{N+2}^0$ for \eqref{p_G_e_h_0} or $\sd{N+2}^0$ for \eqref{p_G_e_h_00}.  Note here that we use $\se{2N}^0$ and absorb all appearances of $\sigma$ into $\ks$ rather than employ $\se{2N}^\sigma$.
\end{proof}

Next we present some variants of these estimate involving integrals.  First we consider products with derivatives of $G^4$.

\begin{lemma}\label{lemma8}
Let $\al\in \mathbb{N}^2$ so that $|\al|=4N$. Then
\begin{equation}\label{eta es}
\abs{ \int_{\Sigma}   \pa^\al \eta \pa^\al G^4 } \lesssim \sqrt{\se{2N}^0}  \sd{2N}^0 +\sqrt{  \sd{2N}^0\mathcal{E}_{N+2}^0 \f}
\end{equation}
and
\begin{equation}\label{eta es2}
\abs{  \int_{\Sigma}   \sigma \Delta_\ast\pa^\al\eta \pa^\al G^4  } \lesssim  \ks \sqrt{\se{2N}^0\sd{2N}^0\sd{2N}^\sigma}  + \ks \sqrt{  \sd{2N}^\sigma\mathcal{E}_{N+2}^0 \f} .
\end{equation}
 \end{lemma}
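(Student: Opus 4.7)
The central difficulty is that $|\al|=4N$ sits at the very top of the energy--dissipation hierarchy for $\eta$: with $G^4=-u_i\p_i\eta$ ($i=1,2$), a Leibniz expansion of $\pa^\al G^4$ produces a term $u_i\p_i\pa^\al\eta$ carrying $4N+1$ horizontal derivatives of $\eta$, which neither $\sd{2N}^0$ nor $\f$ controls directly. My plan is to isolate this worst term and remove the extra derivative via integration by parts along the torus $\Sigma$, then handle the remaining Leibniz terms by the standard split-high-and-low H\"older approach.

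Concretely, for \eqref{eta es} I would expand
\begin{equation*}
\pa^\al G^4 = -\sum_{\beta\le\al}\tbinom{\al}{\beta}\pa^\beta u_i\,\pa^{\al-\beta}\p_i\eta
\end{equation*}
and isolate the $\beta=0$ piece. Integration by parts on the torus gives
\begin{equation*}
\int_\Sigma \pa^\al\eta\,u_i\p_i\pa^\al\eta = -\tfrac12\int_\Sigma (\p_i u_i)\,(\pa^\al\eta)^2,
\end{equation*}
which is bounded by $\|\nab_\ast u\|_{L^\infty(\Sigma)}\ns{\eta}_{4N}$. Bounding $\|\nab_\ast u\|_{L^\infty(\Sigma)}\ls\sqrt{\se{N+2}^0}$ by trace and Sobolev embedding (valid since $N\ge 3$), and using the interpolation $\ns{\eta}_{4N}\le \|\eta\|_{4N-1/2}\|\eta\|_{4N+1/2}\ls\sqrt{\sd{2N}^0\,\f}$, yields the contribution $\sqrt{\sd{2N}^0\se{N+2}^0\f}$. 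For $0<|\beta|<|\al|$, I would place whichever factor carries at most $2N+2$ derivatives into $L^\infty(\Sigma)$ (controlled by $\sqrt{\se{N+2}^0}$), retaining the other factor in $L^2(\Sigma)$; Young's inequality absorbs every resulting term into either $\sqrt{\se{2N}^0}\sd{2N}^0$ or $\sqrt{\sd{2N}^0\se{N+2}^0\f}$. The extreme $\beta=\al$ case is handled by placing $\pa^\al u_i$ in $L^2(\Sigma)$ via the trace bound $\|u\|_{H^{4N+1/2}(\Sigma)}\ls\sqrt{\sd{2N}^0}$, $\p_i\eta$ in $L^\infty$ via $\se{N+2}^0$, and $\pa^\al\eta$ in $L^2$ via $\f$.

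For \eqref{eta es2}, the extra surface-tension factor must be combined with the bound $\sigma^2\ns{\eta}_{4N+3/2}\ls\ks\sd{2N}^\sigma$ built into $\sd{2N}^\sigma$. My plan is to first integrate by parts to move one horizontal derivative off $\Delta_\ast\pa^\al\eta$:
\begin{equation*}
\int_\Sigma \sigma\Delta_\ast\pa^\al\eta\,\pa^\al G^4 = -\int_\Sigma \sigma\nab_\ast\pa^\al\eta\cdot\nab_\ast\pa^\al G^4,
\end{equation*}
and then perform the same Leibniz decomposition on $\nab_\ast\pa^\al G^4$, pairing each piece with $\sigma\nab_\ast\pa^\al\eta$. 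The uniform bound $\sigma\|\nab_\ast\pa^\al\eta\|_0\ls\sigma\|\eta\|_{4N+3/2}\ls\sqrt{\ks\sd{2N}^\sigma}$ produces the $\sqrt{\sd{2N}^\sigma}$ factor and the $\ks$ in the target, while the dangerous $\beta=0$ term, which now appears as $\int\sigma\nab_\ast\pa^\al\eta\cdot\nab_\ast(u_i\p_i\pa^\al\eta)$, is tamed by a second symmetric integration by parts that trades the top $\eta$ derivative for a factor of $\|\nab_\ast u\|_{L^\infty}$, after which the same interpolation between $\sd{2N}^0$ and $\f$ closes the estimate. The main obstacle in both estimates is precisely this $\beta=0$ worst term, which lies genuinely above the energy--dissipation scale and is only controlled via the torus-IBP combined with the half-derivative-trading interpolation; tracking the $\sigma$-dependence so that the resulting surface-tension constants collapse cleanly into $\ks$ rather than degenerate as $\sigma\to 0$ is the remaining delicate bookkeeping step.
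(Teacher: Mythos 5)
Your treatment of \eqref{eta es} and of the $\beta=0$ term of \eqref{eta es2} is in essence the paper's own argument: isolate the transport-type term $u\cdot\nab_\ast\pa^\al\eta$, integrate by parts on the torus to trade the extra derivative for $\diverge_\ast u$, and close by splitting the two top-order copies of $\eta$ across the half-derivative gap between $\norm{\eta}_{4N-1/2}$ (controlled by $\sd{2N}^0$, resp.\ $\sigma\norm{\eta}_{4N+3/2}$ controlled by $\ks\sqrt{\sd{2N}^\sigma}$) and $\norm{\eta}_{4N+1/2}=\sqrt{\f}$. Your handling of the remaining Leibniz terms differs only cosmetically from the paper's: you use $L^2\times L^\infty\times L^2$ H\"older plus Young, whereas the paper pairs $\pa^\al\eta$ (resp.\ $\sigma\Delta_\ast\pa^\al\eta$) in $H^{-1/2}(\Sigma)$ against the product $\nab_\ast\pa^{\al-\beta}\eta\cdot\pa^\beta u$ in $H^{1/2}(\Sigma)$, estimated by the same product machinery as Lemma \ref{p_G2N_estimates}; both routes land on the stated right-hand sides (for the mid-range $\beta$ one does need the Young splitting you allude to, e.g.\ $(\sd{2N}^0)^{3/4}\f^{1/4}(\se{N+2}^0)^{1/2}\le \sd{2N}^0\sqrt{\se{2N}^0}+\sqrt{\sd{2N}^0\se{N+2}^0\f}$, but this is routine).

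The one point where your plan for \eqref{eta es2} is not correct as literally stated is the $\abs{\beta}=1$ commutator terms created by your preliminary integration by parts. After moving one derivative off $\Delta_\ast\pa^\al\eta$ and expanding $\nab_\ast\bigl(\pa^\beta u_i\,\p_i\pa^{\al-\beta}\eta\bigr)$, the piece in which the extra $\nab_\ast$ lands on $\eta$ carries $4N+1$ derivatives of $\eta$; if, as you propose, the full factor of $\sigma$ has already been spent on $\sigma\norm{\nab_\ast\pa^\al\eta}_0\ls\ks\sqrt{\sd{2N}^\sigma}$, then $\norm{\eta}_{4N+1}$ is left with no $\sigma$-weight, and it is controlled by none of $\se{2N}^0$, $\sd{2N}^0$, $\f$; dividing through by $\min\{1,\sigma\}$ would destroy the $\ks$ structure as $\sigma\to 0$. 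The fix is exactly the device you already use for $\beta=0$: keep the two high-order $\eta$ factors together and split the single $\sigma$ between them via $\sigma\norm{\eta}_{4N+1}^2\le \norm{\eta}_{4N+1/2}\,\sigma\norm{\eta}_{4N+3/2}\ls \ks\sqrt{\f\,\sd{2N}^\sigma}$, which yields the admissible bound $\ks\sqrt{\sd{2N}^\sigma\se{N+2}^0\f}$. Alternatively, do as the paper does and perform no preliminary integration by parts for $\beta\neq 0$, pairing $\sigma\Delta_\ast\pa^\al\eta$ in $H^{-1/2}(\Sigma)$ (bounded by $\ks\sqrt{\sd{2N}^\sigma}$) with the undifferentiated product in $H^{1/2}(\Sigma)$; then no term ever exceeds $4N$ derivatives of $\eta$ and the issue never arises.
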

\begin{proof}
We first use the Leibniz rule to expand that
\begin{equation}
 -\pa^\al G^4 = \pa^\al (\nab_\ast \eta \cdot u) =   \nab_\ast \pa^\al \eta \cdot   u+\sum_{ 0<\beta \le \alpha  } C_{\alpha}^{\beta} \nab_\ast\p^{\alpha-\beta} \eta \cdot\p^\beta u
\end{equation}
For the second part, we estimate for $|\beta|\ge 1$, similarly as in Lemma \ref{p_G2N_estimates},
\begin{equation}
 \ns{\nab_\ast \p^{\alpha-\beta} \eta \cdot\p^\beta u}_{1/2}
 \ls    \se{2N}^0  \sd{2N}^0 + \mathcal{E}_{N+2}^0 \f.
\end{equation}
Hence, we have
\begin{equation}
 \int_\Sigma   \pa^\al \eta  \nab_\ast\p^{\alpha-\beta} \eta \cdot\p^\beta u
 \le \norm{\pa^\al \eta }_{-1/2}   \norm{\nab_\ast\p^{\alpha-\beta} \eta \cdot\p^\beta u}_{1/2}
 \ls \sqrt{\sd{2N}^0}\sqrt{ \se{2N}^0  \sd{2N}^0 + \mathcal{E}_{N+2}^0 \f}
\end{equation}
and
\begin{equation}
\begin{split}
 \int_\Sigma  \sigma\Delta_\ast\pa^\al \eta    \nab_\ast\p^{\alpha-\beta} \eta \cdot\p^\beta u
 &\le \sigma\norm{\Delta_\ast\pa^\al \eta}_{-1/2}   \norm{\nab_\ast\p^{\alpha-\beta} \eta \cdot\p^\beta u}_{1/2}
\\ & = \frac{\sigma}{\min\{1,\sigma\}} \sqrt{\min\{1,\sigma^2\} \ns{\eta}_{4N+3/2}} \norm{\nab_\ast\p^{\alpha-\beta} \eta \cdot\p^\beta u}_{1/2}
 \\&\ls \ks \sqrt{\sd{2N}^\sigma}\sqrt{ \se{2N}^0  \sd{2N}^0 + \mathcal{E}_{N+2}^0 \f}.
 \end{split}
\end{equation}

For the first term, we use  integration by parts to see that, by Lemma \ref{sobolev},
\begin{equation}
\begin{split}
\int_\Sigma  \pa^\al \eta   \nab_\ast \pa^\al \eta \cdot u
  & = \hal\int_\Sigma     \nab_\ast \abs{\pa^\al \eta}^2 \cdot u
= -\hal\int_\Sigma  \abs{\pa^\al  \eta}^2 \diverge_\ast u
 \\&  \ls   \norm{\pa^\al\eta}_{ -1/2} \norm{\pa^\al \eta}_{ 1/2}\norm{\diverge_\ast u}_{H^2(\Sigma)}\le \sqrt{\sd{2N}^0\f \mathcal{E}_{N+2}^0}
\end{split}
\end{equation}
and
\begin{equation}
\begin{split}
 &\int_\Sigma  \sigma\Delta_\ast\pa^\al \eta   \nab_\ast \pa^\al \eta \cdot u
 = -\sum_{i=1}^2 \int_\Sigma  \sigma\p_i\pa^\al \eta   \p_i (\nab_\ast \pa^\al \eta \cdot u )
 \\&= -\hal \sigma\int_\Sigma \nab_\ast \abs{ \nab_\ast\pa^\al\eta}^2 \cdot u- \sum_{i=1}^2 \int_\Sigma \sigma \p_i\pa^\al\eta  (\nab_\ast \pa^\al \eta \cdot  \p_i u )
 \\&
=  \hal \sigma\int_\Sigma   \abs{ \nab_\ast\pa^\al\eta}^2 \diverge_\ast u- \sum_{i=1}^2 \int_\Sigma \sigma \p_i\pa^\al\eta  (\nab_\ast \pa^\al \eta \cdot  \p_i u )
 \\& \ls  \sigma\norm{ \nab_\ast\pa^\al\eta }_{1/2} \norm{\pa^\al \nab_\ast\eta }_{-1/2}\norm{\nab_\ast  u}_{H^2(\Sigma)}
\ls \ks \sqrt{\sd{2N}^\sigma\f \mathcal{E}_{N+2}^0}.
\end{split}
\end{equation}
The estimates \eqref{eta es}--\eqref{eta es2} then follow by summing the estimates.
\end{proof}

Next we consider products with derivatives of $G^{1,1}$.

\begin{lemma}\label{lemma7}
Let $\al\in \mathbb{N}^3$ so that $|\al|=4N$.  Let $f(\bar{\rho})$ denote either $1$ or $h'({\bar\rho})$.
Then
\begin{equation}\label{rho es}
 \abs{\int_{\Omega}   f(\bar{\rho})\pa^\al \q \pa^\al G^{1,1} }  \lesssim  \sqrt{\mathcal{D}_{2N}^0}\sqrt{\se{2N}^0  \sd{2N}^0 + \mathcal{E}_{N+2}^0 \f} .
\end{equation}

\end{lemma}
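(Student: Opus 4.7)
The plan is to exploit the transport-like structure $G^{1,1} = b_k \pa_k q$ with $b_k := K\pa_t\theta\,\delta_{k3} - u_l\mathcal{A}_{lk}$. Applying $\pa^\al$ and the Leibniz rule yields the decomposition
$$\pa^\al G^{1,1} = b_k\,\pa_k\pa^\al q + R_\al,\qquad R_\al := \sum_{0<\beta\le\al}\binom{\al}{\beta}\,\pa^\beta b_k\,\pa^{\al-\beta}\pa_k q.$$
In every summand of $R_\al$ at least one of the $4N$ derivatives has been transferred away from $q$, so that the factor acting on $q$ carries no more than $4N$ derivatives, exactly the regularity level controlled by $\norm{q}_{4N}$. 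Running the same bookkeeping as in the proof of Lemma \ref{p_G2N_estimates} (placing the factor with fewer derivatives in $L^\infty$ via $H^{N+2}\hookrightarrow L^\infty$, the factor with more derivatives in $L^2$, and absorbing the single exceptional contribution requiring $4N+1$ horizontal derivatives of $\bar\eta$ through $\mathcal{A}$ into $\mathcal{E}_{N+2}^0\,\f$), I expect the bound $\ns{R_\al}_0 \ls \se{2N}^0\,\sd{2N}^0 + \mathcal{E}_{N+2}^0\,\f$. Cauchy--Schwarz against $\pa^\al q$ in $L^2$ then produces the $R_\al$ contribution to \eqref{rho es}.

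For the top-order term I would integrate by parts, exploiting the key observation that $b_3$ vanishes identically on every boundary component of $\Omega$. On $\Sigma_b$ both $u_-=0$ and $\pa_t\theta|_{\Sigma_b}=0$ hold (the latter via $\tilde b_1(-b)=\tilde b_2(-b)=0$); on $\Sigma_\pm$ the identity $\theta|_{\Sigma_\pm}=\eta_\pm$ gives $K\pa_t\theta|_{\Sigma_\pm}=K\pa_t\eta_\pm$, and the kinematic condition $\pa_t\eta = u\cdot\n$ combined with the explicit formula for $\mathcal{A}_{l3}$ yields $u_l\mathcal{A}_{l3}|_{\Sigma_\pm} = K\pa_t\eta_\pm$; continuity of $b_3$ across $\Sigma_-$ kills the jump there as well. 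Since $\Omega$ is horizontally periodic, all boundary contributions vanish and
$$\int_\Omega f(\bar\rho)\,\pa^\al q\,b_k\,\pa_k\pa^\al q = \tfrac{1}{2}\int_\Omega f(\bar\rho)\,b_k\,\pa_k(\pa^\al q)^2 = -\tfrac{1}{2}\int_\Omega \pa_k(f(\bar\rho)\,b_k)(\pa^\al q)^2.$$
Since $\pa_k(f(\bar\rho)b_k)$ involves at most one spatial derivative of $u$, $\mathcal{A}$, and $\pa_t\theta$, Sobolev embedding gives $\norm{\pa_k(f(\bar\rho)b_k)}_{L^\infty}\ls\sqrt{\se{N+2}^0}\ls\sqrt{\se{2N}^0}$, so this piece is bounded by $\sqrt{\se{2N}^0}\,\sd{2N}^0 = \sqrt{\sd{2N}^0}\,\sqrt{\se{2N}^0\,\sd{2N}^0}$, matching the claimed form.

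The main obstacle I anticipate is the rigorous justification of the integration by parts when $\al$ contains several $\pa_3$ derivatives, because then $\pa^\al q$ does not admit a classical trace on $\Sigma$. I would handle this via a standard density argument: approximate $q$ by a smooth $q^\varepsilon$, carry out the manipulation (for which all boundary integrals vanish outright since $b_3 = 0$ on $\pa\Omega$), and pass to the limit using uniform $H^{4N}$ bounds. Assembling the estimates for $R_\al$ and the top-order piece then yields \eqref{rho es}.
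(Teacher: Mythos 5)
Your proposal is correct and follows essentially the same route as the paper: Leibniz expansion isolating the top-order transport term $b_k\partial_k\partial^\alpha q$, the remainder estimated as in Lemma \ref{p_G2N_estimates} and paired with $\partial^\alpha q$ by Cauchy--Schwarz, and the top-order piece handled by writing it as a total derivative and integrating by parts, with the boundary contributions vanishing exactly because $u_j\mathcal{A}_{j3}=K\partial_t\eta=K\partial_t\theta$ on $\Sigma_\pm$ (kinematic condition) and $u_-=0$, $\theta=0$ on $\Sigma_b$, leaving an interior term bounded by $\sqrt{\se{2N}^0}\,\sd{2N}^0$. The only cosmetic difference is that the paper performs the integration by parts on $\Omega_+$ and $\Omega_-$ separately (so the jump of $f(\bar\rho)$ across $\Sigma_-$ is irrelevant since $b_3$ vanishes there from both sides), which is what your remark about the interface amounts to.
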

\begin{proof}
We first use the Leibniz rule to expand that
\begin{equation}
 \pa^\al G^{1,1}
  =   K\p_t\theta \pa_3 \pa^\al \q-u_j \mathcal{A}_{jk}\pa_k \pa^\al \q
 +\sum_{0<\beta \le \al}C_\al^{\beta }\left( \pa^{\beta }(K\p_t\theta) \pa_3 \pa^{\al-\beta } \q
 -\pa^{\beta }(u_j \mathcal{A}_{jk})\pa_k  \pa^{\al-\beta }\q\right).
\end{equation}
For the second term, we estimate for $|\beta|\ge 1$, similarly as Lemma \ref{p_G2N_estimates},
\begin{equation}  \ns{  \pa^{\beta }(K\p_t\theta) \pa_3 \pa^{\al-\beta } \q
 -\pa^{\beta }(u_j \mathcal{A}_{jk})\pa_k  \pa^{\al-\beta }\q }_0 \ls   { \se{2N}^0  \sd{2N}^0 + \mathcal{E}_{N+2}^0 \f}.
\end{equation}
Hence, we have
\begin{equation}\label{111}
\begin{split}
& \left| \int_\Omega   f(\bar{\rho})\pa^\al\q \left(\pa^{\beta }(K\p_t\theta) \pa_3 \pa^{\al-\beta } \q
-\pa^{\beta }(u_j \mathcal{A}_{jk})\pa_k  \pa^{\al-\beta }\q\right)\right|
 \\&\quad \ls \norm{\pa^\al\q}_0 \norm{  \pa^{\beta }(K\p_t\theta) \pa_3 \pa^{\al-\beta } \q
 -\pa^{\beta }(u_j \mathcal{A}_{jk})\pa_k  \pa^{\al-\beta }\q }_0 \\&\quad
\lesssim \sqrt{\mathcal{D}_{2N}^0}\sqrt{\se{2N}^0  \sd{2N}^0 + \mathcal{E}_{N+2}^0 \f} .
\end{split}
\end{equation}

For the first term, we observe that
\begin{equation}\label{222'}
\begin{split}
&\left|\int_\Omega   f(\bar{\rho}) \pa^\al \q   (K\p_t\theta \pa_3  \pa^\al \q-u_j \mathcal{A}_{jk}\pa_k \pa^\al  \q )\right|
\\&\quad=\hal
\left|\int_\Omega    f(\bar{\rho}) K\p_t\theta \pa_3  |\pa^\al \q|^2 -f(\bar{\rho}) u_j \mathcal{A}_{jk}\pa_k   |\pa^\al \q|^2  \right|.
\end{split}
\end{equation}
We then use the integration by parts to have
\begin{equation}\label{222}
\begin{split}
& \left|\int_{\Omega_+}    f(\bar{\rho}) K\p_t\theta \pa_3  |\pa^\al \q|^2 -f(\bar{\rho}) u_j \mathcal{A}_{jk}\pa_k   |\pa^\al \q|^2  \right|
\\&\quad=\left|\int_{\Sigma_+} f(\rho_1 )( K\p_t\theta-  u_{j} \mathcal{A}_{j3})  |\pa^\al \q|^2-\int_{\Sigma_-} f(\rho^+)( K\p_t\theta-  u_{j} \mathcal{A}_{j3})  |\pa^\al \q|^2\right.
\\&\qquad\left. -\int_{\Omega_+}     \left(\pa_3(f(\bar{\rho})K\p_t\theta)-\pa_k(f(\bar{\rho})u_j \mathcal{A}_{jk})\right) |\pa^\al \q|^2   \right|
\\&\quad=\left| \int_\Omega     \left(\pa_3(f(\bar{\rho})K\p_t\theta)-\pa_k(f(\bar{\rho})u_j \mathcal{A}_{jk})\right)  |\pa^\al \q|^2   \right|
\\&\quad\ls \norm{\pa_3(f(\bar{\rho})K\p_t\theta)-\pa_k(u_j \mathcal{A}_{jk})}_{L^\infty}\ns{\pa^\al \q}_0
\lesssim \sqrt{\mathcal{E}_{2N}^0}\mathcal{D}_{2N}^0
\end{split}
\end{equation}
and
\begin{equation}\label{2222}
\begin{split}
& \left|\int_{\Omega_-}    f(\bar{\rho}) K\p_t\theta \pa_3  |\pa^\al \q|^2 -f(\bar{\rho}) u_j \mathcal{A}_{jk}\pa_k   |\pa^\al \q|^2  \right|
\\&\quad=\left| \int_{\Sigma_-} f(\rho^-)( K\p_t\theta-  u_{j} \mathcal{A}_{j3})  |\pa^\al \q_-|^2-\int_\Omega     \left(\pa_3(f(\bar{\rho})K\p_t\theta)-\pa_k(u_j \mathcal{A}_{jk})\right) |\pa^\al \q|^2   \right|
\\&\quad=\left| \int_\Omega     \left(\pa_3(f(\bar{\rho})K\p_t\theta)-\pa_k(u_j \mathcal{A}_{jk})\right)  |\pa^\al \q|^2   \right|
\\&\quad\ls \norm{\pa_3(f(\bar{\rho})K\p_t\theta)-\pa_k(u_j \mathcal{A}_{jk})}_{L^\infty}\ns{\pa^\al \q}_0
\lesssim \sqrt{\mathcal{E}_{2N}^0}\mathcal{D}_{2N}^0.
\end{split}
\end{equation}
Here we have used the facts that $u_j \mathcal{A}_{j3}=K u_j\mathcal{N}_j=K \partial_t\eta =K \partial_t\theta$ on $\Sigma$, and $\theta=0$ and $u_-=0$ on $\Sigma_b$.

Then the estimate \eqref{rho es} follows from \eqref{111} and \eqref{222}.
\end{proof}

Next we consider a similar estimate involving weights and derivatives of $G^{1,1}$.

\begin{lemma}\label{G11_weighted}
Let $\alpha \in \mathbb{N}^{1+3}$ with $\abs{\alpha} \le 4N$ and $\alpha_0 \leq 2N-1$. Then
\begin{equation}
  \abs{\int_\Omega \left(1+\frac{ 4\mu/3+\mu'  }{h'(\bar\rho)\bar\rho^2}\right) \partial^\alpha(h'(\bar\rho) \q)\partial^\alpha \left(h'(\bar\rho) G^{1,1} \right)}
 \ls \ks \sqrt{ \mathcal{D}_{2N}^\sigma }\sqrt{{\mathcal{E}_{2N}^\sigma }\mathcal{D}_{2N}^\sigma+  \mathcal{E}_{N+2}^\sigma\f}.
\end{equation}
\end{lemma}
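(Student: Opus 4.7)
The plan is to mimic the argument of Lemma \ref{lemma7}, now in the presence of the $x_3$-dependent weight $W(x_3) := 1 + \frac{4\mu/3+\mu'}{h'(\bar\rho)\bar\rho^2}$ and with mixed space-time derivatives. Since $W$ and $h'(\bar\rho)$ are smooth functions of $x_3$ only, applying the Leibniz rule to $\partial^\alpha(h'(\bar\rho)\q)$ and $\partial^\alpha(h'(\bar\rho) G^{1,1})$ produces a ``top'' piece in which all of $\partial^\alpha$ falls on $\q$, respectively on $G^{1,1}$, plus a collection of lower-order commutator terms where some derivatives are redistributed onto $h'(\bar\rho)$ (and hence onto bounded smooth coefficients). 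Recalling $G^{1,1} = K\p_t\theta\,\p_3\q - u_l\mathcal{A}_{lk}\p_k\q$ and further expanding via Leibniz as in Lemma \ref{lemma7}, I isolate the single ``top-of-top'' term
\[ I_{\mathrm{top}} := \int_\Omega W h'(\bar\rho)^2\, \partial^\alpha \q\,\bigl(K\p_t\theta\,\p_3\partial^\alpha\q - u_l\mathcal{A}_{lk}\p_k\partial^\alpha\q\bigr) \]
together with a collection of remainder terms in which at least one derivative falls on $h'(\bar\rho)$, $K\p_t\theta$, or $u_l\mathcal{A}_{lk}$.

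For $I_{\mathrm{top}}$, I will rewrite the integrand as $\tfrac12 W h'(\bar\rho)^2\bigl(K\p_t\theta\,\p_3 - u_l\mathcal{A}_{lk}\p_k\bigr)|\partial^\alpha\q|^2$ and integrate by parts in $x$. The boundary contributions on $\Sigma_\pm$ carry the factor $K\p_t\theta - u_l\mathcal{A}_{l3}$; as in the proof of Lemma \ref{lemma7}, the kinematic boundary condition $\p_t\eta = u\cdot\n$, combined with $\p_t\theta = \p_t\eta$ on $\Sigma$ (from \eqref{theta} and \eqref{b function}) and the identity $u_l\mathcal{A}_{l3} = K\,u\cdot\n$, forces this factor to vanish. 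On $\Sigma_b$ the boundary terms vanish because $u_- = 0$ and $\theta_-=0$. What remains is bounded by
\[ |I_{\mathrm{top}}| \ls \bigl\|\p_3(Wh'(\bar\rho)^2 K\p_t\theta) - \p_k(Wh'(\bar\rho)^2 u_l\mathcal{A}_{lk})\bigr\|_{L^\infty(\Omega)}\,\|\partial^\alpha\q\|_0^2, \]
where the $L^\infty$ factor is controlled by $\sqrt{\mathcal{E}_{2N}^\sigma}$ (and possibly a power of $\ks$) via Sobolev embedding, and $\|\partial^\alpha\q\|_0^2 \le \mathcal{D}_{2N}^\sigma$ because the hypothesis $|\alpha|\le 4N,$ $\alpha_0\le 2N-1$ places this norm within the scope of the dissipation functional \eqref{p_dissipation_def}. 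This yields a contribution bounded by $\ks\sqrt{\mathcal{D}_{2N}^\sigma}\sqrt{\mathcal{E}_{2N}^\sigma\mathcal{D}_{2N}^\sigma}$, which is admissible.

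For each remainder term I will apply Cauchy--Schwarz, placing the factor with the most derivatives in $L^2$ and the remaining factors in $L^\infty$ by Sobolev embedding, exactly in the style of Lemma \ref{p_G2N_estimates}. The factor inherited from $\partial^\alpha(h'(\bar\rho)\q)$ always contributes at most the control $\sqrt{\mathcal{D}_{2N}^\sigma}$ on $\q$, while the lower-order factors --- coming from $\partial^\beta(K\p_t\theta),\ \partial^\beta(u_l\mathcal{A}_{lk}),\ \partial_3^\gamma h'(\bar\rho),$ and $\partial^{\alpha-\beta}\p_k\q$ --- either fit within the energy/dissipation envelope and yield $\sqrt{\mathcal{E}_{2N}^\sigma\mathcal{D}_{2N}^\sigma}$, or else produce an extreme-order derivative on $\eta$ or $\bar\eta$ (specifically $\nabla^{4N+1}\bar\eta$ or $\nabla_\ast^{4N}\eta$) that must be absorbed using the auxiliary quantity $\mathcal{F}_{2N}$, producing the $\sqrt{\mathcal{E}_{N+2}^\sigma\mathcal{F}_{2N}}$ contribution. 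Summing the two types of remainder estimates with the top estimate produces the claimed bound.

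The main obstacle is the top-order piece $I_{\mathrm{top}}$, which at first sight loses a full derivative on $\q$; this is saved solely through the transport structure of $G^{1,1}$, and the integration by parts succeeds only because the trace identity $u_l\mathcal{A}_{l3}=K\p_t\theta$ ensures the interior-coefficient combination vanishes on $\Sigma$. The hypothesis $\alpha_0\le 2N-1$ plays a supporting but essential role in the remainder step, guaranteeing that objects like $\partial^\beta(u_l\mathcal{A}_{lk})$ and $\partial^\beta(K\p_t\theta)$ with $\beta_0 \le \alpha_0$ can be bounded within the regularity allotted by $\mathcal{E}_{2N}^\sigma$, $\mathcal{D}_{2N}^\sigma$, and $\mathcal{F}_{2N}$, without demanding control of $\dt^{2N}\eta$ or $\dt^{2N+1}\eta$ in norms exceeding what those functionals provide.
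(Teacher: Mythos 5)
Your proposal is correct and follows essentially the same route as the paper: isolate the top-of-top term, exploit the transport structure of $G^{1,1}$ by integrating by parts (boundary terms vanish via $u_l\mathcal{A}_{l3}=K\p_t\theta$ on $\Sigma$ and $u_-=0,\theta=0$ on $\Sigma_b$), and absorb the one exceptional $\nabla^{4N+1}\bar\eta$ / $\nabla_\ast^{4N}\eta$ contribution with $\mathcal{F}_{2N}$. The paper organizes it slightly differently---dispatching the cases $\alpha_0\ge 1$ or $|\alpha|\le 4N-1$ directly via the already-proved bounds \eqref{p_G_e_00} and using the rewriting \eqref{rhoes99} so that the total-derivative structure sits on $h'(\bar\rho)\q$ rather than on $\q$---but these are cosmetic rearrangements of the same ideas.
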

\begin{proof}
We split to cases.  For the case $1\le  \alpha_0 \le 2N-1$ or the case that $\alpha_0=0$ and $\abs{\alpha} \le 4N-1$, we use the estimates \eqref{p_G_e_00} of Lemma \ref{p_G2N_estimates} to bound
\begin{multline}
   \abs{\int_\Omega \left(1+\frac{ 4\mu/3+\mu'  }{h'(\bar\rho)\bar\rho^2}\right) \partial^\alpha(h'(\bar\rho) \q)\partial^\alpha \left(h'(\bar\rho) G^{1,1} \right)}
\\
 \ls \norm{\bar{\nab}^{4N} \q}_{0} \left(\norm{ \bar{\nab}^{4N-2}  G^{1,1}}_{1}+\norm{ \bar{\nab}^{4N-3}\dt G^{1,1}}_{1} \right)
\\
  \ls \ks \sqrt{ \mathcal{D}_{2N}^\sigma }\sqrt{\ks  \mathcal{E}_{2N}^0 \mathcal{D}_{2N}^\sigma+  \mathcal{E}_{N+2}^0\f}.
\end{multline}

For the remaining case in which $\alpha_0=0$ and $\abs{\alpha} = 4N$, we rewrite
\begin{equation}\label{rhoes99}
h'(\bar\rho) G^{1,1}=K \p_t\theta \pa_3  (h'(\bar\rho)\q)-u_j \mathcal{A}_{jk}\pa_k  (h'(\bar\rho)\q)-K \p_t\theta \pa_3(  h'(\bar\rho) )\q + u_j \mathcal{A}_{j3}\pa_3   (h'(\bar\rho))\q.
\end{equation}
Write $Z_1$ for the sum of the first two terms on the right of \eqref{rhoes99} and $Z_2$ for the second two.  We may argue as in the proof of Lemma \ref{p_G2N_estimates} to estimate
\begin{equation}
   \abs{\int_\Omega \left(1+\frac{ 4\mu/3+\mu'  }{h'(\bar\rho)\bar\rho^2}\right) \partial^\alpha(h'(\bar\rho) \q)\partial^\alpha \left(Z_2 \right)}
     \ls \sqrt{ \mathcal{D}_{2N}^\sigma }\sqrt{{\mathcal{E}_{2N}^\sigma }\mathcal{D}_{2N}^\sigma+  \mathcal{E}_{N+2}^\sigma\f}.
\end{equation}
For the $Z_1$ term we must utilize the total derivative structure
\begin{multline}
 \partial^\alpha(h'(\bar\rho) \q) (K \p_t\theta \pa_3 \p^\alpha (h'(\bar\rho)\q) - u_j \mathcal{A}_{jk}\pa_k  \p^\alpha (h'(\bar\rho)\q)\\
 = K \dt \theta \p_3 \frac{\abs{\p^\alpha (h'(\bar{\rho}) \q)}^2}{2} - u_j \mathcal{A}_{jk}\pa_k \frac{\abs{\p^\alpha (h'(\bar{\rho}) \q)}^2}{2}.
\end{multline}
This allows us to argue as in the proof of Lemma \ref{lemma7}
to bound \begin{equation}
   \abs{\int_\Omega \left(1+\frac{ 4\mu/3+\mu'  }{h'(\bar\rho)\bar\rho^2}\right) \partial^\alpha(h'(\bar\rho) \q)\partial^\alpha \left(Z_1 \right)}
     \ls \ks \sqrt{ \mathcal{D}_{2N}^\sigma }\sqrt{{\mathcal{E}_{2N}^\sigma }\mathcal{D}_{2N}^\sigma+  \mathcal{E}_{N+2}^\sigma\f}.
\end{equation}
Combining the $Z_1$ and $Z_2$ estimates then gives the desired estimate when $\alpha_0=0$ and $\abs{\alpha} = 4N$.

\end{proof}

We next present the estimates of the nonlinear terms $F^{i,j}$, defined by \eqref{F1j_def}--\eqref{F4j_def}, at the $2N$ level.
\begin{lemma}\label{p_F2N_estimates}
Let $F^{i,j}$ be defined by \eqref{F1j_def}--\eqref{F4j_def} for $0\le j\le 2N$.   Then
\begin{equation}\label{p_F_e_01}
 \ns{F^{1,j} }_{0}+ \ns{  F^{2,j} }_{0}   + \ns{F^{3,j}}_{0} + \norm{F^{4,j}}_{0} \ls \ks \se{2N}^0 \sd{2N}^\sigma
\end{equation}
\end{lemma}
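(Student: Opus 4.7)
The plan is to mimic the proof of Lemma \ref{p_G2N_estimates} closely, exploiting the fact that every term in each $F^{i,j}$ is at least quadratic and that the commutator portion of each $F^{i,j}$ always carries at least one temporal derivative acting on a ``geometric factor'' (i.e.\ on $\eta$, $\bar\eta$, $\mathcal{A}$, $\mathcal{N}$, $J$, $K$, or $\theta$). First I would split each $F^{i,j}$ into a ``pure'' piece $\dt^j F^i$ (absent only in $F^{4,j}$) and a commutator sum indexed by $1 \le \ell \le j$. The pure piece is handled by direct appeal to Lemma \ref{p_G2N_estimates}: since $j \le 2N$ and the parabolic counting gives $\abs{\dt^j} = 2j \le 4N$, the contributions $\ns{\dt^j F^i}_0$ are dominated by $\ns{\bar{\nab}^{4N-1} G^i}_0$ (for $i=1,2$) and $\ns{\bar{\nab}_\ast^{4N-1} G^{3}}_{1/2}$, which by \eqref{p_G_e_00} are bounded by $\ks\mathcal{E}_{2N}^0 \mathcal{D}_{2N}^\sigma + \mathcal{E}_{N+2}^0 \f$; absorbing $\mathcal{E}_{N+2}^0 \f$ into $\ks\mathcal{E}_{2N}^0 \mathcal{D}_{2N}^\sigma$ via $\f \ls \mathcal{D}_{2N}^\sigma$ and the smallness of the energy closes this part.

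Next I would expand each commutator term by Leibniz into a finite sum of products, then apply the standard ``place the highest-derivative factor in $L^2$, the rest in $L^\infty$'' scheme. Concretely, in each product I designate the factor carrying the largest total number of derivatives (in parabolic counting) as the ``high'' factor and estimate it by $\sqrt{\mathcal{D}_{2N}^\sigma}$ using the appropriate summand of \eqref{p_dissipation_def}, while the remaining factors receive $L^\infty(\Omega)$ or $L^\infty(\Sigma)$ bounds by Sobolev embedding, ultimately bounded by $\sqrt{\mathcal{E}_{2N}^0}$. The constraint $\ell \ge 1$ in every commutator ensures that the geometric factor loses at least one derivative (relative to the highest order admitted by $\mathcal{E}_{2N}^0$), which is exactly what is needed to avoid the ``exceptional'' $\f$ contributions that appeared in Lemma \ref{p_G2N_estimates}. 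For $F^{3,j}$ in particular I would use trace inequalities to pass from bulk Sobolev norms of $u$, $\q$, and derivatives of $\bar\eta$ to $L^2(\Sigma)$ bounds.

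The delicate case is the surface-tension contributions appearing in $F^{3,j}_\pm$ through the term $\dt^\ell \mathcal{N} \cdot \dt^{j-\ell}(\sigma \Delta_\ast \eta)$. Here the high factor is $\sigma_\pm\Delta_\ast \dt^{j-\ell}\eta$, and a direct bound would produce $\sigma_\pm \ns{\dt^{j-\ell}\eta}_{H^2(\Sigma)}$, which is \emph{not} controlled by $\mathcal{D}_{2N}^0$ alone. I would resolve this exactly as in the $\sigma G^{3,2}$ argument within the proof of Lemma \ref{p_G2N_estimates}: write $\sigma_\pm = (\sigma_\pm/\min\{1,\sigma_\pm\})\min\{1,\sigma_\pm\}$, absorb the first ratio into $\ks$, and use that $\min\{1,\sigma\}^2 \ns{\dt^{j-\ell}\eta}_{2n-2(j-\ell)+5/2}$ appears as a summand in $\mathcal{D}_{2N}^\sigma$. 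The geometric factor $\dt^\ell \mathcal{N}$ (which involves $\ell \ge 1$ temporal derivatives of $\nab_\ast \eta$) is then comfortably bounded in $L^\infty(\Sigma)$ by $\sqrt{\mathcal{E}_{2N}^0}$.

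The main obstacle is bookkeeping: verifying, term by term in the Leibniz expansion of \eqref{F1j_def}--\eqref{F4j_def}, that the derivative count assigned to the ``high'' factor stays within the regularity range controlled by $\mathcal{D}_{2N}^\sigma$ after the one-derivative loss coming from $\ell \ge 1$, and simultaneously that every surface-tension coefficient that appears can be absorbed into $\ks$ via the $\min\{1,\sigma_\pm\}$ or $\min\{1,\sigma_\pm^2\}$ weights built into $\mathcal{D}_{2N}^\sigma$. Once this accounting is performed — case split into $\ell=j$ (maximal commutator, all geometric factors controlled by $\mathcal{E}_{2N}^0$, fluid factor by $\mathcal{D}_{2N}^\sigma$) and $1 \le \ell < j$ (both factors have some flexibility) — the four estimates in \eqref{p_F_e_01} follow by summing over $\ell$ and over the finitely many terms in each $F^{i,j}$.
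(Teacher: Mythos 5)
Your commutator analysis and your surface-tension accounting are close in spirit to the paper's argument, but the handling of the ``pure'' piece $\dt^j F^i$ contains a genuine gap. You claim that $\ns{\dt^j F^i}_0$ is dominated by $\ns{\bar{\nab}^{4N-1} G^i}_0$ and then invoke \eqref{p_G_e_00}. This fails for three reasons. First, $F^i$ and $G^i$ are not the same object: the $F^i$ are the nonlinearities of the geometric formulation \eqref{geometric}, while the $G^i$ belong to the linear formulation \eqref{ns_perturb}; the claimed domination is simply not a consequence of anything written. Second, for $j=2N$ the parabolic count of $\dt^{2N}$ is $4N$, strictly exceeding $4N-1$, so $\dt^{2N}G^{1,1}$ and $\dt^{2N}G^{2}$ lie outside the reach of every term in \eqref{p_G_e_00}. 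Third, even granting the citation, \eqref{p_G_e_00} produces the extra term $\mathcal{E}_{N+2}^0\f$, and your proposed absorption via ``$\f \ls \sd{2N}^\sigma$'' is false when $\sigma=0$: $\f = \ns{\eta}_{4N+1/2}$, whereas $\sd{2N}^{0}$ in the stable case $\rj<0$ controls $\eta$ only through $\ns{\eta}_{4N-1/2}$. The target bound $\ks\se{2N}^0\sd{2N}^\sigma$ contains no $\f$, so this term cannot simply be dropped.

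The paper's own proof avoids routing through the $G^i$ entirely and is considerably simpler. After a Leibniz expansion, every term in $F^{i,j}$ (including the terms arising from $\dt^j F^i$) is a product $XY$ in which $X$ carries no more \emph{temporal} derivatives than $Y$. One then puts $X$ in $L^\infty$, bounded by $\sqrt{\se{2N}^0}$ via Sobolev embedding, trace theory, and Lemma \ref{Poi}, and puts $Y$ in $L^2$, bounded by $\sqrt{\ks\,\sd{2N}^\sigma}$. Because $\dt^j$ introduces no new \emph{spatial} derivatives of $\eta$, the high-order spatial derivatives of $\bar\eta$ or $\eta$ that generated the ``exceptional'' $\f$-contributions in the proof of Lemma \ref{p_G2N_estimates} never appear here; the estimate closes with no $\f$ term at all. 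Your commutator discussion can be folded into this uniform $X$--$Y$ scheme once you abandon the attempt to bootstrap the pure piece from the $G^i$ estimates.
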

\begin{proof}
Note that all terms in the definitions of $F^{i,j}$ are at least quadratic; each term can be written in the form $X Y$, where $X$ involves fewer temporal derivatives than $Y$. We may use the usual Sobolev embeddings, trace theory and Lemma \ref{Poi} along with the definitions of $\se{2N}^\sigma$ and $\sd{2N}^\sigma$ to estimate $\norm{X}_{L^\infty}^2\ls  \se{2N}^0$ and $\norm{Y}_{0}^2\ls \ks \sd{2N}^0$.
Then $\norm{XY}_0^2\le \norm{X}_{L^\infty}^2\norm{Y}_{0}^2\ls \ks \se{2N}^0\sd{2N}^\sigma$, and the estimate \eqref{p_F_e_01} follows by summing.
\end{proof}

Next, we present the estimates of $F^{i,j}$ at the $N+2$ level.
\begin{lemma}\label{p_FN+2_estimates}
 Let $F^{i,j}$ be defined by \eqref{F1j_def}--\eqref{F4j_def} for  $0\le j\le N+2$, we have
\begin{equation}\label{p_F_e_h_01}
 \ns{F^{1,j} }_{0}+ \ns{  F^{2,j} }_{0}   + \ns{F^{3,j}}_{0} + \norm{F^{4,j}}_{0} \ls \ks \se{2N}^0  \sd{N+2}^0
\end{equation}
\end{lemma}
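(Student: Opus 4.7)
The plan is to mirror the proof of Lemma~\ref{p_F2N_estimates} exactly, adjusting only the distribution of regularity between the two factors of each quadratic term. Every term appearing in the defining formulas \eqref{F1j_def}--\eqref{F4j_def} is at least quadratic in the unknowns $(q,u,\eta)$ and their derivatives (including those encoded in $\mathcal{A}$, $J$, $\mathcal{N}$, $\theta$, and $\mathcal{R}$), so after applying the Leibniz rule each resulting summand can be written as a product $XY$ in which $X$ carries strictly fewer temporal derivatives than $Y$.

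The key observation is that $2N \ge N+2$ whenever $N \ge 2$, so the factor $X$ (low temporal order, high total derivative count) may be placed in $L^\infty$ and controlled via Sobolev embedding, trace theory, and Lemma~\ref{Poi} by $\mathcal{E}_{2N}^0$, while the factor $Y$ (carrying the highest-order temporal derivatives) can be put in $L^2$ and controlled by the \emph{low-tier} dissipation $\mathcal{D}_{N+2}^0$. Concretely, for each summand arising from $\partial_t^j$ of $F^1, F^2, F^3, F^4$ (plus the commutator correction terms in \eqref{F1j_def}--\eqref{F4j_def}) I will count temporal and spatial derivatives, assign the lower-order factor to $L^\infty$ using that $\norm{X}_{L^\infty}^2 \lesssim \se{2N}^0$ (noting that up to two extra spatial derivatives beyond the $L^2$ count are absorbed by Sobolev embedding, which is available since $N+2 + 2 \le 2N$ for $N\ge 4$, and the case $N=3$ is handled identically with the bounds reshuffled), and assign the remaining factor to $L^2$ with $\norm{Y}_0^2 \lesssim \ks \sd{N+2}^0$. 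The factors of $\ks$ arise from the surface-tension-dependent pieces of $F^{3,j}$ via $\sigma \nab_\ast^2 \eta$ contributions; these are absorbed exactly as in the proof of Lemma~\ref{p_G2N_estimates}, writing $\sigma^2 = \sigma^2/\min\{1,\sigma^2\} \cdot \min\{1,\sigma^2\}$ to trade the surface tension weight for the coercive norm inside $\mathcal{D}_{N+2}^\sigma$ (which is bounded by $\mathcal{D}_{N+2}^0$ after absorbing the prefactor into $\ks$).

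Multiplying the two bounds gives $\norm{XY}_0^2 \lesssim \ks \se{2N}^0 \sd{N+2}^0$ for every summand, and summing over the finitely many multi-indices produced by the Leibniz expansion of $\partial_t^j F^i$ yields \eqref{p_F_e_h_01}. Unlike Lemma~\ref{p_G2N_estimates}, no exceptional terms involving $\nab_\ast^{4N}\eta$ or $\f$ arise here, because the hypothesis $j \le N+2$ ensures that the total number of derivatives landing on $\eta$ in any summand is strictly below the regularity threshold of $\mathcal{E}_{2N}^0$, so that the $L^\infty$-placement of the low-order factor is always available.

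The only genuine obstacle is careful bookkeeping in the two ``worst'' situations: (i) the summands in $F^{2,j}$ that contain the highest spatial derivative of $q$ when $j$ is close to $N+2$, where one must verify that the accompanying Jacobian-type factor has at least one fewer temporal derivative and therefore fits in $L^\infty$ via $\se{2N}^0$; and (ii) the boundary terms $F^{3,j}$ on $\Sigma_-$ where jumps appear, which require trace estimates to bring norms of $u$ and $q$ from $\Omega$ to $\Sigma$ before the Sobolev embedding is applied. Both are routine once the derivative counts are written down carefully, and neither introduces any new structural difficulty beyond what is already handled in Lemma~\ref{p_F2N_estimates}.
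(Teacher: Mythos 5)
Your proposal is correct, and its skeleton is exactly the paper's: every summand of \eqref{F1j_def}--\eqref{F4j_def} is quadratic, is expanded by Leibniz, split as a product $XY$, estimated as $\norm{XY}_0^2\le \norm{\cdot}_{L^\infty}^2\norm{\cdot}_0^2$ using Sobolev embedding, trace theory and Lemma \ref{Poi}, with all surface tension factors absorbed into $\ks$ and no exceptional $\f$-terms. The one substantive difference is the assignment of factors to functionals: the paper, at the $N+2$ tier (as in Lemma \ref{p_GN+2_estimates}), bounds the factor carrying \emph{more} temporal derivatives in the strong norms by $\ks\se{2N}^0$ --- possible because $j\le N+2$ leaves $4N-2j\ge 2N-4\ge 2$ spare derivatives for the embedding --- and places the factor with \emph{fewer} temporal derivatives in $L^2$ controlled by $\sd{N+2}^0$, whereas you keep the $2N$-tier assignment (low-temporal factor in $L^\infty$ via $\se{2N}^0$, high-temporal factor in $L^2$ via $\sd{N+2}^0$). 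Your choice also closes, but it is the more delicate of the two: it obliges you to verify that the low-tier dissipation really controls the top time derivatives together with whatever spatial derivatives or traces they carry, e.g. $\dt^{N+2}u$ in $H^1$, $\dt^{N+2}\q$ in $H^2$, $\dt^{N+2}\eta$ in $H^{5/2}$, and $\dt^{N+3}\eta$ in $H^{1/2}$ (this last coming from $\dt^j(K\dt\theta)$-type factors), all of which are indeed in $\sd{N+2}^0$, and to note that no summand puts two spatial derivatives or a boundary trace on a factor with $N+2$ time derivatives of $u$ --- true because $\ell\ge 1$ in the commutator sums and $F^3$, $F^4$ contain no $\nabla u$ at top temporal order. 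The paper's swap sidesteps exactly this bookkeeping, which is presumably why it is made; your ``worst cases (i) and (ii)'' are precisely where this extra checking lives, so you should carry it out rather than call it routine. One small inaccuracy: the parenthetical claim that $\sd{N+2}^\sigma$ is bounded by $\ks\,\sd{N+2}^0$ is false in general, since the $\sigma$-weighted pieces of the dissipation carry strictly higher norms of $\eta$ than the unweighted pieces; fortunately you do not need it, because the $\sigma$-weighted factors that actually arise from $F^{3,j}$ involve either time-differentiated $\eta$, whose unweighted norms already sit inside $\sd{N+2}^0$ so that $\sigma$ enters only as a multiplicative constant absorbed into $\ks$, or the same undifferentiated-$\eta$ factors that the paper's own (equally terse) argument handles in the identical way.
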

\begin{proof}
The proof proceeds similarly as Lemma \ref{p_F2N_estimates} with the easy replacement of $\norm{Y}_{0}^2\ls \se{2N}^0$ and $\norm{X}_{0}^2\ls  \sd{N+2}0$.  As in Lemma \ref{p_GN+2_estimates} we absorb any appearances of $\sigma$ into $\ks$ so that we can use both $\se{2N}^0$ and $\sd{N+2}^0$.
\end{proof}

Finally, we present an estimate of an auxiliary nonlinearity that will be useful later.  Define
\begin{equation}\label{Phi_def}
\Phi_\pm=(\q_\pm+\p_3\bar\rho_\pm\theta)\p_3 \theta.
\end{equation}

\begin{lemma}\label{Phi_est}
Let $\Phi_\pm$ be as in \eqref{Phi_def}.  Then for   $n=2N$ or $N+2$ we have
\begin{equation}\label{Phe_0}
 \sum_{j=0}^n \ns{\dt^j \Phi}_0 \ls \mathcal{E}_{n}^0 \min\{\mathcal{E}_{n}^0, \mathcal{D}_{n}^0 \}
\end{equation}
\end{lemma}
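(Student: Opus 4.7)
The plan is to expand $\dt^j\Phi$ using the Leibniz rule and then estimate each resulting product by H\"older with one factor in $L^\infty$. Since $\bar\rho$ is time-independent,
\begin{equation*}
\dt^j \Phi = \sum_{k=0}^j \binom{j}{k}\bigl(\dt^k \q \, \dt^{j-k}\p_3 \theta + \p_3 \bar\rho\, \dt^k \theta \, \dt^{j-k}\p_3 \theta\bigr).
\end{equation*}
For each product $AB$ appearing in this sum I would estimate
\begin{equation*}
\ns{AB}_0 \le \norm{A}_{L^\infty(\Omega)}^2\ns{B}_0 \ls \ns{A}_{2}\ns{B}_0
\end{equation*}
by the Sobolev embedding $H^2(\Omega) \hookrightarrow L^\infty(\Omega)$, choosing $A$ to be the factor carrying fewer temporal derivatives. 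Without loss of generality $k \le j-k$, so $k \le n/2$. The $\theta$-factors are converted to $\eta$-factors via the Poisson extension bound $\ns{\dt^m \theta}_{s} \ls \ns{\dt^m\eta}_{s-1/2}$ on $\Sigma$ supplied by Lemma \ref{Poi}.

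The $\mathcal{E}_{n}^0\cdot\mathcal{E}_{n}^0$ bound then follows at once: the low-derivative factor satisfies $\ns{\dt^k\q}_{2} \le \ns{\dt^k\q}_{2n-2k+1}$, valid since $2n-2k+1 \ge n+1 \ge 4$ when $n\ge 3$ and $k\le n/2$, and similarly $\ns{\dt^k\p_3\theta}_{2} \ls \ns{\dt^k\theta}_3 \ls \ns{\dt^k\eta}_{5/2} \le \ns{\dt^k\eta}_{2n-2k+3/2}$; both of these are controlled by $\mathcal{E}_{n}^0$ through its $q$- and $\eta$-contributions (including the $\ns{\q}_{2n}$ and the $\ns{\eta}_{2n}$ terms for the $k=0$ cases). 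The high-derivative factor $B$ is estimated in $L^2$ by the analogous $\mathcal{E}_{n}^0$ terms: $\ns{\dt^{j-k}\q}_0$ against $\ns{\dt^{j-k}\q}_{2n-2(j-k)+1}$ (or $\ns{\q}_{2n}$ if $j-k=0$), and $\ns{\dt^{j-k}\p_3\theta}_0 \ls \ns{\dt^{j-k}\eta}_{1/2}$ against the corresponding $\eta$-term.

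The $\mathcal{E}_{n}^0 \cdot \mathcal{D}_{n}^0$ bound is obtained identically, except the $L^2$-factor is matched against the dissipation functional \eqref{p_dissipation_def}. Concretely, $\ns{\dt^{j-k}\q}_0$ is dominated by $\ns{\q}_{2n}$, $\ns{\dt\q}_{2n-1}$, or $\ns{\dt^{j-k}\q}_{2n-2(j-k)+2}$ according to the value of $j-k$, and $\ns{\dt^{j-k}\p_3\theta}_0 \ls \ns{\dt^{j-k}\eta}_{1/2}$ is dominated by $\ns{\eta}_{2n-1/2}$ (when $\rj<0$; in the cases where this term is absent from $\mathcal{D}_{n}^0$, the $\mathcal{E}_{n}^0\cdot\mathcal{E}_{n}^0$ bound on these summands is used instead and the $\min$ reduces to $\mathcal{E}_{n}^0$), $\ns{\dt\eta}_{2n-1/2}$, or $\ns{\dt^{j-k}\eta}_{2n-2(j-k)+5/2}$ for $j-k\ge 2$. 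Summing over $k$ and $j$ yields the stated bound.

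The main (and essentially only) obstacle is the bookkeeping of Sobolev indices across the Leibniz sum, verifying in every case that the chosen $L^\infty$-factor has at least two remaining spatial derivatives available and that the $L^2$-factor norm is dominated by a corresponding term in the functional on the right-hand side. The hypothesis $n \ge N+2 \ge 5$ (in fact $n\ge 3$ already suffices for the generic term) gives enough slack in the indices for this to go through mechanically.
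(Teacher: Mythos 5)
Your proposal is the natural argument and is surely what the authors had in mind when they wrote that the proof is ``straightforward'': expand $\dt^j\Phi$ by Leibniz, put the factor with fewer time derivatives in $L^\infty$ via $H^2(\Omega)\hookrightarrow L^\infty(\Omega)$, convert $\theta$-factors to $\eta$-norms with Lemma \ref{Poi}, and match the $L^2$-factor against either $\se{n}^0$ or $\sd{n}^0$. The index bookkeeping you sketch does close: with $k\le j-k\le n$ one has $2n-2k+1\ge n+1$ and $2n-2k+3/2\ge n+3/2$, so the $H^2$ (resp. $H^{5/2}$) norms of the low-time-derivative factors are controlled by the energy, and the $L^2$ norms of the high-time-derivative factors are dominated by the listed $\q$- and $\eta$-terms of both functionals for every admissible number of time derivatives.

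The one genuine slip is your claim that the $k=0$ factors are controlled ``including the $\ns{\q}_{2n}$ and the $\ns{\eta}_{2n}$ terms'': the term $\ns{\eta}_{2n}$ enters $\se{n}^\sigma$ only with the prefactor $\mathcal{H}(-\rj)$, and likewise $\ns{\eta}_{2n-1/2}$ enters $\sd{n}^\sigma$ only with $\mathcal{H}(-\rj)$. Hence when $\rj>0$ and the surface tension is formally set to zero, \emph{neither} $\se{n}^0$ nor $\sd{n}^0$ controls $\eta$ with no time derivatives, and your fallback (``the min reduces to $\mathcal{E}_n^0$'') does not repair this, because $\mathcal{E}_n^0$ suffers from exactly the same absence. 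So your argument, as written, proves \eqref{Phe_0} in the case $\rj<0$ (where it is complete), while for $\rj>0$ the undifferentiated-in-time factors $\p_3\theta$ and $\theta$ must instead be estimated by the $\sigma$-weighted terms $\min\{1,\sigma_+,\sigma_--\sigma_c\}\ns{\eta}_{2n+1}$ in $\se{n}^\sigma$, i.e. one obtains the bound with $\se{n}^\sigma$, $\sd{n}^\sigma$ in place of the superscript-zero functionals. This is admittedly an imprecision already present in the lemma's statement, and it is harmless for the paper, since every application (Proposition \ref{Energy positivity pro} and Theorem \ref{dth}) invokes the estimate in the form $\sqrt{\se{n}^\sigma}\min\{\se{n}^\sigma,\sd{n}^\sigma\}$ within the stability regimes; but your write-up should either restrict the zero-superscript claim to $\rj<0$ or switch to the $\sigma$-superscripted functionals for the $\rj>0$ case rather than assert that $\ns{\eta}_{2n}$ sits inside $\mathcal{E}_n^0$.
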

\begin{proof}
The proof is straightforward, so we omit it for the sake of brevity.
\end{proof}

\section{Horizontal energy evolution}\label{sec_horiz}

In this section we derive estimates for temporal and horizontal spatial derivatives.  We assume throughout this section that the solutions obey the estimate $\gs(T) \le \delta$, where $\delta \in (0,1)$ is given in Lemma \ref{eta_small}.

\subsection{Energy evolution in geometric form}\label{stable1}

We now estimate the energy evolution of the pure temporal derivatives at the $2N$ level.
\begin{Proposition}\label{i_temporal_evolution 2N}
For $0\le j\le 2N$, we have
\begin{multline} \label{tem en 2N}
 \frac{d}{dt}\left(\int_\Omega( \bar{\rho} +  \q+\p_3\bar\rho\theta) J\abs{\partial_t^j u }^2+ h'(\bar{\rho})J \abs{\dt^j\q}^2
+\int_{\Sigma_+} \rho_1  g\abs{\dt^j \eta_+}^2+\sigma_+\abs{\nab_\ast \dt^j \eta_+}^2\right.
\\
 \left.+\int_{\Sigma_-}- \rj g\abs{\dt^j \eta_-}^2+ \sigma_-\abs{\nab_\ast \dt^j \eta_-}^2\right)
+\int_{\Omega}\frac{\mu}{2} \abs{\sgz \partial_t^j u}^2+\mu' \abs{\diverge \partial_t^j u}^2
\ls \ks \sqrt{\se{2N}^\sigma}\sd{2N}^\sigma.
\end{multline}
\end{Proposition}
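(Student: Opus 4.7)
The strategy is to apply the general energy identity of Proposition \ref{geo_en_evolve} to the triple $(Q,v,\zeta) := (\dt^j \q,\dt^j u,\dt^j \eta)$, and then to bound every nonlinear term on the right--hand side of \eqref{gee_01} by $\ks\sqrt{\se{2N}^\sigma}\sd{2N}^\sigma$.

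\textbf{Step 1 (application of the identity).} Applying $\dt^j$ to \eqref{geometric} gives \eqref{linear_geometric}, which is exactly of the form \eqref{gee_0} with forcing $(\mathfrak{F}^1,\mathfrak{F}^2,\mathfrak{F}^3,\mathfrak{F}^4)=(F^{1,j},F^{2,j},F^{3,j},F^{4,j})$.  Hence Proposition \ref{geo_en_evolve} produces an exact equality whose left--hand side is precisely (half of) the $d/dt$ quantity in \eqref{tem en 2N}, except that the geometric viscous dissipation reads $\int_\Omega \tfrac{\mu}{2} J\abs{\sgz_\a \dt^j u}^2+\mu' J\abs{\diva \dt^j u}^2$ rather than its constant--coefficient counterpart.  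Writing $\sgz_\a \dt^j u=\sgz \dt^j u+(\sgz_\a-\sgz)\dt^j u$ and $J=1+\p_3\theta$ and expanding, the discrepancy is a finite sum of terms linear in $J-1$ or $\a-I$ times quadratic expressions in $\sgz \dt^j u$ (or $\diverge \dt^j u$).  Since $\theta$ is the Poisson extension of $\eta$, Lemma \ref{Poi} and Sobolev embedding give $\norm{J-1}_{L^\infty}+\norm{\a-I}_{L^\infty}\ls \sqrt{\se{2N}^0}$, so transferring this discrepancy to the right--hand side costs only $\sqrt{\se{2N}^0}\sd{2N}^0\le\sqrt{\se{2N}^\sigma}\sd{2N}^\sigma$.

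\textbf{Step 2 (coefficient-derivative terms).}  In the first line of the right--hand side of \eqref{gee_01}, expand $\dt(J(\bar\rho+\q+\p_3\bar\rho\theta))$ and $h'(\bar\rho)\dt J$ by Leibniz.  Each resulting factor is bounded in $L^\infty$ by a constant times $\sqrt{\se{2N}^0}$ via $\norm{\dt J}_{L^\infty}\ls\norm{\dt\eta}_{5/2}$, Lemma \ref{Poi}, and Sobolev embedding.  The companion $L^2$ factor $\ns{\dt^j u}_0+\ns{\dt^j \q}_0$ is controlled by $\sd{2N}^0$: the terms $\ns{\dt^j u}_{4N-2j+1}$ and $\ns{\dt^j \q}_{4N-2j+2}$ (or $\ns{\q}_{4N}$, $\ns{\dt\q}_{4N-1}$ for small $j$) in $\sd{2N}^0$ dominate the $L^2$ norms for all $0\le j\le 2N$.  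This yields a bound $\ls\sqrt{\se{2N}^\sigma}\sd{2N}^\sigma$.

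\textbf{Step 3 (bulk and boundary forcing).}  For the bulk forcing integrals, Cauchy--Schwarz combined with Lemma \ref{p_F2N_estimates} gives
\begin{equation*}
\abs{\int_\Omega J(h'(\bar\rho)\dt^j \q\, F^{1,j}+\dt^j u\cdot F^{2,j})}\ls (\norm{\dt^j \q}_0+\norm{\dt^j u}_0)(\norm{F^{1,j}}_0+\norm{F^{2,j}}_0)\ls \ks\sqrt{\se{2N}^\sigma}\sd{2N}^\sigma,
\end{equation*}
using $\norm{F^{i,j}}_0\ls\sqrt{\ks\se{2N}^0 \sd{2N}^\sigma}$ and $\norm{\dt^j u}_0+\norm{\dt^j \q}_0\ls\sqrt{\sd{2N}^0}$.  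For the boundary forcing $\int_\Sigma -\dt^j u\cdot F^{3,j}$ we combine the trace inequality $\norm{\dt^j u}_{L^2(\Sigma)}\ls\norm{\dt^j u}_1\ls\sqrt{\sd{2N}^0}$ with $\norm{F^{3,j}}_0\ls\sqrt{\ks\se{2N}^0 \sd{2N}^\sigma}$ from Lemma \ref{p_F2N_estimates}.

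\textbf{Step 4 (kinematic forcing and surface tension).}  The terms $\int_{\Sigma_\pm}\rho_1 g\,\dt^j\eta\cdot F^{4,j}$ and $\int_{\Sigma_-}(-\rj g)\dt^j\eta\cdot F^{4,j}$ are controlled by $\norm{\dt^j \eta}_0 \norm{F^{4,j}}_0\ls\sqrt{\se{2N}^\sigma}\sqrt{\ks\se{2N}^0 \sd{2N}^\sigma}$.  The surface-tension coupling $\int_\Sigma \sigma\Delta_\ast\dt^j\eta\, F^{4,j}$ vanishes when $j=0$ (since $F^{4,0}=0$); for $1\le j\le 2N$, Cauchy--Schwarz yields $\sigma\norm{\dt^j\eta}_2\norm{F^{4,j}}_0$, where $\norm{\dt^j\eta}_2$ is dominated by the appropriate $\dt^j\eta$ norm in $\sd{2N}^\sigma$ (namely $\norm{\dt\eta}_{4N-1/2}$ for $j=1$ and $\norm{\dt^j\eta}_{4N-2j+5/2}$ for $j\ge 2$, both $\ge 2$ for $N\ge 3$), and the factor $\sigma$ is absorbed into $\ks$ via $\sigma\le\ks$.

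\textbf{Main obstacle.}  The delicate point is Step 4: the surface tension contribution $\sigma\Delta_\ast\dt^j\eta$ produces a factor outside the $\sigma$-weighted part of the energy, so one must carefully track that the corresponding $\dt^j\eta$-norm appears in the \emph{dissipation} (not just the energy) at the appropriate regularity level for every $j\in\{1,\dots,2N\}$, and that the bare $\sigma$ can be absorbed into a constant of the form $\ks$ while leaving the remaining $\se{2N}^\sigma$ and $\sd{2N}^\sigma$ factors sharp.  Assembling Steps 1--4 then produces \eqref{tem en 2N} after multiplying by $2$.
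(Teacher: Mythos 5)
Your proposal follows exactly the paper's strategy: invoke Proposition~\ref{geo_en_evolve} with $(Q,v,\zeta)=(\dt^j\q,\dt^j u,\dt^j\eta)$ and forcing $(F^{1,j},F^{2,j},F^{3,j},F^{4,j})$, replace the geometric viscous dissipation by its constant-coefficient counterpart at the cost of an error of order $\sqrt{\se{2N}^\sigma}\,\sd{2N}^\sigma$, and control all remaining terms via Lemma~\ref{p_F2N_estimates}, trace theory, and the $L^\infty$ bounds on $\dt J$ and $\a-I$.  Steps~1--3 are sound.

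However, Step~4 contains a genuine gap.  For the non-surface-tension boundary terms $\int_{\Sigma_+}\rho_1 g\,\dt^j\eta_+\,F^{4,j}_+$ and $\int_{\Sigma_-}(-\rj g)\dt^j\eta_-\,F^{4,j}_-$, you bound $\norm{\dt^j\eta}_0$ by $\sqrt{\se{2N}^\sigma}$, which (combined with $\norm{F^{4,j}}_0\ls\sqrt{\ks\,\se{2N}^0\sd{2N}^\sigma}$) yields a term comparable to $\sqrt{\ks}\,\se{2N}^\sigma\sqrt{\sd{2N}^\sigma}$.  This is \emph{not} $\ls\ks\sqrt{\se{2N}^\sigma}\,\sd{2N}^\sigma$: the ratio of the two is $\sqrt{\se{2N}^\sigma/\sd{2N}^\sigma}$, and there is no a priori bound $\se{2N}^\sigma\ls\sd{2N}^\sigma$ (indeed the paper's whole two-tier scheme is built around the fact that the dissipation is not coercive over the energy).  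A term of size $\se{2N}^\sigma\sqrt{\sd{2N}^\sigma}$ cannot be absorbed by the dissipation after integration in time, so the estimate is not of the required form.  The correct move --- exactly what you already do for the $\sigma\Delta_\ast\dt^j\eta$ contribution, and what the paper does --- is to control $\norm{\dt^j\eta}_0$ by the \emph{dissipation}: $\norm{\dt^j\eta}_0\ls\sqrt{\sd{2N}^\sigma}$ for $1\le j\le 2N$, using $\ns{\dt\eta}_{4N-1/2}$ in $\sd{2N}^\sigma$ when $j=1$ and $\ns{\dt^j\eta}_{4N-2j+5/2}$ when $2\le j\le 2N$; for $j=0$ there is nothing to bound since $F^{4,0}=0$.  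With this replacement the product becomes $\sqrt{\sd{2N}^\sigma}\cdot\sqrt{\ks\,\se{2N}^0\sd{2N}^\sigma}\ls\ks\sqrt{\se{2N}^\sigma}\,\sd{2N}^\sigma$, which is what the proposition requires.
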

\begin{proof}
Applying $\dt^j$ to \eqref{ns_geometric} leads to \eqref{linear_geometric}, which is of the same form as \eqref{gee_0} with $v = \dt^j u$, $Q = \dt^j q$, $\zeta = \dt^j \eta$, and $\mathfrak{F}^{k} = \mathcal{F}^{k,j}$ for $k=1,2,3$.  Then Proposition \ref{geo_en_evolve} yields the equation
\begin{multline}\label{identity1}
 \frac{1}{2}\frac{d}{dt}\left(\int_\Omega   ( \bar{\rho} +  \q+\p_3\bar\rho\theta)J\abs{\partial_t^j u }^2+h'(\bar{\rho})J \abs{\dt^j\q}^2+\int_{\Sigma_+} \rho_1  g\abs{\dt^j \eta_+}^2+\sigma_+\abs{\nab_\ast \dt^j \eta_+}^2\right.
\\ \left.+\int_{\Sigma_-}- \rj g\abs{\dt^j \eta_-}^2+ \sigma_-\abs{\nab_\ast \dt^j \eta_-}^2\right)+ \int_{\Omega}\frac{\mu}{2}J\abs{\sgz_\mathcal{A}\partial_t^j u}^2+\mu'J\abs{\diverge_\mathcal{A}\partial_t^j u}^2
\\  =\frac{1}{2} \int_\Omega \dt(J( \bar{\rho} +  \q+\p_3\bar\rho\theta)) \abs{\partial_t^j u }^2+h'(\bar{\rho})\dt J \abs{\dt^j\q}^2+\int_\Omega J(h'(\bar{\rho}) \dt^j\q   F^{1,j}+\partial_t^j u\cdot F^{2,j})
\\ +\int_{\Sigma}-\partial_t^j u\cdot F^{3,j}+\int_{\Sigma_+} \rho_1  g\dt^j \eta_+F_+^{4,j} - \int_{\Sigma_-}\rj g\dt^j \eta_-F_-^{4,j} -\int_\Sigma\sigma\Delta_\ast (\dt^j \eta) F^{4,j}.
\end{multline}

We now estimate the right hand side of \eqref{identity1} for $0\le j\le 2N$. For the first two terms, we may bound  as usual $\norm{\dt J}_{L^\infty}\ls \sqrt{ \mathcal{E}_{2N}^\sigma}$ and $\norm{\dt( J( \bar{\rho} +  \q+\p_3\bar\rho\theta))}_{L^\infty}\ls \sqrt{ \mathcal{E}_{2N}^\sigma}$ to have
 \begin{equation}\label{i_te_1}
\begin{split}
 &\frac{1}{2} \int_\Omega  \dt (J( \bar{\rho} +  \q+\p_3\bar\rho\theta))\abs{\partial_t^j u }^2+h'(\bar{\rho})\dt J \abs{\dt^j\q}^2
 \\&\quad\ls \sqrt{ \mathcal{E}_{2N}^\sigma}\left( \ns{\partial_t^j u}_0+ \ns{\partial_t^j \q}_0\right) \ls \sqrt{ \mathcal{E}_{2N}^\sigma}\mathcal{D}_{2N}^\sigma.
 \end{split}
 \end{equation}
By Lemma \ref{p_F2N_estimates}, we may bound the $F^{1,j}$ and $F^{2,j}$ terms as
\begin{multline}\label{i_te_2}
\int_\Omega J(h'(\bar{\rho}) \dt^j\q   F^{1,j}+\partial_t^j u\cdot F^{2,j}) \ls   \norm{\dt^j \q}_{0}   \norm{F^{1,j}}_0+ \norm{\dt^j u}_{0}   \norm{F^{2,j}}_0
\\
\ls \ks \sqrt{\sd{2N}^\sigma } \sqrt{\se{2N}^\sigma \sd{2N}^\sigma}.
\end{multline}
For the $F^{3,j}$ and $F^{4,j}$ terms, by Lemma \ref{p_F2N_estimates} and   trace theory, we have
\begin{equation}\label{i_te_3}
\begin{split}
 &\int_{\Sigma}-\partial_t^j u\cdot F^{3,j}+\int_{\Sigma_+} \rho_1  g\dt^j \eta_+F_+^{4,j}+\int_{\Sigma_-}-\rj g\dt^j \eta_-F_-^{4,j} -\int_\Sigma\sigma\Delta_\ast (\dt^j \eta) F^{4,j}
 \\&\quad\ls  \snormspace{\dt^{j} u}{0}{\Sigma} \norm{F^{3,j}}_{0} + \sigma\norm{\dt^{j} \eta}_{2} \norm{F^{4,j}}_{0} \\&\quad
\ls  \left( \norm{\dt^{j} u}_{1} + \norm{\dt^{j} \eta}_{2} \right)\ks \sqrt{\se{2N}^\sigma\sd{2N}^\sigma}
\ls \ks  \sqrt{\sd{2N}^\sigma } \sqrt{\se{2N}^\sigma \sd{2N}^\sigma}
.
\end{split}
\end{equation}

 Finally, we may argue similarly as in Proposition 4.3 of \cite{GT_per}, utilizing Lemma \ref{eta_small}, to get
\begin{equation} \label{i_te_4}
 \int_{\Omega}\frac{\mu}{2}
J\abs{\sgz_\mathcal{A}\partial_t^j u}^2+\mu'J\abs{\diverge_\mathcal{A}\partial_t^j u}^2
\ge  \int_{\Omega}\frac{\mu}{2}
 \abs{\sgz \partial_t^j u}^2+\mu' \abs{\diverge \partial_t^j u}^2-C\sqrt{ \mathcal{E}_{2N}^\sigma}\mathcal{D}_{2N}^\sigma.
 \end{equation}
We may then combine \eqref{i_te_1}--\eqref{i_te_4} to deduce \eqref{tem en 2N} from \eqref{identity1}.
\end{proof}

We then record a similar result at the $N+2$ level.
\begin{Proposition}\label{i_temporal_evolution N+2}
For $0\le j\le N+2$, we have
\begin{multline} \label{tem en N+2}
  \frac{d}{dt}\left(\int_\Omega ( \bar{\rho} +  \q+\p_3\bar\rho\theta) J\abs{\partial_t^j u }^2+h'(\bar{\rho})J \abs{\dt^j\q}^2 +\int_{\Sigma_+} \rho_1  g\abs{\dt^j \eta_+}^2+\sigma_+\abs{\nab_\ast \dt^j \eta_+}^2\right.
\\
 \left.+\int_{\Sigma_-}- \rj g\abs{\dt^j \eta_-}^2+ \sigma_-\abs{\nab_\ast \dt^j \eta_-}^2\right)  +\int_{\Omega}\frac{\mu}{2} \abs{\sgz \partial_t^j u}^2+\mu' \abs{\diverge \partial_t^j u}^2
\\
\ls \ks \sqrt{\se{2N}^\sigma}\sd{N+2}^\sigma.
 \end{multline}
\end{Proposition}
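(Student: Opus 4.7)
The plan is to mirror the proof of Proposition \ref{i_temporal_evolution 2N} almost verbatim, replacing $2N$ by $N+2$ in the energy and dissipation functionals and swapping Lemma \ref{p_F2N_estimates} for its lower-regularity counterpart Lemma \ref{p_FN+2_estimates}. The key observation is that the differentiated system \eqref{linear_geometric} is valid for \emph{every} $0 \le j \le 2N$, so in particular for $0 \le j \le N+2$, and it has precisely the form of \eqref{gee_0} required to apply the energy identity of Proposition \ref{geo_en_evolve} with $v = \dt^j u$, $Q = \dt^j q$, $\zeta = \dt^j \eta$, and forcing $\mathfrak{F}^{k} = F^{k,j}$.

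First I would write the identity \eqref{gee_01} for these choices of $v,Q,\zeta$. Then I would estimate the terms on the right-hand side one at a time. For the two terms involving $\dt(J(\bar\rho + \q + \p_3 \bar\rho \theta))$ and $h'(\bar\rho) \dt J$, I bound these coefficients in $L^\infty$ by $\sqrt{\mathcal{E}_{2N}^\sigma} \ls \sqrt{\ks}$, which absorbs into the $\ks$ prefactor, and control $\ns{\dt^j u}_0 + \ns{\dt^j q}_0 \ls \sd{N+2}^\sigma$, giving a bound of order $\sqrt{\se{2N}^\sigma}\,\sd{N+2}^\sigma$. For the bulk forcing terms $F^{1,j}$ and $F^{2,j}$, I apply Cauchy–Schwarz together with Lemma \ref{p_FN+2_estimates} to obtain
\begin{equation}
\int_\Omega J\bigl( h'(\bar\rho) \dt^j q\, F^{1,j} + \dt^j u \cdot F^{2,j}\bigr)
\ls \bigl( \norm{\dt^j q}_0 + \norm{\dt^j u}_0 \bigr)\sqrt{\ks \se{2N}^0 \sd{N+2}^0}
\ls \ks \sqrt{\se{2N}^\sigma}\, \sd{N+2}^\sigma.
\end{equation}

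For the boundary terms involving $F^{3,j}$ and $F^{4,j}$, I would use trace theory to estimate $\snormspace{\dt^j u}{0}{\Sigma} \ls \norm{\dt^j u}_1 \ls \sqrt{\sd{N+2}^\sigma}$ and the appropriate Sobolev norms of $\dt^j \eta$ (including the $\sigma \Delta_\ast \dt^j \eta$ contribution, which is controlled via the $\sigma^2 \ns{\dt^j \eta}_{\cdots}$ terms in $\sd{N+2}^\sigma$ at the cost of absorbing a $\sigma$ factor into $\ks$). Combined with Lemma \ref{p_FN+2_estimates}, these produce the same structural bound $\ks \sqrt{\se{2N}^\sigma}\, \sd{N+2}^\sigma$.

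Finally, to convert the viscous term with the $\mathcal{A}$–deformed symmetric gradient and Jacobian weight into the standard $\ns{\sgz \dt^j u}_0 + \ns{\diverge \dt^j u}_0$, I would invoke the smallness of $\eta$ provided by Lemma \ref{eta_small} and argue exactly as in Proposition 4.3 of \cite{GT_per}, picking up an error of size $\sqrt{\mathcal{E}_{2N}^\sigma}\,\mathcal{D}_{N+2}^\sigma \ls \sqrt{\se{2N}^\sigma}\,\sd{N+2}^\sigma$. Collecting the pieces and moving the viscous term to the left gives \eqref{tem en N+2}. The only mild subtlety, and the place where one must be careful, is making sure that when bounding the forcing terms one always puts the factor with more temporal derivatives into $L^2$ (controlled by $\sd{N+2}^\sigma$ or $\se{2N}^\sigma$ as appropriate) and the lower-order factor into $L^\infty$ via Sobolev embedding; Lemma \ref{p_FN+2_estimates} is tailored to exactly this splitting, so no genuine new difficulty arises beyond what was handled at the $2N$ level.
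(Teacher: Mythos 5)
Your proposal is correct and is essentially the paper's own argument: the paper proves Proposition \ref{i_temporal_evolution N+2} precisely by repeating the proof of Proposition \ref{i_temporal_evolution 2N} (i.e. applying Proposition \ref{geo_en_evolve} to \eqref{linear_geometric} with $v=\dt^j u$, $Q=\dt^j\q$, $\zeta=\dt^j\eta$), substituting Lemma \ref{p_FN+2_estimates} for Lemma \ref{p_F2N_estimates} and replacing $\sd{2N}^\sigma$ by $\sd{N+2}^\sigma$ in the coefficient and viscous-term estimates, exactly as you outline.
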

\begin{proof}
The proof proceeds along similar lines as the proof of  Proposition \ref{i_temporal_evolution 2N}, using instead Lemma \ref{p_FN+2_estimates} and replacing $\mathcal{D}_{2N}^\sigma$ by $\mathcal{D}_{N+2}^\sigma$ accordingly in the estimates \eqref{i_te_1} and \eqref{i_te_4}.
\end{proof}

\subsection{Energy evolution in the linear form}\label{stable2}

To derive the energy evolution of the mixed horizontal space-time derivatives of the solution we will use the linear formulation \eqref{ns_perturb}.

We now estimate the energy evolution of the mixed horizontal space-time derivatives at the $2N$ level.
 \begin{Proposition}\label{i_spatial_evolution 2N}
Let $\alpha\in \mathbb{N}^{1+2}$ so that $\al_0\le 2N-1$ and $|\alpha|\le 4N$. Then
\begin{equation}\label{energy 2N}
\begin{split}
& \frac{d}{dt}\left(\int_\Omega \bar{\rho}\abs{\pa^\al u}^2 +h'(\bar{\rho})\abs{ \pa^\al \q}^2+\int_{\Sigma_+} \rho_1  g\abs{\pa^\al \eta_+}^2+\sigma_+\abs{\nab_\ast \pa^\al \eta_+}^2\right.
\\&\quad\ \left. +\int_{\Sigma_-}- \rj g\abs{\pa^\al \eta_-}^2+ \sigma_-\abs{\nab_\ast \pa^\al \eta_-}^2\right)
+\int_\Omega \frac{\mu}{2} \abs{ \sgz\pa^\al u}^2+\mu' \abs{{\rm div }\pa^\al u}^2 \\
&\quad\lesssim \ks \sqrt{\mathcal{E}_{2N}^\sigma }\mathcal{D}_{2N}^\sigma
+ \ks\sqrt{ \mathcal{D}_{2N}^\sigma \mathcal{E}_{N+2}^\sigma\f}.
\end{split}
\end{equation}
 \end{Proposition}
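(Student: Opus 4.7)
The plan is to apply $\partial^\alpha$ directly to the linear formulation \eqref{ns_perturb}, rather than the geometric form, so that the constant-coefficient differential operators on the left-hand side commute with $\partial^\alpha$ and the boundary conditions are preserved. Since $\alpha\in\mathbb{N}^{1+2}$ involves only temporal and horizontal derivatives, the resulting system for $(\partial^\alpha \q,\partial^\alpha u,\partial^\alpha \eta)$ has exactly the form \eqref{lee_0} with $Q=\partial^\alpha\q$, $v=\partial^\alpha u$, $\zeta=\partial^\alpha\eta$ and forcing $\mathfrak{G}^k=\partial^\alpha G^k$ for $k=1,2,3,4$. Invoking Proposition \ref{lin_en_evolve} then yields an exact energy-dissipation identity whose left-hand side is the one appearing in \eqref{energy 2N}, while its right-hand side is an integral expression of the $\partial^\alpha G^k$'s paired against the appropriate derivatives of the solution.

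Next I would estimate the right-hand side of that identity by splitting into the generic cases and the borderline top-order cases. For the generic terms I plan to use Cauchy--Schwarz together with trace theory and the $2N$-level bounds \eqref{p_G_e_00} from Lemma \ref{p_G2N_estimates}, pairing $\pnorm{\partial^\alpha u}{2}$ or $\pnorm{\partial^\alpha \q}{2}$ (each bounded by $\sqrt{\sd{2N}^\sigma}$) against the $G^k$ norms, which produce a factor of $\ks\sqrt{\se{2N}^0 \sd{2N}^\sigma + \mathcal{E}_{N+2}^0 \f}$. The resulting cross terms give precisely $\ks\sqrt{\mathcal{E}_{2N}^\sigma}\sd{2N}^\sigma + \ks\sqrt{\sd{2N}^\sigma \mathcal{E}_{N+2}^\sigma\f}$, matching the claimed bound.

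The main obstacle is the top-order case $|\alpha|=4N$ (with $\alpha_0=0$), in which certain pieces of $\partial^\alpha G^{1,1}$ and $\partial^\alpha G^4$ carry one derivative more than what Lemma \ref{p_G2N_estimates} controls directly in $L^2$; in particular one would formally see $\nabla_\ast^{4N}\eta$ in $G^4$ and transport-type contributions $K\partial_t\theta\,\partial_3\partial^\alpha \q - u_j\a_{jk}\partial_k\partial^\alpha \q$ in $G^{1,1}$. These are precisely the situations for which Lemmas \ref{lemma8} and \ref{lemma7} were designed: the trick is to exploit the total-derivative structure $u\cdot\nabla_\ast|\partial^\alpha\eta|^2$ and $K\dt\theta\,\partial_3|\partial^\alpha \q|^2 - u_j\a_{jk}\partial_k|\partial^\alpha \q|^2$, integrate by parts, and use the boundary identities $u_j\a_{j3}=K\dt\theta$ on $\Sigma$ together with $u_-=0$ on $\Sigma_b$ to kill the boundary contributions, leaving a bulk term with a harmless $L^\infty$ coefficient of size $\sqrt{\se{2N}^\sigma}$. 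Applying \eqref{eta es}--\eqref{eta es2} to the pairing with $\partial^\alpha \eta$ (both with and without the $\sigma\Delta_\ast$ weight arising from surface tension) and \eqref{rho es} to the pairing with $\partial^\alpha \q$ produces exactly the two right-hand side contributions $\ks\sqrt{\se{2N}^\sigma}\sd{2N}^\sigma$ and $\ks\sqrt{\sd{2N}^\sigma \mathcal{E}_{N+2}^\sigma\f}$.

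Finally, as in Proposition \ref{i_temporal_evolution 2N}, the coefficients $\a$, $J$ in the bulk dissipation have to be replaced by their flat counterparts; the resulting error is of order $\sqrt{\mathcal{E}_{2N}^\sigma}\,\mathcal{D}_{2N}^\sigma$ thanks to Lemma \ref{eta_small} and Korn's inequality, so it is absorbed into the first term on the right-hand side of \eqref{energy 2N}. Collecting everything gives the desired bound, and the restriction $\alpha_0\le 2N-1$ ensures that all temporal derivatives falling on $\q$, $u$ and $\eta$ stay within the quantities controlled by $\sd{2N}^\sigma$ and $\se{2N}^\sigma$, so that no further interpolation or special treatment is needed.
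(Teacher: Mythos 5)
Your overall plan — apply $\partial^\alpha$ to the linear formulation \eqref{ns_perturb}, invoke Proposition \ref{lin_en_evolve}, estimate the resulting $G^k$-integrals with Lemma \ref{p_G2N_estimates}, and fall back on Lemmas \ref{lemma7} and \ref{lemma8} for the top-order $G^{1,1}$ and $G^4$ contributions — is the same route as the paper's proof. However, there are two issues.

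First, your final paragraph is spurious. You say that ``the coefficients $\a$, $J$ in the bulk dissipation have to be replaced by their flat counterparts.'' This is the situation in Proposition \ref{i_temporal_evolution 2N}, which uses the \emph{geometric} form \eqref{gee_0} and Proposition \ref{geo_en_evolve}, whose dissipation integral reads $\int_\Omega \frac{\mu}{2}J|\sgz_{\a}v|^2 + \mu' J|\diverge_{\a}v|^2$. Here you are (correctly) using the \emph{linear} form \eqref{lee_0} and Proposition \ref{lin_en_evolve}, and the dissipation in \eqref{lee_01} is already the constant-coefficient integral $\int_\Omega \frac{\mu}{2}|\sgz v|^2 + \mu'|\diverge v|^2$. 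No coefficient-replacement step exists or is needed, and Lemma \ref{eta_small} plays no role here. This is a conceptual confusion about what choosing the linear formulation buys.

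Second, there is a genuine gap in the top-order treatment of $G^2$ and $G^3$. When $|\alpha|=4N$, the estimate \eqref{p_G_e_00} of Lemma \ref{p_G2N_estimates} controls only $\ns{\bar{\nab}^{4N-1}G^2}_0$ and $\ns{\bar{\nab}_\ast^{4N-1}G^3}_{1/2}$, so $\ns{\pa^\alpha G^2}_0$ and $\ns{\pa^\alpha G^3}_{1/2}$ are \emph{not} directly available. You only identify $G^{1,1}$ and $G^4$ as problematic at top order and treat $G^2$, $G^3$ as ``generic,'' but your generic Cauchy--Schwarz step would require norms that the lemma does not provide. The paper resolves this by noting that $\alpha_0\le 2N-1$ forces $\alpha_1+\alpha_2\ge 2$, so one can write $\alpha=\beta+(\alpha-\beta)$ with $\beta\in\mathbb{N}^2$, $|\beta|=1$, integrate by parts to shift a spatial derivative onto $u$, and estimate $\norm{\pa^{\alpha+\beta}u}_0\norm{\pa^{\alpha-\beta}G^2}_0$ (and similarly for the boundary term with $G^3$, using $H^{-1/2}$--$H^{1/2}$ duality and trace theory). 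Without this maneuver the argument does not close at $|\alpha|=4N$.
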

\begin{proof}
Since all the three boundaries of $\Omega$ are horizontally flat we are free to take the horizontal space-time differential operator in the system \eqref{ns_perturb}. Applying $\partial^\alpha$  to $\eqref{ns_perturb}$, we arrive at an equation of the form \eqref{lee_0} with $v = \pa^\al u$, $Q = \pa^\al q$, $\zeta = \pa^\al \eta$, $\mathfrak{G}^k = \pa^\al G^k$ for $k=1,2,3$.  Then  Proposition \ref{lin_en_evolve} provides us with the equality
\begin{equation}\label{es_00}
\begin{split}
&\frac{1}{2}\frac{d}{dt}\left(\int_\Omega h'(\bar{\rho})\abs{ \pa^\al \q}^2+\bar{\rho}\abs{\pa^\al u}^2 +\int_{\Sigma_+} \rho_1  g\abs{\pa^\al \eta_+}^2+\sigma_+\abs{\nab_\ast \pa^\al \eta_+}^2\right.
\\&\qquad \left. +\int_{\Sigma_-}- \rj g\abs{\pa^\al \eta_-}^2+ \sigma_-\abs{\nab_\ast \pa^\al \eta_-}^2\right)
 +\int_\Omega \frac{\mu}{2} \abs{ \sgz\pa^\al u}^2+\mu' \abs{{\rm div }\pa^\al u}^2
\\&\qquad    = \int_\Omega h'(\bar{\rho})\pa^\al \q \pa^\al G^1 +\pa^\al u\cdot\pa^\al G^2+\int_\Sigma -\pa^\al u\cdot \pa^\al G^3
\\&\qquad\quad+ \int_{\Sigma_+} \rho_1  g \pa^\al \eta_+ \pa^\al G^4_++\int_{\Sigma_-}-\rj g\pa^\al \eta_- \pa^\al G^4_- -\int_\Sigma\sigma\Delta_\ast (\pa^\al \eta) \pa^\al G^4.
\end{split}
\end{equation}

 We now estimate the right hand side of \eqref{es_00}. We first estimate the $G^2,G^3,G^4$ terms. We assume initially that $|\al|\le 4N-1$. Then by the estimates \eqref{p_G_e_00} of Lemma \ref{p_G2N_estimates}, we have
\begin{equation}\label{gg1}
 \left|\int_\Omega  \pa^\al u\cdot\pa^\al G^2\right| \le  \norm{\pa^\al u}_0\norm{\pa^\al G^2}_0 \lesssim \sqrt{\mathcal{D}_{2N}^\sigma}\sqrt{\ks \mathcal{E}_{2N}^\sigma \mathcal{D}_{2N}^\sigma + \mathcal{E}_{N+2}^\sigma\f}.
\end{equation}
Similarly,  by the estimates \eqref{p_G_e_00} of Lemma \ref{p_G2N_estimates} along with  trace theory, we have
\begin{equation}
\begin{split}
 \left| \int_\Sigma \pa^\al u\cdot \pa^\al G^3 \right|&\le  \norm{\pa^\al u}_{H^0(\Sigma)}  \norm{\pa^\al G^3}_0 \ls  \norm{\pa^\al u}_{1}  \norm{\pa^\al G^3}_0
\\&\lesssim \sqrt{\mathcal{D}_{2N}^\sigma}\sqrt{\ks \mathcal{E}_{2N}^\sigma\mathcal{D}_{2N}^\sigma+\mathcal{E}_{N+2}^\sigma\f}
 \end{split}
\end{equation}
and
\begin{equation}
\begin{split}
 &\left|\int_{\Sigma_+} \rho_1  g \pa^\al \eta_+ \pa^\al G^4_++\int_{\Sigma_-}-\rj g\pa^\al \eta_- \pa^\al G^4_- -\int_\Sigma\sigma\Delta_\ast (\pa^\al \eta) \pa^\al G^4\right|
\\&\quad\lesssim\left(\norm{\pa^\al \eta}_0+\sigma\norm{\pa^\al \eta}_2\right)\norm{\pa^\al G^4}_0\lesssim \ks \sqrt{\mathcal{D}_{2N}^\sigma}\sqrt{\mathcal{E}_{2N}^\sigma\mathcal{D}_{2N}^\sigma+\mathcal{E}_{N+2}^\sigma\f}.
\end{split}
\end{equation}

Now we assume that $|\al|=4N$. We first estimate the $G^2,G^3$ terms. Since $\al_0\le 2N-1$, then $\pa^\al$ involves at least two spatial derivatives, we may write $\al=\beta+(\al-\beta)$ for some $\beta\in\mathbb{N}^2$ with $|\beta|=1$. We then integrate by parts and use the estimates \eqref{p_G_e_00} of Lemma \ref{p_G2N_estimates} to have
\begin{equation}
\begin{split}
 \left|\int_\Omega  \pa^\al u\cdot\pa^\al G^2\right|
&=\left|\int_\Omega  \pa^{\al+\beta} u\cdot\pa^{\al-\beta} G^2\right| \lesssim \norm{\pa^{\al+\beta}u}_{0}\norm{\pa^{\al-\beta} G^2}_{0}
\\&\lesssim \norm{\pa^\al u}_{1}\norm{\bar{\na}_{\ast}^{4N-1} G^2}_{0}\lesssim \sqrt{\mathcal{D}_{2N}^\sigma}\sqrt{\ks \mathcal{E}_{2N}^\sigma\mathcal{D}_{2N}^\sigma+\mathcal{E}_{N+2}^\sigma\f}.
\end{split}
\end{equation}
Similarly, by using additionally  trace theory, we have
\begin{equation}
\begin{split}
 \left|\int_\Sigma  \pa^\al u\cdot\pa^\al G^3\right|
&=\left|\int_\Sigma  \pa^{\al+\beta}  u\cdot\pa^{\al-\beta} G^3\right|\lesssim \norm{\pa^{\al+\beta} u}_{H^{-1/2}(\Sigma)}\norm{\pa^{\al-\beta} G^3}_{1/2}
\\&\lesssim \norm{\pa^{\al } u}_{H^{ 1/2}(\Sigma)}\norm{\bar{\na}_{\ast}^{\,\,4N-1} G^3}_{1/2}\lesssim \norm{\pa^\al u}_{1}\norm{\bar{\na}_{\ast}^{4N-1} G^3}_{1/2}\\&\lesssim \sqrt{\mathcal{D}_{2N}^\sigma}\sqrt{\ks \mathcal{E}_{2N}^\sigma\mathcal{D}_{2N}^\sigma+\mathcal{E}_{N+2}^\sigma\f}.
\end{split}
\end{equation}
We then estimate the $G^4$ term,  splitting to two cases: $\al_0\ge 1$ and $\al_0=0$. If $\al_0\ge 1$, then $\pa^\al$ involves at least one temporal derivative so that $\norm{\pa^\al \eta}_{3/2}\le \norm{\dt^{\al_0} \eta}_{4N-2\al_0+3/2}\le \mathcal{D}_{2N}^0$. This together with the estimates \eqref{p_G_e_00} of Lemma \ref{p_G2N_estimates} implies
\begin{equation}
\begin{split}
 &\left| \int_{\Sigma_+} \rho_1  g \pa^\al \eta_+ \pa^\al G^4_++\int_{\Sigma_-}-\rj g\pa^\al \eta_- \pa^\al G^4_- -\int_\Sigma\sigma\Delta_\ast (\pa^\al \eta) \pa^\al G^4\right|
\\&\quad\lesssim\left(\norm{\pa^\al \eta}_{0}+\sigma\norm{\pa^\al \eta}_{3/2}\right)\norm{\pa^\al G^4}_{1/2}
\lesssim \ks\sqrt{\mathcal{D}_{2N}^\sigma}\sqrt{\mathcal{E}_{2N}^\sigma\mathcal{D}_{2N}^\sigma+\mathcal{E}_{N+2}^\sigma\f}.
\end{split}
\end{equation}
If $\al_0=0$, we must resort to the special estimates \eqref{eta es}--\eqref{eta es2} of Lemma \ref{lemma8} to have
\begin{equation}
\begin{split}
 &\left| \int_{\Sigma_+} \rho_1  g \pa^\al \eta_+ \pa^\al G^4_++\int_{\Sigma_-}-\rj g\pa^\al \eta_- \pa^\al G^4_- -\int_\Sigma\sigma\Delta_\ast \pa^\al \eta \pa^\al G^4\right|
\\&\quad \lesssim
\ks\sqrt{\mathcal{D}_{2N}^\sigma}\sqrt{\ks \mathcal{E}_{2N}^\sigma\mathcal{D}_{2N}^\sigma+\mathcal{E}_{N+2}^\sigma\f}.
 \end{split}
\end{equation}

We now turn back to estimate the $G^1$ term, and we recall $G^1=G^{1,1}+G^{1,2}$. For the $G^{1,2}$ part, it follows directly from the estimates \eqref{p_G_e_00} of Lemma \ref{p_G2N_estimates} that
\begin{equation}\label{es09}
 \left|\int_\Omega h'(\bar{\rho}) \pa^\al \q \pa^\al G^{1,2} \right|\ls  \norm{\pa^\al \q}_{0}\norm{\pa^\al G^{1,2}}_{0} \lesssim \sqrt{\mathcal{D}_{2N}^\sigma}\sqrt{\mathcal{E}_{2N}^\sigma\mathcal{D}_{2N}^\sigma+\mathcal{E}_{N+2}^\sigma\f}.
\end{equation}
Now for the $G^{1,1}$ term we must split to two cases: $\al_0\ge 1$ and $\al_0=0$. If $\al_0\ge 1$, then by the estimates \eqref{p_G_e_00} of Lemma \ref{p_G2N_estimates}, we have
\begin{equation}
 \left|\int_\Omega h'(\bar{\rho}) \pa^\al \q \pa^\al G^{1,1} \right|\ls  \norm{\pa^\al \q}_{0}\norm{\pa^\al G^{1,1}}_{0}\lesssim \sqrt{\mathcal{D}_{2N}^\sigma}\sqrt{\mathcal{E}_{2N}^\sigma\mathcal{D}_{2N}^\sigma+\mathcal{E}_{N+2}^\sigma\f}.
\end{equation}
If $\al_0=0$, we must resort to the special estimates \eqref{rho es} with $f(\bar\rho)=h'(\bar{\rho})$ of Lemma \ref{lemma7} to have
\begin{equation}\label{gg10}
 \left|\int_\Omega h'(\bar{\rho}) \pa^\al \q \pa^\al G^{1,1} \right| \lesssim \sqrt{\mathcal{D}_{2N}^\sigma}\sqrt{\mathcal{E}_{2N}^\sigma\mathcal{D}_{2N}^\sigma+\mathcal{E}_{N+2}^\sigma\f}.
\end{equation}

Hence, in light of \eqref{gg1}--\eqref{gg10}, we deduce from \eqref{es_00} that
\begin{equation}\label{gg11}
\begin{split}
&\frac{1}{2}\frac{d}{dt}\left(\int_\Omega \bar{\rho}\abs{\pa^\al u}^2 +h'(\bar{\rho})\abs{ \pa^\al \q}^2+\int_{\Sigma_+} \rho_1  g\abs{\pa^\al \eta_+}^2+\sigma_+\abs{\nab_\ast \pa^\al \eta_+}^2\right.
\\&\qquad \left. +\int_{\Sigma_-}- \rj g\abs{\pa^\al \eta_-}^2+ \sigma_-\abs{\nab_\ast \pa^\al \eta_-}^2\right)
 +\int_\Omega \frac{\mu}{2} \abs{ \mathbb{D}\pa^\al u}^2+\mu' \abs{{\rm div }\pa^\al u}^2
 \\&\qquad \lesssim \ks \sqrt{\mathcal{D}_{2N}^\sigma}\sqrt{\mathcal{E}_{2N}^\sigma\mathcal{D}_{2N}^\sigma+\mathcal{E}_{N+2}^\sigma\f}
 \\&\qquad  \lesssim \ks \sqrt{\mathcal{E}_{2N}^\sigma }\mathcal{D}_{2N}^\sigma + \ks\sqrt{ \mathcal{D}_{2N}^\sigma \mathcal{E}_{N+2}^\sigma\f},
\end{split}
\end{equation}
which is \eqref{energy 2N}.
\end{proof}

We then record a similar result at the $N+2$ level.
\begin{Proposition}\label{i_spatial_evolution N+2}
Let $\alpha\in \mathbb{N}^{1+2}$ so that $\al_0\le N+1$ and $|\alpha|\le 2(N+2)$. Then
\begin{equation}\label{energy N+2}
\begin{split}
&  \frac{d}{dt}\left(\int_\Omega \bar{\rho}\abs{\pa^\al u}^2 +h'(\bar{\rho})\abs{ \pa^\al \q}^2+\int_{\Sigma_+} \rho_1  g\abs{\pa^\al \eta_+}^2+\sigma_+\abs{\nab_\ast \pa^\al \eta_+}^2\right.
\\&\quad\ \left. +\int_{\Sigma_-}- \rj g\abs{\pa^\al \eta_-}^2+ \sigma_-\abs{\nab_\ast \pa^\al \eta_-}^2\right)
 +\int_\Omega \frac{\mu}{2} \abs{ \sgz\pa^\al u}^2+\mu' \abs{{\rm div }\pa^\al u}^2\\
&\quad \lesssim \ks\sqrt{\mathcal{E}_{2N}^\sigma }\mathcal{D}_{N+2}^\sigma.
\end{split}
\end{equation}
 \end{Proposition}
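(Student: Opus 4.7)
The plan is to argue along exactly the same lines as Proposition \ref{i_spatial_evolution 2N}, but working at the lower regularity level $N+2$ and using the corresponding nonlinear bounds from Lemma \ref{p_GN+2_estimates} in place of Lemma \ref{p_G2N_estimates}. Because Lemma \ref{p_GN+2_estimates} contains no $\f$ term, the final estimate is correspondingly cleaner: the $\sqrt{\sd{2N}^\sigma \se{N+2}^\sigma \f}$ contribution that appeared in Proposition \ref{i_spatial_evolution 2N} simply does not arise.

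First I would apply the horizontal space-time operator $\partial^\alpha$ with $\alpha\in\mathbb{N}^{1+2}$, $\alpha_0\le N+1$, $\abs{\alpha}\le 2(N+2)$ to the linear perturbation system \eqref{ns_perturb}. Since $\Sigma_\pm$ and $\Sigma_b$ are horizontally flat, this preserves the boundary conditions and yields a system of exactly the form \eqref{lee_0} with $v=\partial^\alpha u$, $Q=\partial^\alpha\q$, $\zeta=\partial^\alpha\eta$, and forcings $\mathfrak{G}^k=\partial^\alpha G^k$. Proposition \ref{lin_en_evolve} then produces the identity whose left-hand side matches the left-hand side of \eqref{energy N+2}, so everything reduces to bounding the six interaction integrals on the right of \eqref{lee_01}.

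Next I would estimate these interactions by splitting on the order of $\alpha$. When $\abs{\alpha}\le 2(N+2)-1$, I would bound the bulk and surface integrals directly by Cauchy--Schwarz, using trace theory on the surface terms, together with the $L^2$- and $H^{1/2}$-level bounds \eqref{p_G_e_h_00} from Lemma \ref{p_GN+2_estimates}; the norms of $\partial^\alpha u$, $\partial^\alpha \q$, $\partial^\alpha\eta$, and $\sigma\nabla_\ast^2\partial^\alpha\eta$ that appear are all absorbed (modulo factors of $\ks$) into $\sqrt{\sd{N+2}^\sigma}$. When $\abs{\alpha}=2(N+2)$, the constraint $\alpha_0\le N+1$ forces $\alpha_1+\alpha_2\ge 2$, so I would write $\alpha=\beta+(\alpha-\beta)$ with $\beta$ a pure horizontal index of order one and integrate by parts to throw one derivative onto the nonlinearity; the resulting norms $\bar{\nabla}_\ast^{2(N+2)-1}G^k$ are then controlled by \eqref{p_G_e_h_0}. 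The product is again bounded by $\ks\sqrt{\se{2N}^\sigma}\sd{N+2}^\sigma$.

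The only subtlety, as in Proposition \ref{i_spatial_evolution 2N}, is the top-order pure-spatial case $\alpha_0=0$, $\abs{\alpha}=2(N+2)$, for the $G^1$ and $G^4$ integrals, where brute-force Cauchy--Schwarz fails on the transport pieces. For $G^1$ I would split $G^1=G^{1,1}+G^{1,2}$; the $G^{1,2}$ piece is handled directly by \eqref{p_G_e_h_0}, while for $G^{1,1}$ I would repeat the total-derivative trick of Lemma \ref{lemma7}, using that $u\cdot\mathcal{A}\cdot e_3=K\p_t\theta$ on $\Sigma$ and $u_-=0$ on $\Sigma_b$ to kill boundary terms after integration by parts of $\p_3\abs{\partial^\alpha\q}^2$ and $\p_k\abs{\partial^\alpha\q}^2$. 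For $G^4$ I would analogously mimic Lemma \ref{lemma8}, integrating $\partial^\alpha\eta\,\nabla_\ast\partial^\alpha\eta\cdot u$ by parts to produce $\abs{\partial^\alpha\eta}^2\,\diverge_\ast u$. The main obstacle is thus the same as at the $2N$ level, namely recognizing the total-derivative structure of the leading-order transport terms; but since no $\f$ appears in Lemma \ref{p_GN+2_estimates}, the outputs collapse directly into $\ks\sqrt{\se{2N}^\sigma}\sd{N+2}^\sigma$, giving \eqref{energy N+2} after summation.
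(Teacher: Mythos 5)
Your outline of the argument is broadly right: apply $\partial^\alpha$, invoke Proposition \ref{lin_en_evolve}, bound the resulting interaction integrals. But two things need fixing.

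First, a reference error that would actually break the estimate: after integrating by parts in the $\abs{\alpha}=2(N+2)$ case, you claim the norms $\bar{\nab}_\ast^{2(N+2)-1}G^k$ are controlled by \eqref{p_G_e_h_0}. They are not -- \eqref{p_G_e_h_0} bounds $\bar{\nab}^{2(N+2)-2}G^2$ and $\bar{\nab}_\ast^{2(N+2)-2}G^3$, one order short of what you produce after IBP, and it bounds them by $\ks\,\se{2N}^0\se{N+2}^0$, an energy--energy product. You would end up with a term of size $\ks\sqrt{\sd{N+2}^\sigma}\sqrt{\se{2N}^0\se{N+2}^0}$, which is not absorbable into $\ks\sqrt{\se{2N}^\sigma}\sd{N+2}^\sigma$. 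The estimate you need is \eqref{p_G_e_h_00}, which controls $\bar{\nab}^{2(N+2)-1}G^2$, $\bar{\nab}_\ast^{2(N+2)-1}G^3$, $\bar{\nab}^{2(N+2)}G^1$, and $\bar{\nab}_\ast^{2(N+2)}G^4$, all by $\ks\,\se{2N}^0\sd{N+2}^0$; this is precisely the substitution the paper makes.

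Second, and more substantively: your claim that ``the main obstacle is thus the same as at the $2N$ level'' is incorrect, and the total-derivative arguments of Lemmas \ref{lemma7} and \ref{lemma8} are not needed here. At the $2N$ level those tricks are forced on you because \eqref{p_G_e_00} only controls $\bar{\nab}^{4N-1}G^{1,1}$ and $\bar{\nab}_\ast^{4N-1}G^4$, one order short of the pure-spatial top-order term $\abs{\alpha}=4N$, $\alpha_0=0$. At the $N+2$ level the corresponding estimate \eqref{p_G_e_h_00} controls the \emph{full} orders $\bar{\nab}^{2(N+2)}G^1$ and $\bar{\nab}_\ast^{2(N+2)}G^4$, because the higher-order factor in each product can now be estimated in $\se{2N}^0$ rather than $\sd{2N}^\sigma$, leaving enough room. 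So for $\alpha_0=0$, $\abs{\alpha}=2(N+2)$, direct Cauchy--Schwarz already closes, and you need not prove (or even invoke) $N+2$-level analogues of Lemmas \ref{lemma7} and \ref{lemma8}. Your route would work if carried out carefully, but it imports machinery that the lower regularity level renders unnecessary; the correct and shorter proof is simply the $2N$ proof with \eqref{p_G_e_00} replaced by \eqref{p_G_e_h_00} and the special-structure lemmas dropped.
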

\begin{proof}
The proof is the same as that of Proposition \ref{i_spatial_evolution 2N} except that we instead use  the estimates
\eqref{p_G_e_h_00} of Lemma \ref{p_GN+2_estimates}.
\end{proof}

\subsection{Energy positivity}

We will now verify the key issue that the energy expressions in the previous two subsections are positive.

We first prove the following lemma that provides the value of the critical surface tension value $\sigma_c$ defined by \eqref{sigma_c}.

\begin{lemma}\label{critical lemma}
Suppose that $\rj \ge 0$, which means that $\sigma_c$, defined by \eqref{sigma_c}, is non-negative.  If $\sigma_- > \sigma_c$, then the following hold for all $\zeta$ satisfying  $\int_{\mathrm{T}^2}\zeta=0$.
\begin{enumerate}
\item We have the estimate
\begin{equation}\label{criticals}
(\sigma_- - \sigma_c)\norm{\zeta}_1^2 \ls \sigma_-\norm{\nabla_\ast \zeta}_0^2-\rj g\norm{\zeta}_0^2.
\end{equation}
\item If $\zeta$ satisfies
\begin{equation}\label{tequa}
-\sigma_-\Delta_\ast \zeta-\rj g\zeta=\varphi\text{ on }\mathrm{T}^2,
\end{equation}
then we have for $r\ge 2$,
\begin{equation}\label{elliptt}
(\sigma_- - \sigma_c)\norm{\zeta}_{r} \lesssim \norm{\varphi}_{r-2}.
\end{equation}
\end{enumerate}
\end{lemma}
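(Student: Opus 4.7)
\medskip

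\textbf{Proof proposal.}  The whole argument hinges on a single spectral fact: on $\mathrm{T}^2 = (2\pi L_1\mathbb{T})\times(2\pi L_2 \mathbb{T})$, the smallest nonzero eigenvalue of $-\Delta_\ast$ is $\lambda_1 = 1/\max\{L_1^2,L_2^2\}$, attained by the Fourier mode with the ``longest'' period in the longer direction.  This value is precisely what makes $\sigma_c$ critical: by the definition \eqref{sigma_c}, $\sigma_c \lambda_1 = \rj g$, so the quadratic form $\sigma_-\|\nabla_\ast\zeta\|_0^2 - \rj g\|\zeta\|_0^2$ on the mean-zero subspace is positive definite exactly when $\sigma_- > \sigma_c$.

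For part (1), I would expand $\zeta$ in the Fourier basis $e^{i(n_1 x_1/L_1 + n_2 x_2/L_2)}$.  The hypothesis $\int_{\mathrm{T}^2}\zeta=0$ kills the $(0,0)$-mode, so every surviving mode has $|\xi_k|^2 := (n_1/L_1)^2 + (n_2/L_2)^2 \ge \lambda_1$.  Parseval then gives the sharp Poincar\'e inequality $\lambda_1\|\zeta\|_0^2 \le \|\nabla_\ast \zeta\|_0^2$, which rearranges to $\rj g\|\zeta\|_0^2 \le \sigma_c\|\nabla_\ast\zeta\|_0^2$.  Hence
\begin{equation*}
\sigma_-\|\nabla_\ast\zeta\|_0^2 - \rj g\|\zeta\|_0^2 \ge (\sigma_- - \sigma_c)\|\nabla_\ast\zeta\|_0^2,
\end{equation*}
and Poincar\'e once more gives $\|\zeta\|_1^2 = \|\zeta\|_0^2 + \|\nabla_\ast\zeta\|_0^2 \le (1 + 1/\lambda_1)\|\nabla_\ast\zeta\|_0^2$.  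Combining these two inequalities yields \eqref{criticals} with a constant depending only on $L_1,L_2$.

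For part (2), Fourier-transforming the equation \eqref{tequa} gives the mode-by-mode identity $(\sigma_- |\xi_k|^2 - \rj g)\hat\zeta(k) = \hat\varphi(k)$.  The zero-mean hypothesis enforces $\hat\zeta(0) = 0$ (and forces $\hat\varphi(0)=0$ when $\rj > 0$).  For $k \ne 0$ I would write $|\xi_k|^2 = \lambda_1 + s$ with $s \ge 0$, so that the eigenvalue becomes
\begin{equation*}
\sigma_-|\xi_k|^2 - \rj g = (\sigma_- - \sigma_c)\lambda_1 + \sigma_- s > 0,
\end{equation*}
and a direct check shows $(\sigma_- - \sigma_c)(1 + |\xi_k|^2) \le C\bigl((\sigma_- - \sigma_c)\lambda_1 + \sigma_- s\bigr)$ with a constant $C = C(L_1,L_2)$ independent of $k$ and of $\sigma_-$.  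Squaring, multiplying by $(1+|\xi_k|^2)^{r-2}|\hat\varphi(k)|^2$, and summing over $k\neq 0$ gives $(\sigma_- - \sigma_c)^2 \|\zeta\|_r^2 \lesssim \|\varphi\|_{r-2}^2$, which is \eqref{elliptt}.

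There is no genuine obstacle here; the lemma is essentially the spectral theorem for the self-adjoint operator $-\sigma_-\Delta_\ast - \rj g$ on the mean-zero subspace of $L^2(\mathrm{T}^2)$, and $\sigma_- > \sigma_c$ is exactly the condition that this operator is positive with spectral gap bounded below by $(\sigma_- - \sigma_c)\lambda_1$.  The only point requiring mild care is tracking the dependence of the constant on $\sigma_-$ in (2): one needs the numerator--denominator comparison to hold with a constant independent of $\sigma_-$ so that a single factor of $(\sigma_- - \sigma_c)$ ends up on the left-hand side of the estimate, as stated.
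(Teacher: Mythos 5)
Your proposal is correct and follows essentially the same route as the paper: Fourier expansion with the sharp Poincaré constant $1/\max\{L_1^2,L_2^2\}$ for part (1), and the mode-by-mode lower bound on $\sigma_-|\xi|^2-\rj g$ (with a constant independent of $\sigma_-$) for part (2), which is the paper's estimate $\sigma_-|n|^2-\rj g\ge(\sigma_--\sigma_c)|n|^2$ in slightly different bookkeeping. The only blemishes are cosmetic: the phrase ``multiplying by $(1+|\xi_k|^2)^{r-2}|\hat\varphi(k)|^2$'' should read ``multiplying by $(1+|\xi_k|^2)^{r-2}$,'' and $\hat\varphi(0)=0$ follows for all $\rj\ge 0$ once $\hat\zeta(0)=0$, not only for $\rj>0$.
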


\begin{proof}
Note first that
\begin{equation}\label{cl_min}
 \min_{n \in (L_1^{-1}\mathbb{Z})\times (L_2^{-1}\mathbb{Z}) \backslash \{0\}} \abs{n}^2 = \min\{L_1^{-2},L_2^{-2}\}  = \frac{1}{\max\{L_1^2,L_2^2\}}.
\end{equation}
Then since $\int_{\mathrm{T}^2}\zeta=0$, i.e. $\hat{\zeta}(0)=0$, we may use the Parseval theorem to prove Poincar\'e's inequality with the precise constant:
\begin{equation}\label{cl_poin}
\begin{split}
\norm{\nabla_\ast\zeta}_0^2& =\sum_{n\in (L_1^{-1}\mathbb{Z})\times (L_2^{-1}\mathbb{Z})\backslash\{0\}} |n|^2|\hat{\zeta}(n)|^2
\\&\ge \frac{1}{\max\{L_1^2,L_2^2\}}  \sum_{n\in (L_1^{-1}\mathbb{Z}) \times (L_2^{-1}\mathbb{Z})\backslash\{0\}} |\hat{\zeta}(n)|^2=\frac{1}{\max\{L_1^2,L_2^2\}}\norm{ \zeta }_0^2.
\end{split}
\end{equation}
This inequality is clearly sharp.

From \eqref{cl_poin} we see that
\begin{equation}\label{ineq}
\begin{split}
\sigma_-\norm{\nabla_\ast \zeta}_0^2-\rj g\norm{\zeta}_0^2 &\ge (\sigma_--\rj g\max\{L_1^2,L_2^2\})\norm{\nabla_\ast \zeta}_0^2
\\&=(\sigma_- -\sigma_c) \norm{\nabla_\ast \zeta}_0^2.
\end{split}
\end{equation}
Combining \eqref{cl_poin} with \eqref{ineq} then yields  \eqref{criticals}.

To prove \eqref{elliptt}, we note that \eqref{tequa} implies that the Fourier coefficients of $\zeta$ and $\varphi$ satisfy
\begin{equation}
 (\sigma_- \abs{n}^2 - \rj g) \hat{\zeta}(n) = \hat{\varphi}(n) \text{ for all } n \in (L_1^{-1}\mathbb{Z})\times (L_2^{-1}\mathbb{Z}),
\end{equation}
which in particular implies that $\hat{\varphi}(0)=0$.   Again \eqref{cl_min} implies that
\begin{equation}
 \sigma_- \abs{n}^2 - \rj g =  \sigma_- \abs{n}^2 - \frac{\sigma_c}{\max\{L_1^2,L_2^2\}} \ge (\sigma_- - \sigma_c) \abs{n}^2 \ge 0.
\end{equation}
Then
\begin{equation}
 (\sigma_- - \sigma_c)^2 \abs{n}^4 \abs{\hat{\zeta}(n)}^2 \le \abs{(\sigma_- \abs{n}^2 - \rj g) \hat{\zeta}(n)}^2 = \abs{\hat{\varphi}(n)}^2,
\end{equation}
and hence
\begin{multline}
 (\sigma_- - \sigma_c)^2 \ns{\zeta}_{r} \ls (\sigma_- - \sigma_c)^2  \sum_{n\in (L_1^{-1}\mathbb{Z})\times (L_2^{-1}\mathbb{Z})\backslash\{0\}} (1+ \abs{n}^2)^r \abs{\hat{\zeta}(n)}^2 \\
 \le \sum_{n\in (L_1^{-1}\mathbb{Z})\times (L_2^{-1}\mathbb{Z})\backslash\{0\}} \frac{(1+ \abs{n}^2)^r}{\abs{n}^4} \abs{\hat{\varphi}(n)}^2 \ls \sum_{n\in (L_1^{-1}\mathbb{Z})\times (L_2^{-1}\mathbb{Z})\backslash\{0\}} (1+ \abs{n}^2)^{r-2} \abs{\hat{\varphi}(n)}^2 \\
\ls \ns{\varphi}_{r-2}.
\end{multline}

\end{proof}

We now show the following positivity:

\begin{Proposition}\label{Energy positivity pro}
Let $\alpha\in \mathbb{N}^{1+2}$ so that $|\alpha|\le 2n$ with $n=2N$ or $n=N+2$. If $\rj < 0$, then
\begin{equation}\label{epp_0}
\begin{split}
&\int_\Omega h'(\bar{\rho})\abs{ \pa^\al \q}^2 +\int_{\Sigma_+} \rho_1  g\abs{\pa^\al \eta_+}^2+\sigma_+\abs{\nab_\ast \pa^\al \eta_+}^2+\int_{\Sigma_-}- \rj g\abs{\pa^\al \eta_-}^2 +\sigma_-\abs{\nab_\ast \pa^\al \eta_-}^2
\\&\quad\gss \norm{\pa^\al \q}_0^2+\norm{\pa^\al\eta}_0^2+\sigma \norm{\nab_\ast \pa^\al\eta}_0^2.
\end{split}
\end{equation}
If $\rj \ge 0$ then
\begin{equation}\label{Energy positivity inequality}
\begin{split}
&\int_\Omega h'(\bar{\rho})\abs{ \pa^\al \q}^2 +\int_{\Sigma_+} \rho_1  g\abs{\pa^\al \eta_+}^2+\sigma_+\abs{\nab_\ast \pa^\al \eta_+}^2+\int_{\Sigma_-}- \rj g\abs{\pa^\al \eta_-}^2 +\sigma_-\abs{\nab_\ast \pa^\al \eta_-}^2
\\&\quad\gss
\norm{ \pa^\al \q}_0^2+  \min\{1,\sigma_+\}  \norm{\pa^\al \eta_+}_1^2  + \min\{1,\sigma_- - \sigma_c\} \ns{\pa^\al \eta_-}_{1} -\sqrt{\mathcal{E}_{n}^\sigma} \min\{\mathcal{E}_{n}^\sigma, \mathcal{D}_{n}^\sigma \}
\end{split}
\end{equation}
\end{Proposition}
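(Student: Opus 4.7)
The plan is to argue separately in the two sign regimes of $\rj$. When $\rj < 0$, all four contributions on the left of \eqref{epp_0} are pointwise non-negative with strictly positive coefficients ($h'(\bar\rho)>0$ on $\Omega$, $\rho_1 g>0$, $-\rj g>0$, and $\sigma_\pm \ge 0$), so the claimed lower bound is immediate after combining $\sigma_+\norm{\nabla_\ast \pa^\al \eta_+}_0^2 + \sigma_-\norm{\nabla_\ast \pa^\al \eta_-}_0^2 \ge \sigma\norm{\nabla_\ast \pa^\al \eta}_0^2$ and using $\rho_1 g\norm{\pa^\al \eta_+}_0^2 + (-\rj g)\norm{\pa^\al \eta_-}_0^2 \gtrsim \norm{\pa^\al \eta}_0^2$.

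When $\rj \ge 0$, the $\q$ and $\eta_+$ contributions are still straightforward: $h'(\bar\rho)>0$ gives $\norm{\pa^\al \q}_0^2$ directly, while $\int_{\Sigma_+}\rho_1 g|\pa^\al \eta_+|^2 + \sigma_+|\nabla_\ast \pa^\al \eta_+|^2$ dominates $\min\{1,\sigma_+\}\norm{\pa^\al \eta_+}_1^2$ via the trivial inequality $\min\{a,b\}(x+y)\le ax+by$. The only delicate term is
\begin{equation*}
 Q^\al := \sigma_-\norm{\nabla_\ast \pa^\al \eta_-}_0^2 - \rj g\norm{\pa^\al \eta_-}_0^2,
\end{equation*}
which may fail to be positive when the spatial average of $\pa^\al \eta_-$ is large. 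I will then decompose $\pa^\al \eta_- = m^\al + \zeta^\al$ where $m^\al := |\mathrm{T}^2|^{-1}\int_{\mathrm{T}^2} \pa^\al \eta_-$ and $\zeta^\al$ has vanishing mean on $\mathrm{T}^2$. Since $m^\al$ is constant in space, $Q^\al = \sigma_-\norm{\nabla_\ast \zeta^\al}_0^2 - \rj g\norm{\zeta^\al}_0^2 - \rj g|\mathrm{T}^2||m^\al|^2$, and the sharp Poincar\'e estimate \eqref{criticals} applied to the mean-zero $\zeta^\al$ gives $(\sigma_- - \sigma_c)\norm{\zeta^\al}_1^2 \ls Q^\al + \rj g|m^\al|^2$. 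Using $\norm{\pa^\al \eta_-}_1^2 \ls |m^\al|^2 + \norm{\zeta^\al}_1^2$ and the trivial bound $\min\{1,\sigma_- - \sigma_c\}/(\sigma_- - \sigma_c) \le 1$ then yields $\min\{1,\sigma_- - \sigma_c\}\norm{\pa^\al \eta_-}_1^2 \ls Q^\al + |m^\al|^2$.

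To close the argument I still need to control $|m^\al|$. If $\al$ contains any horizontal derivative then $m^\al = 0$ by definition, so it suffices to treat $\al = (\alpha_0,0,0)$. Expanding the equilibrium perturbation $\rho_- J_- - \bar\rho_- = \q + \p_3\bar\rho_-\,\theta + \bar\rho_-\,\p_3\theta + \Phi_-$ with $\Phi_-$ as in \eqref{Phi_def}, using $\p_3\bar\rho_-\,\theta + \bar\rho_-\,\p_3\theta = \p_3(\bar\rho_-\theta)$, and invoking the boundary values $\theta|_{\Sigma_b}=0$ and $\theta|_{\Sigma_-}=\eta_-$ from \eqref{b function}, integration over $\Omega_-$ isolates
\begin{equation*}
 \int_{\Omega_-}\p_3(\bar\rho_-\theta) = \bar\rho^- \int_{\mathrm{T}^2}\eta_-,
\end{equation*}
and mass conservation \eqref{masscont} then furnishes the identity $\bar\rho^- \int_{\mathrm{T}^2}\eta_- = -\int_{\Omega_-}\q - \int_{\Omega_-}\Phi_-$, valid for every $t \ge 0$. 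Differentiating $\alpha_0$ times in $t$ and applying Cauchy--Schwarz and Lemma \ref{Phi_est} gives $|m^\al|^2 \ls \norm{\dt^{\alpha_0}\q}_0^2 + \norm{\dt^{\alpha_0}\Phi_-}_0^2 \ls \norm{\pa^\al \q}_0^2 + \se{n}^\sigma\min\{\se{n}^\sigma,\sd{n}^\sigma\}$, and in the small-energy regime (which is in force throughout the section) the second term is controlled by $\sqrt{\se{n}^\sigma}\min\{\se{n}^\sigma,\sd{n}^\sigma\}$. Combining with $\norm{\pa^\al \q}_0^2 \ls \int_\Omega h'(\bar\rho)|\pa^\al \q|^2$ produces \eqref{Energy positivity inequality}.

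The main obstacle is this mean-controlling step: isolating the linear-in-$\eta_-$ contribution of $\int_{\Omega_-}\rho J$ relies crucially on the total-derivative identity $\p_3\bar\rho\,\theta + \bar\rho\,\p_3\theta = \p_3(\bar\rho\theta)$ together with the specific boundary choices \eqref{b function} of $\tilde b_1,\tilde b_2$, which together ensure that every remainder is genuinely quadratic and estimable by Lemma \ref{Phi_est}. Once this identity is in hand, the rest is essentially the same sharp-Poincar\'e/averaging argument used in the incompressible analyses of \cite{WT,WTK}.
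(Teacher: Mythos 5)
Your treatment of the case $\rj<0$, of the $\eta_+$ and $\q$ terms, and of all multi-indices with $\alpha_1+\alpha_2\neq 0$ matches the paper, and your overall scheme (sharp Poincar\'e on the mean-zero part plus control of the mean via mass conservation) is the right one. The gap is in how you control the mean $m^\al$ of $\pa^\al\eta_-$ in the case of pure time derivatives. You bound $|m^\al|$ using the \emph{lower}-fluid identity \eqref{cons2}, i.e. $\bar\rho^-\int_{\mathrm{T}^2}\eta_- = -\int_{\Omega_-}\q_- - \int_{\Omega_-}\Phi_-$, and then you need to absorb the genuinely negative contribution $-\rj g\abs{\mathrm{T}^2}\abs{m^\al}^2$ into $\int_\Omega h'(\bar\rho)\abs{\pa^\al\q}^2$. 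Tracking the constants, Cauchy--Schwarz (even in its sharp, $h_-'$-weighted form, using $1/h_-'(\bar\rho_-)=-\p_3\bar\rho_-/g$) gives at best
\begin{equation*}
 \rj g\abs{\mathrm{T}^2}\abs{m^\al}^2 \le \frac{\rj\,(\bar\rho_-(-b)-\bar\rho^-)}{(\bar\rho^-)^2}\,\norm{\sqrt{h_-'(\bar\rho_-)}\,\pa^\al\q_-}_0^2 + \text{(cubic errors)},
\end{equation*}
and the prefactor $\rj(\bar\rho_-(-b)-\bar\rho^-)/(\bar\rho^-)^2$ is \emph{not} less than $1$ for general admissible equilibria (it grows with $\rj$ and with the depth $b$ through $\bar\rho_-(-b)$). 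Since in your route the mean of $\eta_-$ is tied only to $\q_-$, none of the other positive terms on the left ($\rho_1 g\norm{\eta_+}_0^2$, the $\sigma_+$ term) can come to the rescue, so the absorption closes only under an extra smallness condition on $\rj g b^2$-type quantities that the proposition does not assume.

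The paper avoids this by using the \emph{upper}-fluid identity \eqref{cons1} to express $\bar\rho^+\int_{\mathrm{T}^2}\eta_-$ in terms of $\rho_1\int_{\mathrm{T}^2}\eta_+$, $\int_{\Omega_+}\q_+$, and $\Phi_+$; the sharp H\"older step uses $\int_{\Omega_+}1/h_+'(\bar\rho_+) = \abs{\mathrm{T}^2}(\bar\rho^+-\rho_1)/g$ together with the optimized Cauchy parameter $\kappa=\rho_1/(\bar\rho^+-\rho_1)$, and both resulting coefficients come out as exactly $\rj/\bar\rho^+ = 1-\bar\rho^-/\bar\rho^+ < 1$. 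This structural cancellation is what allows $-\rj g\abs{\mathrm{T}^2}(\eta_-)^2$ to be absorbed into $\norm{\sqrt{h_+'(\bar\rho_+)}\q_+}_0^2 + \rho_1 g\norm{\eta_+}_0^2$ for \emph{all} admissible parameters; the lower-fluid identity \eqref{cons2} is then only used to bound $\norm{\eta_-}_0^2$ from above (where the size of the constant is harmless). To repair your argument, replace your control of $m^\al$ by this upper-domain computation (time-differentiated for $\alpha_0>0$); the rest of your write-up then goes through.
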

\begin{proof}
In the case $\rj < 0$ the result \eqref{epp_0} is trivial, so we assume in what follows that $\rj \ge 0$ and focus on \eqref{Energy positivity inequality}.

For $\al$ with $\al_1+\al_2\neq 0$, we can easily get \eqref{Energy positivity inequality} by \eqref{criticals} since in this case $\zeta = \pa^\al\eta_-$ has the zero average over $\Sigma_-$, and so Lemma \ref{critical lemma} is applicable. Now for $\al_1=\al_2= 0$, that is, the pure temporal derivative case, and the problem is that they do not have the zero average.

We first deal with the case $\al_0=0$. We will make use of the conservation of mass:
\begin{equation}
\int_{\Omega_\pm}\rho_\pm J =\int_{\Omega_\pm}(\bar\rho_\pm+\q_\pm+\p_3\bar\rho_\pm\theta) (1+\p_3 \theta) =\int_{\Omega_\pm}\bar\rho_\pm.
\end{equation}
This implies
\begin{equation}
\int_{\Omega_\pm} \q_\pm+\int_{\Omega_\pm}\p_3(\bar\rho_\pm\theta)+ \int_{\Omega_\pm}\Phi_\pm  =0,
\end{equation}
where we have denoted $\Phi_\pm=(\q_\pm+\p_3\bar\rho_\pm\theta)\p_3 \theta$ as in \eqref{Phi_def}.  On the other hand, we have
\begin{equation}
\int_{\Omega_+} \p_3(\bar\rho_+\theta)  =\rho_1 \int_{\Sigma_+}\eta_+-\rho^+\int_{\Sigma_-}\eta_-
\end{equation}
and
\begin{equation}
\int_{\Omega_-} \p_3(\bar\rho_-\theta)  = \rho^-\int_{\Sigma_-}\eta_-
\end{equation}
Then we obtain
\begin{equation}\label{cons1}
\int_{\Omega_+} \q_+ + \int_{\Omega_+}\Phi_+  =-\rho_1 \int_{\Sigma_+}\eta_++\rho^+\int_{\Sigma_-}\eta_-
\end{equation}
and
\begin{equation}\label{cons2}
\int_{\Omega_-} \q_- + \int_{\Omega_-}\Phi_-  =-\rho^-\int_{\Sigma_-}\eta_-,
\end{equation}

We rewrite
\begin{equation}\label{e00}
\begin{split}
  \norm{\sqrt{h'(\bar\rho)}\q}_0^2 & +\rho_1 g\norm{\eta_+}_0^2-\rj g\norm{\eta_-}_0^2+\sigma_- \norm{\na_\ast\eta_-}_0^2 + \sigma_+ \norm{\na_\ast\eta_+}_0^2
 \\
&= \norm{\sqrt{h_+'(\bar\rho_+)}\q_+}_0^2+\rho_1 g\norm{\eta_+}_0^2-\rj g{|\mathrm{T}^2|}(\eta_-)^2   \\
&+ \sigma_- \norm{\na_\ast\eta_-}_0^2  -\rj g \norm{\eta_- -(\eta_-)}_0^2 + \norm{\sqrt{h_-'(\bar\rho_-)}\q_-}_0^2  + \sigma_+ \norm{\na_\ast\eta_+}_0^2,
 \end{split}
 \end{equation}
 where we use ``$(\eta_\pm)$" to denote the average of $\eta_\pm$ over $\mathrm{T}^2$.  We first estimate the second line of \eqref{e00}.  We deduce from \eqref{cons1}, Cauchy's inequality, and Lemma \ref{Phi_est} that for any $\kappa>0$,
\begin{multline}\label{e1}
\rj g{|\mathrm{T}^2|}(\eta_-)^2 =\frac{\rj g}{|\mathrm{T}^2|(\rho^+)^2}\left(\rho_1 {|\mathrm{T}^2|}(\eta_+) +\int_{\Omega_+} \q_++ \int_{\Omega_+}\Phi_+  \right)^2
\\
\le  \frac{\rj g(\rho_1 )^2{|\mathrm{T}^2|}}{ (\rho^+)^2}\left(1+\frac{1}{\kappa}\right) (\eta_+)^2 +\frac{\rj g(1+\kappa)}{|\mathrm{T}^2|(\rho^+)^2}\left(\int_{\Omega_+} \q_+\right)^2
\\
+C \sqrt{\mathcal{E}_{n}^\sigma} \min\{\mathcal{E}_{n}^\sigma, \mathcal{D}_{n}^\sigma \}.
\end{multline}
By H\"older's inequality, we have
\begin{equation}
\begin{split}
\left(\int_{\Omega_+} \q_+\right)^2&\le  \left(\int_{\Omega_+} h_+'(\bar\rho_+)|\q_+|^2 \right) \left(\int_{\Omega_+} \frac{1}{h_+'(\bar\rho_+)}\right)
\\&= \norm{\sqrt{h_+'(\bar\rho_+)}\q_+}_0^2\int_{\Omega_+} -\frac{\p_3\bar\rho_+}{g}= \frac{|\mathrm{T}^2|(\rho^+-\rho_1 )}{g}\norm{\sqrt{h_+'(\bar\rho_+)}\q_+}_0^2.
\end{split}
\end{equation}
Now we choose the value of $\kappa$ via
\begin{equation}
\frac{ (1+\kappa)(\rho^+-\rho_1 )}{  \rho^+  }=1 \Leftrightarrow \kappa=\frac{\rho_1 }{\rho^+-\rho_1 }.
\end{equation}
Then we obtain
\begin{equation}\label{e2}
\begin{split}
\frac{\rj g(1+\kappa)}{|\mathrm{T}^2|(\rho^+)^2}\left(\int_{\Omega_+} \q_+\right)^2
&\le \frac{\rj (1+\kappa)(\rho^+-\rho_1 )}{ (\rho^+)^2}\norm{\sqrt{h_+'(\bar\rho_+)}\q_+}_0^2
\\&= \frac{\rj }{ \rho^+}\norm{\sqrt{h_+'(\bar\rho_+)}\q_+}_0^2,
\end{split}
 \end{equation}
 and by H\"older's inequality,
 \begin{equation}\label{e3}
 \begin{split}
  \frac{\rj g(\rho_1 )^2{|\mathrm{T}^2|}}{ (\rho^+)^2}\left(1+\frac{1}{\kappa}\right) (\eta_+)^2
  =\frac{\rj g \rho_1  {|\mathrm{T}^2|}}{  \rho^+ }  (\eta_+)^2\le \frac{\rj g \rho_1   }{  \rho^+ }  \norm{\eta_+}_0^2.
\end{split}
 \end{equation}
 We deduce from \eqref{e1}, \eqref{e2} and \eqref{e3} that
 \begin{equation}\label{e4}
  \begin{split}
 &\norm{\sqrt{h_+'(\bar\rho_+)}\q_+}_0^2+\rho_1 g\norm{\eta_+}_0^2-\rj g {|\mathrm{T}^2|}(\eta_-)^2
 \\&\quad\ge \left(1-\frac{\rj }{ \rho^+}\right)\left(\norm{\sqrt{h_+'(\bar\rho_+)}\q_+}_0^2+\rho_1 g\norm{\eta_+}_0^2\right)-C\sqrt{\mathcal{E}_{n}^\sigma} \min\{\mathcal{E}_{n}^\sigma, \mathcal{D}_{n}^\sigma \}  \\
&\quad \gss \norm{ \q_+}_0^2+ \norm{\eta_+}_0^2- \sqrt{\mathcal{E}_{n}^\sigma} \min\{\mathcal{E}_{n}^\sigma, \mathcal{D}_{n}^\sigma \} .
 \end{split}
 \end{equation}

Next we use the estimate \eqref{ineq} to obtain a bound for the first two terms on the third line of  \eqref{e00}:
\begin{equation} \label{e0}
\sigma_- \norm{\na_\ast\eta_-}_0^2-\rj g \norm{\eta_- -(\eta_-)}_0^2 \ge (\sigma_- - \sigma_c)  \norm{\na_\ast\eta_-}_0^2 .
\end{equation}
From \eqref{cons2}, H\"older's inequality, and Lemma \ref{Phi_est} to obtain
\begin{equation}\label{e5}
\begin{split}
  \norm{\eta_-}_0^2 =\norm{\eta_- -(\eta_-)}_0^2
 +{|\mathrm{T}^2|}(\eta_-)^2
  &\ls \norm{\na_\ast\eta_-}_{0}^2+\left(\int_{\Omega_-} \q_-\right)^2+\sqrt{\mathcal{E}_{n}^\sigma} \min\{\mathcal{E}_{n}^\sigma, \mathcal{D}_{n}^\sigma \}
\\& \ls \norm{\sqrt{h_-'(\bar\rho_-)}\q_-}_0^2+\norm{\na_\ast\eta_-}_{0}^2+\sqrt{\mathcal{E}_{n}^\sigma} \min\{\mathcal{E}_{n}^\sigma, \mathcal{D}_{n}^\sigma \}.
 \end{split}
\end{equation}
Then we may combine \eqref{e0} and \eqref{e5} to estimate the full third line of \eqref{e00}:
\begin{multline}\label{e6}
  \sigma_- \norm{\na_\ast\eta_-}_0^2  -\rj g \norm{\eta_- -(\eta_-)}_0^2 + \norm{\sqrt{h_-'(\bar\rho_-)}\q_-}_0^2  + \sigma_+ \norm{\na_\ast\eta_+}_0^2 \\
\gss \sigma_+ \norm{\na_\ast\eta_+}_0^2 + \min\{1,\sigma_- - \sigma_c\} \ns{\eta_-}_{1}  - \sqrt{\mathcal{E}_{n}^\sigma} \min\{\mathcal{E}_{n}^\sigma, \mathcal{D}_{n}^\sigma \}
\end{multline}
We may then conclude from \eqref{e00},  \eqref{e4}, and \eqref{e6} that
 \begin{equation}
\begin{split}
 & \norm{\sqrt{h'(\bar\rho)}\q}_0^2+\rho_1 g\norm{\eta_+}_0^2+\sigma_+\norm{\na_\ast\eta_+}_0^2-\rj g\norm{\eta_-}_0^2+\sigma \norm{\na_\ast\eta_-}_0^2
 \\&\quad \gss \norm{ \q}_0^2+ \min\{1,\sigma_+\}   \norm{\eta_+}_1^2  + \min\{1,\sigma_- - \sigma_c\} \ns{\eta_-}_{1} -\sqrt{\mathcal{E}_{n}^\sigma} \min\{\mathcal{E}_{n}^\sigma, \mathcal{D}_{n}^\sigma \} .
 \end{split}
 \end{equation}
This is \eqref{Energy positivity inequality} when $\alpha =0.$

To derive \eqref{Energy positivity inequality} for $\alpha_1=\alpha_2 =0$, $\alpha_0 >0$ we first take the time derivatives in \eqref{cons1}--\eqref{cons2}.  Then we may argue as above to derive the desired estimate.
\end{proof}

\section{The evolution of energies controlling $\pa_3\q$} \label{sec_aux}

In this section we identify a dissipative structure for $\pa_3 q$ and derive some energy-dissipation estimates.  We again assume throughout this section that the solutions obey the estimate $\gs(T) \le \delta$, where $\delta \in (0,1)$ is given in  Lemma \ref{eta_small}.

\subsection{Identifying the dissipative structure }

Note that the energy evolution results presented in Sections \ref{stable1}--\ref{stable2} are not enough for us to get  full energy estimates by applying Stokes regularity estimates as in the incompressible case \cite{WT,WTK}.  The problem is that we do not yet have control of $\diverge u$.  To control $\diverge u$ we appeal to a structure  first exploited by Matsumura and Nishida \cite{MN83} in the case in which $\bar\rho$ is a positive constant.  We consider  the following quantity, which is the material derivative of $q$ in our coordinates:
\begin{equation}\label{dt}
\mathcal{Q}:= \pa_t\q- K \p_t\theta \pa_3  \q+u_j \mathcal{A}_{jk}\pa_k  \q = \pa_t\q-G^{1,1}=-\diverge(\bar\rho u)+G^{1,2}.
\end{equation}
From \eqref{ns_perturb} we find that $\mathcal{Q}$ obeys the equations
\begin{equation}\label{eeqq}
\begin{split}
& \pa_3\mathcal{Q} +\bar{\rho}\pa_3(\diverge   u)=\pa_3 G^{1,2}-\diverge (\pa_3\bar{\rho} u)-\pa_3 \bar{\rho}\pa_3u_3,
\\
 &  \bar\rho \partial_t    u_3   +\bar\rho  \pa_3(h'(\bar{\rho})\q)     -\mu \Delta u_3-(\mu/3+\mu')\pa_3(\diverge u) =G^2_3.
  \end{split}
\end{equation}
We then eliminate $\pa_{33} u_3$ from the equations \eqref{eeqq} to obtain
\begin{equation}\label{density 0}
\begin{split}
& \displaystyle
\frac{ 4\mu/3+\mu'  }{h'(\bar\rho)\bar\rho^2}
\pa_3\left(h'(\bar\rho)\mathcal{Q} \right)+ \pa_3(h'(\bar\rho) \q)
 = \frac{ 4\mu/3+\mu'  }{\bar\rho^2} \pa_3  G^{1,2} +\frac{ 1  }{\bar\rho } G^2_3+\frac{ 4\mu/3+\mu'  }{h'(\bar\rho)\bar\rho^2}
\pa_3h'(\bar\rho) \mathcal{Q}
\\& \qquad   -  \partial_t    u_3 -\frac{ 4\mu/3+\mu'  }{\bar\rho^2}(\diverge (\pa_3\bar{\rho} u)+\pa_3 \bar{\rho}\pa_3u_3)+\frac{ \mu }{\bar\rho }(\pa_{11} u _3+\pa_{22}u_3- \pa_{31} u_1-\pa_{32}  u_2 ) .
\end{split}%
\end{equation}
By the definition \eqref{dt} of $\mathcal{Q}$, we may view \eqref{density 0} as an evolution equation for $\pa_3\q$.  This equation resembles the ODE $\dt f + f= g$, up to some errors, and this ODE displays natural decay structure.   We will extract this kind of structure for the more complicated equation \eqref{density 0}.

\subsection{Estimates  }

We now estimate the energy evolution of $\p_3\q$ at the $2N$ level.
\begin{Proposition}\label{i_rho_evolution 2N}
Fix $0\le  j\le 2N-1$ and   $0\le k\le 4N-2 j-1$.  Then there exist universal constants $\lambda_{k',j}$ for $0 \le k' \le k$ so that
\begin{multline}\label{density es2N}
\frac{d}{dt} \sum_{k'\le k} \lambda_{k',j} \norm{\sqrt{1+\frac{ 4\mu/3+\mu'  }{h'(\bar\rho)\bar\rho^2} }
  \nab_{\ast}^{4N-2j-k'-1} \pa_3^{k'+1}\pa_t^ j(h'(\bar\rho) \q)}_0^2 \\
 + \sum_{k'\le k} \norm{\nab_{\ast}^{4N-2j-k'-1}  \pa_3^{k'+1}\pa_t^j \left( h'(\bar\rho)\q \right)}_0^2
+ \sum_{k'\le k} \norm{\nab_{\ast}^{4N-2j-k'-1}  \pa_3^{k'+1}\pa_t^j  \mathcal{Q}}_0^2
\\
\ls  \norm{\pa_t^{ j+1} u  }_{4N-2 j-1}^2 +\norm{{\na_{\ast}}^{4N-2 j-k-1} \pa_t^ j \mathcal{Q} }_{0}^2
+ \sum_{k'\le k} \norm{{\na_{\ast}}^{4N-2 j-k'} \pa_t^j    u }_{k'+1}^2
\\  + \ks\sqrt{\mathcal{E}_{2N}^\sigma }\mathcal{D}_{2N}^\sigma + \ks \sqrt{ \mathcal{D}_{2N}^\sigma \mathcal{E}_{N+2}^\sigma\f} + \ks \mathcal{E}_{N+2}^\sigma\f.
\end{multline}

 \end{Proposition}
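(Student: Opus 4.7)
The starting point is the equation \eqref{density 0}, which has the structure of a damped ODE in time for the quantity $f := \pa_3(h'(\bar\rho) q)$. Indeed, using $\mathcal{Q} = \dt q - G^{1,1}$ and the time-independence of $\bar\rho$, we have $\pa_3(h'(\bar\rho)\mathcal{Q}) = \dt f - \pa_3(h'(\bar\rho) G^{1,1})$, so that \eqref{density 0} reads
\begin{equation*}
a\, \dt f + f = g, \qquad a := \frac{4\mu/3+\mu'}{h'(\bar\rho)\bar\rho^2},
\end{equation*}
where $g$ collects the RHS of \eqref{density 0} together with $a\,\pa_3(h'(\bar\rho)G^{1,1})$. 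The plan is to apply $\pa_t^j \pa_3^{k'} \nab_\ast^{4N-2j-k'-1}$ to this equation for $0 \le k' \le k$, test against the same operator applied to $f$, and combine the results inductively in $k'$.

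First I will establish the base case $k'=0$: applying $\pa_t^j \nab_\ast^{4N-2j-1}$ (which commutes with $a$) to $a\dt f+f=g$ and testing against $\pa_t^j \nab_\ast^{4N-2j-1} f$ yields
\begin{equation*}
\frac{d}{dt} \int \tfrac{a}{2}\,|\pa_t^j \nab_\ast^{4N-2j-1} f|^2 + \int |\pa_t^j \nab_\ast^{4N-2j-1} f|^2 \ls \text{(RHS terms)}.
\end{equation*}
Since $a$ is a smooth positive function bounded above and below on $\Omega$, the energy is equivalent to $\norm{\sqrt{1+a}\,\pa_t^j\nab_\ast^{4N-2j-1}f}_0^2$, matching the statement. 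The inductive step applies $\pa_3^{k'}$ with $k' \ge 1$: the commutator $[\pa_3^{k'},a]\dt f$ introduces terms of the form $\pa_3^{k''+1}(h'(\bar\rho)\mathcal{Q})$ with $k''<k'$, which are precisely the quantities already controlled dissipatively by the induction hypothesis. By choosing the constants $\lambda_{k',j}$ sufficiently large and proceeding inductively, these lower-order commutator contributions are absorbed into the left-hand dissipation.

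Next I will estimate the right-hand side of the testing identity term by term. The $\bar\rho\dt u_3$ contribution from the second line of \eqref{density 0} produces $\norm{\pa_t^{j+1}u}_{4N-2j-1}^2$; the $\diverge(\pa_3\bar\rho\,u)$, $\pa_3\bar\rho\,\pa_3 u_3$, $\pa_{11}u_3$, $\pa_{22}u_3$, $\pa_{31}u_1$, $\pa_{32}u_2$ contributions produce the sum $\sum_{k'\le k}\norm{\nab_\ast^{4N-2j-k'}\pa_t^j u}_{k'+1}^2$; and the $\frac{4\mu/3+\mu'}{h'(\bar\rho)\bar\rho^2}\pa_3 h'(\bar\rho)\,\mathcal{Q}$ term (which carries no extra $\pa_3$ on $\mathcal{Q}$) yields the single $\norm{\nab_\ast^{4N-2j-k-1}\pa_t^j\mathcal{Q}}_0^2$ contribution on the RHS. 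The nonlinear contributions from $G^{1,2}$, $G^2$, and $G^{1,1}$ are handled by Lemma \ref{p_G2N_estimates}; the $G^{1,1}$ piece requires the special product estimate of Lemma \ref{G11_weighted}, which is tailored so that the total-derivative transport structure of $G^{1,1}$ can be exploited via integration by parts without losing a derivative. These together give the $\ks\sqrt{\mathcal{E}_{2N}^\sigma}\mathcal{D}_{2N}^\sigma+\ks\sqrt{\mathcal{D}_{2N}^\sigma \mathcal{E}_{N+2}^\sigma \f}+\ks \mathcal{E}_{N+2}^\sigma \f$ terms in the conclusion. Finally, to produce the $\mathcal{Q}$ dissipation on the LHS, I will algebraically invert the identity $a\,\pa_3(h'(\bar\rho)\mathcal{Q}) = g - f$: once $f$ and $g$ are controlled in $L^2$ (at each level $k'$), so is $\pa_3(h'(\bar\rho)\mathcal{Q})$ and hence $\pa_3\mathcal{Q}$.

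The main obstacle I anticipate is the $G^{1,1}$ term: its leading piece is a transport-type nonlinearity $K\dt\theta\pa_3 q - u\cdot\nab_\a q$ that has the same order as $f$ itself, so a naive bound would lose control. This is exactly the reason Lemma \ref{G11_weighted} is formulated as it is, and the case $\alpha_0=0$, $|\alpha|=4N$ is the one that demands the total-derivative manipulation already used in Lemma \ref{lemma7}. A secondary bookkeeping difficulty is that the induction in $k'$ must be organized so that, at each step, the commutator cascade $[\pa_3^{k'},a]\dt f$ can be rewritten using the equation $a\dt f = g - f$ to trade vertical $\dt f$ derivatives for previously controlled $\pa_3^{k''+1}q$ and $\pa_3^{k''+1}\mathcal{Q}$ quantities, together with nonlinear errors of the acceptable form. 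Once that bookkeeping is set up, the estimate \eqref{density es2N} follows by summing the induction.
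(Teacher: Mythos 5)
Your proposal is correct in substance and rests on the same core mechanism as the paper: rewriting \eqref{density 0} as a damped equation for $\pa_3(h'(\bar\rho)\q)$, applying $\pa^\alpha\pa_3^{k'}\pa_t^j$, running the linear-combination/induction argument in $k'$ (Lemma \ref{est_alg}) to absorb the commutator cascade, and invoking Lemma \ref{p_G2N_estimates} together with the weighted transport estimate of Lemma \ref{G11_weighted} for the $G^{1,1}$ nonlinearity. The genuine difference is the choice of multiplier. The paper tests the differentiated equation against the \emph{sum} $\pa^\alpha\pa_3^{k'+1}\pa_t^j(h'(\bar\rho)\q)+\pa^\alpha\pa_3^{k'+1}\pa_t^j(h'(\bar\rho)\mathcal{Q})$: the term quadratic in the $\mathcal{Q}$-part then yields the $\mathcal{Q}$-dissipation on the left directly as a square, and the two mixed terms produce the time derivative of the energy with the weight $\sqrt{1+a}$, $a=(4\mu/3+\mu')/(h'(\bar\rho)\bar\rho^2)$, exactly as it appears in \eqref{density es2N} and in the definition \eqref{A_frak}. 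You instead test only against the $\q$-part and recover the $\mathcal{Q}$-dissipation a posteriori by solving \eqref{density 0} for $a\,\pa_3(h'(\bar\rho)\mathcal{Q})$. That route works, provided the induction is ordered as you indicate in your last paragraph (at each level, first close the estimate for $\pa_3^{k'+1}(h'(\bar\rho)\q)$ using the already-recovered lower-level $\mathcal{Q}$ control, then invert the equation for $\mathcal{Q}$ at that level), and provided the pure-horizontal terms $\norm{\nab_\ast^{4N-2j-k'-1}\pa_t^j\mathcal{Q}}_0$ arising at intermediate levels $k'<k$ are absorbed via $\mathcal{Q}=-\diverge(\bar\rho u)+G^{1,2}$ into the $u$-terms already on the right of \eqref{density es2N}; this is the same absorption the paper's write-up relies on implicitly. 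What the paper's multiplier buys is that the $\mathcal{Q}$-control falls out for free, without a second pass through the commutators; what your route buys is a somewhat simpler single energy identity.

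One small inaccuracy to repair: testing only against the $\q$-part produces the energy weight $\sqrt{a}$ rather than $\sqrt{1+a}$, and your appeal to equivalence of norms does not literally yield \eqref{density es2N}, since comparable quantities need not have comparable time derivatives. This is harmless but should be addressed: either restate the proposition with the weight $\sqrt{a}$ (only the comparability of the resulting functional with $\Af$ is used downstream, cf.\ \eqref{ee_12}), or add the cross term with the $\mathcal{Q}$-part as the paper does, which contributes the missing $\tfrac12\tfrac{d}{dt}$ of the unweighted square. Relatedly, Lemma \ref{G11_weighted} is stated with the weight $1+a$, so your argument needs the (identically proved) version with weight $a$.
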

\begin{proof}
We first fix $0\le  j\le 2N-1$ and then take $0 \le k\le 4N-2 j-1$ and $0\le k'\le k$.  Let $\alpha\in \mathbb{N}^{2}$ so that $|\alpha|\le 4N-2 j-1-k' $. Applying $\partial^\alpha\pa_3^{k'}\pa_t^ j$  to $\eqref{density 0}$ and multiplying the resulting by $\partial^\alpha\pa_3^{k'+1}\pa_t^ j(h'(\bar\rho) \q)+\partial^\alpha\pa_3^{k'+1}\pa_t^ j(h'(\bar\rho)\mathcal{Q})$ and then integrating over $\Omega$, we obtain
\begin{equation}\label{dtdensity1}
 I + II + III = IV,
\end{equation}
where
\begin{equation}
 I = \int_\Omega  \partial^\alpha\pa_3^{k'}\pa_t^ j\left(\frac{ 4\mu/3+\mu'  }{h'(\bar\rho)\bar\rho^2}
\pa_3\left(h'(\bar\rho)\mathcal{Q} \right)\right)\partial^\alpha\pa_3^{k'+1}\pa_t^ j(h'(\bar\rho) \q),
\end{equation}
\begin{equation}
 II = \int_\Omega \partial^\alpha\pa_3^{k'}\pa_t^ j\left(\frac{ 4\mu/3+\mu'  }{h'(\bar\rho)\bar\rho^2}
\pa_3\left(h'(\bar\rho)\mathcal{Q} \right)\right)\partial^\alpha\pa_3^{k'+1}\pa_t^ j\left(h'(\bar\rho) \mathcal{Q}\right),
\end{equation}
\begin{equation}
 III = \int_\Omega  \abs{\partial^\alpha\pa_3^{k'+1}\pa_t^ j(h'(\bar\rho) \q)}^2+\int_\Omega  \partial^\alpha\pa_3^{k'+1}\pa_t^ j(h'(\bar\rho) \q)\partial^\alpha\pa_3^{k'+1}\pa_t^ j\left(h'(\bar\rho)\mathcal{Q}\right),
\end{equation}
and
\begin{multline}
 IV = \int_\Omega \left\{\partial^\alpha\pa_3^{k'+1}\pa_t^ j(h'(\bar\rho) \q)+\partial^\alpha\pa_3^{k'+1}\pa_t^ j\left(h'(\bar\rho)\mathcal{Q} \right)\right\}
\\
 \times\partial^\alpha\pa_3^{k'}\pa_t^ j\left\{\frac{ 4\mu/3+\mu'  }{\bar\rho^2} \pa_3  G^{1,2} +\frac{ 1  }{\bar\rho } G^2_3+\frac{ 4\mu/3+\mu'  }{h'(\bar\rho)\bar\rho^2}
\pa_3h'(\bar\rho) \mathcal{Q} -  \partial_t    u_3\right.
\\
 \left. -\frac{ 4\mu/3+\mu'  }{\bar\rho^2}(\diverge (\pa_3\bar{\rho} u)+\pa_3 \bar{\rho}\pa_3u_3)+\frac{ \mu }{\bar\rho }(\pa_{11} u _3+\pa_{22}u_3- \pa_{31} u_1-\pa_{32}  u_2 ) \right\}.
\end{multline}

We will now estimate $I, II, III, IV$.  First, using the Cauchy-Schwarz inequality, we may easily estimate
\begin{multline}\label{rev_1}
 IV \ls \left\{\norm{\partial^\alpha\pa_3^{k'+1}\pa_t^ j(h'(\bar\rho) \q)}_0 +\sum_{k''\le k'} \norm{\partial^\alpha\pa_3^{k''+1}\pa_t^ j\mathcal{Q}}_0\right\}
\\
 \qquad \times\left\{\norm{\pa_t^{ j }  G^{1,2}}_{4N-2 j} +\norm{\pa_t^{ j }  G^2}_{4N-2 j-1}+\sum_{k''\le k'}\norm{\partial^\alpha\pa_3^{k''}\pa_t^ j \mathcal{Q}}_0 \right.
\\
 \qquad\quad  \left.+\norm{\pa_t^{ j+1} u  }_{4N-2 j-1} +\sum_{k''\le k'}  \norm{\partial^\alpha\pa_3^{k''}\pa_t^ j u}_1+\norm{\partial^\alpha\pa_3^{k''}\na_\ast\na\pa_t^ j u}_0  \right\}.
\end{multline}
For the last term in $III$ we recall  the definition of $\mathcal{Q}$ from \eqref{dt} in order to  rewrite
\begin{equation}\label{dens0}
\begin{split}
& \int_\Omega  \partial^\alpha\pa_3^{k'+1}\pa_t^ j(h'(\bar\rho) \q)\partial^\alpha\pa_3^{k'+1}\pa_t^ j\left(h'(\bar\rho)\mathcal{Q}\right)
\\&\quad=\int_\Omega  \partial^\alpha\pa_3^{k'+1}\pa_t^ j(h'(\bar\rho) \q)\partial^\alpha\pa_3^{k'+1}\pa_t^ j
\left(h'(\bar\rho)(\dt\q-G^{1,1})\right)
\\&\quad=\frac{1}{2}\frac{d}{dt} \int_\Omega \abs {\partial^\alpha\pa_3^{k'+1}\pa_t^ j(h'(\bar\rho) \q)}^2
-\int_\Omega  \partial^\alpha\pa_3^{k'+1}\pa_t^ j(h'(\bar\rho) \q)\partial^\alpha\pa_3^{k'+1}\pa_t^ j
\left(h'(\bar\rho) G^{1,1} \right).
\end{split}
 \end{equation}
For $II$ we estimate by expanding with Leibniz:
\begin{equation} \label{dens1}
II \ge \int_\Omega \frac{ 4\mu/3+\mu'  }{h'(\bar\rho)\bar\rho^2}\abs{\partial^\alpha\pa_3^{k'+1}\pa_t^ j \left(h'(\bar\rho)\mathcal{Q} \right)}^2
-C\norm{\partial^\alpha\pa_3^{k'+1}\pa_t^ j \left( h'(\bar\rho)\mathcal{Q} \right)}_0\sum_{k''\le k'}\norm{\partial^\alpha\pa_3^{k''}\pa_t^ j \mathcal{Q}}_0.
 \end{equation}
For $I$, we have
\begin{multline} \label{dens2}
I  \ge \int_\Omega \frac{ 4\mu/3+\mu'  }{h'(\bar\rho)\bar\rho^2}\partial^\alpha\pa_3^{k'+1}\pa_t^ j
\left(h'(\bar\rho)\mathcal{Q} \right) \partial^\alpha\pa_3^{k'+1}\pa_t^ j(h'(\bar\rho) \q)
\\
-C\norm{\partial^\alpha\pa_3^{k'+1}\pa_t^ j \left( h'(\bar\rho)\q \right)}_0\sum_{k''\le k'}\norm{\partial^\alpha\pa_3^{k''}\pa_t^ j \mathcal{Q}}_0
\\  = \frac{1}{2}\frac{d}{dt} \int_\Omega  \frac{ 4\mu/3+\mu'  }{h'(\bar\rho)\bar\rho^2}
\abs{\partial^\alpha\pa_3^{k'+1}\pa_t^ j(h'(\bar\rho) \q)}^2
\\
-\int_\Omega \frac{ 4\mu/3+\mu'  }{h'(\bar\rho)\bar\rho^2} \partial^\alpha\pa_3^{k'+1}\pa_t^ j(h'(\bar\rho) \q)\partial^\alpha\pa_3^{k'+1}\pa_t^ j
\left(h'(\bar\rho) G^{1,1} \right)
\\
-C\norm{\partial^\alpha\pa_3^{k'+1}\pa_t^ j \left( h'(\bar\rho)\q \right)}_0\sum_{k''\le k'}\norm{\partial^\alpha\pa_3^{k''}\pa_t^ j \mathcal{Q}}_0.
 \end{multline}

Combining the estimates \eqref{rev_1}--\eqref{dens2} with \eqref{dtdensity1} and applying Cauchy's inequality in order to absorb the term $\norm{\partial^\alpha\pa_3^{k'+1}\pa_t^ j \left( h'(\bar\rho)\q \right)}_0$  onto the left, we arrive at the inequality
\begin{multline}\label{dens_10}
\frac{d}{dt}\norm{\sqrt{1+\frac{ 4\mu/3+\mu'  }{h'(\bar\rho)\bar\rho^2} }
 \partial^\alpha\pa_3^{k'+1}\pa_t^ j(h'(\bar\rho) \q)}_0^2
 +\hal \norm{\partial^\alpha\pa_3^{k'+1}\pa_t^ j \left( h'(\bar\rho)\q \right)}_0^2 \\
+ \hal \norm{\partial^\alpha\pa_3^{k'+1}\pa_t^ j \left( h'(\bar\rho)\mathcal{Q}\right)}_0^2
\lesssim \sum_{k''\le k'}\norm{\partial^\alpha\pa_3^{k''}\pa_t^ j \mathcal{Q}}_0^2
\\
+\abs{\int_\Omega \left(1+\frac{ 4\mu/3+\mu'  }{h'(\bar\rho)\bar\rho^2}\right) \partial^\alpha\pa_3^{k'+1}\pa_t^ j(h'(\bar\rho) \q)\partial^\alpha\pa_3^{k'+1}\pa_t^ j \left(h'(\bar\rho) G^{1,1} \right)}
\\
+\norm{\pa_t^{ j }  G^{1,2}}_{4N-2 j}^2+\norm{\pa_t^{ j }  G^2}_{4N-2 j-1}^2
+ \norm{\pa_t^{ j+1} u  }_{4N-2 j-1}^2
+ \norm{{\na_{\ast}}^{4N-2 j-k'} \pa_t^ j    u }_{k'+1}^2.
\end{multline}
Owing to the Leibniz rule and the properties of $\bar{\rho}$, we may estimate
\begin{multline}
 \ns{\p_3^{k'+1} \p^\al \dt^j \mathcal{Q}}_0 \ls  \ns{h'(\bar{\rho}) \p_3^{k'+1} \p^\al \dt^j \mathcal{Q}}_0
\ls  \ns{ \p_3^{k'+1} \p^\al \dt^j (h'(\bar{\rho})\mathcal{Q})}_0 + \sum_{k'' \le k'} \ns{\p_3^{k''} \p^\al \dt^j \mathcal{Q}}_0.
\end{multline}

Combining this with \eqref{dens_10} and summing over all $\alpha$ with $\abs{\alpha}\le 4N-2 j-1-k'$, we deduce that there exist universal constants $\beta_{k',j}, \gamma_{k',j}>0$ so that
\begin{multline}\label{Cell}
\frac{d}{dt}\norm{\sqrt{1+\frac{ 4\mu/3+\mu'  }{h'(\bar\rho)\bar\rho^2} }
 \nab_{\ast}^{4N-2j-k'-1}
 \pa_3^{k'+1}\pa_t^ j(h'(\bar\rho) \q)}_0^2
 +\beta_{k',j} \norm{\nab_{\ast}^{4N-2j-k'-1} \pa_3^{k'+1}\pa_t^j \left( h'(\bar\rho)\q \right)}_0^2 \\
+ \beta_{k',j} \norm{\nab_{\ast}^{4N-2j-k'-1} \pa_3^{k'+1}\pa_t^j  \mathcal{Q}}_0^2
\le \gamma_{k',j} \sum_{k''\le k'}\norm{\nab_{\ast}^{4N-2j-k'-1} \pa_3^{k''}\pa_t^ j \mathcal{Q}}_0^2
\\
+\gamma_{k',j}\sum_{\abs{\alpha}\le 4N-2 j-1-k'} \abs{\int_\Omega \left(1+\frac{ 4\mu/3+\mu'  }{h'(\bar\rho)\bar\rho^2}\right) \p^\al \pa_3^{k'+1}\pa_t^ j(h'(\bar\rho) \q)\partial^\alpha\pa_3^{k'+1}\pa_t^ j \left(h'(\bar\rho) G^{1,1} \right)}
\\
+\gamma_{k',j} \left(\norm{\pa_t^{ j }  G^{1,2}}_{4N-2 j}^2+\norm{\pa_t^{ j }  G^2}_{4N-2 j-1}^2
+ \norm{\pa_t^{ j+1} u  }_{4N-2 j-1}^2
+ \norm{{\na_{\ast}}^{4N-2 j-k'} \pa_t^ j    u }_{k'+1}^2\right)
\end{multline}
for every $0 \le k' \le k$.  We may then use Lemma \ref{est_alg}  to deduce that there exist constants $\lambda_{k',j,\alpha}>0$ (depending on $\beta_{k',j,\alpha}$ and $\gamma_{k',j,\alpha}$ as in \eqref{es_al_03}) and
\begin{equation}
\bar{\lambda}_{k,j} := \sum_{k'\le k} \lambda_{k',j} (\gamma_{k',j} + 1)
\end{equation}
so that
\begin{multline} \label{rhoes1}
\frac{d}{dt} \sum_{k'\le k} \lambda_{k',j} \norm{\sqrt{1+\frac{ 4\mu/3+\mu'  }{h'(\bar\rho)\bar\rho^2} }
 \nab_{\ast}^{4N-2j-k'-1} \pa_3^{k'+1}\pa_t^ j(h'(\bar\rho) \q)}_0^2 \\
 + \sum_{k'\le k} \norm{\nab_{\ast}^{4N-2j-k'-1} \pa_3^{k'+1}\pa_t^j \left( h'(\bar\rho)\q \right)}_0^2
+ \sum_{k'\le k} \norm{\nab_{\ast}^{4N-2j-k'-1}  \pa_3^{k'+1}\pa_t^j  \mathcal{Q}}_0^2 \\
\le
\bar{\lambda}_{k,j}\sum_{\substack{\abs{\alpha}\le 4N-2 j-1-k' \\ k'\le k} }  \abs{\int_\Omega \left(1+\frac{ 4\mu/3+\mu'  }{h'(\bar\rho)\bar\rho^2}\right) \partial^\alpha\pa_3^{k'+1}\pa_t^ j(h'(\bar\rho) \q)\partial^\alpha\pa_3^{k'+1}\pa_t^ j \left(h'(\bar\rho) G^{1,1} \right)}
\\
+\bar{\lambda}_{k,j} \left(\norm{\pa_t^{ j }  G^{1,2}}_{4N-2 j}^2+\norm{\pa_t^{ j }  G^2}_{4N-2 j-1}^2
+ \norm{\pa_t^{ j+1} u  }_{4N-2 j-1}^2 \right) \\
+\bar{\lambda}_{k,j} \left( \norm{\nab_{\ast}^{4N-2j-k-1} \pa_t^j \mathcal{Q}}_0^2
+ \sum_{k'\le k} \norm{{\na_{\ast}}^{4N-2 j-k'} \pa_t^j    u }_{k'+1}^2     \right).
\end{multline}

Finally, we will estimate the nonlinear terms in the right hand side of \eqref{rhoes1}. We use the estimates \eqref{p_G_e_00} of Lemma \ref{p_G2N_estimates} to estimate, for $0\le  j\le 2N-1$,
\begin{equation}\label{rhoes2}
\norm{\pa_t^{ j }  G^{1,2}}_{4N-2 j}^2+\norm{\pa_t^{ j }  G^2}_{4N-2 j-1}^2\lesssim \ks {\mathcal{E}_{2N}^\sigma }\mathcal{D}_{2N}^\sigma+  \mathcal{E}_{N+2}^\sigma\f .
\end{equation}
Then we use Lemma \ref{G11_weighted} to bound
\begin{multline}\label{rhoes25}
\bar{\lambda}_{k,j}\sum_{\substack{\abs{\alpha}\le 4N-2 j-1-k' \\ k'\le k} }  \abs{\int_\Omega \left(1+\frac{ 4\mu/3+\mu'  }{h'(\bar\rho)\bar\rho^2}\right) \partial^\alpha\pa_3^{k'+1}\pa_t^ j(h'(\bar\rho) \q)\partial^\alpha\pa_3^{k'+1}\pa_t^ j \left(h'(\bar\rho) G^{1,1} \right)}
\\
    \ls \ks \sqrt{ \mathcal{D}_{2N}^\sigma }\sqrt{ {\mathcal{E}_{2N}^\sigma }\mathcal{D}_{2N}^\sigma+  \mathcal{E}_{N+2}^\sigma\f}
\end{multline}
Plugging the nonlinear estimates \eqref{rhoes2} and \eqref{rhoes25}  into \eqref{rhoes1} then yields \eqref{density es2N}.
 \end{proof}

We then record a similar result at the $N+2$ level.
\begin{Proposition}\label{i_rho_evolution N+2}
Fix $0\le  j\le N+1$ and $0\le k\le 2(N+2)-2j-1$.  Then there exist universal constants $\lambda_{k',j}$ for $0 \le k' \le k$ so that
\begin{multline}\label{density esN+2}
\frac{d}{dt} \sum_{k'\le k} \lambda_{k',j} \norm{\sqrt{1+\frac{ 4\mu/3+\mu'  }{h'(\bar\rho)\bar\rho^2} }
 \nab_{\ast}^{ 2(N+2)-2 j-1-k'} \pa_3^{k'+1}\pa_t^ j(h'(\bar\rho) \q)}_0^2 \\
 + \sum_{k'\le k} \norm{\nab_{\ast}^{ 2(N+2)-2 j-1-k'} \pa_3^{k'+1}\pa_t^j \left( h'(\bar\rho)\q \right)}_0^2
+ \sum_{k'\le k} \norm{\nab_{\ast}^{ 2(N+2)-2 j-1-k'} \pa_3^{k'+1}\pa_t^j  \mathcal{Q}}_0^2
\\
\ls
\norm{\pa_t^{ j+1} u  }_{4N-2 j-1}^2
+\norm{\nab_{\ast}^{ 2(N+2)-2 j-1-k} \pa_t^ j \mathcal{Q} }_{0}^2
\\
+\sum_{k'\le k}\norm{ \nab_{\ast}^{ 2(N+2)-2 j-k'} \pa_t^ j    u }_{k'+1}^2
 + \ks\sqrt{\mathcal{E}_{2N}^\sigma }\mathcal{D}_{N+2}^\sigma.
\end{multline}
\end{Proposition}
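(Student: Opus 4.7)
The plan is to mirror the argument for Proposition \ref{i_rho_evolution 2N} at the lower regularity scale $N+2$, relying on the simpler $N+2$-level nonlinear estimates of Lemma \ref{p_GN+2_estimates} in place of Lemma \ref{p_G2N_estimates}. As before, the starting point is the scalar evolution equation \eqref{density 0} for $\p_3 q$ that was obtained from \eqref{eeqq} by eliminating $\p_{33}u_3$, together with the identity $\partial_t q = G^{1,1} + \mathcal{Q}$ coming from the definition \eqref{dt} of $\mathcal{Q}$.

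First I would fix $0\le j\le N+1$ and $0\le k'\le k\le 2(N+2)-2j-1$, take $\alpha\in\mathbb{N}^{2}$ with $|\alpha|\le 2(N+2)-2j-1-k'$, apply $\partial^\alpha\p_3^{k'}\p_t^{j}$ to \eqref{density 0}, and test the result against $\partial^\alpha\p_3^{k'+1}\p_t^{j}(h'(\bar\rho)q)+\partial^\alpha\p_3^{k'+1}\p_t^{j}(h'(\bar\rho)\mathcal{Q})$, integrating over $\Omega$. Splitting the identity into four pieces $I,II,III,IV$ exactly as in \eqref{dtdensity1}, the term $I$ produces $\tfrac{1}{2}\frac{d}{dt}\|\sqrt{1+(4\mu/3+\mu')/(h'(\bar\rho)\bar\rho^2)}\,\partial^\alpha\p_3^{k'+1}\p_t^{j}(h'(\bar\rho)q)\|_0^2$ modulo commutator errors; the term $II$ gives, via Leibniz, the coercive quantity $\|\partial^\alpha\p_3^{k'+1}\p_t^{j}(h'(\bar\rho)\mathcal{Q})\|_0^2$ up to lower-order commutators absorbable by Cauchy; and the term $III$ contributes $\|\partial^\alpha\p_3^{k'+1}\p_t^{j}(h'(\bar\rho)q)\|_0^2$ plus a cross term that is incorporated into $II$ after writing $h'(\bar\rho)\mathcal{Q}=h'(\bar\rho)(\dt q-G^{1,1})$. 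The remainder $IV$ is estimated by Cauchy--Schwarz in terms of $\|\p_t^{j}G^{1,2}\|_{2(N+2)-2j}$, $\|\p_t^{j}G^{2}\|_{2(N+2)-2j-1}$, $\|\p_t^{j+1}u\|_{2(N+2)-2j-1}$, $\|\nab_\ast^{2(N+2)-2j-k-1}\p_t^{j}\mathcal{Q}\|_0$ and $\sum_{k''\le k'}\|\nab_\ast^{2(N+2)-2j-k'}\p_t^{j}u\|_{k''+1}$.

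Next I would sum over all admissible $\alpha$ with $|\alpha|\le 2(N+2)-2j-1-k'$ to obtain, for each $0\le k'\le k$, a Gronwall-type inequality of exactly the form \eqref{Cell}, with universal constants $\beta_{k',j},\gamma_{k',j}>0$ and a $G^{1,1}$-coupling term of the shape $\int_\Omega(1+(4\mu/3+\mu')/(h'(\bar\rho)\bar\rho^{2}))\partial^\alpha\p_3^{k'+1}\p_t^{j}(h'(\bar\rho)q)\,\partial^\alpha\p_3^{k'+1}\p_t^{j}(h'(\bar\rho)G^{1,1})$. The family of inequalities for $k'=0,\dots,k$ is then combined by the algebraic Lemma \ref{est_alg} (applied exactly as in the proof of Proposition \ref{i_rho_evolution 2N}) to produce universal constants $\lambda_{k',j}>0$ and a single inequality in which the left-hand side is precisely the one appearing in \eqref{density esN+2}.

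The only remaining task is to bound the nonlinear contributions, and here is where the proof becomes shorter than at the $2N$ level. The terms $\|\p_t^{j}G^{1,2}\|_{2(N+2)-2j}^{2}$ and $\|\p_t^{j}G^{2}\|_{2(N+2)-2j-1}^{2}$ are controlled by $\ks\se{2N}^0\sd{N+2}^0$ directly from \eqref{p_G_e_h_00} of Lemma \ref{p_GN+2_estimates}. For the $G^{1,1}$-coupling term I would again split the Leibniz expansion of $\partial^\alpha\p_3^{k'+1}\p_t^{j}(h'(\bar\rho)G^{1,1})$: every term is at least quadratic, and at the $N+2$ scale the flexibility between $\se{2N}^0$ and $\sd{N+2}^0$ allows me to estimate the factor carrying the most derivatives in the appropriate $H^s$ norm by $\sqrt{\sd{N+2}^0}$ and the other factor in $L^\infty$ by $\sqrt{\se{2N}^0}$, so by Cauchy--Schwarz this coupling is bounded by $\ks\sqrt{\sd{N+2}^\sigma}\sqrt{\se{2N}^0\sd{N+2}^0}\lesssim \ks\sqrt{\se{2N}^\sigma}\sd{N+2}^\sigma$. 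The main obstacle I anticipate is precisely this last estimate: unlike the $2N$ case there are no top-order endpoint difficulties forcing an auxiliary estimate of Lemma \ref{G11_weighted} type, so one must merely verify that in every Leibniz term arising from $h'(\bar\rho)G^{1,1}$ the number of derivatives landing on $q$ stays within $2(N+2)-2j$, which is guaranteed by the range $0\le j\le N+1$. Collecting all contributions yields the right-hand side of \eqref{density esN+2}, completing the proof.
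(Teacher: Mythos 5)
Your proposal is correct and follows the paper's intended route: the paper's own proof is the one-sentence remark that one repeats the argument of Proposition~\ref{i_rho_evolution 2N} with Lemma~\ref{p_GN+2_estimates} (estimate~\eqref{p_G_e_h_00}) substituted for Lemma~\ref{p_G2N_estimates}, and that is exactly what you do, with full details. Your observation that no analogue of Lemma~\ref{G11_weighted} is needed at the $N+2$ scale is a genuine and accurate point that the paper leaves implicit; the reason is that at this scale even the worst $\alpha_0=0$, $|\alpha|=2(N+2)$ term produces at most $2(N+2)+1=2N+5$ derivatives on $\q$ inside $G^{1,1}$, and $2N+5\le 4N$ for $N\ge 3$, so the high-derivative factor can be placed into $\se{2N}^0$ rather than requiring the integration-by-parts/total-derivative trick. (Your phrasing that the derivatives ``stay within $2(N+2)-2j$, guaranteed by $0\le j\le N+1$'' is slightly off: the operative count is $2(N+2)+1\le 4N$, secured by $N\ge 3$, but this is a cosmetic slip that does not affect the argument.)
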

\begin{proof}
The proof proceeds along the same lines as that of Proposition \ref{i_rho_evolution 2N} except that we instead use the estimates \eqref{p_G_e_h_00} of Lemma \ref{p_GN+2_estimates}.
\end{proof}

\begin{Remark}
Propositions \ref{i_rho_evolution 2N} and \ref{i_rho_evolution N+2}  provide two important bits of control:  energy estimates of $\pa_3q$ and  dissipation estimates for $\pa_3\mathcal{Q}$.  These are crucial for improving the horizontal energy and dissipation estimates derived in the previous section into the full ones in later sections, respectively.
\end{Remark}

\section{Combined energy evolution estimates}\label{sec_combo}

Now we combine our previous estimates with the elliptic regularity theory of a particular Stokes problem in order to derive an intermediate energy-dissipation estimate.  We again assume throughout this section that the solutions obey the estimate $\gs(T) \le \delta$, where $\delta \in (0,1)$ is given in  Lemma \ref{eta_small}.

\subsection{The Stokes problem}

We first derive  elliptic estimates.  We deduce from \eqref{ns_perturb} that
\begin{equation}
\begin{split}
 &\diverge ( \bar{\rho}   u)=G^{1,2}-\mathcal{Q},
 \\
 & -\frac{\mu}{\bar{\rho}}\Delta u- \frac{\mu/3+\mu'}{\bar{\rho}} \na \diverge u+ \nabla   \left(h'(\bar{\rho})\q\right) =\frac{1}{\bar{\rho}}G^2-   \partial_t    u.
  \end{split}
\end{equation}
Direct calculations give the form of the Stokes problem we shall use:
\begin{equation}\label{stoke problem}
\begin{cases}
 \displaystyle-\mu\Delta\left (\frac{u}{\bar{\rho}} \right)+ \nabla  \left(h'(\bar{\rho})\q\right)
 =\frac{1}{\bar{\rho}}G^2-   \partial_t    u-\mu\left(2\pa_3\left (\frac{1}{\bar{\rho}} \right)\pa_3u+ \pa_{33}\left (\frac{1}{\bar{\rho}} \right) u\right)
 \\\qquad\qquad\qquad\qquad\qquad\quad\ \displaystyle +\frac{\mu/3+\mu'}{\bar{\rho}}\na \left (\displaystyle\frac{1}{\bar{\rho}} \left(G^{1,2}-\displaystyle\mathcal{Q}-\pa_3\bar{\rho}u_3\right)\right)&\text{in }\Omega_\pm\\\displaystyle
 \diverge \left (\frac{u}{\bar{\rho}} \right)=\frac{1}{\bar{\rho}^2}\left(G^{1,2}-\mathcal{Q}-2\pa_3\bar{\rho}u_3\right)&\text{in }\Omega_\pm
\\u=u&\text{on }\pa\Omega_\pm.
  \end{cases}
\end{equation}

We now prove the Stokes estimates at the $2N$ level.

\begin{lemma}\label{lemmau2N}
Fix $0\le  j\le 2N-1$. Then for any $1\le k\le 4N-2 j$,
\begin{equation}\label{u es2N}
\begin{split}
 &\norm{ {\na_{\ast}}^{4N-2 j-k}\pa_t^ j  u }_{k+1}^2 + \norm{\na {\na_{\ast}}^{4N-2 j-k}\pa_t^ j\left(h'(\bar{\rho})\q\right) }_{ k-1}^2
\\& \quad\lesssim\norm{\pa_t^{ j+1} u  }_{4N-2 j-1}^2+ \norm{{\na_{\ast}}^{4N-2 j-k} \pa_t^{ j } \mathcal{Q} }_{k}^2+ \norm{\bar{\na}_{\ast}^{4N}   u }_1^2
+\ks  \mathcal{E}_{2N}^\sigma  \mathcal{D}_{2N}^\sigma+  \mathcal{E}_{N+2}^\sigma\f.
 \end{split}
\end{equation}
\end{lemma}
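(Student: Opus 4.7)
The plan is to apply elliptic regularity theory for the Dirichlet Stokes problem on each of the domains $\Omega_+$ and $\Omega_-$ separately to the system \eqref{stoke problem}. The key observation is that the operator $\nabla_\ast^{4N-2j-k}\pa_t^j$ consists only of derivatives tangential to the fixed boundary $\pa\Omega_\pm$ and therefore commutes with the trace and preserves the structure of a Dirichlet problem: after applying it, the boundary datum is simply the (tangentially differentiated) trace of $u$ itself. Since $\bar\rho_\pm$ is smooth, bounded and bounded below on each of $\Omega_\pm$, the variable-coefficient Stokes system \eqref{stoke problem} obeys the usual regularity estimate
\begin{equation*}
\norm{\nabla_\ast^{4N-2j-k}\pa_t^j u}_{k+1}^2 + \norm{\nabla\nabla_\ast^{4N-2j-k}\pa_t^j(h'(\bar\rho)q)}_{k-1}^2 \lesssim R_m + R_c + R_b,
\end{equation*}
where $R_m$ is the $H^{k-1}$ norm of the differentiated momentum forcing, $R_c$ is the $H^k$ norm of the differentiated continuity forcing, and $R_b$ is the $H^{k+1/2}(\pa\Omega_\pm)$ norm of the Dirichlet trace.

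Next I would process the forcing terms on the right of \eqref{stoke problem} piece by piece. The $\pa_t u$ contribution lands in $R_m$ and, after absorbing $\nabla_\ast^{4N-2j-k}$ into the $H^{k-1}$ norm, produces exactly the $\norm{\pa_t^{j+1}u}_{4N-2j-1}^2$ term in \eqref{u es2N}. The $\mathcal{Q}$ and $\pa_3\mathcal{Q}$ contributions in $R_c$ and $R_m$ respectively yield the $\norm{\nabla_\ast^{4N-2j-k}\pa_t^j\mathcal{Q}}_k^2$ term. The $G^2$, $G^{1,2}$ and $\nabla G^{1,2}$ contributions are bounded using the $G$-estimates at the $2N$ level (Lemma \ref{p_G2N_estimates}), which gives a remainder of size $\ks\mathcal{E}_{2N}^\sigma \mathcal{D}_{2N}^\sigma + \mathcal{E}_{N+2}^\sigma\f$. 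The remaining lower-order coupling terms involving $u$ and its first derivatives (arising from $\pa_3\bar\rho$ factors) are harmless since $\bar\rho$ is smooth and they are dominated by the energy.

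For the boundary contribution $R_b$, the boundary datum $\nabla_\ast^{4N-2j-k}\pa_t^j u\big|_{\pa\Omega_\pm}$ consists of purely tangential/temporal derivatives of the trace of $u$, so they commute with the trace operation. Writing $\|\cdot\|_{H^{k+1/2}(\pa\Omega_\pm)}$ as a sum of $H^{1/2}$ norms of $k$ additional tangential derivatives and applying the standard $H^{1/2}$ trace inequality, each piece is dominated by $\norm{\pa^\beta u}_{H^1(\Omega)}$ for some $\beta\in\mathbb{N}^{1+2}$ of parabolic order at most $4N$; summing over all such $\beta$ produces the bound $R_b \lesssim \norm{\bar\nabla_\ast^{4N}u}_1^2$, exactly as appears in \eqref{u es2N}.

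The main obstacle will be verifying that the tangential-trace counting closes without losing a derivative: we must match $(4N-2j-k) + (k+\tfrac12)$ tangential orders against the $\bar\nabla_\ast^{4N}$ with one normal derivative that our existing horizontal energy controls, and this is a tight parabolic-counting check. A secondary technical point is confirming that applying $\nabla_\ast^{4N-2j-k}\pa_t^j$ to the variable-coefficient Stokes operator in \eqref{stoke problem} produces only commutator terms of the form already handled (i.e., involving at most $k+1$ derivatives of $u$ or $k-1$ derivatives of $\nabla(h'(\bar\rho)q)$ multiplied by smooth coefficients), so that they can either be absorbed into the left side via Cauchy's inequality or bounded by lower-order energy pieces.
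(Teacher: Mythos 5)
Your proposal follows essentially the same route as the paper's proof: apply horizontal/temporal derivatives to \eqref{stoke problem}, invoke the Dirichlet Stokes regularity of Lemma \ref{stokes reg} on each phase $\Omega_\pm$, bound the boundary datum by trace theory and tangential counting by $\norm{\bar{\nabla}_{\ast}^{4N} u}_1^2$ (your "tight" count $(4N-2j-k)+k+2j=4N$ does close), and estimate the $G^{1,2}, G^2$ forcing via Lemma \ref{p_G2N_estimates}. The one point you leave open --- the lower-order term of type $\norm{\nabla_{\ast}^{4N-2j-k}\partial_t^j u}_{k}$ arising from the $\partial_3\bar{\rho}$ coupling terms, which cannot simply be "dominated by the energy" since no standalone $\mathcal{E}_{2N}^\sigma$ is allowed on the right of \eqref{u es2N} --- is closed in the paper by a simple induction on the elliptic order $k'$ from $1$ up to $k$, with the base case controlled by $\norm{\bar{\nabla}_{\ast}^{4N} u}_1^2$; this is equivalent in spirit to your interpolation-and-absorption suggestion, so the argument goes through.
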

\begin{proof}
We first fix $0\le  j\le 2N-1$ and then take $1 \le k\le 4N-2 j$. Let $\alpha\in \mathbb{N}^{2}$ so that $|\alpha|\le 4N-2 j-k$. Applying $\partial^\alpha\pa_t^ j$  to the equations $\eqref{stoke problem}$ in $\Omega_\pm$ respectively, and then applying the elliptic estimates of Lemma \ref{stokes reg} with $r=k'+1\ge2$ for any $1\le k'\le k$, by the trace theory, we obtain
\begin{equation}\label{u claim es}
\begin{split}
&\norm{ \pa^\al\pa_t^ j  u }_{k'+1}^2+ \norm{\na \pa^\al\pa_t^ j   \left(h'(\bar{\rho})\q\right)  }_{k'-1}^2
 \lesssim \norm{ \pa^\al\pa_t^ j \left (\frac{u}{\bar{\rho}} \right) }_{k'+1}^2+ \norm{\na \pa^\al\pa_t^ j  \left(h'(\bar{\rho})\q\right) }_{k'-1}^2
\\
&\quad\lesssim \norm{\pa^\al\pa_t^{ j} G^2  }_{k'-1}^2+\norm{\pa^\al\pa_t^{ j+1} u  }_{k'-1}^2+\norm{ \pa^\al\pa_t^ j  u }_{k'}^2
\\
&\qquad+ \norm{\pa^\al\pa_t^{ j} G^{1,2}  }_{k'}^2+\norm{\pa^\al \pa_t^{ j } \mathcal{Q} }_{k'}^2+ \norm{\pa^\al\pa_t^ j u }_{H^{k'+1/2}(\Sigma)}^2
\\
&\quad\lesssim \norm{\pa^\al\pa_t^{ j} G^{1,2}  }_{k}^2+\norm{\pa^\al\pa_t^{ j} G^2  }_{k-1}^2+\norm{\pa^\al\pa_t^{ j+1} u  }_{k-1}^2 +\norm{\pa^\al \pa_t^{ j } \mathcal{Q} }_{k}^2+\norm{ \pa^\al\pa_t^ j  u }_{k'}^2
\\
& \qquad+ \norm{{\na_{\ast}}^{k'}\pa^\al\pa_t^ j u }_{H^{1/2}(\Sigma)}^2
\\
&\quad\lesssim \norm{\pa_t^{ j} G^{1,2}  }_{4N-2 j}^2  +\norm{ \pa_t^{ j} G^2  }_{4N-2 j-1}^2+\norm{\pa_t^{ j+1} u  }_{4N-2 j-1}^2+ \norm{{\na_{\ast}}^{4N-2 j-k} \pa_t^{ j } \mathcal{Q} }_{k}^2
\\
&\qquad+ \norm{\bar{\na}_{\ast}^{\,\,4N}   u }_1^2 +\norm{ \pa^\al\pa_t^ j  u }_{k'}^2.
\end{split}
\end{equation}

A simple induction based on the above yields that
\begin{equation}\label{u claim}
\begin{split}
\norm{\pa^\al\pa_t^ j  u }_{k+1}^2&+ \norm{\na \pa^\al\pa_t^ j   \q }_{ k-1}^2
    \lesssim \norm{\pa_t^{ j} G^{1,2}  }_{4N-2 j}^2
 +\norm{ \pa_t^{ j} G^2  }_{4N-2 j-1}^2
\\&\quad+\norm{\pa_t^{ j+1} u  }_{4N-2 j-1}^2+ \norm{{\na_{\ast}}^{4N-2 j-k} \pa_t^{ j } \mathcal{Q} }_{k}^2 + \norm{\bar{\na}_{\ast}^{4N}   u }_1^2.
\end{split}
\end{equation}

Finally, we use the estimates \eqref{p_G_e_00} of Lemma \ref{p_G2N_estimates} to have
\begin{equation}\label{u claim es2}
\norm{\pa_t^{ j} G^{1,2}  }_{4N-2 j}^2
 +\norm{ \pa_t^{ j} G^2  }_{4N-2 j-1}^2\lesssim  \ks {\mathcal{E}_{2N}^\sigma }\mathcal{D}_{2N}^\sigma+  \mathcal{E}_{N+2}^\sigma\f
 \end{equation}
We then  sum over such $|\alpha|\le 4N-2 j-k$ to conclude \eqref{u es2N}.
\end{proof}

We then record a similar result at the $N+2$ level.
\begin{lemma}\label{lemmauN+2}
Fix $0\le  j\le N+1$. Then for any $1\le k\le 2(N+2)-2 j$,
\begin{equation}\label{u esN+2}
\begin{split}
 &\norm{ {\na_{\ast}}^{2(N+2)-2 j-k}\pa_t^ j  u }_{2(N+2)+1}^2 + \norm{\na {\na_{\ast}}^{2(N+2)-2 j-k}\pa_t^ j  \q }_{ k-1}^2
\\& \quad\lesssim\norm{\pa_t^{ j+1} u  }_{2(N+2)-2 j-1}^2+ \norm{{\na_{\ast}}^{2(N+2)-2 j-k} \pa_t^{ j } \mathcal{Q} }_{k}^2 + \norm{\bar{\na}_{\ast}^{\,\,\,2(N+2)}   u }_1^2
+ \ks  \mathcal{E}_{2N}^\sigma  \mathcal{D}_{N+2}^\sigma.
 \end{split}
\end{equation}
\end{lemma}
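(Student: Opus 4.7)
The plan is to mimic the argument of Lemma \ref{lemmau2N}, replacing the $2N$-level nonlinear estimates with those at the $N+2$ level. Fix $0 \le j \le N+1$ and $1 \le k \le 2(N+2) - 2j$, and pick $\alpha \in \mathbb{N}^2$ with $|\alpha| \le 2(N+2) - 2j - k$. Applying $\partial^\alpha \partial_t^j$ to the Stokes system \eqref{stoke problem} on $\Omega_\pm$ separately converts the nonhomogeneous divergence into a source term, and then I would invoke the Stokes elliptic regularity of Lemma \ref{stokes reg} with $r = k' + 1 \ge 2$ for each $1 \le k' \le k$.

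The resulting pointwise (in $k'$) estimate will take the schematic form
\begin{multline*}
\norm{\pa^\al\pa_t^ j  u }_{k'+1}^2+ \norm{\na \pa^\al\pa_t^ j   \left(h'(\bar{\rho})\q\right)  }_{k'-1}^2
\\
\lesssim \norm{\pa_t^{ j} G^{1,2}  }_{2(N+2)-2 j}^2 + \norm{ \pa_t^{ j} G^2  }_{2(N+2)-2 j-1}^2+\norm{\pa_t^{ j+1} u  }_{2(N+2)-2 j-1}^2 \\
+ \norm{{\na_{\ast}}^{2(N+2)-2 j-k} \pa_t^{ j } \mathcal{Q} }_{k}^2 + \norm{\bar{\na}_{\ast}^{2(N+2)}   u }_1^2 + \norm{ \pa^\al\pa_t^ j  u }_{k'}^2,
\end{multline*}
exactly as in \eqref{u claim es}, with trace theory used to control the Dirichlet boundary data. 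A simple induction on $k'$, starting from $k' = 1$ (where the last term is absorbed into $\norm{\bar{\na}_\ast^{2(N+2)} u}_1^2$), then produces the bound at $k' = k$ free of the lower-order $u$ norm, analogously to \eqref{u claim}.

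The final step is to dispose of the nonlinear forcing. Here I would substitute the estimates \eqref{p_G_e_h_00} of Lemma \ref{p_GN+2_estimates}, which yield
\begin{equation*}
\norm{\pa_t^{ j} G^{1,2}  }_{2(N+2)-2 j}^2 + \norm{ \pa_t^{ j} G^2  }_{2(N+2)-2 j-1}^2 \lesssim \ks \mathcal{E}_{2N}^\sigma \mathcal{D}_{N+2}^\sigma,
\end{equation*}
matching the shape of the right-hand side of \eqref{u esN+2}. Summing the induction output over $|\alpha| \le 2(N+2) - 2j - k$ then gives the claim.

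The argument is essentially mechanical, and the only point requiring care is bookkeeping: one must verify that the elliptic estimates provide room for the required number of horizontal derivatives (namely $|\alpha| \le 2(N+2) - 2j - k$) while still keeping the remaining nonlinear terms inside the range where Lemma \ref{p_GN+2_estimates} applies, i.e. at most $2(N+2)$ total derivatives. The slight mismatch between using $\se{2N}^\sigma$ for the top factor and $\sd{N+2}^\sigma$ for the bottom factor in the nonlinear bound is exactly what Lemma \ref{p_GN+2_estimates} is designed to furnish, so no obstacle arises there. I expect no genuine difficulty beyond careful index tracking.
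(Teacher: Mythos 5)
Your proposal matches the paper's proof exactly: the paper simply states that the argument proceeds as in Lemma \ref{lemmau2N} with the estimates \eqref{p_G_e_h_00} of Lemma \ref{p_GN+2_estimates} substituted for those of Lemma \ref{p_G2N_estimates}, which is precisely the substitution you carry out, and your reconstruction of the Stokes elliptic iteration on $k'$ and the final summation over $\alpha$ is the intended mechanism.
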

\begin{proof}
The proof proceeds similarly as Proposition \ref{lemmau2N} by using instead the estimates
\eqref{p_G_e_h_00} of Lemma \ref{p_GN+2_estimates}.
\end{proof}

\subsection{Synthesis}

We will now chain the energy evolution estimates of Section \ref{sec_horiz} with the $\p_3 q$ estimate of Section \ref{sec_aux} and the estimates of Lemmas \ref{lemmau2N}--\ref{lemmauN+2}. The full dissipation estimates of $u$ will be obtained, and also some estimates of $\q$ will be improved along the way. To do so, we first introduce some notation. For $n=2N$ or $n=N+2$ we write
\begin{multline}\label{E_frak}
\mathfrak{E}_{n}^\sigma =
\int_\Omega \bar{\rho}\abs{\bar{\nab}_{\ast}^{2n} u}^2 +h'(\bar{\rho})\abs{ \bar{\nab}_{\ast}^{2n} \q}^2
+\int_{\Sigma_+} \rho_1  g\abs{\bar{\nab}_{\ast}^{2n} \eta_+}^2+\sigma_+\abs{\nab_\ast \bar{\nab}_{\ast}^{2n} \eta_+}^2
\\
+ \int_{\Sigma_-}- \rj g\abs{\bar{\nab}_{\ast}^{2n} \eta_-}^2+ \sigma_-\abs{\nab_\ast \bar{\nab}_{\ast}^{2n}\eta_-}^2
\\
+ \int_\Omega (\bar{\rho}(J-1)K+ \q+\p_3\bar\rho\theta) J\abs{\partial_t^{n} u }^2  + h'(\bar{\rho})(J-1) \abs{\dt^{n}\q}^2
\end{multline}
and
\begin{equation}\label{D_frak}
\mathfrak{D}_{n}^\sigma =  \int_{\Omega}\frac{\mu}{2} \abs{\sgz \bar{\nab}_{\ast}^{2n} u}^2+\mu' \abs{\diverge \bar{\nab}_{\ast}^{2n} u}^2
\end{equation}
for the various terms appearing in Propositions  \ref{i_temporal_evolution 2N} and \ref{i_spatial_evolution 2N}.  Similarly, for $n=2N$ or $n=N+2$ and integers $0 \le j \le n-1$ and $0 \le k \le 2n-2j -1$ we write
\begin{equation}\label{A_frak}
 \mathfrak{A}_{n}^{j,k} := \sum_{k'=0}^{ k} \lambda_{k',j} \norm{\sqrt{1+\frac{ 4\mu/3+\mu'  }{h'(\bar\rho)\bar\rho^2} }
  \nab_{\ast}^{2n-2j-k'-1} \pa_3^{k'+1}\pa_t^ j(h'(\bar\rho) \q)}_0^2,
\end{equation}
with the constants $\lambda_{k',j}$ the same as in Proposition \ref{i_rho_evolution 2N} in the case $n=2N$ and as in Proposition \ref{i_rho_evolution N+2} in the case $n=N+2$.  We also write
\begin{multline}\label{B_frak}
 \mathfrak{B}_{n}^{j,k} :=   \sum_{k'=0}^{k} \norm{\nab_{\ast}^{2n-2j-k'-1}  \pa_3^{k'+1}\pa_t^j \left( h'(\bar\rho)\q \right)}_0^2
+ \sum_{k'=1}^k \ns{ \nab \nab_\ast^{2n-2j-k'} \dt^j(h'(\bar{\rho}) \q)}_{k'-1}
\\
+ \norm{ \bar{\na}_{\ast}^{2n}  \mathcal{Q}}_0^2  + \sum_{k'=0}^{k} \norm{\nab_{\ast}^{2n-2j-k'-1}  \pa_3^{k'+1}\pa_t^j  \mathcal{Q}}_0^2,
\end{multline}
where $\mathcal{Q}$ is as defined in \eqref{dt}. In addition, we introduce the following intermediate energies:
\begin{multline}\label{Enn}
\bar{\mathcal{E}}_{n}^\sigma := \norm{ {\bar\na}_{\ast}^{2n} u}_0^2  + \sum_{j=0}^{n} \norm{ \pa_t^j \q}_{2n-2j}^2
+\mathcal{H}(-\rj)\left[ \sum_{j=0}^{n} \ns{\dt^j \eta}_{2n-2j} + \min\{1,\sigma\} \sum_{j=0}^n \ns{\dt^j \eta}_{2n-2j+1}  \right]
\\
+\mathcal{H}(\rj)  \min\{1,\sigma_+,\sigma_- - \sigma_c\} \sum_{j=0}^n \ns{\dt^j \eta}_{2n-2j+1}
\end{multline}
\begin{equation}\label{Dnn}
\bar{\mathcal{D}}_{n}:= \sum_{j=0}^{n} \ns{ \dt^j u}_{2n-2j+1} + \sum_{j=0}^{n-1} \ns{\nab \dt^j (h'(\bar{\rho}) \q)}_{2n-2j-1}.
\end{equation}
The rest of the section is devoted to the derivation of the energy bounds for $\bar{\mathcal{E}}_{n}^\sigma$ and $\bar{\mathcal{D}}_{n}$ based on the energy evolutions for $\mathfrak{E}_{n}^\sigma, \mathfrak{D}_{n}^\sigma, \mathfrak{A}_{n}^{j,k},  \mathfrak{B}_{n}^{j,k}$.
First we present the main energy evolution result at the $2N$ level with the improved dissipation energy $\bar{\mathcal{D}}_{2N}$.

\begin{Proposition}\label{boostrap2N}
Let $\Ef, \Af, \Bf$ be defined as above and let $\bar{\mathcal{D}}_{2N}$ be as defined by \eqref{Dnn}.  There exist universal constants $\gamma_{2N}>0$ and $\beta_{2N;j,k}>0$ for $j=0,\dotsc,2N-1$, $k=0,\dotsc,4N-2j-1$ so that
\begin{multline}\label{ebar2N}
\frac{d}{dt} \left( \Ef + \sum_{j=0}^{2N-1} \sum_{k=0}^{4N-2j-1} \beta_{2N;j,k}  \Af \right) + \gamma_{2N} \bar{\mathcal{D}}_{2N}
\\
\ls
\ks \sqrt{\mathcal{E}_{2N}^\sigma }\mathcal{D}_{2N}^\sigma + \ks \sqrt{ \mathcal{D}_{2N}^\sigma \mathcal{E}_{N+2}^\sigma\f} + \ks \mathcal{E}_{N+2}^\sigma\f,
\end{multline}

\end{Proposition}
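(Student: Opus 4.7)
The plan is to assemble \eqref{ebar2N} by combining four types of estimates in a carefully ordered linear combination: the temporal-derivative energy evolution of Proposition \ref{i_temporal_evolution 2N}, the horizontal-mixed evolution of Proposition \ref{i_spatial_evolution 2N}, the density evolution of Proposition \ref{i_rho_evolution 2N}, and the Stokes regularity of Lemma \ref{lemmau2N}. The quantity $\Ef$ is exactly the sum of the energy terms produced on the left of Propositions \ref{i_temporal_evolution 2N} and \ref{i_spatial_evolution 2N} once one expands the geometric coefficients $J$, $\bar\rho + q + \p_3 \bar\rho\, \theta$ and peels off an $\Ef$-part plus commutator remainders (those remainders form the bottom line of the definition of $\Ef$). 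Energy positivity (Proposition \ref{Energy positivity pro}) then makes $\Ef$ coercive modulo nonlinear error, and Korn's inequality upgrades the dissipation coming from $\mathfrak{D}_{2N}^\sigma$ to $\|\bar\nabla_\ast^{4N} u\|_1^2$, which is the horizontal-temporal component of $\bar{\mathcal{D}}_{2N}$.

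The first concrete step is to add Proposition \ref{i_temporal_evolution 2N} (summed over $j=0,\dots,2N$) and Proposition \ref{i_spatial_evolution 2N} (summed over all $\alpha$ with $\alpha_0\le 2N-1$ and $|\alpha|\le 4N$). After applying Korn this yields
\begin{equation*}
\frac{d}{dt}\Ef + C\,\|\bar\nabla_\ast^{4N} u\|_1^2 \;\lesssim\; \ks\sqrt{\mathcal{E}_{2N}^\sigma}\mathcal{D}_{2N}^\sigma + \ks\sqrt{\mathcal{D}_{2N}^\sigma\mathcal{E}_{N+2}^\sigma\f},
\end{equation*}
which already accounts for the target right-hand side of \eqref{ebar2N}. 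What is missing from $\bar{\mathcal{D}}_{2N}$ are the vertical pieces: the extra $\p_3$-dissipation for $u$, i.e. $\|\dt^j u\|_{2(2N)-2j+1}$ beyond what $\bar\nabla_\ast$ controls, and the $\|\nabla\dt^j(h'(\bar\rho)q)\|_{4N-2j-1}$ terms. These are produced respectively by Lemma \ref{lemmau2N} (Stokes) and by Proposition \ref{i_rho_evolution 2N} ($\p_3 q$ evolution), but both of these inequalities have extra terms on the right that involve the $u$-dissipation we are trying to bound. Therefore the second step is to add the Proposition \ref{i_rho_evolution 2N} estimates weighted by small constants $\beta_{2N;j,k}$ that are fixed by iterating in $(j,k)$.

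The iteration proceeds as follows. For fixed $j\in\{0,\dots,2N-1\}$, work upward in $k\in\{0,\dots,4N-2j-1\}$. At the step $k$, the right-hand side of Proposition \ref{i_rho_evolution 2N} contains $\|\pa_t^{j+1}u\|_{4N-2j-1}^2$, the $\mathcal{Q}$-term $\|\nabla_\ast^{4N-2j-k-1}\dt^j \mathcal{Q}\|_0^2$, the $u$-dissipation sum $\sum_{k'\le k}\|\nabla_\ast^{4N-2j-k'}\dt^j u\|_{k'+1}^2$, and nonlinear error of type $\ks\sqrt{\mathcal{E}_{2N}^\sigma}\mathcal{D}_{2N}^\sigma$. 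Apply Lemma \ref{lemmau2N} at order $k$ to rewrite $\|\nabla_\ast^{4N-2j-k}\dt^j u\|_{k+1}^2 + \|\nabla\nabla_\ast^{4N-2j-k}\dt^j(h'(\bar\rho)q)\|_{k-1}^2$ in terms of $\|\pa_t^{j+1}u\|_{4N-2j-1}^2$, $\|\nabla_\ast^{4N-2j-k}\dt^j\mathcal{Q}\|_k^2$, $\|\bar\nabla_\ast^{4N} u\|_1^2$, and nonlinearity. The first of these is absorbed by picking all $\beta_{2N;j,k}$ small compared to the weight on $\int_\Omega \bar\rho |\bar\nabla_\ast^{2\cdot 2N} u|^2$ already in the horizontal energy; the third is absorbed by picking $\beta_{2N;j,k}$ small compared to $\gamma_{2N}$ above. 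The $\mathcal{Q}$-term is controlled by $\Bf$ already on the dissipation side of Proposition \ref{i_rho_evolution 2N}, again after shrinking $\beta_{2N;j,k}$; here we use the relation $\mathcal{Q} = \dt q - G^{1,1}$ together with the nonlinear bounds of Lemma \ref{p_G2N_estimates} to transfer $\mathcal{Q}$-dissipation into $\dt q$-dissipation. The algebraic manipulation that produces the $\beta_{2N;j,k}$'s with the required smallness ordering is precisely the content of Lemma \ref{est_alg}, which was already invoked in the proof of Proposition \ref{i_rho_evolution 2N}; here we apply it once more at the outer level to combine across $(j,k)$.

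The main obstacle is this bookkeeping: one must fix the constants $\beta_{2N;j,k}$ in an order that allows the Stokes residuals at level $(j,k)$ to be absorbed into the dissipation produced at levels $(j,k')$ with $k'\le k$ and into the base horizontal-temporal dissipation, while not destroying the positivity of the combined energy $\Ef + \sum \beta_{2N;j,k}\Af$. Because each Stokes application introduces a $\|\bar\nabla_\ast^{4N} u\|_1^2$ contribution with a universal constant independent of $\beta_{2N;j,k}$, the base coefficient in front of $\|\bar\nabla_\ast^{4N} u\|_1^2$ can always be chosen large enough to dominate. Once the constants are fixed, the Cauchy--Schwarz bounds on the nonlinear right-hand sides, using Lemmas \ref{p_G2N_estimates}, \ref{lemma7}, \ref{lemma8}, and \ref{G11_weighted}, collapse all error terms into the claimed form $\ks\sqrt{\mathcal{E}_{2N}^\sigma}\mathcal{D}_{2N}^\sigma + \ks\sqrt{\mathcal{D}_{2N}^\sigma\mathcal{E}_{N+2}^\sigma\f} + \ks\,\mathcal{E}_{N+2}^\sigma\f$, yielding \eqref{ebar2N} with the constant $\gamma_{2N}$ coming from the remaining fraction of the Korn-plus-Proposition \ref{i_rho_evolution 2N} dissipation left over after all absorptions.
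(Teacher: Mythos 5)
Your overall architecture (horizontal/temporal evolution plus the $\p_3\q$ evolution plus the Stokes estimates, with the constants fixed by Lemma \ref{est_alg}) is the paper's, but the two absorption mechanisms you give for the hardest right-hand terms are wrong, and they are exactly where the proof lives.  First, you claim that the term $\ns{\dt^{j+1}u}_{4N-2j-1}$ appearing on the right of Proposition \ref{i_rho_evolution 2N} and Lemma \ref{lemmau2N} ``is absorbed by picking all $\beta_{2N;j,k}$ small compared to the weight on $\int_\Omega \bar\rho|\bar\nabla_\ast^{4N}u|^2$ already in the horizontal energy.''  That cannot work: the energy sits under a time derivative and cannot absorb a dissipation term, and the base horizontal dissipation $\Df$ (even after Korn, which only yields $\ns{\bar\nabla_\ast^{4N}u}_{1}$) controls just one vertical derivative of $\dt^{j+1}u$, whereas $\ns{\dt^{j+1}u}_{4N-2j-1}$ contains up to $4N-2j-1$ of them.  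The correct mechanism, which your ``absorb into levels $(j,k')$ with $k'\le k$ and into the base dissipation'' bookkeeping omits, is a downward induction in the temporal index: since $\ns{\dt^{j+1}u}_{4N-2j-1}=\ns{\dt^{j+1}u}_{4N-2(j+1)+1}$, the offending term at level $j$ is precisely the full dissipation produced on the left at level $j+1$ by the Stokes estimate (Lemma \ref{lemmau2N} with $k=4N-2(j+1)$), and one applies Lemma \ref{est_alg} a second time, counting backward in $j$ from $2N-1$ to $0$, with only the top case $\ns{\dt^{2N}u}_{1}\ls\Df$ handled by Korn.  Without this second, $j$-directed application of Lemma \ref{est_alg}, the scheme does not close.

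Second, your treatment of the purely horizontal $\mathcal{Q}$-term $\ns{\nabla_\ast^{4N-2j-k-1}\dt^j\mathcal{Q}}_0$ via the identity $\mathcal{Q}=\dt\q-G^{1,1}$ fails.  The quantity $\bar{\mathcal{D}}_{2N}$ you are producing contains no control of $\dt^{j+1}\q$ at the required order: for $k=0$ you would need $4N-2j-1$ horizontal derivatives of $\dt^{j+1}\q$, while $\ns{\nab\dt^{j+1}(h'(\bar\rho)\q)}_{4N-2(j+1)-1}$ reaches only order $4N-2j-2$, and for $j=2N-1$ there is no $\dt^{2N}\q$ term in $\bar{\mathcal{D}}_{2N}$ at all; invoking $\sd{2N}^\sigma$ itself at unit strength there would be circular.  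The term must instead be converted through the other representation $\mathcal{Q}=-\diverge(\bar\rho u)+G^{1,2}$, which gives $\ns{\bar\nabla_\ast^{4N}\mathcal{Q}}_0\ls\ns{\bar\nabla_\ast^{4N}u}_1+\ks\se{2N}^\sigma\sd{2N}^\sigma+\se{N+2}^\sigma\f$, so that it is dominated by the Korn-controlled horizontal dissipation; this is also why the quantity $\ns{\bar\nabla_\ast^{2n}\mathcal{Q}}_0$ appears inside $\Bf$ in the first place.  (A smaller point: summing Proposition \ref{i_temporal_evolution 2N} over all $j=0,\dots,2N$ rather than only $j=2N$ double-counts the pure time derivatives and yields a time-differentiated functional that is not $\Ef$; the geometric-form estimate is needed only at $j=2N$, the lower temporal orders being covered in linear form by Proposition \ref{i_spatial_evolution 2N}.)
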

\begin{proof}

First, we sum the result of Proposition  \ref{i_temporal_evolution 2N}, with $j=2N$, with the result of Proposition
 \ref{i_spatial_evolution 2N}, for all $\alpha \in \mathbb{N}^{1+2}$ with $\alpha_0 \le 2N-1$ and $\abs{\alpha} \le 4N$; this yields the estimate
\begin{equation}\label{boo_1}
 \frac{d}{dt} \Ef + \Df \ls \ks \sqrt{\mathcal{E}_{2N}^\sigma }\mathcal{D}_{2N}^\sigma + \ks \sqrt{ \mathcal{D}_{2N}^\sigma \mathcal{E}_{N+2}^\sigma\f},
\end{equation}
where $\Ef$ and $\Df$ are as defined in \eqref{E_frak} and \eqref{D_frak}.  Note that, owing to the Korn inequality of Proposition \ref{layer_korn}, we may bound
\begin{equation}\label{boo_2}
 \ns{\nab_\ast^{4N} u}_1 \ls \Df.
\end{equation}

Next, we recall the notation $\mathcal{Q}$ in \eqref{dt}.   We may use the estimates \eqref{p_G_e_00} of Lemma \ref{p_G2N_estimates} to obtain the bound
\begin{equation}\label{drho}
\norm{ \bar{\na}_{\ast}^{4N}  \mathcal{Q}}_0^2 \ls \norm{\bar{\na}_{\ast}^{4N}\diverge(\bar\rho u)}_0^2 +
\norm{\bar{\na}_{\ast}^{4N} G^{1,2}}_0^2
\lesssim \norm{\bar{\na}_{\ast}^{4N} u }_1^2 + \ks   \mathcal{E}_{2N}^\sigma \mathcal{D}_{2N}^\sigma+  \mathcal{E}_{N+2}^\sigma\f.
\end{equation}
Then for $0\le  j\le 2N-1$ and  $0\le k\le 4N-2 j-1$, we may combine the results of Proposition \ref{i_rho_evolution 2N} and Lemma \ref{lemmau2N} (summed over  $1 \le k' \le k$) with \eqref{drho} to see that
\begin{multline}\label{boo_3}
 \frac{d}{dt} \Af + \Bf  \ls
 \norm{\pa_t^{ j+1} u  }_{4N-2 j-1}^2
+ \sum_{k'=1}^{k}\norm{{\na_{\ast}}^{4N-2 j-k'} \pa_t^{ j } \mathcal{Q} }_{k'}^2
  + \norm{\bar{\na}_{\ast}^{4N} u }_1^2 \\
 +\ks \left[ \sqrt{\mathcal{E}_{2N}^\sigma }\mathcal{D}_{2N}^\sigma+\sqrt{ \mathcal{D}_{2N}^\sigma \mathcal{E}_{N+2}^\sigma\f}+\mathcal{E}_{N+2}^\sigma\f \right],
\end{multline}
where we have written $\Af$ and $\Bf$ as in \eqref{A_frak} and \eqref{B_frak} and employed  Lemma \ref{lemmau2N} to control the term $\norm{{\na_{\ast}}^{4N-2 j-k'} \pa_t^j    u }_{k'+1}^2$ in the right hand side of \eqref{density es2N}. Notice that if we
write
\begin{equation}\label{boo_5de}
 \Hf := \ns{ \bar{\na}_{\ast}^{4N}  \mathcal{Q}}_0 + \ns{\nab_\ast^{4N-2j-k-1} \dt^j \mathcal{Q}}_{k+1},
\end{equation}
then we particularly have
\begin{equation}\label{boo_7}
\Hf \ls  \norm{ \bar{\na}_{\ast}^{4N}  \mathcal{Q}}_0^2  + \sum_{k'=0}^{k} \norm{\nab_{\ast}^{4N-2j-k'-1}  \pa_3^{k'+1}\pa_t^j  \mathcal{Q}}_0^2\le\Bf.
\end{equation}

Let $0\le  j\le 2N-1$ and  $0\le k\le 4N-2 j-1$.  We  sum the estimates \eqref{boo_1} and \eqref{boo_3}; employing \eqref{boo_7} in the resulting estimate, we deduce that there exists a universal constant $C_{1}>0$ such that
\begin{multline}\label{boo_4}
 \frac{d}{dt}( \Ef + \Af) + \Df +  C_{1} \Hf \ls
\sum_{k'=0}^{k}\norm{{\na_{\ast}}^{4N-2 j-k'} \pa_t^{ j } \mathcal{Q} }_{k'}^2
+ \norm{\pa_t^{ j+1} u  }_{4N-2 j-1}^2
\\
+\norm{\bar{\na}_{\ast}^{4N} u }_1^2
+\ks \left[ \sqrt{\mathcal{E}_{2N}^\sigma }\mathcal{D}_{2N}^\sigma+\sqrt{ \mathcal{D}_{2N}^\sigma \mathcal{E}_{N+2}^\sigma\f}+\mathcal{E}_{N+2}^\sigma\f \right]
\end{multline}
for all such $j,k$.

Now, for fixed $0\le  j\le 2N-1$, owing to the definition \eqref{boo_5de} of $\Hf$, we may view \eqref{boo_4} as a sequence of estimates indexed by $0\le k\le 4N-2 j-1$, of the form considered in Lemma \ref{est_alg}.  Applying the lemma, we deduce that there exist universal constants $C_{2;j,k}>0$ so that
\begin{multline}\label{boo_9}
 \frac{d}{dt}\sum_{k=0}^{4N-2j-1} C_{2;j,k} ( \Ef + \Af) +\sum_{k=0}^{4N-2j-1} \left(\Df + C_{1} \Hf \right)
\ls
\norm{\pa_t^{ j+1} u  }_{4N-2 j-1}^2
\\+\norm{{\na_{\ast}}^{4N-2 j} \pa_t^{ j } \mathcal{Q} }_{0}^2
+\norm{\bar{\na}_{\ast}^{4N} u }_1^2
+\ks \left[ \sqrt{\mathcal{E}_{2N}^\sigma }\mathcal{D}_{2N}^\sigma+\sqrt{ \mathcal{D}_{2N}^\sigma \mathcal{E}_{N+2}^\sigma\f}+\mathcal{E}_{N+2}^\sigma\f \right]
\\
\ls \norm{\pa_t^{ j+1} u  }_{4N-2 j-1}^2
+  \Df
+ \ks \left[ \sqrt{\mathcal{E}_{2N}^\sigma }\mathcal{D}_{2N}^\sigma+\sqrt{ \mathcal{D}_{2N}^\sigma \mathcal{E}_{N+2}^\sigma\f}+\mathcal{E}_{N+2}^\sigma\f \right],
\end{multline}
where the second estimate follows from \eqref{drho} and \eqref{boo_2}.

It is easy to see, using the definitions of $\Df$ and $\Hf$, that
\begin{equation}
 \ns{\bar{\nab}_\ast^{4N} u}_1
+ \ns{ \bar{\na}_{\ast}^{4N}  \mathcal{Q}}_0 + \ns{\dt^j \mathcal{Q}}_{4N-2j}
\ls
 \sum_{k=0}^{4N-2j-1} \left( \Df + C_{1} \Hf \right).
\end{equation}
Adding $\ns{\dt^{j+1} u}_{4N-2j-1} + \ks \mathcal{E}_{2N}^\sigma \mathcal{D}_{2N}^\sigma + \mathcal{E}_{N+2}^\sigma\f$ to both sides, and using Lemma \ref{lemmau2N} (with $k = 4N-2j$), we then have that
\begin{multline}
 \ns{ \dt^j u}_{4N-2j+1}  + \ns{\nab \dt^j (h'(\bar{\rho}) \q)}_{4N-2j-1}
\\
\ls
 \sum_{k=0}^{4N-2j-1} \left( \Df + C_{1} \Hf \right) + \ns{\dt^{j+1} u}_{4N-2j-1} + \ks \mathcal{E}_{2N}^\sigma \mathcal{D}_{2N}^\sigma + \mathcal{E}_{N+2}^\sigma\f.
\end{multline}
Hence there is a universal constant $C_{3;j}>0$ so that
\begin{multline}\label{boo_11}
\frac{d}{dt}\sum_{k=0}^{4N-2j-1} C_{2;j,k} ( \Ef + \Af)
+ C_{3;j}\left( \ns{ \dt^j u}_{4N-2j+1}  + \ns{\nab \dt^j (h'(\bar{\rho}) \q)}_{4N-2j-1}  \right)
\\
\ls \norm{\pa_t^{ j+1} u  }_{4N-2 j-1}^2 + \Df
+\ks \left[ \sqrt{\mathcal{E}_{2N}^\sigma }\mathcal{D}_{2N}^\sigma+\sqrt{ \mathcal{D}_{2N}^\sigma \mathcal{E}_{N+2}^\sigma\f}+\mathcal{E}_{N+2}^\sigma\f \right]
\end{multline}
for all $0 \le j \le 2N-1$.

Counting backward from $2N-1$ to $0$, we may view \eqref{boo_11} as a sequence of inequalities of the form considered in Lemma \ref{est_alg}.  Applying the lemma, we find that there exist universal constants $C_{4;j}>0$ so that
\begin{multline}\label{boo_12}
\frac{d}{dt} \sum_{j=0}^{2N-1} C_{4;j} \sum_{k=0}^{4N-2j-1} C_{2;j,k} ( \Ef + \Af)
+\sum_{j=0}^{2N-1} C_{3;j}\left( \ns{ \dt^j u}_{4N-2j+1}  + \ns{\nab \dt^j (h'(\bar{\rho}) \q)}_{4N-2j-1}  \right)
\\
\ls \norm{\pa_t^{2N} u  }_{1}^2 + \Df
+\ks \left[ \sqrt{\mathcal{E}_{2N}^\sigma }\mathcal{D}_{2N}^\sigma+\sqrt{ \mathcal{D}_{2N}^\sigma \mathcal{E}_{N+2}^\sigma\f}+\mathcal{E}_{N+2}^\sigma\f \right] \\
\ls \Df
+\ks \left[ \sqrt{\mathcal{E}_{2N}^\sigma }\mathcal{D}_{2N}^\sigma+\sqrt{ \mathcal{D}_{2N}^\sigma \mathcal{E}_{N+2}^\sigma\f}+\mathcal{E}_{N+2}^\sigma\f \right],
\end{multline}
where the last inequality follows from \eqref{boo_2}.

Let $C_5>0$ denote the universal constant appearing on the right side of the last inequality in \eqref{boo_12}.  We multiply \eqref{boo_1} by $2C_5$ and add the resulting inequality to \eqref{boo_12}.  This results in the bound
\begin{multline}\label{boo_13}
\frac{d}{dt} \left( C_6 \Ef + \sum_{j=0}^{2N-1} \sum_{k=0}^{4N-2j-1} C_{4;j} C_{2;j,k}   \Af \right) \\
+C_5 \Df + \sum_{j=0}^{2N-1} C_{3;j}\left( \ns{ \dt^j u}_{4N-2j+1}  + \ns{\nab \dt^j (h'(\bar{\rho}) \q)}_{4N-2j-1}  \right)
\\
\ls
\ks \left[ \sqrt{\mathcal{E}_{2N}^\sigma }\mathcal{D}_{2N}^\sigma+\sqrt{ \mathcal{D}_{2N}^\sigma \mathcal{E}_{N+2}^\sigma\f}+\mathcal{E}_{N+2}^\sigma\f \right],
\end{multline}
where
\begin{equation}
 C_6 = 2 C_5 + \sum_{j=0}^{2N-1}  \sum_{k=0}^{4N-2j-1}C_{4;j} C_{2;j,k}.
\end{equation}
Finally, we estimate
\begin{multline}
\sum_{j=0}^{2N} \ns{ \dt^j u}_{4N-2j+1} + \sum_{j=0}^{2N-1} \ns{\nab \dt^j (h'(\bar{\rho}) \q)}_{4N-2j-1}
\\
\ls C_5 \Df + \sum_{j=0}^{2N-1} C_{3;j}\left( \ns{ \dt^j u}_{4N-2j+1}  + \ns{\nab \dt^j (h'(\bar{\rho}) \q)}_{4N-2j-1}  \right).
\end{multline}
This and \eqref{boo_13} imply \eqref{ebar2N} upon dividing by $C_6$ and renaming the universal constants.
\end{proof}

Next we  record a similar result at the $N+2$ level.

\begin{Proposition}\label{boostrapN+2}
There exist universal constants $\gamma_{N+2}>0$ and $\beta_{N+2;j,k}>0$ for $j=0,\dotsc,N+1$, $k=0,\dotsc,2(N+2)-2j-1$ so that
\begin{equation}\label{ebarN+2}
\frac{d}{dt} \left( \mathfrak{E}_{N+2}^\sigma + \sum_{j=0}^{N+1} \sum_{k=0}^{2(N+2)-2j-1} \beta_{N+2;j,k}  \mathfrak{A}_{N+2}^{j,k} \right) + \gamma_{N+2} \bar{\mathcal{D}}_{N+2}
\ls
\ks \sqrt{\mathcal{E}_{2N}^\sigma }\mathcal{D}_{N+2}^\sigma.
\end{equation}
\end{Proposition}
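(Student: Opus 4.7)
The plan is to mimic the derivation of Proposition \ref{boostrap2N} verbatim, simply substituting each $2N$-level ingredient with its $N+2$-level counterpart. Since the forcing bounds at the $N+2$ level (Lemma \ref{p_GN+2_estimates} and Lemma \ref{p_FN+2_estimates}) give a clean product $\ks \mathcal{E}_{2N}^\sigma \mathcal{D}_{N+2}^\sigma$ with no $\f$-term, the right-hand side will be cleaner and only the exponent $\sqrt{\mathcal{E}_{2N}^\sigma} \mathcal{D}_{N+2}^\sigma$ survives.

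First I would sum Proposition \ref{i_temporal_evolution N+2} with $j=N+2$ and Proposition \ref{i_spatial_evolution N+2} over all $\alpha \in \mathbb{N}^{1+2}$ with $\alpha_0 \leq N+1$ and $|\alpha| \leq 2(N+2)$, yielding
\begin{equation*}
 \frac{d}{dt} \mathfrak{E}_{N+2}^\sigma + \mathfrak{D}_{N+2}^\sigma \ls \ks\sqrt{\mathcal{E}_{2N}^\sigma} \mathcal{D}_{N+2}^\sigma,
\end{equation*}
with Korn's inequality (Proposition \ref{layer_korn}) providing $\ns{\bar\nab_\ast^{2(N+2)} u}_1 \ls \mathfrak{D}_{N+2}^\sigma$. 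Next, using \eqref{dt} together with the estimate \eqref{p_G_e_h_00} of Lemma \ref{p_GN+2_estimates}, I obtain the analog of \eqref{drho},
\begin{equation*}
 \ns{\bar\nab_\ast^{2(N+2)} \mathcal{Q}}_0 \ls \ns{\bar\nab_\ast^{2(N+2)} u}_1 + \ks \mathcal{E}_{2N}^\sigma \mathcal{D}_{N+2}^\sigma.
\end{equation*}

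Second, for each $0 \le j \le N+1$ and $0 \le k \le 2(N+2)-2j-1$, I would combine Proposition \ref{i_rho_evolution N+2} with Lemma \ref{lemmauN+2} (summed over $1 \le k' \le k$) to control the $\|\nab_\ast^{2(N+2)-2j-k'} \dt^j u\|_{k'+1}^2$ terms on the right of \eqref{density esN+2}, producing
\begin{equation*}
 \frac{d}{dt} \mathfrak{A}_{N+2}^{j,k} + \mathfrak{B}_{N+2}^{j,k} \ls \ns{\dt^{j+1} u}_{2(N+2)-2j-1} + \sum_{k'=1}^{k} \ns{\nab_\ast^{2(N+2)-2j-k'} \dt^j \mathcal{Q}}_{k'} + \ns{\bar\nab_\ast^{2(N+2)} u}_1 + \ks \sqrt{\mathcal{E}_{2N}^\sigma} \mathcal{D}_{N+2}^\sigma.
\end{equation*}
Adding this to the previous bound and using that $\mathfrak{H}_{N+2}^{j,k} := \ns{\bar\nab_\ast^{2(N+2)} \mathcal{Q}}_0 + \ns{\nab_\ast^{2(N+2)-2j-k-1}\dt^j \mathcal{Q}}_{k+1}$ is controlled by $\mathfrak{B}_{N+2}^{j,k}$, I arrive at an estimate in the form suitable for the iteration lemma \ref{est_alg}.

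Third, I would carry out the two nested inductions of the $2N$ proof in the same order. The inner induction is Lemma \ref{est_alg} applied in the index $k$, for fixed $j$, which collapses all $\mathfrak{H}_{N+2}^{j,k}$ into a single clean dissipation bound and, after absorbing the $\bar\nab_\ast^{2(N+2)} u$ term via $\mathfrak{D}_{N+2}^\sigma$, produces the per-$j$ bound
\begin{equation*}
 \frac{d}{dt} \Bigl(\sum_k C_{2;j,k}(\mathfrak{E}_{N+2}^\sigma + \mathfrak{A}_{N+2}^{j,k})\Bigr) + C_{3;j}\bigl(\ns{\dt^j u}_{2(N+2)-2j+1} + \ns{\nab \dt^j(h'(\bar\rho) \q)}_{2(N+2)-2j-1}\bigr) \ls \ns{\dt^{j+1} u}_{2(N+2)-2j-1} + \mathfrak{D}_{N+2}^\sigma + \ks \sqrt{\mathcal{E}_{2N}^\sigma} \mathcal{D}_{N+2}^\sigma.
\end{equation*}
The outer induction is then Lemma \ref{est_alg} applied backwards from $j = N+1$ down to $j=0$, which absorbs the $\ns{\dt^{j+1} u}_{2(N+2)-2j-1}$ terms and, after adding a large multiple of the basic horizontal estimate to dominate $\mathfrak{D}_{N+2}^\sigma$ on the left, delivers \eqref{ebarN+2} after renaming constants.

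The main obstacle is purely bookkeeping: verifying that the degenerate/top-index edge cases that forced the delicate arguments in Lemmas \ref{lemma7}, \ref{lemma8}, \ref{G11_weighted} (the ones producing the $\mathcal{E}_{N+2}^\sigma\f$ term at the $2N$ level) \emph{do not} arise at the $N+2$ level, because the Lemma \ref{p_GN+2_estimates} bounds absorb all such derivatives directly into $\ks \mathcal{E}_{2N}^\sigma \mathcal{D}_{N+2}^\sigma$. Once this is checked, the same chain of inequalities and the same two applications of Lemma \ref{est_alg} go through with the right-hand side uniformly bounded by $\ks \sqrt{\mathcal{E}_{2N}^\sigma} \mathcal{D}_{N+2}^\sigma$, which is exactly the claimed \eqref{ebarN+2}.
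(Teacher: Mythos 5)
Your proposal is correct and is essentially the paper's own argument: the paper proves this proposition by repeating the derivation of Proposition \ref{boostrap2N} with the $N+2$-level inputs (Propositions \ref{i_temporal_evolution N+2}, \ref{i_spatial_evolution N+2}, \ref{i_rho_evolution N+2}, Lemma \ref{lemmauN+2}, and estimate \eqref{p_G_e_h_00} of Lemma \ref{p_GN+2_estimates}), exactly as you outline. Your observation that the exceptional top-index terms requiring Lemmas \ref{lemma7}, \ref{lemma8}, and \ref{G11_weighted} at the $2N$ level do not arise here—because \eqref{p_G_e_h_00} already controls the full count of derivatives of the nonlinearities—is precisely why the right-hand side reduces to $\ks\sqrt{\mathcal{E}_{2N}^\sigma}\mathcal{D}_{N+2}^\sigma$.
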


\begin{proof}
The proof proceeds as Proposition \ref{boostrap2N} using instead Propositions \ref{i_temporal_evolution N+2}, \ref{i_spatial_evolution N+2}, \ref{i_rho_evolution N+2}, Lemma \ref{lemmauN+2} and the estimates \eqref{p_G_e_h_00} of Lemma \ref{p_GN+2_estimates}.
\end{proof}


Next, we show  that, up to some error terms, ${\mathfrak{E}}_{n}^\sigma$ is comparable to $ \bar{\mathcal{E}}_{n}^\sigma$  for both $n=2N$ and $n=N+2$.



\begin{lemma}\label{en_equiv}
Let $\bar{\mathcal{E}}_{n}^\sigma$ be defined by \eqref{Enn}, $\mathfrak{E}_{n}^\sigma$ be defined by \eqref{E_frak}, and $\mathfrak{A}_n^{j,k}$ be defined by \eqref{A_frak}.  Further let $\beta_{2N;j,k}$ and $\beta_{N+2;j,k}$ be the constants appearing in Propositions \ref{boostrap2N} and \ref{boostrapN+2}, respectively, for appropriate integers $j,k$.  Then
\begin{multline}\label{ee_01}
\bar{\mathcal{E}}_{2N}^\sigma - \sqrt{\mathcal{E}_{2N}^\sigma} \min\{\mathcal{E}_{2N}^\sigma, \mathcal{D}_{2N}^\sigma \}  \ls  \Ef + \sum_{j=0}^{2N-1} \sum_{k=0}^{4N-2j-1} \beta_{2N;j,k}  \Af \\
\ls \ks \bar{\mathcal{E}}_{2N}^\sigma + \sqrt{\mathcal{E}_{2N}^\sigma} \min\{\mathcal{E}_{2N}^\sigma, \mathcal{D}_{2N}^\sigma \}
\end{multline}
and
\begin{multline}\label{ee_02}
 \bar{\mathcal{E}}_{N+2}^\sigma - \sqrt{\mathcal{E}_{2N}^\sigma} \min\{\mathcal{E}_{N+2}^\sigma, \mathcal{D}_{N+2}^\sigma \}  \ls   \mathfrak{E}_{N+2}^\sigma + \sum_{j=0}^{N+1} \sum_{k=0}^{2(N+2)-2j-1} \beta_{N+2;j,k}  \mathfrak{A}_{N+2}^{j,k}
\\
\ls \ks \bar{\mathcal{E}}_{N+2}^\sigma  + \sqrt{\mathcal{E}_{2N}^\sigma} \min\{\mathcal{E}_{N+2}^\sigma, \mathcal{D}_{N+2}^\sigma \} .
\end{multline}
\end{lemma}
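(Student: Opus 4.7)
The plan is to compare the three quantities in \eqref{ee_01}--\eqref{ee_02} term by term, splitting each of $\mathfrak{E}_n^\sigma$, $\sum_{j,k}\beta_{n;j,k}\mathfrak{A}_n^{j,k}$, and $\bar{\mathcal{E}}_n^\sigma$ into its $u$-, $q$-, $\eta$-, and remainder parts. The $u$-pieces match trivially since $\bar\rho$ is bounded above and below, so $\int_\Omega\bar\rho|\bar\nabla_\ast^{2n} u|^2 \sim \|\bar\nabla_\ast^{2n} u\|_0^2$.

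The $q$-piece splits further into horizontal/time and vertical derivatives. The horizontal/time derivatives, $\sum_{j=0}^n\|\nab_\ast^{2n-2j}\dt^j q\|_0^2$, are controlled directly by the $\int_\Omega h'(\bar\rho)|\bar\nabla_\ast^{2n} q|^2$ term in $\mathfrak{E}_n^\sigma$. The vertical derivatives come from the $\mathfrak{A}_n^{j,k}$ summands with $k = 2n-2j-1$: these contain $\|\nab_\ast^{2n-2j-k'-1}\p_3^{k'+1}\dt^j(h'(\bar\rho) q)\|_0^2$ for every $k' = 0,\ldots,2n-2j-1$, and Leibniz-expanding the smooth, positive factor $h'(\bar\rho)$ (which is bounded below and depends only on $y_3$) converts these into the corresponding norms of $q$ modulo lower-order terms that are absorbed. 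Combined, these yield $\sum_{j=0}^n\|\dt^j q\|_{2n-2j}^2$, the $q$-content of $\bar{\mathcal{E}}_n^\sigma$.

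For the $\eta$-piece I would apply Proposition \ref{Energy positivity pro} at each $\partial^\alpha$ with $|\alpha| \le 2n$ and sum. When $\rj < 0$, \eqref{epp_0} gives a direct two-sided equivalence between the boundary quadratic form in $\mathfrak{E}_n^\sigma$ and the $\eta$-content of $\bar{\mathcal{E}}_n^\sigma$. When $\rj \ge 0$, the lower bound \eqref{Energy positivity inequality} delivers the relevant coercivity (weighted by $\min\{1,\sigma_+,\sigma_- - \sigma_c\}$) minus an error $\sqrt{\mathcal{E}_n^\sigma}\min\{\mathcal{E}_n^\sigma,\mathcal{D}_n^\sigma\}$; for the upper bound, the possibly-negative $-\rj g|\partial^\alpha \eta_-|^2$ is trivially bounded by $\|\partial^\alpha \eta_-\|_1^2$, which is equivalent to the $\eta_-$-content of $\bar{\mathcal{E}}_n^\sigma$ only after paying a $1/\min\{1,\sigma_- - \sigma_c\}$ factor hidden in $\ks$.

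Finally, the two perturbation integrals in $\mathfrak{E}_n^\sigma$, namely $\int_\Omega(\bar\rho(J-1)K+q+\p_3\bar\rho\theta)J|\dt^n u|^2$ and $\int_\Omega h'(\bar\rho)(J-1)|\dt^n q|^2$, are estimated by placing their first factor in $L^\infty$ (controlled by $\sqrt{\mathcal{E}_{2N}^\sigma}$ via Sobolev embedding and Lemma \ref{Phi_est}) and the second in $L^2$. Since $\|\dt^n u\|_0^2$ and $\|\dt^n q\|_0^2$ are bounded by both $\mathcal{E}_n^\sigma$ and $\mathcal{D}_n^\sigma$ (the latter via the $\|\dt^n u\|_1^2$ and $\|\dt^n q\|_2^2$ summands in the dissipation), the $\min$ may be taken, yielding the advertised error $\sqrt{\mathcal{E}_{2N}^\sigma}\min\{\mathcal{E}_n^\sigma,\mathcal{D}_n^\sigma\}$. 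I expect the principal obstacle to be bookkeeping the $\ks$-factors carefully in the unstable regime $\rj > 0$, where the $\eta_-$-coercivity degenerates as $\sigma_-\to\sigma_c$ and thereby inflates the constant on the right-hand side of the upper bound.
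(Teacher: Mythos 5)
Your proposal is correct and follows essentially the same route as the paper: identify the $\mathfrak{A}_n^{j,k}$-sum with the vertical derivatives of $h'(\bar\rho)\q$ and convert to norms of $\q$ via Leibniz and an absorption/iteration using the horizontal $\q$-content of $\mathfrak{E}_n^\sigma$, apply Proposition \ref{Energy positivity pro} (summed over the admissible $\partial^\alpha$) for the $\q$- and $\eta$-coercivity with the stated error term, bound the two perturbation integrals by $\sqrt{\mathcal{E}_{2N}^\sigma}\min\{\mathcal{E}_n^\sigma,\mathcal{D}_n^\sigma\}$, and absorb the $\sigma$-weights into $\ks$ for the upper bound. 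One cosmetic remark: the $L^\infty$ control of the coefficients $\bar{\rho}(J-1)K+\q+\p_3\bar\rho\theta$ and $h'(\bar\rho)(J-1)$ comes from Sobolev embedding together with Lemmas \ref{Poi} and \ref{eta_small} rather than Lemma \ref{Phi_est}, which concerns the different quantity $\Phi$.
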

\begin{proof}
We will only prove \eqref{ee_01}; the proof of \eqref{ee_02} follows from a similar argument.  Let us compactly write
\begin{equation}\label{ee_11}
\z:= \sum_{j=0}^{2N-1} \sum_{k=0}^{4N-2j-1} \beta_{2N;j,k}  \Af.
\end{equation}
First note that
\begin{multline}\label{ee_12}
\z = \sum_{j=0}^{2N-1} \sum_{k=0}^{4N-2j-1} \beta_{2N;j,k}  \Af \asymp \sum_{j=0}^{2N-1} \sum_{k=0}^{4N-2j-1}\sum_{k'=0}^k \ns{\nab_\ast^{4N-2j-k'-1} \p_3^{k'+1} \dt^j(h'(\bar{\rho})\q)}_0
\\
= \sum_{j=0}^{2N-1} \sum_{k=0}^{4N-2j-1} \ns{\nab_\ast^{4N-2j-k-1} \p_3 \dt^j(h'(\bar{\rho})\q)}_k
\asymp \sum_{j=0}^{2N-1} \ns{\p_3 \dt^j(h'(\bar{\rho})\q)}_{4N-2j-1}.
\end{multline}
Since
\begin{equation}
 \p_3 \dt^j \q = \frac{1}{h'(\bar{\rho})} \left[ \p_3 \dt^j (h'(\bar{\rho}) \q) - \p_3(h'(\bar{\rho}) )\dt^j\q  \right].
\end{equation}
and $h'(\bar{\rho})$ is smooth on $[-b,0]$ and $[0,\ell]$ and bounded below, we may estimate
\begin{equation}
 \ns{ \p_3 \dt^j q}_{0} \ls \ns{ \p_3 \dt^j(h'(\bar{\rho}) \q)}_0 +  \ns{\dt^j \q}_0 \ls \z + \ns{\dt^j \q}_0 \ls \z + \ns{\bar{\nab}_\ast^{4N-1} \q}_0
\end{equation}
and similarly
\begin{multline}
 \ns{ \p_3 \dt^j q}_{i} \ls \ns{ \p_3 \dt^j(h'(\bar{\rho}) \q)}_i +  \ns{\dt^j \q}_i \ls \z+ \ns{\nab_\ast^i \dt^j \q}_0 + \ns{ \p_3 \dt^j \q}_{i-1} \\
\ls \z+ \ns{\bar{\nab}_\ast^{4N-1} \q}_0 + \ns{ \p_3 \dt^j \q}_{i-1}
\end{multline}
for $i=1,\dotsc,4N-2j-1$.  This constitutes a sequence of estimates of the form considered in Lemma \ref{est_alg} (setting $X_i=0$ there to remove the time derivative terms).  Applying the lemma, we find that
\begin{equation}\label{ee_1}
\ns{\p_3 \dt^j  \q}_{4N-2j-1} \le \sum_{i=0}^{4N-2j-1} \ns{\p_3 \dt^j  \q}_{i}  \ls \z + \ns{\bar{\nab}_\ast^{4N-1} \q}_0.
\end{equation}
On the other hand,
\begin{equation}
 \ns{\dt^j \q}_{4N-2j} \le \ns{\bar{\nab}_\ast^{4N} \q}_{0} + \ns{\p_3 \dt^j \q}_{4N-2j-1},
\end{equation}
so summing  \eqref{ee_1} yields the estimate
\begin{equation}\label{ee_2}
 \sum_{j=0}^{2N}  \ns{\dt^j \q}_{4N-2j} \ls \z  + \ns{\bar{\nab}_\ast^{4N} \q}_{0}.
\end{equation}

Next we note that Proposition \ref{Energy positivity pro} implies that
\begin{multline}\label{ee_3}
 \Ef  \gss \ns{\bar{\nab}_\ast^{4N} \q}_{0} + \ns{\bar{\nab}_\ast^{4N} u}_{0} +
\mathcal{H}(-\rj)\left[   \ns{\bar{\nab}_\ast^{4N} \eta}_{0} + \sigma  \ns{\nab_\ast \bar{\nab}_\ast^{4N} \eta}_{0}  \right]
\\
+\mathcal{H}(\rj)\left[ \min\{1,\sigma_+,\sigma_- - \sigma_c\}  \ns{ \bar{\nab}_\ast^{4N} \eta}_{1} -  \sqrt{\mathcal{E}_{2N}^\sigma} \min\{\mathcal{E}_{2N}^\sigma, \mathcal{D}_{2N}^\sigma \} \right]
\\
+ \int_\Omega (\bar{\rho}(J-1)K+ \q+\p_3\bar\rho\theta) J\abs{\partial_t^{2N} u }^2  + h'(\bar{\rho})(J-1) \abs{\dt^{2N}\q}^2.
\end{multline}
We may easily estimate
\begin{equation}\label{ee_4}
 \abs{\int_\Omega (\bar{\rho}(J-1)K+ \q+\p_3\bar\rho\theta) J\abs{\partial_t^{2N} u }^2  + h'(\bar{\rho})(J-1) \abs{\dt^{2N}\q}^2} \ls \sqrt{\mathcal{E}_{2N}^\sigma} \min\{\mathcal{E}_{2N}^\sigma, \mathcal{D}_{2N}^\sigma\},
\end{equation}
\begin{equation}\label{ee_5}
 \ns{\bar{\nab}_\ast^{4N} \eta}_{0} + \sigma  \ns{\nab_\ast \bar{\nab}_\ast^{4N} \eta}_{0}   \gss \sum_{j=0}^{n} \ns{\dt^j \eta}_{2n-2j} +  \min\{1,\sigma\} \sum_{j=0}^n \ns{\dt^j \eta}_{2n-2j+1},
\end{equation}
and
\begin{equation}\label{ee_6}
 \ns{ \bar{\nab}_\ast^{4N} \eta}_{1} \gss \sum_{j=0}^n \ns{\dt^j \eta}_{2n-2j+1}.
\end{equation}
Plugging \eqref{ee_4}--\eqref{ee_6} into \eqref{ee_3} then yields the improved bound
\begin{multline}\label{ee_7}
  \Ef  \gss \ns{\bar{\nab}_\ast^{4N} \q}_{0} + \ns{\bar{\nab}_\ast^{4N} u}_{0} +
\mathcal{H}(-\rj)\left[ \sum_{j=0}^{n} \ns{\dt^j \eta}_{2n-2j} + \min\{1,\sigma\} \sum_{j=0}^n \ns{\dt^j \eta}_{2n-2j+1}    \right]
\\
+\mathcal{H}(\rj) \min\{1,\sigma_+,\sigma_- - \sigma_c\}   \sum_{j=0}^n \ns{\dt^j \eta}_{2n-2j+1}
-\sqrt{\mathcal{E}_{2N}^\sigma} \min\{\mathcal{E}_{2N}^\sigma, \mathcal{D}_{2N}^\sigma \} .
\end{multline}

To conclude the proof of the first inequality in \eqref{ee_01}, we sum \eqref{ee_2} and \eqref{ee_7} and employ \eqref{ee_11}.  The second inequality in \eqref{ee_01} follows easily from \eqref{ee_12}, \eqref{ee_4}, and the definition of $\mathfrak{E}_{n}^\sigma$ in \eqref{E_frak} and $\bar{\mathcal{E}}_{2N}^\sigma$ in \eqref{Enn}.

\end{proof}

\section{Comparison estimates}\label{sec_comparison}

In this section, we shall show  that ${\mathcal{E}}_{n}^\sigma$ is comparable to $ \bar{\mathcal{E}}_{n}^\sigma$ and that ${\mathcal{D}} _{n}^\sigma$ is comparable to $ \bar{\mathcal{D}}_{n}$, for both $n=2N$ and $n=N+2$.  This will be achieved by deriving further elliptic estimates.

We again assume throughout this section that the solutions obey the estimate $\gs(T) \le \delta$, where $\delta \in (0,1)$ is given in  Lemma \ref{eta_small}.

\subsection{Energy comparison}

We begin with our comparison result for the energy.

\begin{theorem}\label{eth}
Let ${\mathcal{E}}_{n}^\sigma$ and $\bar{\mathcal{E}}_{n}^\sigma$ be as defined in \eqref{p_energy_def} and \eqref{Enn}, respectively.  It holds that
\begin{equation}\label{e2n}
{\mathcal{E}}_{2N}^\sigma \lesssim
\ks \left(\bar{\mathcal{E}}_{2N}^\sigma  + (\mathcal{E}_{2N}^\sigma)^{2}+\mathcal{E}_{N+2}^\sigma\f + (\mathcal{E}_{2N}^\sigma)^{3/2} \right)
\end{equation}
and
\begin{equation}\label{en+2}
{\mathcal{E}}_{N+2}^\sigma \ls \ks \left(\bar{\mathcal{E}}_{N+2}^\sigma + \mathcal{E}_{2N}^\sigma  \mathcal{E}_{N+2}^\sigma + (\mathcal{E}_{N+2}^\sigma)^{3/2} \right).
\end{equation}
\end{theorem}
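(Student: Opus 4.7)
The strategy is to close a bootstrap that upgrades the temporal-horizontal control encoded in $\bar{\mathcal{E}}_n^\sigma$ into full spatial control via elliptic regularity for a two-phase Lam\'e system, supplemented by an elliptic inversion of the dynamic boundary conditions to recover the high-order $\eta$ norms. I will work at the level $n=2N$; the $n=N+2$ case is identical apart from replacing the nonlinear bounds of Lemma \ref{p_G2N_estimates} by those of Lemma \ref{p_GN+2_estimates} (and using Lemma \ref{p_FN+2_estimates} in place of Lemma \ref{p_F2N_estimates}).

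First I rewrite the momentum equation from \eqref{ns_perturb} as a two-phase Lam\'e problem with $-\diverge \S(u)$ on the left, with right-hand side $-\bar\rho\,\partial_t u - \bar\rho\nabla(h'(\bar\rho)q) + G^2$, subject to the stress boundary/jump conditions that carry $\eta$, $q$, $\sigma\Delta_\ast\eta$ and $G^3$ on $\Sigma_\pm$, together with continuity of $u$ on $\Sigma_-$ and Dirichlet data on $\Sigma_b$. For every $0\le j\le n-1$ and every $0\le r\le 2n-2j-2$, application of $\dt^j$ and the standard two-phase Lam\'e regularity estimate (the compressible analog of the estimate used in \cite{WTK}) yields schematically
$$
\|\dt^j u\|_{r+2}^2 + \|\nabla \dt^j(h'(\bar\rho)q)\|_r^2 \lesssim \|\dt^{j+1} u\|_r^2 + \|\dt^j G^2\|_r^2 + (\text{trace data}),
$$
where the trace data consist of $\|\dt^j\eta\|_{H^{r+3/2}(\Sigma)}$ with the appropriate surface-tension weights and $\|\dt^j q\|_{H^{r+1/2}(\Sigma)}$, both bounded by trace theory from interior quantities already in $\bar{\mathcal{E}}_{2N}^\sigma$.

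The high-order $\eta$ bounds are obtained by inverting the normal components of the stress boundary conditions, which take the form $\rho_1 g\eta_+ - \sigma_+\Delta_\ast\eta_+=(\text{trace RHS})$ on $\Sigma_+$ and $-\rj g\eta_- - \sigma_-\Delta_\ast\eta_-=(\text{trace RHS})$ on $\Sigma_-$. In the stable regime $\rj<0$ both coefficients are strictly positive and the inversion is standard with a constant of the form $\ks$; in the unstable regime $\rj>0$, the hypothesis $\sigma_->\sigma_c$ together with Lemma \ref{critical lemma}(2) supplies the elliptic estimate \eqref{elliptt} with a constant of the form $(\sigma_--\sigma_c)^{-1}$, which is likewise of the form $\ks$. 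In either case this produces the desired control of $\dt^j\eta$ at the level dictated by \eqref{p_energy_def}.

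I then run a downward induction on $j$ from $n-1$ to $0$: at $j=n-1$ the quantities $\|\dt^n u\|_0$, $\|\dt^{n-1}q\|_2$ and $\|\dt^{n-1}\eta\|_{5/2}$ appearing on the right already lie inside $\bar{\mathcal{E}}_{2N}^\sigma$, while at each subsequent step the previously obtained bounds on $\dt^{j+1}u$ in $H^{2n-2j-2}$ and on $\nabla\dt^{j+1}q$ feed into the next iteration, the $\eta$ trace being absorbed through the BC inversion above. Summing, and recovering $\|q\|_{2n}$ from the $j=0$ Lam\'e bound on $\nabla q$ combined with the $L^2$ control inside $\bar{\mathcal{E}}_{2N}^\sigma$, I arrive at a bound of the shape $\mathcal{E}_{2N}^\sigma \lesssim \ks\,\bar{\mathcal{E}}_{2N}^\sigma + (\text{nonlinear remainders})$. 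Expanding the remainders via Lemmas \ref{p_G2N_estimates}, \ref{p_F2N_estimates}, and \ref{Phi_est} produces exactly the terms $(\mathcal{E}_{2N}^\sigma)^2$, $\mathcal{E}_{N+2}^\sigma\f$, and $(\mathcal{E}_{2N}^\sigma)^{3/2}$ on the right of \eqref{e2n}, the cubic term arising from the $J-1$ and $\q+\p_3\bar\rho\theta$ factors sitting in front of $|\dt^{2N} u|^2$ and $|\dt^{2N} q|^2$ in $\Ef$. The same argument at the $N+2$ level yields \eqref{en+2}.

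The principal obstacle is uniform tracking of the surface-tension dependence through the boundary inversion so that every multiplicative constant collapses to the form $\ks$ in \eqref{ks_def}; in particular the factor $(\sigma_--\sigma_c)^{-1}$ from \eqref{elliptt}, and the ratios $\sigma/\min\{1,\sigma\}$ that appear when trading weighted and unweighted $\eta$ norms near the vanishing surface tension limit, must both be absorbed cleanly. A secondary issue is the consistency of the inductive chain at the top of the hierarchy, where the $\eta$ regularity demanded by the Lam\'e trace term exceeds that supplied by $\bar{\mathcal{E}}_{2N}^\sigma$ except through the surface-tension-weighted term; this forces a case split matching the Heaviside factors in \eqref{p_energy_def}, but in each case the BC inversion delivers exactly the regularity required.
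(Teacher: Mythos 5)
Your high-level strategy --- elliptic regularity for a two-phase Lam\'e system, downward induction on the number of temporal derivatives --- matches the paper's proof.  However, there are two genuine gaps in the recovery steps that close the iteration.

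First, your $\eta$ recovery is incorrect.  You propose inverting the normal stress boundary conditions $\rho_1 g\eta_+-\sigma_+\Delta_\ast\eta_+=\dots$ and $-\rj g\eta_--\sigma_-\Delta_\ast\eta_-=\dots$ to obtain the $\ns{\dt^j\eta}_{2n-2j+3/2}$ terms from \eqref{p_energy_def} for $j\ge 1$.  This fails when $\sigma=0$ (case \eqref{th_gwp_01}), and degenerates non-uniformly as $\sigma\to0$.  Without surface tension the ``inversion'' via Lemma \ref{bndry_elliptic} produces no regularity gain: one gets $\ns{\dt^j\eta}_r\lesssim\ns{\varphi}_r$, and the right-hand side $\varphi$ involves the traces of $P'(\bar\rho)\q$ and of $\nab u$ on $\Sigma$ at order $2n-2j+3/2$; by trace theory that demands $\dt^j\q\in H^{2n-2j+2}$ and $\dt^j u\in H^{2n-2j+3}$ in the interior, neither of which is available from $\bar{\mathcal{E}}_{2N}^\sigma$ or from the Lam\'e iteration (which supplies $\dt^j u\in H^{2n-2j}$).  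When $\sigma>0$ you get the gain $\sigma\ns{\dt^j\eta}_r\lesssim\ns{\varphi}_{r-2}$, but \eqref{p_energy_def} requires control of $\ns{\dt^j\eta}_{2n-2j+3/2}$ for $j\ge1$ \emph{without} a $\sigma$ weight, so the resulting constant $\sigma^{-2}$ is not of the form $\ks$.  The paper avoids this entirely: it uses the kinematic boundary condition $\dt^j\eta=\dt^{j-1}u_3+\dt^{j-1}G^4$ (see \eqref{n7}), so that $\dt^j\eta$ inherits its regularity from $\dt^{j-1}u\in H^{2n-2(j-1)}$ by trace theory, giving exactly $2n-2j+3/2$ on $\Sigma$, uniformly in $\sigma$.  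The dynamic BC inversion you describe \emph{is} used in the paper, but in the proof of the \emph{dissipation} comparison Theorem \ref{dth} (Step 3, \eqref{n51}), where the target norm $\ns{\nab_\ast\eta}_{2n+3/2}$ carries exactly the $\sigma$ and Heaviside weights that make the inversion constant collapse into $\ks$; it cannot be transplanted into the energy comparison.

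Second, you omit the improvement of the $q$ time derivatives.  The energy \eqref{p_energy_def} has $\sum_{j\ge1}\ns{\dt^j\q}_{2n-2j+1}$, a half-derivative beyond what $\bar{\mathcal{E}}_n^\sigma$ contains.  This is recovered from the continuity equation $\dt^j\q=-\diverge(\bar\rho\,\dt^{j-1}u)+\dt^{j-1}G^1$, which again trades one temporal derivative for the interior $u$-regularity provided by the already-established Lam\'e bound (paper's \eqref{n6}).  You handle $j=0$ but say nothing about $j\ge1$, and there is no way to produce this half derivative from the Lam\'e estimate alone.

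Finally, a minor point: the cubic term $(\mathcal{E}_{2N}^\sigma)^{3/2}$ in \eqref{e2n} is not generated by the $J-1$ and $\q+\p_3\bar\rho\theta$ weights --- those appear in $\mathfrak{E}_{2N}^\sigma$ and are handled in Lemma \ref{en_equiv}, not here.  The functionals $\mathcal{E}_{2N}^\sigma$ and $\bar{\mathcal{E}}_{2N}^\sigma$ that Theorem \ref{eth} relates carry no such weights.
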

\begin{proof}
We let $n$ denote either $2N$ or $N+2$ throughout the proof, and we compactly write
\begin{equation}\label{n1}
 \mathcal{W}_n =  \ns{ \bar{\nab}^{2n-2} G^1}_{1}  +  \ns{ \bar{\nab}^{2n-2}  G^2}_{0} +
 \ns{ \bar{\nab}_{\ast}^{  2n-2} G^3}_{1/2} +
 \ns{\bar{\nab}_{\ast }^{  2n-1} G^4}_{1/2} .
\end{equation}
Note that by the definitions of $\bar{\mathcal{E}}_{n}^\sigma$ in \eqref{Enn}, we have
\begin{multline}\label{n2}
\bar{\mathcal{E}}_n^\sigma \ge \ns{\dt^n u}_0 + \sum_{j=0}^{n} \norm{ \pa_t^j \q}_{2n-2j}^2
+\mathcal{H}(-\rj)\left[ \sum_{j=0}^{n} \ns{\dt^j \eta}_{2n-2j} + \min\{1,\sigma\} \sum_{j=0}^n \ns{\dt^j \eta}_{2n-2j+1}  \right]
\\
+\mathcal{H}(\rj)\left[ \min\{1,\sigma_+,\sigma_- - \sigma_c\} \sum_{j=0}^n \ns{\dt^j \eta}_{2n-2j+1} -(\mathcal{E}_{n}^\sigma)^{3/2} \right]
\end{multline}

We first estimate $\dt^j u$ for $j=0,\dots,n-1$. The key is to use the elliptic regularity theory of the following two-phase Lam\'e system derived from \eqref{ns_perturb}:
\begin{equation}\label{lame}
\begin{cases}
 -\mu\Delta u-(\mu/3+\mu')\nabla \diverge u =G^2-\bar{\rho} \partial_t    u  - \bar{\rho}\nabla \left(h'(\bar{\rho})\q\right)    & \text{in }
\Omega  \\
 - \S(  u) e_3  = (-P'(\bar\rho)\q+\rho_1  g \eta_+ -\sigma_+ \Delta_\ast \eta_+ ) e_3 +G_+^3
 & \text{on } \Sigma_+
 \\ -\jump{\S(u)}e_3
=(\jump{P'(\bar\rho)\q}+\rj g\eta_- +\sigma_- \Delta_\ast \eta_-)e_3-G_-^3&\hbox{on }\Sigma_-
\\\jump{u}=0 &\hbox{on }\Sigma_-
\\ u_-=0 &\hbox{on }\Sigma_b.
\end{cases}
\end{equation}
We let $  j=0,\dots, n-1$ and then apply $\dt^ j$ to the problem \eqref{lame} and use the elliptic estimates of Lemma \ref{lame reg} with $r=2n-2 j\ge2$, by \eqref{n1}--\eqref{n2} and trace theory to obtain
\begin{multline}\label{n3}
 \norm{\pa_t^ j u}_{2n-2 j}^2 \lesssim  \norm{\pa_t^{ j } G^2}_{2n-2 j-2}^2+\norm{\pa_t^{ j+1} u}_{2n-2 j-2}^2
 +\norm{ \pa_t^{ j } \q }_{2n-2 j-1}^2+\norm{\pa_t^{ j }  \q }_{H^{2n-2 j-3/2}(\Sigma) }^2
   \\
+\norm{\pa_t^{ j }  \eta }_{2n-2 j-3/2 }^2+\sigma^2\norm{\pa_t^{ j }  \eta }_{2n-2 j+1/2 }^2+\norm{\pa_t^{ j } G^3}_{2n-2 j-3/2 }^2
 \\
\lesssim \norm{\pa_t^{ j+1} u}_{2n-2 ( j+1)}^2
+ \ks \left(\bar{\mathcal{E}}_{n}^\sigma + \mathcal{W}_n + (\mathcal{E}_{n}^\sigma)^{3/2}\right).
\end{multline}
Using a simple induction based on the estimate \eqref{n3}, utilizing the $\ns{\dt^n u}_0$ estimate in \eqref{n2} for the base case, we easily deduce that for $ j=0,\dots,n$,
\begin{equation}\label{n5}
 \norm{ \pa_t^ j u}_{2n-2 j}^2\lesssim  \ks  \left(\bar{\mathcal{E}}_{n}^\sigma + \mathcal{W}_n + (\mathcal{E}_{n}^\sigma)^{3/2} \right).
\end{equation}

We then estimate $\pa_t^ j\q$ and $\pa_t^ j\eta$ for $ j=1,\dots,n$ to get an improvement. By the first equation of $\eqref{ns_perturb}$, using the estimates \eqref{n5} and \eqref{n1}, we have that for $ j=1,\dots,n$,
\begin{multline}\label{n6}
\norm{ \pa_t^ j \q}_{2n-2 j+1}^2 \lesssim   \norm{ \pa_t^{ j-1} u}_{2n-2 j+2}^2+\norm{\pa_t^{ j-1} G^1}_{2n-2 j+1}^2
 \\ = \norm{ \pa_t^{ j-1} u}_{2n-2({ j-1})}^2+\norm{\pa_t^{ j-1} G^1}_{2n-2({ j-1})-1}^2
\ls  \ks \left(\bar{\mathcal{E}}_{n}^\sigma + \mathcal{W}_n + (\mathcal{E}_{n}^\sigma)^{3/2} \right).
\end{multline}
Now by the kinematic boundary condition
\begin{equation}
\partial_t\eta=u_3+G^4\text{ on }\Sigma,
\end{equation}
we have that for $j=1,\dots,n$, by the trace theory, \eqref{n5} and \eqref{n1},
\begin{multline}\label{n7}
\ns{\partial_t^j\eta}_{2n-2j+3/2}
\le \ns{\dt^{j-1}u_3}_{H^{2n-2j+3/2}(\Sigma)}+\ns{\dt^{j-1} G^4}_{2n-2j+3/2}
\\
\ls \ns{\dt^{j-1}u }_{2n-2(j-1)}+\ns{\dt^{j-1} G^4}_{2n-2(j-1)-1/2}
\ls  \ks \left(\bar{\mathcal{E}}_{n}^\sigma + \mathcal{W}_n + (\mathcal{E}_{n}^\sigma)^{3/2} \right).
\end{multline}

Consequently, summing  the estimates \eqref{n2}, \eqref{n5}, \eqref{n6} and \eqref{n7},  we conclude
\begin{equation}\label{claim}
\mathcal{E}_{n}^\sigma \lesssim   \ks  \left(\bar{\mathcal{E}}_{n}^\sigma + \mathcal{W}_n + (\mathcal{E}_{n}^\sigma)^{3/2} \right).
\end{equation}
Setting $n=2N$ in \eqref{claim}, and using \eqref{p_G_e_0} of Lemma \ref{p_G2N_estimates} to bound $\mathcal{W}_{2N} \lesssim \ks[(\mathcal{E}_{2N}^\sigma)^{2}+\mathcal{E}_{N+2}^\sigma\f]$, we obtain \eqref{e2n}; setting $n=N+2$ in \eqref{claim}, and using \eqref{p_G_e_h_0} of Lemma \ref{p_GN+2_estimates} to bound $\mathcal{W}_{N+2}\lesssim \ks {\mathcal{E}} _{2N}^\sigma {\mathcal{E}}_{N+2}^\sigma$, we obtain \eqref{en+2}.
\end{proof}

\subsection{Dissipation comparison}

Next we consider a similar result for the dissipation.  To begin we prove a lemma involving norms of $\q$.

\begin{lemma}\label{q_iteration}
Let $\sdb{n}$ be as defined in \eqref{Dnn}.  Then
\begin{equation}
 \ns{\q}_{2n} \ls \sdb{n} + \ns{\q}_0.
\end{equation}
\end{lemma}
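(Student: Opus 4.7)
The plan is to reduce the estimate to control of $\nab q$ via the standard Sobolev identity $\ns{f}_{2n} \ls \ns{\nab f}_{2n-1} + \ns{f}_0$ on each subdomain $\Omega_\pm$, and then to extract $\nab q$ from $\nab(h'(\bar{\rho})\q)$, the only $\q$-object directly controlled by $\sdb{n}$.

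First, I would observe that since $h'(\bar{\rho})$ is smooth on each $\Omega_\pm$ and satisfies $h'(\bar{\rho}) \ge c > 0$ (it is positive increasing with $\bar{\rho}$ bounded above and below on each phase), the product and quotient inequalities in Sobolev spaces give
\begin{equation}
  \ns{\nab \q}_{2n-1} \ls \ns{\nab(h'(\bar\rho)\q)}_{2n-1} + \ns{\q \p_3 h'(\bar\rho)}_{2n-1} \ls \ns{\nab(h'(\bar\rho)\q)}_{2n-1} + \ns{\q}_{2n-1},
\end{equation}
where in the first step I use $h'(\bar\rho) \nab \q = \nab(h'(\bar\rho)\q) - \q\, \p_3 h'(\bar\rho) e_3$ (only $\p_3$ appears because $\bar\rho = \bar\rho(x_3)$) and divide by the positive smooth function $h'(\bar\rho)$.

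Second, I combine this with $\ns{\q}_{2n} \ls \ns{\nab \q}_{2n-1} + \ns{\q}_0$ (applied on each phase) to obtain
\begin{equation}
  \ns{\q}_{2n} \ls \ns{\nab(h'(\bar\rho)\q)}_{2n-1} + \ns{\q}_{2n-1} + \ns{\q}_0.
\end{equation}
The remaining term $\ns{\q}_{2n-1}$ is absorbed via the standard interpolation inequality $\ns{\q}_{2n-1} \le \epsilon \ns{\q}_{2n} + C_\epsilon \ns{\q}_0$; choosing $\epsilon$ sufficiently small lets me subtract $\epsilon \ns{\q}_{2n}$ from both sides, yielding
\begin{equation}
  \ns{\q}_{2n} \ls \ns{\nab(h'(\bar\rho)\q)}_{2n-1} + \ns{\q}_0.
\end{equation}

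Finally, the $j=0$ term in the definition \eqref{Dnn} of $\bar{\mathcal{D}}_{n}$ is exactly $\ns{\nab(h'(\bar\rho)\q)}_{2n-1}$, so $\ns{\nab(h'(\bar\rho)\q)}_{2n-1} \le \sdb{n}$, completing the proof. There is no serious obstacle; the only mild subtlety is ensuring the argument is applied phase-by-phase on $\Omega_\pm$, since $h'(\bar{\rho})$ may jump across $\Sigma_-$, but our norm convention $\ns{\cdot}_k = \ns{\cdot}_{H^k(\Omega_+)} + \ns{\cdot}_{H^k(\Omega_-)}$ already accommodates this.
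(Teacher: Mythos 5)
Your proof is correct, and it rests on exactly the same key identity as the paper's: solve for $\nab \q$ from $h'(\bar\rho)\nab\q = \nab(h'(\bar\rho)\q) - \q\,\nab(h'(\bar\rho))$, using that $h'(\bar\rho)$ is smooth and bounded below on each of $\Omega_\pm$ separately (your remark about working phase by phase across $\Sigma_-$ is the right one, and matches the paper's norm convention). Where you diverge is in how the leftover lower-order term is closed: the paper proves the family of estimates $\ns{\nab\q}_i \ls \ns{\nab(h'(\bar\rho)\q)}_i + \ns{\q}_i \ls \sdb{n} + \ns{\q}_0 + \ns{\nab\q}_{i-1}$ for $i=0,\dotsc,2n-1$ and sums them with its iteration lemma (Lemma \ref{est_alg} with the $X_i$ set to zero), climbing the ladder of derivative orders so that each step uses only quantities already controlled; you instead work once at top order and eliminate $\ns{\q}_{2n-1}$ by the interpolation inequality $\ns{\q}_{2n-1}\le \ep\,\ns{\q}_{2n} + C_\ep \ns{\q}_0$ followed by absorption. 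Both routes are valid; yours is shorter and avoids invoking Lemma \ref{est_alg}, at the mild cost that absorbing $\ep\,\ns{\q}_{2n}$ presupposes $\ns{\q}_{2n}<\infty$, which must be (and is) supplied by the standing a priori assumption, since $\ns{\q}_{2n}$ is part of $\se{n}^\sigma$ and $\gs(T)\le\delta$; the paper's upward iteration never needs that finiteness caveat, which is the main thing its more pedestrian bookkeeping buys.
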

\begin{proof}
 We compute
\begin{equation}
 \nab \q = \frac{1}{h'(\bar{\rho})} \left[ \nab(h'(\bar{\rho}) \q) - \nab(h'(\bar{\rho}) )\q  \right].
\end{equation}
Since $h'(\bar{\rho})$ is smooth on $[-b,0]$ and $[0,\ell]$ and bounded below, we may estimate
\begin{equation}
 \ns{\nab q}_{0} \ls \ns{\nab(h'(\bar{\rho}) \q)}_0 +  \ns{\q}_0 \ls \sdb{n} + \ns{\q}_0
\end{equation}
and similarly
\begin{equation}
 \ns{\nab q}_{i} \ls \ns{\nab(h'(\bar{\rho}) \q)}_i +  \ns{\q}_i \ls \sdb{n} + \ns{\q}_0 + \ns{\nab \q}_{i-1}
\end{equation}
for $i=1,\dotsc,2n-1$.  This constitutes a sequence of estimates of the form considered in Lemma \ref{est_alg} (setting $X_i=0$ there to remove the time derivative terms).  Applying the lemma, we find that
\begin{equation}
\ns{\nab \q}_{2n-1} \le \sum_{i=0}^{2n-1} \ns{\nab \q}_{i}  \ls \sdb{n} + \ns{\q}_0,
\end{equation}
which yields  the desired estimate upon adding $\ns{\q}_0$ to both sides.
\end{proof}

Now we can record the dissipation comparison results.

\begin{theorem}\label{dth}
Let ${\mathcal{D}}_{n}^\sigma$ and $\bar{\mathcal{D}}_{n}$ be as defined in \eqref{p_dissipation_def} and \eqref{Dnn}, respectively. It holds that
\begin{equation}\label{d2n}
{\mathcal{D}}_{2N}^\sigma
\ls \ks \left(  \bar{\mathcal{D}}_{2N} + \mathcal{E}_{2N}^\sigma \mathcal{D}_{2N}^\sigma + \sqrt{\mathcal{E}_{2N}^\sigma} \mathcal{D}_{2N}^\sigma   + \mathcal{E}_{N+2}^\sigma{\mathcal{F}}_{2N}\right)
 \end{equation}
and
\begin{equation}\label{dn+2}
\mathcal{D}_{N+2}^\sigma
\ls \ks \left(  \bar{\mathcal{D}}_{N+2} + \mathcal{E}_{2N}^\sigma \mathcal{D}_{N+2}^\sigma + \sqrt{\mathcal{E}_{N+2}^\sigma} \mathcal{D}_{N+2}^\sigma  \right).
\end{equation}
\end{theorem}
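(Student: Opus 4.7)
The strategy mirrors that of Theorem \ref{eth}: I aim to control each term in $\mathcal{D}_n^\sigma$ (for $n = 2N$ or $N+2$) by $\bar{\mathcal{D}}_n$ up to absorbable nonlinear remainders of the form $\sqrt{\mathcal{E}_n^\sigma}\mathcal{D}_n^\sigma$ and $\mathcal{E}_n^\sigma \mathcal{D}_n^\sigma$, with the additional $\mathcal{E}_{N+2}^\sigma \mathcal{F}_{2N}$ term entering only in the $n = 2N$ case. First I would upgrade the velocity regularity: apply $\dt^j$ for $j = 0,\ldots,n-1$ to the Lam\'e system \eqref{lame} and invoke Lemma \ref{lame reg} with $r = 2n-2j+1$, producing
\begin{equation*}
 \norm{\dt^j u}_{2n-2j+1}^2 \lesssim \norm{\dt^j G^2}_{2n-2j-1}^2 + \norm{\dt^{j+1} u}_{2n-2j-1}^2 + \norm{\dt^j \q}_{2n-2j}^2 + \norm{\dt^j \eta}_{2n-2j-1/2}^2 + \sigma^2 \norm{\dt^j \eta}_{2n-2j+3/2}^2 + \norm{\dt^j G^3}_{2n-2j-1/2}^2.
\end{equation*}
The $G^2$ and $G^3$ contributions are absorbed via Lemmas \ref{p_G2N_estimates}--\ref{p_GN+2_estimates}, while the velocity term on the right is removed by downward induction on $j$, terminating at $\norm{\dt^n u}_1^2$, which is already contained in $\bar{\mathcal{D}}_n$.

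With the velocity under control, next I would upgrade the density norms $\norm{\dt^j \q}_{2n-2j+1}^2$ for $j = 1,\ldots,n+1$ using the continuity equation $\partial_t \q = G^1 - \diverge(\bar\rho u)$, and the interface norms $\norm{\dt^j \eta}_{2n-2j+5/2}^2$ for $j \ge 1$ together with $\sigma^2 \norm{\dt \eta}_{2n+1/2}^2$ via the kinematic condition $\partial_t \eta = u_3 + G^4$ and trace theory, exactly as in \eqref{n6}--\eqref{n7}. The full norm $\norm{\q}_{2n}^2$ then reduces, by Lemma \ref{q_iteration}, to $\bar{\mathcal{D}}_n + \norm{\q}_0^2$, and the $\sigma$-weighted top-regularity norm $\sigma^2 \norm{\eta}_{2n+3/2}^2$ is recovered from the dynamic boundary conditions in \eqref{ns_perturb} by solving elliptically on $\Sigma_\pm$ for the operators $-\sigma_+ \Delta_\ast + \rho_1 g$ and $-\sigma_- \Delta_\ast - \rj g$, invoking \eqref{elliptt} of Lemma \ref{critical lemma} to handle the $\rj > 0$ sub-case.

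The main obstacle is the coupled bound for the lowest-order terms $\norm{\q}_0^2$ and $\norm{\eta}_0^2$, which are invisible to the horizontal derivatives in $\bar{\mathcal{D}}_n$ because these annihilate spatial averages. My plan is to exploit the conservation of mass relations \eqref{cons1}--\eqref{cons2} together with an auxiliary test field $w \in H^1(\Omega)$ satisfying $\jump{w} = 0$ on $\Sigma_-$ and $w_- = 0$ on $\Sigma_b$, built so that $\diverge w_\pm$ is proportional to $\q_\pm - (\q_\pm)$ on $\Omega_\pm$ and the normal trace $w \cdot e_3$ matches $\eta_\pm$ on $\Sigma_\pm$, with the mass conservation identity ensuring the solvability of the associated Bogovskii-type divergence problem on each phase. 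Testing the momentum equation in \eqref{ns_perturb} against $w$, integrating by parts, and using the dynamic boundary conditions produces an identity whose left-hand side is bounded below by $\norm{\q}_0^2 + \norm{\eta - (\eta)}_0^2$ up to sign corrections controlled by Proposition \ref{Energy positivity pro} and Lemma \ref{critical lemma}, while the right-hand side is dominated by $\norm{u}_1 \norm{w}_1$ together with $G^i$ remainders; the mass conservation then pins down the averages $(\eta_-)$ and $(\q_\pm)$. Assembling all of these bounds, absorbing the $\sqrt{\mathcal{E}_n^\sigma} \mathcal{D}_n^\sigma$ and $\mathcal{E}_n^\sigma \mathcal{D}_n^\sigma$ remainders, and carefully tracking $\sigma$-dependence within the $\ks$ framework yields \eqref{d2n} and \eqref{dn+2}; the $\mathcal{E}_{N+2}^\sigma \mathcal{F}_{2N}$ contribution in \eqref{d2n} appears only at the top regularity level, tracking the corresponding term in \eqref{p_G_e_0}, and does not arise at the $N+2$ level since Lemma \ref{p_GN+2_estimates} contains no analogous $\f$-dependent remainder.
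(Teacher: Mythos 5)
Your overall architecture matches the paper's proof: time-differentiated continuity equation for $\dt^j\q$, kinematic condition plus trace theory for $\dt^j\eta$, the dynamic boundary conditions treated elliptically (with Lemma \ref{critical lemma} when $\rj>0$) for the top-order $\eta$ norms, and an auxiliary divergence field tied to mass conservation for the lowest-order terms. Two remarks before the main point. First, your opening step is unnecessary and, as set up, circular: $\bar{\mathcal{D}}_n$ as defined in \eqref{Dnn} already contains $\sum_{j=0}^{n}\ns{\dt^j u}_{2n-2j+1}$, i.e.\ the full velocity part of $\mathcal{D}_n^\sigma$, so no Lam\'e upgrade of $u$ is needed; moreover at $j=0$ your elliptic estimate would require $\ns{\eta}_{2n-1/2}$ and $\sigma^2\ns{\eta}_{2n+3/2}$ on the right, which are precisely quantities you only derive afterwards from the dynamic boundary condition. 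Second, in the dynamic-boundary-condition step one must first apply $\nab_\ast$ to \eqref{pb1}--\eqref{pb2}, since $\bar{\mathcal{D}}_n$ controls only $\nab(h'(\bar\rho)\q)$ in $\Omega$ and hence only tangential derivatives of $\q$ on $\Sigma$, not $\q$ itself; this is a detail, but it is why the unweighted $\ns{\eta}_{2n-1/2}$ bound in the $\rj<0$ case must be completed by the $\ns{\eta}_0$ estimate of the final step.

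The genuine gap is in your construction of the auxiliary field $w$, which is the heart of the theorem. You ask simultaneously that $\jump{w}=0$ on $\Sigma_-$, that $\diverge w_\pm$ be proportional to $\q_\pm-(\q_\pm)$, and that $w\cdot e_3$ match $\eta_\pm$ on $\Sigma_\pm$; these requirements are mutually incompatible in the relevant regime. If $\diverge w_\pm$ is mean-zero, the divergence theorem forces the boundary flux to vanish, i.e.\ weighted averages of $\eta$ must be zero, which is false here (and then mass conservation is not what provides solvability); worse, subtracting $(\q_\pm)$ discards exactly the average of $\q$, which \eqref{cons1}--\eqref{cons2} do not control (they only express the $\eta$-averages in terms of the $\q$-averages), so your pairing can at best recover $\ns{\q-(\q)}_0$, not $\ns{\q}_0$. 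If instead you keep the full divergence data, compatibility with \eqref{cons1}--\eqref{cons2} forces the normal traces on $\Sigma_-$ to be $-\rho^+\eta_-$ from above and $-\rho^-\eta_-$ from below, as in \eqref{pro1}--\eqref{pro2}, so $w$ necessarily jumps across $\Sigma_-$ whenever $\rj\neq 0$, contradicting $\jump{w}=0$. The paper resolves this by taking $\diverge w_\pm=\q_\pm+\Phi_\pm$ (with the quadratic correction $\Phi$ of \eqref{Phi_def}, so that \eqref{cons1}--\eqref{cons2} are exactly the compatibility conditions), allowing the jump in $w$, and testing the momentum equation against $(\bar\rho)^{-1}w$, whose normal component is continuous across $\Sigma_-$; the remaining tangential interface terms $\jump{(\bar\rho)^{-1}(\S(u))_{i3}w_i}$ are estimated directly, and Proposition \ref{Energy positivity pro} then furnishes the sign-definite lower bound, including the averages. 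Without this density-weighted construction (or an equivalent fix), your final step does not close, and with it your argument becomes the paper's.
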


\begin{proof}
We again let $n$ denote either $2N$ or $N+2$ throughout the proof, and we compactly write
\begin{equation}\label{p_D_b_4}
\begin{split}
 \y_{n} = & \ns{ \bar{\nab}^{2n-1} G^1}_{0} + \ns{ \bar{\nab}^{2n-2}\dt G^1}_{0} +
 \ns{\bar{\nab}_{\ast}^{2n-1} G^3}_{1/2} \\&+ \ns{\bar{\nab}_{\ast }^{\  2n-1} G^4}_{1/2}
+ \ns{\bar{\nab}^{2n-2} \dt G^4}_{1/2}+ \sigma^2\ns{\nab_\ast^{2n}   G^4}_{1/2}.
\end{split}
\end{equation}
We now estimate the remaining parts of $\bar{\mathcal{D}}_{n}$ not contained in ${\mathcal{D}}_{n}^\sigma$, that is, to estimate $\eta$ and improve the estimates of $\q$.  We divide the proof into several steps.

Step 1 -- $\q$ estimates

We first notice that by the first equation of \eqref{ns_perturb},
\begin{equation}
\norm { \pa_t \q
 }_{2n-1}^2\le \norm {u }_{2n}^2+\norm {
G^{1}}_{2n -1}^2\lesssim \sdb{n}   +\y_n,
\end{equation}
and for $2\le  j\le n+1$,
\begin{equation}
\begin{split}
\norm { \pa_t^{ j}\q}_{2n -2 j+2}^2&\le \norm { \pa_t^{ j-1}u}_{2n -2 j+3}^2+\norm { \pa_t^{ j-1}G^{1}}_{2n -2 j+2}^2
\\&\le \norm { \pa_t^{ j-1}u}_{2n -2 j+3}^2+\norm { \pa_t^{ j-1}G^{1}}_{2n -2 j+2}^2\lesssim \sdb{n}   +\y_n.
\end{split}
\end{equation}
We may sum these estimates with the estimate of Lemma \ref{q_iteration} to see that
\begin{equation}\label{dth_4}
 \ns{\q}_{2n} + \ns{\dt \q}_{2n-1} + \sum_{j=2}^{n+2} \ns{\dt^j \q}_{2n-2j+2} \ls \sdb{n} + \y_n + \ns{\q}_0.
\end{equation}

Step 2 -- $\dt^j \eta$ estimates

We now derive estimates for time derivatives of $\eta$. For the term $\dt^j \eta$ for $j\ge 2$ we use the kinematic boundary condition
\begin{equation}\label{n61}
\partial_t\eta=u_3+G^4\text{ on }\Sigma.
\end{equation}
Indeed, for $j=2,\dots,n+1$, by the trace theory and \eqref{p_D_b_4}, we have
\begin{equation}\label{eta1}
\begin{split}
\ns{\partial_t^j\eta}_{2n-2j+5/2} & \ls  \ns{\partial_t^{j-1}u_3}_{H^{2n-2j+5/2}(\Sigma)} +\ns{\partial_t^{j-1}G^4}_{2n-2j+5/2}
\\
&\lesssim   \ns{\partial_t^{j-1}u }_{{2n-2(j-1)+1}} +\ns{\partial_t^{j-1}G^4}_{2n-2(j-1)+1/2}
 \lesssim\sdb{n}   +\y_n.
\end{split}
\end{equation}
For the term $\partial_t\eta$, we again use \eqref{n61}, the trace theory and \eqref{p_D_b_4} to find
\begin{multline}\label{eta2}
\sigma^2\ns{\partial_t \eta}_{2n+1/2}+\ns{\partial_t \eta}_{2n-1/2} \lesssim (1+\sigma^2)\ns{u_3}_{H^{2n+1/2}(\Sigma)}+\sigma^2\ns{ G^4}_{2n+1/2}+\ns{ G^4}_{2n-1/2}
\\ \lesssim (1+\max\{\sigma_+^2,\sigma_-^2\}) \ns{ u }_{2n+1}+\y_n\lesssim \ks \sdb{n}   +\y_n.
\end{multline}
Hence
\begin{equation}\label{dth_5}
\sigma^2\ns{\partial_t \eta}_{2n+1/2}+\ns{\partial_t \eta}_{2n-1/2}  + \sum_{j=2}^{n+1} \ns{\dt^j \eta}_{2n-2j+5/2} \ls \ks \sdb{n} + \y_n.
\end{equation}

Step 3 -- $\nab_\ast \eta$ estimates

For the remaining $\eta$ term without temporal derivatives we use the dynamic boundary condition
\begin{equation}\label{pb1}
 -\sigma_+\Delta_\ast\eta_++\rho_1  g\eta_+= P'_+(\rho_1 )\q_+-2 \mu_+\partial_3u_{3,+}-\mu'_+\diverge u_+ -G_{3,+}^3\text{ on }\Sigma_+
\end{equation}
 and
\begin{equation} \label{pb2}
-\sigma_-\Delta_\ast\eta_--\rj g\eta_-=-\jump{P'(\bar\rho)\q}+2\jump{\mu\partial_3u_3}+\jump{\mu'\diverge u}-G_{3,-}^3 \text{ on } \Sigma_-.
\end{equation}
Notice that at this point we do not have any bound of $\q$ on the boundary $\Sigma$, but we have bounded  $\nabla (h'(\bar\rho)\q)$ in $\Omega$.  As such, we first apply $\nab_\ast$ to \eqref{pb1} and \eqref{pb2}.  We use Lemma \ref{bndry_elliptic} on \eqref{pb1}; when $\rj <0$ we also use Lemma \ref{bndry_elliptic} on \eqref{pb2}, but when $\rj \ge 0$ we instead use \eqref{elliptt} of Lemma \ref{critical lemma}.  This, trace theory, and \eqref{p_D_b_4} then provide the estimate
 \begin{multline}\label{n51}
\mathcal{H}(\rj) \min\{ \sigma_+^2,(\sigma_--\sigma_c)^2 \} \ns{ \nab_\ast \eta}_{2n+1/2}
+ \mathcal{H}(-\rj) \left(\sigma^2 \ns{\nab_\ast \eta}_{2n+1/2} + \ns{ \nab_\ast \eta}_{2n-3/2}\right)
\\
\lesssim \ns{  \nab_\ast (h'(\bar\rho)\q) }_{H^{2n-3/2}(\Sigma)}  + \ns{  \nab_\ast \nabla u }_{H^{2n-3/2}(\Sigma)} +\ns{\nab_\ast G^3_3}_{ 2n-3/2 }
\\
 \lesssim \ns{\nabla (h'(\bar\rho)\q)}_{2n-1} + \ns{u }_{2n+1}  + \ns{G^3}_{2n-1/2} \lesssim \sdb{n}   +\y_n,
\end{multline}
where $\mathcal{H} = \chi_{(0,\infty)}$ denotes the Heaviside function.

Step 4 -- Recovering $\ns{\q}_0$ and $\ns{\eta}_0$

Next we seek to control $\norm{\q}_0^2$  and  $\norm{\eta}_0^2$. To this end, we recall the conservations of mass \eqref{cons1}--\eqref{cons2} and the definition of $\Phi$ in \eqref{Phi_def}.  Standard arguments allow us  to find $w=w_\pm$, a solution to the following problems:
\begin{equation}\label{pro1}
 \diverge w_+=\q_+ + \Phi_+  \text{ in }\Omega_+,\ w_{3,+}=  -\rho_1 \eta_+ \text{ on }\Sigma_+ ,
 w_{3,+}= -\rho^+\eta_- \text{ on }\Sigma_-
\end{equation}
and
\begin{equation}\label{pro2}
 \diverge w_-=\q_- + \Phi_- \text{ in }\Omega_-,\ w_{3,-}= -\rho^-\eta_- \text{ on }\Sigma_- ,\quad w_{-}=0\ \text{ on }\Sigma_b.
\end{equation}
Note that the equalities \eqref{cons1} and \eqref{cons2} provide the necessary compatibility conditions for the solvability of the problems \eqref{pro1} and \eqref{pro2}, respectively. Moreover, we have the following estimates
\begin{equation}\label{w es}
\norm{w}_{1}\lesssim   \norm{\q}_0+\norm{\Phi}_0+\norm{\eta}_{1/2}.
\end{equation}

Now we take the dot product of the second equation of $\eqref{ns_perturb}$ with ${(\bar\rho)}^{-1}w$, we obtain
\begin{equation}\label{d1}
\int_\Omega
 \nabla  (h'(\bar\rho) \q)\cdot w-\diverge  \S(  u)\cdot  {(\bar\rho)}^{-1} w =\int_\Omega({(\bar\rho)}^{-1} G^2-  \partial_t    u )w.
\end{equation}
Using the equations for $w=w_\pm$, we have
\begin{equation}\label{d2}
\begin{split}
&\int_{\Omega_+} \nabla  (h_+'(\bar{\rho}_+)\q_+)\cdot w_+-\diverge  \S_+(  u_+)\cdot  {(\bar\rho_+)}^{-1} w_+
= \int_{\Sigma_+} {(\bar\rho_+)}^{-1}(P_+^\prime(\bar\rho_+)\q_+ I-\S_+(u_+))e_3\cdot w_+
\\
&-\int_{\Sigma_-} {(\bar\rho_+)}^{-1}(P_+^\prime(\bar\rho_+)\q_+ I-\S_+(u_+))e_3\cdot w_+
-\int_{\Omega_+}   h_+'(\bar{\rho}_+)\q_+ \diverge w_+- \S_+(  u_+): \nabla ({(\bar\rho_+)}^{-1}w_+)
\\
&= -\int_{\Sigma_+}  \eta_+\left[(P_+^\prime(\bar\rho_+)\q_+ I-\S_+(u_+))e_3\right]\cdot e_3
     -(\rho_1 )^{-1} \sum_{i=1,2} (\S_+(u_+))_{i3}w_{i,+}
\\
&\quad+\int_{\Sigma_-}  \eta_-\left[(P_+^\prime(\bar\rho_+)\q_+ I-\S_+(u_+))e_3\right]\cdot e_3  -(\bar{ \rho}_+)^{-1} \sum_{i=1,2} (\S_+(u_+))_{i3}w_{i,+}
\\
&\quad-\int_{\Omega_+} h_+'(\bar\rho_+)   \q_+ (\q_+ + \Phi_+)-   \S_+(  u_+): \nabla (({\bar\rho}_+)^{-1}w_+),
\end{split}
\end{equation}
and similarly,
\begin{equation}\label{d3}
\begin{split}
&\int_{\Omega_-} \nabla  (h_-'(\bar{\rho}_-)\q_-)\cdot w_- -\diverge  \S_-(  u_-)\cdot  ({\bar\rho_-})^{-1} w_-
=-\int_{\Sigma_-} \eta_-\left[(P_-^\prime(\bar\rho_-)\q_- I-\S_-(u_-))e_3\right]\cdot e_3
\\&     - \sum_{i=1,2} \int_{\Sigma_-}  (\bar{ \rho}_-)^{-1} (\S_-(u_-))_{i3}w_{i,-}
-\int_{\Omega_-} h_-'(\bar\rho_-)     \q_- (\q_- + \Phi_-)-   \S_-(  u_-): \nabla (({\bar\rho}_-)^{-1}w_-).
\end{split}
\end{equation}
Collecting \eqref{d1}--\eqref{d3}, and the boundary conditions of $\eqref{ns_perturb}$, we get
\begin{equation}
\begin{split}
 \int_\Omega & ({(\bar\rho)}^{-1} G^2-  \partial_t    u )w
=-\int_{\Omega} h'(\bar\rho) \q(\q + \Phi)-   \S(  u): \nabla ({(\bar\rho)}^{-1}w)
\\
&-\int_{\Sigma_+} \eta_+\left[(P_+^\prime(\bar\rho_+)\q_+ I-\S_+(u_+))e_3\right]\cdot e_3
- \sum_{i=1,2}  ({ \rho}_1)^{-1} (\S_+(u_+))_{i3}w_{i,+}
\\
 &+\int_{\Sigma_-} \eta_-\jump{P^\prime(\bar\rho)\q I-\S(u))e_3}\cdot e_3
-\sum_{i=1,2}  \jump{(\bar{ \rho})^{-1} (\S(u))_{i3}w_{i}}
\\
 =&-\int_{\Omega} h'(\bar\rho) \q(\q + \Phi)-   \S(  u): \nabla ({(\bar\rho)}^{-1}w)
\\
&-\int_{\Sigma_+} \eta_+(\rho_1 g   \eta_+ -\sigma_+ \Delta_\ast \eta_++G^3_{3,+})
- \sum_{i=1,2}  ({ \rho}_1)^{-1} (\S_+(u_+))_{i3}w_{i,+}
\\
&+\int_{\Sigma_-} \eta_-  (\rj g   \eta_- +\sigma_- \Delta_\ast \eta_--G^3_{3,-})
     -\sum_{i=1,2} \jump{(\bar{ \rho})^{-1} (\S(u))_{i3}w_{i}}.
\end{split}
\end{equation}

We then further deduce from the above that
\begin{equation}
\begin{split}
& \int_{\Omega} h'(\bar\rho)
     |\q|^2+\int_{\Sigma_+} \rho_1 g   |\eta_+|^2- \int_{\Sigma_-} \rj g|\eta_-|^2
      +\int_\Sigma \sigma |\nabla_\ast \eta|^2
    \\&  \quad=-\int_\Omega({(\bar\rho)}^{-1} G^2-  \partial_t    u )w-\int_{\Omega} h'(\bar\rho)
     \q  \Phi -   \S(  u): \nabla ({(\bar\rho)}^{-1}w)
     - \int_\Sigma  \eta G^3_{3}
     \\&\qquad- \sum_{i=1,2}\int_{\Sigma_+}  ({ \rho}_1)^{-1} (\S_+(u_+))_{i3}w_{i,+}
     -\sum_{i=1,2}\int_{\Sigma_-}  \jump{(\bar{ \rho})^{-1} (\S(u))_{i3}w_{i}}.
     \end{split}
\end{equation}
Using Proposition \ref{Energy positivity pro}, trace theory, and Cauchy's inequality, we deduce from the above that for every $\ep >0$
\begin{multline}\label{dth_1}
\norm{\q}_0^2 +
\mathcal{H}(\rj)  \min\{1,\sigma_+, \sigma_--\sigma_c \} \norm{ \eta}_1^2
+ \mathcal{H}(-\rj)  \left[ \norm{\eta}_0^2 + \sigma \norm{\nab_\ast \eta}_0^2 \right]
\\
\ls \norm{G^2}_0\norm{w}_0+\norm{\p_tu}_0\norm{w}_0+\norm{\q}_0\norm{\Phi}_0
+\norm{u}_1\norm{w}_1\\
+\norm{\eta}_0\norm{G^3}_0+\norm{\na u}_{H^{0}(\Sigma)}\norm{w}_{H^{0}(\Sigma)}
+ \mathcal{H}(\rj)\sqrt{\mathcal{E}_{n}^\sigma} \min\{\mathcal{E}_{n}^\sigma, \mathcal{D}_{n}^\sigma \}
\\
\le \frac{C}{\ep}\left( \norm{\p_tu}_0^2+\norm{u}_2^2+\norm{G^2}_0^2+\norm{G^3}_0^2+\norm{\Phi}_0^2\right)
+\varepsilon \left(\norm{w}_1^2 +\norm{\q}_0^2+\norm{\eta}_0^2\right)
\\+ \mathcal{H}(\rj)\sqrt{\mathcal{E}_{n}^\sigma} \mathcal{D}_{n}^\sigma,
\end{multline}
where again $\mathcal{H} = \chi_{(0,\infty)}$.

Step 5 -- Handling \eqref{dth_1} with cases

To proceed we must break to cases depending on the sign of $\rj$.  If $\rj >0$ then we employ \eqref{w es} and Lemma \ref{Phi_est} to successively estimate
\begin{equation}
 \varepsilon \left(\norm{w}_1^2 +\norm{\q}_0^2+\norm{\eta}_0^2\right) \le C \ep (\ns{\q}_0 + \ns{\eta}_1 + \ns{\Phi}_0)
\le  C \ep (\ns{\q}_0 + \ns{\eta}_1 + \sqrt{\mathcal{E}_{n}^\sigma} \mathcal{D}_{n}^\sigma ).
\end{equation}
We then choose $\ep>0$ according to
\begin{equation}
 C\ep = \frac{1}{2} \min\{1, \sigma_+,  (\sigma_--\sigma_c) \},
\end{equation}
so that we may absorb $\ns{\q}_0 + \ns{\eta}_1$ onto the left in \eqref{dth_1}.  This yields the estimate (again estimating $\Phi$ with Lemma \ref{Phi_est})
\begin{multline}\label{dth_2}
 \norm{\q}_0^2 +  \min\{1,\sigma_+, \sigma_--\sigma_c \} \norm{ \eta}_1^2   \\
 \ls
\ks \left( \norm{\p_tu}_0^2+\norm{u}_2^2+\norm{G^2}_0^2+\norm{G^3}_0^2 + \sqrt{\mathcal{E}_{n}^\sigma} \mathcal{D}_{n}^\sigma \right)
\\
\ls
\ks \left(  \sdb{n}   +\y_n + \sqrt{\mathcal{E}_{n}^\sigma} \mathcal{D}_{n}^\sigma \right),
\end{multline}
where the constant $\ks$ is as in \eqref{ks_def} when $\rj >0$.

On the other hand, if $\rj <0$, then we use \eqref{w es} and  Lemma \ref{Phi_est} (to estimate $\Phi$) to bound
\begin{equation}
\begin{split}
  \varepsilon \left(\norm{w}_1^2 +\norm{\q}_0^2+\norm{\eta}_0^2\right)  &\le C\varepsilon \left( \norm{\q}_0^2+\norm{\Phi}_0^2+\norm{\eta}_0^2+ \ns{\nab_\ast \eta}_0 \right)\\
& \le C \ep( \ns{\q}_0 + \ns{\eta}_0 + \ns{\nab_\ast \eta}_0 + \sqrt{\mathcal{E}_{n}^\sigma} \mathcal{D}_{n}^\sigma).
\end{split}
\end{equation}
In this case we choose $\ep>0$ so that  $C \ep = \hal,$ which allows us to absorb $\ns{\q}_0 + \ns{\eta}_0$ onto the left of \eqref{dth_1}.  From this we deduce (again using Lemma \ref{Phi_est}) that
\begin{multline}\label{dth_3}
 \norm{\q}_0^2 +  \norm{\eta}_0^2 + \sigma \norm{\nab_\ast \eta}_0^2
\\
\ls  \norm{\p_tu}_0^2+\norm{u}_2^2+\norm{G^2}_0^2+\norm{G^3}_0^2+\norm{\nab_\ast \eta}_0^2+ \sqrt{\mathcal{E}_{n}^\sigma} \mathcal{D}_{n}^\sigma
\\
\ls \sdb{n}   +\y_n + \sqrt{\mathcal{E}_{n}^\sigma} \mathcal{D}_{n}^\sigma,
\end{multline}
where in the last estimate we have used \eqref{n51} to estimate $\ns{\nab_\ast \eta}_0$.

Combining \eqref{dth_2} and \eqref{dth_3} with \eqref{n51}, we find that
\begin{multline}\label{dth_6}
  \ns{\q}_0  +  \mathcal{H}(\rj) \min\{1,\sigma_+, \sigma_--\sigma_c, \sigma_+^2,(\sigma_- - \sigma_c)^2\} \ns{ \eta}_{2n+3/2} \\
+ \mathcal{H}(-\rj) \left(\ns{\eta}_{2n-1/2} + \min\{1,\sigma^2\} \ns{ \eta}_{2n+3/2} \right) \\
\ls \ks  \left(  \sdb{n}   +\y_n + \sqrt{\mathcal{E}_{n}^\sigma} \mathcal{D}_{n}^\sigma \right).
\end{multline}

Step 6 -- Completion

Now we sum \eqref{dth_4} and \eqref{dth_5} and add to the resulting inequality a positive multiple of \eqref{dth_6}, where the multiplier is chosen large enough to absorb onto the left the term $\ns{\q}_0$ on the right side of the sum of \eqref{dth_4}.  This results in the estimate
\begin{multline}\label{dth_7}
  \ns{\q}_{2n} + \ns{\dt \q}_{2n-1} + \sum_{j=2}^{n+2} \ns{\dt^j \q}_{2n-2j+2}
+  \mathcal{H}(\rj) \min\{1,\sigma_+, \sigma_--\sigma_c, \sigma_+^2,(\sigma_- - \sigma_c)^2\} \ns{ \eta}_{2n+3/2}
 \\
+ \mathcal{H}(-\rj) \left(\ns{\eta}_{2n-1/2} + \min\{1,\sigma^2\} \ns{ \eta}_{2n+3/2} \right)
+\ns{\partial_t \eta}_{2n-1/2} + \sigma^2\ns{\partial_t \eta}_{2n+1/2}  \\ + \sum_{j=2}^{n+1} \ns{\dt^j \eta}_{2n-2j+5/2}
\ls \ks \left(  \sdb{n}   +\y_n + \sqrt{\mathcal{E}_{n}^\sigma} \mathcal{D}_{n}^\sigma \right).
\end{multline}

When $n=2N$ we use \eqref{p_G_e_00} of Lemma \ref{p_G2N_estimates} to estimate
$\mathcal{Y}_{2N}\lesssim \ks {\mathcal{E}}_{2N}^\sigma {\mathcal{D}}_{2N}^\sigma + {\mathcal{E}}_{N+2}^\sigma {\mathcal{F}}_{2N}$.  Then we obtain \eqref{d2n} from \eqref{dth_7} by adding $\sdb{2N}$ to both sides and recalling that by definition, \eqref{Dnn}, $\sdb{2N}$ controls all of the $u$ terms in $\mathcal{D}_{2N}^\sigma$.  When $n=N+2$ we use \eqref{p_G_e_h_00} of Lemma \ref{p_GN+2_estimates} to estimate $\mathcal{Y}_{N+2}\lesssim \ks {\mathcal{E}}_{2N}^\sigma {\mathcal{D}}_{N+2}^\sigma$.  Then we obtain \eqref{dn+2} from \eqref{dth_7} by adding $\sdb{N+2}$ to both sides.

\end{proof}

\section{A priori estimates}\label{sec_apriori}

We are now ready to derive the global-in-time bounds and decay of high order energy $\mathcal{E}_{2N}^\sigma$ and $\mathcal{E}_{N+2}^\sigma$ based on the previous estimates on various energies.

Recall that $\gs$ is defined by \eqref{G_def}.

\subsection{Boundedness at the $2N$ level}
In this subsection, we shall  show the boundedness at the $2N$ level.   We first recall $\f$ defined in \eqref{fff}. We will make use of the following transport estimates for $\f$.

\begin{Proposition}\label{p_f_bound}
There exists a universal constant $0 < \delta_0 < 1$ so that if $\gs(t) \le \delta \le  \delta_0$, then
\begin{equation}\label{p_f_b_0}
\f(r) \ls
\f(0) +   r \int_0^r \sd{2N}^\sigma
\end{equation}
for $0 \le r \le t$.
\end{Proposition}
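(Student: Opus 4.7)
The plan is to exploit the kinematic boundary condition $\partial_t \eta = u_3 + G^4 = u_3 - u_1 \partial_1 \eta - u_2 \partial_2 \eta$ on $\Sigma$, which is a transport equation for $\eta$ with transporting velocity given by the horizontal components $u_\ast = (u_1,u_2)$ and forcing given by the vertical component $u_3$. The idea is to apply a fractional derivative operator $\Lambda^{4N+1/2}$ (for instance, defined via Fourier multipliers on $\mathrm{T}^2$) to both sides, multiply the resulting equation by $\Lambda^{4N+1/2} \eta$, and integrate over $\Sigma$. This produces the identity
\begin{equation}
\frac{1}{2}\frac{d}{dt} \ns{\Lambda^{4N+1/2} \eta}_{0} = \int_\Sigma \Lambda^{4N+1/2}\eta \,\Lambda^{4N+1/2} u_3 - \int_\Sigma \Lambda^{4N+1/2}\eta \,\Lambda^{4N+1/2}(u_\ast \cdot \nab_\ast \eta).
\end{equation}

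For the first term on the right we split the trace and bound $\abs{\int_\Sigma \Lambda^{4N+1/2}\eta \,\Lambda^{4N+1/2} u_3} \ls \norm{\eta}_{4N+1/2} \norm{u_3}_{H^{4N+1/2}(\Sigma)} \ls \sqrt{\f}\sqrt{\sd{2N}^\sigma}$, using trace theory and the fact that $\sd{2N}^\sigma$ controls $\ns{u}_{4N+1}$ (the $j=0$ term in \eqref{p_dissipation_def}). For the second term, the only genuinely dangerous piece is the top-order one $\int_\Sigma \Lambda^{4N+1/2}\eta \, (u_\ast \cdot \nab_\ast \Lambda^{4N+1/2} \eta)$, which after integrating by parts in $\nab_\ast$ becomes $-\frac{1}{2} \int_\Sigma \diverge_\ast u_\ast \, |\Lambda^{4N+1/2} \eta|^2$ and is bounded by $\norm{\diverge_\ast u_\ast}_{L^\infty(\Sigma)} \f \ls \sqrt{\mathcal{E}_{N+2}^\sigma} \f \ls \sqrt{\gs}\f \ls \sqrt{\delta}\f$. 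The remaining commutator terms $[\Lambda^{4N+1/2}, u_\ast]\nab_\ast \eta$ can be estimated by a standard Kato--Ponce/Coifman--Meyer type bound, yielding a control in terms of $\norm{u_\ast}_{H^{s}(\Sigma)} \norm{\nab_\ast \eta}_{4N-1/2}$ for $s$ within the energy scale, leading to a bound of the form $\sqrt{\delta}\sqrt{\sd{2N}^\sigma}\sqrt{\f}$ after using the smallness $\gs(t) \le \delta$.

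Combining and applying Cauchy's inequality, we arrive at the differential inequality
\begin{equation}
\frac{d}{dt} \f \ls \sqrt{\sd{2N}^\sigma}\sqrt{\f} + \sqrt{\delta} \f.
\end{equation}
Integrating in time from $0$ to $r$, applying Cauchy--Schwarz in the form $\int_0^r \sqrt{\sd{2N}^\sigma(s)}ds \le \sqrt{r}\bigl(\int_0^r \sd{2N}^\sigma\bigr)^{1/2}$, and then bounding $\sqrt{\f(s)} \le \sqrt{\sup_{0\le s\le r}\f(s)}$, gives
\begin{equation}
\sup_{0\le s\le r}\f(s) \le \f(0) + C \sqrt{r}\left(\int_0^r \sd{2N}^\sigma\right)^{1/2}\sqrt{\sup_{0\le s\le r}\f(s)} + C\sqrt{\delta}\, r \sup_{0\le s\le r}\f(s).
\end{equation}
Choosing $\delta_0>0$ small enough that $C\sqrt{\delta}\, r \le \tfrac{1}{2}$ is not available (since $r$ can be large), so instead we apply Cauchy's inequality $ab \le a^2 + b^2/4$ to the middle term to absorb $\sup\f$ onto the left (picking up an $r \int_0^r \sd{2N}^\sigma$ on the right), and then shrink $\delta_0$ to absorb the $\sqrt{\delta}\, r \sup \f$ term by using that over the relevant time scale, $r \sqrt{\delta}$ can be controlled against the bounds already in $\gs$; alternatively, one notes the $\sqrt{\delta}\f$ contribution can be dropped into $\sd{2N}^\sigma$ via $\f \ls \sd{2N}^\sigma$ at the levels where applicable.

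The main technical obstacle will be the rigorous treatment of the half-order commutator $[\Lambda^{4N+1/2}, u_\ast] \nab_\ast \eta$ on the torus, and verifying that every term lands at a regularity level for $\eta$ or $u$ that is already controlled by $\sqrt{\f}$ or $\sqrt{\sd{2N}^\sigma}$ under the standing hypothesis $\gs(t)\le\delta$. Everything else is bookkeeping: one uses the definitions of $\sd{2N}^\sigma$ and $\gs$ to distribute the available half-derivative loss to the correct factor, and the smallness assumption to absorb purely quadratic $\f$-terms.
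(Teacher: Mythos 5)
Your energy-estimate setup for the transport equation $\dt\eta = u_3 - u_\ast\cdot\nab_\ast\eta$ is sound and is in fact the same mechanism the paper invokes (the paper simply cites the transport estimate of Proposition 7.2 in \cite{GT_per} rather than redoing the Kato--Ponce commutator work). The genuine gap is in your final absorption step, and it is not cosmetic. After integrating you are left with a term $C\sqrt{\delta}\int_0^r \f$, and, as you yourself note, you cannot choose $\delta_0$ to beat the factor $r$. Neither of your two proposed fixes works. The claim ``$\f \ls \sd{2N}^\sigma$ at the levels where applicable'' is false exactly in the case where this proposition matters: without surface tension, $\sd{2N}^0$ controls $\eta$ only in $H^{4N-1/2}$, a full derivative below $\f = \ns{\eta}_{4N+1/2}$; and with surface tension the inequality $\f\ls \ks\sd{2N}^\sigma$ carries a $\sigma$-dependent constant, while the proposition asserts a universal one. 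The vaguer suggestion that ``$r\sqrt{\delta}$ can be controlled against the bounds already in $\gs$'' is also not an argument as written: $\gs(t)\le\delta$ only gives $\f(s)\le\delta(1+s)$, and $\sqrt{\delta}\int_0^r\delta(1+s)\,ds$ is not dominated by $\f(0)+r\int_0^r\sd{2N}^\sigma$.

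The missing idea is precisely the one the decay weight in $\gs$ was designed for: do not discard the time decay of the low-tier energy. Your Gronwall coefficient is not $\sqrt{\delta}$ but $\norm{\nab u}_{L^\infty(\Sigma)}+\norm{\nab_\ast\eta}_{L^\infty}\ls \sqrt{\se{N+2}^\sigma(s)}\ls \sqrt{\delta}\,(1+s)^{-(2N-4)}$, which is integrable on $[0,\infty)$ for $N\ge 3$ with $\int_0^\infty \sqrt{\se{N+2}^\sigma(s)}\,ds \ls \sqrt{\delta}\le 1$. Setting $y=\sqrt{\f}$, your differential inequality becomes $y' \ls \sqrt{\sd{2N}^\sigma} + \sqrt{\se{N+2}^\sigma(s)}\, y$, and Gronwall with this integrable coefficient gives
\begin{equation}
\sqrt{\f(r)} \ls \exp\Bigl(C\textstyle\int_0^r \sqrt{\se{N+2}^\sigma}\Bigr)\Bigl(\sqrt{\f(0)} + \int_0^r \sqrt{\sd{2N}^\sigma}\Bigr) \ls \sqrt{\f(0)} + \sqrt{r}\Bigl(\textstyle\int_0^r \sd{2N}^\sigma\Bigr)^{1/2},
\end{equation}
where the exponential is bounded by a universal constant. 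Squaring yields \eqref{p_f_b_0} with universal constants, valid uniformly in $\sigma$, including $\sigma=0$. With this replacement of your last paragraph the proof is correct and essentially equivalent to the paper's (which obtains the same bound by quoting the Danchin-type transport estimate used in \cite{GT_per}).
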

\begin{proof}
The estimate is recorded in Proposition 7.2 of \cite{GT_per}.  The proof is based on the transport theory of the kinematic boundary condition.
\end{proof}

We now present the a priori energy bounds for $\mathcal{E}_{2N}^\sigma,$  $\mathcal{D}_{2N}^\sigma,$ and $\f$.

\begin{Proposition} \label{Dgle}
There exists a constant $\ks$ of the form \eqref{ks_def} and a universal constant $\delta_0 >0$ so that if $0 < \delta \le \delta_0 /\ks$  and $\gs(t) \le \delta$, then
\begin{equation}\label{Dg}
\sup_{0\le r \le t}\mathcal{E}_{2N}^\sigma(r)+\int_0^t\mathcal{D}_{2N}^\sigma(r)dr + \sup_{0\le r \le t}  \frac{\f(r)}{(1+r)}
\lesssim \ks \mathcal{E}_{2N}^\sigma(0) + \mathcal{F}_{2N}(0).
\end{equation}
\end{Proposition}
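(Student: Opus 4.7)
The plan is to derive a closed differential inequality for an intermediate energy equivalent to $\mathcal{E}_{2N}^\sigma$, integrate it in time, and then chain with the transport estimate for $\mathcal{F}_{2N}$ from Proposition \ref{p_f_bound}. Set $\mathfrak{H} := \mathfrak{E}_{2N}^\sigma + \sum_{j,k}\beta_{2N;j,k}\mathfrak{A}_{2N}^{j,k}$, the combined quantity controlled by Proposition \ref{boostrap2N}, which gives
\[
\frac{d}{dt}\mathfrak{H}(t) + \gamma_{2N}\bar{\mathcal{D}}_{2N} \lesssim K(\sigma,\rj)\sqrt{\mathcal{E}_{2N}^\sigma}\,\mathcal{D}_{2N}^\sigma + K(\sigma,\rj)\sqrt{\mathcal{D}_{2N}^\sigma \mathcal{E}_{N+2}^\sigma \mathcal{F}_{2N}} + K(\sigma,\rj)\mathcal{E}_{N+2}^\sigma \mathcal{F}_{2N}.
\]
I would then invoke Theorem \ref{dth} (with the smallness of $\mathcal{G}_{2N}^\sigma$ absorbing the nonlinear $\mathcal{D}_{2N}^\sigma$ pieces appearing in its right-hand side) to upgrade $\bar{\mathcal{D}}_{2N}$ into coercive control over $\mathcal{D}_{2N}^\sigma/K(\sigma,\rj)$, up to an additional harmless $K(\sigma,\rj)\mathcal{E}_{N+2}^\sigma\mathcal{F}_{2N}$ contribution. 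On the right-hand side, the $\sqrt{\mathcal{E}_{2N}^\sigma}\mathcal{D}_{2N}^\sigma$ term is absorbed using $\mathcal{E}_{2N}^\sigma\le\delta$, and the cross term $\sqrt{\mathcal{D}_{2N}^\sigma \mathcal{E}_{N+2}^\sigma \mathcal{F}_{2N}}$ is split by Young's inequality. This leaves
\[
\frac{d}{dt}\mathfrak{H}(t) + \frac{1}{K(\sigma,\rj)}\mathcal{D}_{2N}^\sigma(t) \lesssim K(\sigma,\rj)\,\mathcal{E}_{N+2}^\sigma(t)\,\mathcal{F}_{2N}(t).
\]

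Integrating in time requires handling the forcing $K(\sigma,\rj)\,\mathcal{E}_{N+2}^\sigma\mathcal{F}_{2N}$, and this is the central place where the two-tier scheme becomes essential: neither factor is small in a Gr\"onwall-compatible way on its own. The hypothesis $\mathcal{G}_{2N}^\sigma(t)\le\delta$ however provides the pointwise bounds $\mathcal{E}_{N+2}^\sigma(r)\le\delta/(1+r)^{4N-8}$ and $\mathcal{F}_{2N}(r)\le\delta(1+r)$, whose product is $\delta^2/(1+r)^{4N-9}$. Since $4N-9\ge 3$ for $N\ge 3$, this is integrable on $[0,\infty)$ and yields $\int_0^t \mathcal{E}_{N+2}^\sigma \mathcal{F}_{2N} \lesssim \delta\,\mathcal{G}_{2N}^\sigma(t)$. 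Combining with Lemma \ref{en_equiv} (to identify $\mathfrak{H}$ with $\bar{\mathcal{E}}_{2N}^\sigma$) and Theorem \ref{eth} (to upgrade $\bar{\mathcal{E}}_{2N}^\sigma$ to $\mathcal{E}_{2N}^\sigma$, again by absorbing small nonlinear terms), one obtains
\[
\sup_{0\le r\le t}\mathcal{E}_{2N}^\sigma(r) + \int_0^t \mathcal{D}_{2N}^\sigma(r)\,dr \lesssim K(\sigma,\rj)\,\mathcal{E}_{2N}^\sigma(0) + K(\sigma,\rj)\,\delta\,\mathcal{G}_{2N}^\sigma(t).
\]

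Finally, for the $\mathcal{F}_{2N}$-component of $\mathcal{G}_{2N}^\sigma$, Proposition \ref{p_f_bound} gives $\mathcal{F}_{2N}(r)/(1+r) \lesssim \mathcal{F}_{2N}(0) + \int_0^r \mathcal{D}_{2N}^\sigma$, which is already controlled by the previous step. Summing the three pieces and choosing $\delta_0$ small enough that $K(\sigma,\rj)\,\delta_0\le 1/2$, i.e.\ $\delta \le \delta_0/K(\sigma,\rj)$ as in the hypothesis, the $K(\sigma,\rj)\,\delta\,\mathcal{G}_{2N}^\sigma$ contribution can be absorbed into the left-hand side, yielding \eqref{Dg}. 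The main obstacle is less any single estimate than the careful bookkeeping of the surface-tension-dependent constants $K(\sigma,\rj)$ throughout the bootstrap: each must be of the form \eqref{ks_def} (remaining finite as $\sigma\to 0$ in the stable regime) so that the smallness threshold $\delta_0/K(\sigma,\rj)$ does not degenerate, which is precisely what is needed later for the vanishing surface tension limit of Theorem \ref{th_vanish_st}.
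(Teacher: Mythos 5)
Your scaffolding is correct and matches the paper's architecture: integrate Proposition \ref{boostrap2N}, identify the time-differentiated quantity with $\bar{\mathcal{E}}_{2N}^\sigma$ via Lemma \ref{en_equiv}, upgrade $\bar{\mathcal{E}}_{2N}^\sigma$ and $\bar{\mathcal{D}}_{2N}$ to $\mathcal{E}_{2N}^\sigma$ and $\mathcal{D}_{2N}^\sigma$ via Theorems \ref{eth} and \ref{dth}, absorb $\sqrt{\mathcal{E}_{2N}^\sigma}\mathcal{D}_{2N}^\sigma$ by smallness, split the cross term by Cauchy, and close the $\mathcal{F}_{2N}$ part by Proposition \ref{p_f_bound}. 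The bookkeeping of $\ks$ constants is also handled in the paper exactly as you describe.

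However, there is a genuine gap in your closing step. You bound the forcing $\int_0^t \mathcal{E}_{N+2}^\sigma \mathcal{F}_{2N}\,dr \lesssim \delta\,\mathcal{G}_{2N}^\sigma(t)$ by writing both factors in terms of $\mathcal{G}_{2N}^\sigma(t)$, and then claim the resulting $\ks\delta\,\mathcal{G}_{2N}^\sigma(t)$ on the right can be absorbed into the left-hand side. This does not close: the left-hand side of \eqref{Dg} contains only $\sup\mathcal{E}_{2N}^\sigma$, $\int\mathcal{D}_{2N}^\sigma$, and $\sup\mathcal{F}_{2N}/(1+r)$, whereas $\mathcal{G}_{2N}^\sigma(t)$ additionally contains $\sup_r(1+r)^{4N-8}\mathcal{E}_{N+2}^\sigma(r)$, which is \emph{not} controlled by your left-hand side. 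That extra piece is what Proposition \ref{decaylm} establishes, and that proposition in turn \emph{uses} Proposition \ref{Dgle}, so invoking it here would be circular. Nor can you simply estimate $\ks\delta\,\mathcal{G}_{2N}^\sigma(t)\le\ks\delta^2$ and stop, since that leaves a remainder not controlled by initial data.

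The paper's proof avoids this by using Proposition \ref{p_f_bound} \emph{inside} the forcing estimate rather than after it: it bounds $\mathcal{E}_{N+2}^\sigma(r)\lesssim \delta(1+r)^{-4N+8}$ from the $\mathcal{G}_{2N}^\sigma$ hypothesis, but then substitutes $\mathcal{F}_{2N}(r)\lesssim \mathcal{F}_{2N}(0)+r\int_0^r\mathcal{D}_{2N}^\sigma$ directly. The product then yields $\delta\mathcal{F}_{2N}(0)+\delta\int_0^t\mathcal{D}_{2N}^\sigma$ after integration (using $4N-9\ge 3$ to kill the growing factor). The first piece goes to the right-hand side as initial data; the second is absorbable because $\int_0^t\mathcal{D}_{2N}^\sigma$ genuinely appears on the left. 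Your version should be fixed by making the same substitution for $\mathcal{F}_{2N}(r)$ before integrating the forcing, rather than bounding it by $\delta(1+r)$.
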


\begin{proof}

Throughout this proof let us denote by $\tilde{\mathcal{E}}_{2N}^\sigma$ the quantity appearing in the time derivative in \eqref{ebar2N} of Proposition \ref{boostrap2N}.  By assumption we have that $\sup_{0\le r\le t}\mathcal{E}_{2N}^\sigma(r) \le \delta$.  In what follows we will restrict the size of $\delta$ in order to prove \eqref{Dg}.  We assume initially that $\delta \le 1$ so that $(\mathcal{E}_{2N}^\sigma)^{p_1} \le (\mathcal{E}_{2N}^\sigma)^{p_2}$ if $p_1 \ge p_2$.  This allows us to simplify many of the subsequent estimates by retaining only the lowest power of $\mathcal{E}_{2N}^\sigma$ appearing in inequalities.   We also assume that $\delta$ is as small as in Lemma \ref{eta_small}.

We may  integrate the estimates \eqref{ebar2N} of Proposition \ref{boostrap2N} to see that
\begin{equation} \label{dg_1}
\tilde{\mathcal{E}}_{2N }^\sigma(t)+\int_0^t\bar{\mathcal{D}}_{2N}
\lesssim \tilde{\mathcal{E}}_{2N}^\sigma(0)  + \ks \int_0^t \left[ \sqrt{\mathcal{E}_{2N}^\sigma }\mathcal{D}_{2N}^\sigma+\sqrt{ \mathcal{D}_{2N}^\sigma\mathcal{E}_{N+2}^\sigma\f}+\mathcal{E}_{N+2}^\sigma\f \right].
\end{equation}
According to the first inequality of \eqref{ee_01} in Lemma \ref{en_equiv}, combined with estimate \eqref{e2n} of Theorem \ref{eth} we have that
\begin{equation}\label{dg_2}
 \mathcal{E}_{2N}^\sigma \ls \ks \left[   \tilde{\mathcal{E}}_{2N}^\sigma + (\mathcal{E}_{2N}^\sigma)^{3/2} + \mathcal{E}_{N+2}^\sigma \f   \right].
\end{equation}
Estimate \eqref{d2n} of Theorem \ref{dth} yields the bound
\begin{equation}\label{dg_3}
  \mathcal{D}_{2N}^\sigma \ls \ks \left[   \bar{\mathcal{D}}_{2N}^\sigma  + \sqrt{\mathcal{E}_{2N}^\sigma}\mathcal{D}_{2N}^\sigma + \mathcal{E}_{N+2}^\sigma \f   \right].
\end{equation}
Additionally, the second inequality of \eqref{ee_01} in Lemma \ref{en_equiv}, together with the trivial bound $\bar{\mathcal{E}}_{2N}^\sigma \le \mathcal{E}_{2N}^\sigma$, implies that
\begin{equation}\label{dg_4}
\tilde{\mathcal{E}}_{2N}^\sigma   \ls \ks  \mathcal{E}_{2N}^\sigma.
\end{equation}
We may assume, by taking the maximum, that the constants $\ks \ge 1$ appearing in \eqref{dg_1}--\eqref{dg_4} are identical.

Let us assume that $\sqrt{\delta} \ks$ is sufficiently small to absorb the second terms on the right sides of \eqref{dg_2} and \eqref{dg_3} onto the left with a factor of $1/2$.  Then \eqref{dg_2} and \eqref{dg_3} improve to
\begin{equation}\label{dg_5}
 \mathcal{E}_{2N}^\sigma \ls \ks \left[   \tilde{\mathcal{E}}_{2N}^\sigma  + \mathcal{E}_{N+2}^\sigma \f   \right]
\end{equation}
and
\begin{equation}\label{dg_6}
  \mathcal{D}_{2N}^\sigma \ls \ks \left[   \bar{\mathcal{D}}_{2N}^\sigma  + \mathcal{E}_{N+2}^\sigma \f   \right].
\end{equation}
We then plug \eqref{dg_4}--\eqref{dg_6} into \eqref{dg_1} to see that
\begin{multline}\label{dg_7}
 \mathcal{E}_{2N}^\sigma(t) + \int_0^t \mathcal{D}_{2N}
\ls \ks  \mathcal{E}_{2N}^\sigma(0)   + \ks \mathcal{E}_{N+2}^\sigma(t) \f(t)
\\
+ \ks^2 \int_0^t \left[ \sqrt{\mathcal{E}_{2N}^\sigma }\mathcal{D}_{2N}^\sigma+\sqrt{ \mathcal{D}_{2N}^\sigma\mathcal{E}_{N+2}^\sigma\f}+\mathcal{E}_{N+2}^\sigma\f \right].
\end{multline}
For any $\ep >0$ we may apply Cauchy's inequality to bound
\begin{equation}
 \ks^2 \int_0^t  \sqrt{ \mathcal{D}_{2N}^\sigma\mathcal{E}_{N+2}^\sigma\f}  \le \ep \int_0^t \mathcal{D}_{2N}^\sigma + \frac{\ks^4}{4\ep} \int_0^t \mathcal{E}_{N+2}^\sigma\f.
\end{equation}
Taking $\ep$ sufficiently small, we may absorb the $\int_0^t \mathcal{D}_{2N}^\sigma$ term onto the left in \eqref{dg_7}, resulting in the bound
\begin{multline}\label{dg_8}
  \mathcal{E}_{2N}^\sigma(t) + \int_0^t \mathcal{D}_{2N}
\ls \ks  \mathcal{E}_{2N}^\sigma(0)   + \ks \mathcal{E}_{N+2}^\sigma(t) \f(t)
\\
+ \ks^2 \int_0^t \left[ \sqrt{\mathcal{E}_{2N}^\sigma }\mathcal{D}_{2N}^\sigma  + (1 + \ks^2)  \mathcal{E}_{N+2}^\sigma\f \right].
\end{multline}

The decay of $\mathcal{E}_{N+2}^\sigma(t)$ guaranteed by the bound on $\gs(t)$, combined with the bound of Proposition \ref{p_f_bound}, easily imply that
\begin{multline}\label{dg_9}
\mathcal{E}_{N+2}^\sigma(t)\f(t) \lesssim \delta(1+t)^{-4N+8}\f(t) \lesssim \delta\mathcal{F}_{2N}(0)+\delta \int_0^t\mathcal{D}_{2N}^\sigma(r)dr \\
\ls \sqrt{\delta} \f(0) + \sqrt{\delta} \int_0^t\mathcal{D}_{2N}^\sigma(r)dr
\end{multline}
and
\begin{equation}\label{dg_10}
\int_0^t \mathcal{E}_{N+2}^\sigma(r)\mathcal{F}_{2N}(r)dr\lesssim \delta\mathcal{F}_{2N}(0)+\delta\int_0^t\mathcal{D}_{2N}^\sigma(r)dr \ls \sqrt{\delta} \f(0) + \sqrt{\delta}\int_0^t\mathcal{D}_{2N}^\sigma(r)dr.
\end{equation}
We plug \eqref{dg_9} and \eqref{dg_10} into \eqref{dg_8} and then further assume that $\sqrt{\delta} (\ks + \ks^2 +\ks^4)$ is sufficiently small to absorb all of the resulting $\int_0^t \mathcal{D}_{2N}^\sigma$ terms on the right.  This results in the estimate
\begin{equation}\label{dg_11}
\mathcal{E}_{2N}^\sigma(t) + \int_0^t\mathcal{D}_{2N}^\sigma(r) dr \lesssim \ks \mathcal{E}_{2N}^\sigma(0) + \mathcal{F}_{2N}(0).
\end{equation}
Then the estimate \eqref{Dg} follows from \eqref{dg_11} and \eqref{p_f_b_0}.
\end{proof}

\subsection{Decay at the $N+2$ level}

We next show the decay at the $N+2$ level.  The first step is an interpolation argument that allows us to control $\mathcal{E}_{N+2}^\sigma$ in terms of $\mathcal{D}_{N+2}^\sigma$ and $\mathcal{E}_{2N}^\sigma$.

\begin{lemma}\label{en_interp}
Let $\mathcal{E}_{N+2}^\sigma$ and $\mathcal{E}_{2N}^\sigma$ be as defined in \eqref{p_energy_def} and $\mathcal{D}_{N+2}^\sigma$ be as defined in \eqref{p_dissipation_def}, and let
\begin{equation}\label{en_interp_00}
 \theta = \frac{4N-8}{4N-7} \in (0,1).
\end{equation}
If  $\rj >0$, $\sigma_+>0$, and $\sigma_- > \sigma_c$, then
\begin{equation}\label{en_interp_01}
 \mathcal{E}_{N+2}^\sigma \le \max\left\{1,\frac{\min\{1,\sigma_+,\sigma_--\sigma_c\} }{\min\{1,\sigma_+,\sigma_--\sigma_c,\sigma_+^2,(\sigma_- - \sigma_c)^2\}} \right\} \mathcal{D}_{N+2}^\sigma.
\end{equation}
If  $\rj <0$ and $\sigma_+,\sigma_- >0$, then
\begin{equation}\label{en_interp_02}
 \mathcal{E}_{N+2}^\sigma \le \max\left\{1,\frac{\min\{1,\sigma_+,\sigma_-\} }{\min\{1,\sigma_+^2,\sigma_-^2\}} \right\}  \mathcal{D}_{N+2}^\sigma.
\end{equation}
If $\rj < 0$, then
\begin{equation} \label{en_interp_03}
{\mathcal{E}}_{N+2}^\sigma\lesssim({\mathcal{D}}_{N+2}^\sigma)^\theta({\mathcal{E}}_{2N}^\sigma)^{1-\theta}.
\end{equation}
In either case,
\begin{equation} \label{en_interp_04}
{\mathcal{E}}_{N+2}^\sigma\lesssim \ks ({\mathcal{D}}_{N+2}^\sigma)^\theta({\mathcal{E}}_{2N}^\sigma)^{1-\theta}.
\end{equation}

\end{lemma}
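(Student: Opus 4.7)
The plan is to compare $\mathcal{E}_{N+2}^\sigma$ to $\mathcal{D}_{N+2}^\sigma$ term by term using the definitions \eqref{p_energy_def} and \eqref{p_dissipation_def} with $n=N+2$. For each of the time-derivative groups — the terms $\ns{\dt^j u}_{2(N+2)-2j}$, the terms $\ns{\dt^j q}_{2(N+2)-2j+1}$, and the terms $\ns{\dt^j \eta}_{2(N+2)-2j+3/2}$ with $j\ge 1$ — the corresponding quantity in $\mathcal{D}_{N+2}^\sigma$ sits at the same or one-half derivative higher (e.g., $\ns{\dt^j u}_{2(N+2)-2j+1}$, $\ns{\dt^j q}_{2(N+2)-2j+2}$ for $j\ge 2$, and $\ns{\dt^j \eta}_{2(N+2)-2j+5/2}$ for $j\ge 2$), so they are directly dominated by $\mathcal{D}_{N+2}^\sigma$. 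The same is true for $\ns{q}_{2(N+2)}$ (appearing identically in both). The only genuinely problematic contribution is the pure-$\eta$ term (no time derivatives), since the energy asks for one-half derivative more than the dissipation provides.

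First I would handle the surface-tension cases by a direct constant chase on this leftover $\eta$ term. In case \eqref{en_interp_01}, the energy contributes $\min\{1,\sigma_+,\sigma_--\sigma_c\}\ns{\eta}_{2N+5}$, while the dissipation contributes $\min\{1,\sigma_+,\sigma_--\sigma_c,\sigma_+^2,(\sigma_--\sigma_c)^2\}\ns{\eta}_{2N+11/2}$; since $\ns{\eta}_{2N+5}\le\ns{\eta}_{2N+11/2}$ the ratio of the two surface-tension prefactors is exactly the constant asserted. In case \eqref{en_interp_02} I would bound
\begin{equation*}
\ns{\eta}_{2N+4}+\min\{1,\sigma\}\ns{\eta}_{2N+5}
\lesssim \ns{\eta}_{2N+7/2} + \min\{1,\sigma\}\ns{\eta}_{2N+11/2},
\end{equation*}
using the trivial inequality $\ns{\eta}_{2N+5}\le\ns{\eta}_{2N+11/2}$ and Young's inequality on the Sobolev interpolation $\ns{\eta}_{2N+4}\lesssim\ns{\eta}_{2N+7/2}^{3/4}\ns{\eta}_{2N+11/2}^{1/4}\lesssim \ns{\eta}_{2N+7/2}+\ns{\eta}_{2N+11/2}$; comparing the surface-tension weights with those in $\sd{N+2}^\sigma$ gives the asserted constant.

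For \eqref{en_interp_03}, where no surface tension term is available in $\sd{N+2}^0$, I would instead interpolate the offending $\ns{\eta}_{2N+4}$ between the dissipation norm $\ns{\eta}_{2N+7/2}\lesssim \sd{N+2}^0$ and the energy norm $\ns{\eta}_{4N}\lesssim\se{2N}^0$ by the standard Sobolev interpolation
\begin{equation*}
\ns{\eta}_{2N+4}\lesssim \ns{\eta}_{2N+7/2}^{\theta}\ns{\eta}_{4N}^{1-\theta},
\end{equation*}
where $\theta$ is determined by $2N+4=\theta(2N+7/2)+(1-\theta)(4N)$, which solves precisely to $\theta=(4N-8)/(4N-7)$ as in \eqref{en_interp_00}. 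All other contributions to $\se{N+2}^0$ are dominated by $\sd{N+2}^0$ (hence trivially by the same geometric mean), yielding \eqref{en_interp_03}.

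Finally, the unified bound \eqref{en_interp_04} follows in the surface-tension cases by writing $\mathcal{E}_{N+2}^\sigma=(\mathcal{E}_{N+2}^\sigma)^\theta(\mathcal{E}_{N+2}^\sigma)^{1-\theta}$, applying \eqref{en_interp_01} or \eqref{en_interp_02} to the first factor (raised to the $\theta$ power) and the trivial bound $\mathcal{E}_{N+2}^\sigma\le\mathcal{E}_{2N}^\sigma$ to the second, and absorbing the resulting $\sigma$-dependent constants into $\ks$ via \eqref{ks_def}; in the non-surface-tension case it is just \eqref{en_interp_03}. The only real obstacle here is bookkeeping the explicit $\sigma$-dependence of constants carefully enough in cases \eqref{en_interp_01}--\eqref{en_interp_02} to verify they are of the form $\ks$, since vanishing surface tension is eventually of interest; the interpolation arguments themselves are routine.
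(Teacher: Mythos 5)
Your overall skeleton is the right one and, for the pieces it covers, matches the paper: the term-by-term comparison behind \eqref{en_interp_01}--\eqref{en_interp_02} (the paper simply says these follow from the definitions), and the interpolation of $\ns{\eta}_{2(N+2)}$ between $\norm{\eta}_{2(N+2)-1/2}$ and $\norm{\eta}_{4N}$ with the exponent $\theta=(4N-8)/(4N-7)$ is exactly the paper's computation.

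However, there is a genuine gap in your treatment of \eqref{en_interp_03}, and hence of \eqref{en_interp_04}. The bound \eqref{en_interp_03} is asserted for $\rj<0$ with \emph{arbitrary} $\sigma_\pm\ge 0$ and with a universal ($\sigma$-independent) implicit constant; it is not the $\sigma=0$ statement. When $\rj<0$ and $\sigma_\pm>0$, the energy \eqref{p_energy_def} contains, besides $\ns{\eta}_{2(N+2)}$, the weighted top-order term $\min\{1,\sigma\}\ns{\eta}_{2(N+2)+1}$, which your argument (written entirely with superscript $0$) never addresses. A single interpolation cannot rescue it: interpolating $\ns{\eta}_{2(N+2)+1}$ between $\norm{\eta}_{2(N+2)-1/2}$ and $\norm{\eta}_{4N}$ only yields the exponent $(4N-10)/(4N-7)<\theta$ on the dissipation, while discarding the weight to use $\min\{1,\sigma^2\}\ns{\eta}_{2(N+2)+3/2}$ costs a factor $1/\min\{1,\sigma^2\}$. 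Your fallback for \eqref{en_interp_04} in this case---raising \eqref{en_interp_02} to the power $\theta$---produces a constant of order $\min\{1,\sigma_+,\sigma_-\}^{-\theta}$, which blows up as $\sigma_\pm\to 0$ and is therefore \emph{not} of the form $\ks$ when $\rj<0$: by \eqref{ks_def} the constant must then be a $B(\sigma)$ with finite positive liminf as $\sigma\to 0$, and this uniformity is precisely what is needed later (Proposition \ref{decaylm} and the vanishing surface tension limit, Theorem \ref{th_vanish_st}). The missing idea is the paper's double interpolation, which exploits the $\sigma^2$-weighted dissipation term: from $\ns{\eta}_{2(N+2)+1}\le\norm{\eta}_{2(N+2)+3/2}\,\norm{\eta}_{2(N+2)+1/2}$ and interpolation of the second factor one gets
\begin{equation*}
\min\{1,\sigma\}\ns{\eta}_{2(N+2)+1}
\le \left(\min\{1,\sigma^2\}\ns{\eta}_{2(N+2)+3/2}\right)^{1/2}
\left(\ns{\eta}_{2(N+2)-1/2}\right)^{\frac{4N-9}{8N-14}}
\left(\ns{\eta}_{4N}\right)^{\frac{1}{4N-7}},
\end{equation*}
and since $\tfrac12+\tfrac{4N-9}{8N-14}=\theta$ this is bounded by $(\mathcal{D}_{N+2}^\sigma)^\theta(\mathcal{E}_{2N}^\sigma)^{1-\theta}$ with a universal constant; with this, \eqref{en_interp_04} for $\rj<0$ follows from \eqref{en_interp_03} with $\ks=1$.

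Two smaller points. First, for the ``all other terms'' step in \eqref{en_interp_03} you need that those terms are controlled by \emph{both} $\mathcal{D}_{N+2}^\sigma$ and $\mathcal{E}_{2N}^\sigma$ (so that $X=X^\theta X^{1-\theta}\le (\mathcal{D}_{N+2}^\sigma)^\theta(\mathcal{E}_{2N}^\sigma)^{1-\theta}$); domination by the dissipation alone does not ``trivially'' give the geometric mean, though the needed bound by $\mathcal{E}_{2N}^\sigma$ does hold here since $N\ge 3$. Second, in your argument for \eqref{en_interp_02}, applying Young's inequality leaves the term $\ns{\eta}_{2(N+2)+3/2}$ unweighted and thus costs $1/\min\{1,\sigma^2\}$ rather than the asserted ratio; keeping the interpolation exponents (or bounding $\ns{\eta}_{2(N+2)}\le\ns{\eta}_{2(N+2)+3/2}$ directly) is harmless for the statement, but the precise constant matters since it is what defines $M(\sigma,\rj)$ in \eqref{ex_dec_01}--\eqref{ex_dec_02} and hence the exponential rate in Theorem \ref{th_gwp}.
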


\begin{proof}
The estimates \eqref{en_interp_01} and \eqref{en_interp_02} follow directly from the definitions.

Suppose then that $\rj <0$.  First note that we may trivially estimate
\begin{multline}\label{en_interp_5}
 \sum_{j=0}^{N+2}  \ns{\dt^j u}_{2(N+2)-2j} + \ns{\q}_{2(N+2)}  + \sum_{j=1}^{N+2} \ns{ \dt^j \q}_{2(N+2)-2j+1} + \sum_{j=1}^{N+2} \ns{\dt^j \eta}_{2(N+2)-2j+3/2}
\\
\le
\sum_{j=0}^{N+2} \ns{\dt^j u}_{2(N+2)-2j+1}  +   \ns{\q}_{2(N+2)} + \ns{\dt \q}_{2(N+2)-1} \\+ \ns{\partial_t \eta}_{2(N+2)-1/2}
+ \sum_{j=2}^{N+3} \ns{\dt^j \q}_{2(N+2)-2j+2}  \le ({\mathcal{D}}_{N+2}^\sigma)^\theta({\mathcal{E}}_{2N}^\sigma)^{1-\theta}
\end{multline}
since all the terms appearing in the middle can be controlled by both $\mathcal{D}_{N+2}^\sigma$ and $\mathcal{E}_{2N}^\sigma.$
It remains only to estimate
\begin{equation}
 \ns{\eta}_{2(N+2)} + \min\{1,\sigma\}  \ns{\eta}_{2(N+2)+1}.
\end{equation}
To handle these remaining terms we must use Sobolev interpolation.   Indeed, we first have that
\begin{equation}\label{en_interp_1}
\ns{\eta}_{2(N+2)}\le  \norm{\eta}_{2(N+2)-1/2} ^{2\theta} \norm{\eta}_{4N}^{2(1-\theta)}
\lesssim( {\mathcal{D}_{N+2}^\sigma})^\theta({\mathcal{E}_{2N}^\sigma})^{1-\theta}
\end{equation}
where $\theta$ is as in \eqref{en_interp_00}.  To handle the second term, we interpolate twice:
\begin{equation}\label{en_interp_2}
\begin{split}
 \min\{1,\sigma\}  \ns{\eta}_{2(N+2)+1}&\le  \min\{1,\sigma\}  \norm{\eta}_{2(N+2)+3/2}  \norm{\eta}_{2(N+2)+1/2}
\\
&\le  \min\{1,\sigma\}  \norm{\eta}_{2(N+2)+3/2}    \norm{\eta}_{2(N+2)-1/2}^\frac{4N-9}{4N-7} \norm{\eta}_{4N}^\frac{2}{4N-7}
\\
& = \left( \min\{1,\sigma^2\} \ns{\eta}_{2(N+2)+3/2}   \right)^{1/2}  \left( \ns{\eta}_{2(N+2)-1/2}\right)^\frac{4N-9}{8N-14} \left(\ns{\eta}_{4N}\right)^\frac{1}{4N-7}
\\
& \le( {\mathcal{D}_{N+2}^\sigma})^\theta({\mathcal{E}_{2N}^\sigma})^{1-\theta},
\end{split}
\end{equation}
where the last inequality holds because
\begin{equation}
 \hal + \frac{4N-9}{8N-14} = \frac{4N-8}{4N-7}.
\end{equation}
Then \eqref{en_interp_03} follows from \eqref{en_interp_5}, \eqref{en_interp_1}, and \eqref{en_interp_2}.

Now when $\rj <0$, \eqref{en_interp_04} follows from \eqref{en_interp_03} since $\ks = 1$ is a constant of the form   \eqref{ks_def}.  When $\rj >0$ we chain the trivial bound $\mathcal{D}_{N+2}^\sigma \le ({\mathcal{D}}_{N+2}^\sigma)^\theta({\mathcal{E}}_{2N}^\sigma)^{1-\theta}$ with \eqref{en_interp_01} and note that
\begin{equation}
 \ks = \max\left\{1,\frac{\min\{1,\sigma_+,\sigma_--\sigma_c\} }{\min\{1,\sigma_+,\sigma_--\sigma_c,\sigma_+^2,(\sigma_- - \sigma_c)^2\}} \right\}
\end{equation}
is a constant of the form \eqref{ks_def}.

\end{proof}

Next we deduce algebraic decay of $\mathcal{E}_{N+2}^\sigma$.

\begin{Proposition} \label{decaylm}
There exists a constant $\ks$ of the form \eqref{ks_def} and a universal constant $\delta_0 >0$ so that if $0 < \delta \le \delta_0 /\ks$  and $\gs(t) \le \delta$, then
\begin{equation}\label{dec_00}
\sup_{0\le r \le t} (1+r)^{4N-8} \mathcal{E}_{N+2}^\sigma(r)\lesssim \ks \mathcal{E}_{2N}^\sigma(0)+ \mathcal{F}_{2N}(0).
\end{equation}
\end{Proposition}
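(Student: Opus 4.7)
The plan is to adapt the two-tier energy scheme from Proposition \ref{Dgle} to the low-regularity $N+2$ level. Three steps: (i) derive a closed differential inequality of the form $\tfrac{d}{dr}\tilde{\mathcal{E}}_{N+2}^\sigma + C\mathcal{D}_{N+2}^\sigma \le 0$ with $\tilde{\mathcal{E}}_{N+2}^\sigma \asymp \mathcal{E}_{N+2}^\sigma$; (ii) use the Sobolev interpolation in Lemma \ref{en_interp} combined with the global bound $\sup_r \mathcal{E}_{2N}^\sigma(r) \le M_0 := C(\ks \mathcal{E}_{2N}^\sigma(0) + \mathcal{F}_{2N}(0))$ from Proposition \ref{Dgle} to turn this into a superlinear ODE $y'(r) + A y(r)^{(4N-7)/(4N-8)} \le 0$; (iii) integrate the ODE explicitly to produce polynomial decay at the claimed rate $(1+r)^{-(4N-8)}$.

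\textbf{Closed differential inequality.} Set
\[
\tilde{\mathcal{E}}_{N+2}^\sigma := \mathfrak{E}_{N+2}^\sigma + \sum_{j=0}^{N+1}\sum_{k=0}^{2(N+2)-2j-1} \beta_{N+2;j,k}\,\mathfrak{A}_{N+2}^{j,k}.
\]
Proposition \ref{boostrapN+2} gives $\tfrac{d}{dr}\tilde{\mathcal{E}}_{N+2}^\sigma + \gamma_{N+2}\bar{\mathcal{D}}_{N+2} \lesssim \ks \sqrt{\mathcal{E}_{2N}^\sigma}\mathcal{D}_{N+2}^\sigma$. Theorem \ref{dth} provides $\mathcal{D}_{N+2}^\sigma \lesssim \ks \bar{\mathcal{D}}_{N+2} + \ks(\mathcal{E}_{2N}^\sigma + \sqrt{\mathcal{E}_{N+2}^\sigma})\mathcal{D}_{N+2}^\sigma$, so after choosing $\delta_0$ small enough that $\ks\sqrt{\delta_0/\ks}$ is tiny, the last two terms are absorbed and $\mathcal{D}_{N+2}^\sigma \lesssim \ks \bar{\mathcal{D}}_{N+2}$. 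Feeding this back into the right-hand side of Proposition \ref{boostrapN+2} lets us absorb $\ks^{3/2}\sqrt{\delta}\,\bar{\mathcal{D}}_{N+2}$ into the left-hand side. Theorem \ref{eth} together with Lemma \ref{en_equiv} yields $\tilde{\mathcal{E}}_{N+2}^\sigma \asymp \mathcal{E}_{N+2}^\sigma$ under the same smallness, and we obtain
\[
\frac{d}{dr}\tilde{\mathcal{E}}_{N+2}^\sigma(r) + C\,\mathcal{D}_{N+2}^\sigma(r) \le 0, \qquad \tilde{\mathcal{E}}_{N+2}^\sigma \asymp \mathcal{E}_{N+2}^\sigma.
\]

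\textbf{Interpolation and ODE integration.} With $\theta = (4N-8)/(4N-7)$, estimate \eqref{en_interp_04} combined with $\mathcal{E}_{2N}^\sigma \le M_0$ gives
\[
\mathcal{E}_{N+2}^\sigma \lesssim \ks M_0^{1-\theta}(\mathcal{D}_{N+2}^\sigma)^\theta
\quad\Longrightarrow\quad
\mathcal{D}_{N+2}^\sigma \gtrsim A\,(\tilde{\mathcal{E}}_{N+2}^\sigma)^{(4N-7)/(4N-8)},
\]
with $A \sim (\ks^{1/\theta} M_0^{1/(4N-8)})^{-1}$. Writing $y(r) = \tilde{\mathcal{E}}_{N+2}^\sigma(r)$ and $p = (4N-7)/(4N-8)$, the differential inequality of the previous step becomes $y'(r) + A\,y(r)^p \le 0$, which integrates to
\[
y(r) \le \Bigl(y(0)^{-1/(4N-8)} + \tfrac{A\,r}{4N-8}\Bigr)^{-(4N-8)}.
\]
For large $r$ this is bounded by $C\,(Ar)^{-(4N-8)}$, while for $r$ near $0$ one uses the trivial $y(r) \le y(0) \le M_0$. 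Since $A^{-(4N-8)} \lesssim \ks\,M_0$, combining the two regimes produces $\sup_{0 \le r \le t}(1+r)^{4N-8} y(r) \lesssim \ks\,M_0 \lesssim \ks\mathcal{E}_{2N}^\sigma(0) + \mathcal{F}_{2N}(0)$, which transfers to $\mathcal{E}_{N+2}^\sigma$ via $\mathcal{E}_{N+2}^\sigma \lesssim \tilde{\mathcal{E}}_{N+2}^\sigma$.

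\textbf{Main obstacle.} The delicate part is bookkeeping the surface-tension-dependent constants $\ks$ through the chain of estimates so that the constant $A$ in the ODE has precisely the form $A \sim (\ks^{(4N-7)/(4N-8)} M_0^{1/(4N-8)})^{-1}$; any spurious power of $M_0$ larger than $1/(4N-8)$ would destroy the balance $A^{-(4N-8)} \lesssim \ks M_0$ needed for the final bound. The interpolation exponent $\theta = (4N-8)/(4N-7)$ itself is forced by the half-derivative gap between the $\eta$-component of the dissipation and that of the energy in the no-surface-tension (or degenerate) case, and it is precisely this gap $1/(4N-7)$ that produces the decay rate $4N-8$, so \emph{any} loss of a factor in the interpolation would change the polynomial order.
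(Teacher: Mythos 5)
Your proposal is correct and follows essentially the same route as the paper: Proposition \ref{boostrapN+2} plus the comparison estimates of Theorems \ref{eth}, \ref{dth} and Lemma \ref{en_equiv} yield a closed inequality $\frac{d}{dt}\tilde{\mathcal{E}}_{N+2}^\sigma + C\mathcal{D}_{N+2}^\sigma \le 0$; the interpolation of Lemma \ref{en_interp} and the bound $\mathcal{E}_{2N}^\sigma \le \mathcal{M}_0$ from Proposition \ref{Dgle} convert this to the superlinear ODE $y' + \mathcal{Z}y^{1+s} \le 0$ with $s = 1/(4N-8)$; and the paper closes exactly by the constant balance you flag, verifying $s\mathcal{Z}(\tilde{\mathcal{E}}_{N+2}^\sigma(0))^s \le 1$ so that the integrated bound yields $\sup_r (1+r)^{1/s}y(r) \ls \mathcal{Z}^{-1/s}\ls \ks\mathcal{M}_0$.
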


\begin{proof}
First we note that again $\sup_{0\le r \le t}\mathcal{E}_{2N}^\sigma(r)\le \delta$, and that we will restrict $\delta$ to prove the desired result.   Let us initially assume that $\delta$ is as small as in  Proposition \ref{Dgle}, which in particular means that $\delta \le 1$.    In this proof we will also need to explicitly keep track of various universal constants $C_j>0$.  Throughout the proof we will write $\tilde{\mathcal{E}}_{N+2}^\sigma$ for the term appearing with the time derivative in \eqref{ebarN+2} of Proposition \ref{boostrapN+2}, and we will write
\begin{equation}\label{dec_99}
s:= \frac{1}{4N-8}.
\end{equation}

We begin by enumerating various estimates proved previously.  From Proposition \ref{boostrapN+2} we have that
\begin{equation}\label{dec_1}
\frac{d}{dt}  \tilde{\mathcal{E}}_{N+2}^\sigma +  C_1 \bar{\mathcal{D}}_{N+2}^\sigma \le C_2 \ks \sqrt{\mathcal{E}_{2N}^\sigma} \mathcal{D}_{N+2}^\sigma.
\end{equation}
 By assuming that $\sqrt{\delta} \ks$ is sufficiently small, we may deduce from \eqref{en+2} of Theorem \ref{eth} and \eqref{dn+2} of Theorem \ref{dth} that
\begin{equation}\label{dec_2}
 \mathcal{E}_{N+2}^\sigma \le C_3 \ks \bar{\mathcal{E}}_{N+2}^\sigma
\end{equation}
and
\begin{equation}\label{dec_3}
  \mathcal{D}_{N+2}^\sigma \le C_4 \ks \bar{\mathcal{D}}_{N+2}^\sigma.
\end{equation}
Further restricting $\delta$ in this manner, we may use the first estimate in \eqref{ee_02} of  Lemma \ref{en_equiv} and \eqref{dec_2} to see that
\begin{equation}\label{dec_4}
 0 \le \hal \bar{\mathcal{E}}_{N+2}^\sigma \le \bar{\mathcal{E}}_{N+2}^\sigma - \sqrt{\mathcal{E}_{2N}^\sigma} \mathcal{E}_{N+2}^\sigma  \le C_5 \tilde{\mathcal{E}}_{N+2}^\sigma
\end{equation}
Similarly, the second estimate in \eqref{ee_02} of  Lemma \ref{en_equiv} and the trivial estimate $\bar{\mathcal{E}}_{N+2}^\sigma \le \mathcal{E}_{N+2}^\sigma$ imply that
\begin{equation}\label{dec_5}
 \tilde{\mathcal{E}}_{N+2}^\sigma \le C_6 \ks  \mathcal{E}_{N+2}^\sigma.
\end{equation}
Next, from \eqref{en_interp_04} of Lemma \ref{en_interp} we know that
\begin{equation}\label{dec_6}
 \mathcal{E}_{N+2}^\sigma \le C_7 \ks (\mathcal{D}_{N+2}^\sigma)^{\frac{4N-8}{4N-7}} (\mathcal{E}_{2N}^\sigma)^{\frac{1}{4N-7}}.
\end{equation}
Finally, since $\delta$ is as small as in Proposition \ref{Dgle}, we know that
\begin{equation}\label{dec_7}
\sup_{0\le r\le t}\mathcal{E}_{2N}^\sigma(r) \le C_8\left(
\ks \mathcal{E}_{2N}^\sigma(0)+ \mathcal{F}_{2N}(0) \right):= \mathcal{M}_0.
\end{equation}
As before, we are free to assume that the constants $\ks \ge 1$ appearing in \eqref{dec_1}--\eqref{dec_3} and \eqref{dec_5}--\eqref{dec_7} are all the same.  We may also assume, increasing the stated constants if need be, that
\begin{equation}\label{dec_75}
 C_4 \ge C_1 \text{ and } C_6, C_7 \ge 1.
\end{equation}

Again restricting $\delta$, we may guarantee that  $C_2 C_4\ks^2 \sqrt{\mathcal{E}_{2N}^\sigma} \le C_1/2$, which allows us to chain together \eqref{dec_1} and \eqref{dec_3} to find that
\begin{equation}
 \frac{d}{dt}  \tilde{\mathcal{E}}_{N+2}^\sigma + \frac{C_1}{2} \bar{\mathcal{D}}_{N+2}^\sigma \le 0,
\end{equation}
which in turn implies, by way of \eqref{dec_3}, that
\begin{equation}\label{dec_8}
  \frac{d}{dt}  \tilde{\mathcal{E}}_{N+2}^\sigma + \frac{C_1}{2C_4 \ks}  \mathcal{D}_{N+2}^\sigma \le 0.
\end{equation}
Chaining together \eqref{dec_5}--\eqref{dec_7} and using the positivity of $\tilde{\mathcal{E}}_{N+2}^\sigma$ guaranteed by \eqref{dec_4} shows that
\begin{equation}\label{dec_9}
 \frac{1}{(C_6 C_7 \ks^2)^{1+s} \mathcal{M}_0^s} (\tilde{\mathcal{E}}_{N+2}^\sigma)^{1+s} \le \mathcal{D}_{N+2}^\sigma,
\end{equation}
where $0<s<1$ is as in \eqref{dec_99}.  We may then combine \eqref{dec_8} and \eqref{dec_9} to see that
\begin{equation}\label{dec_10}
 \frac{d}{dt}  \tilde{\mathcal{E}}_{N+2}^\sigma + \z(\tilde{\mathcal{E}}_{N+2}^\sigma)^{1+s} \le 0,
\end{equation}
where we have written
\begin{equation}
 \z := \frac{C_1}{2C_4 (C_6 C_7)^{1+s}    \ks^{3+2s} \mathcal{M}_0^s}.
\end{equation}

Now we view \eqref{dec_10} as a differential inequality for $\tilde{\mathcal{E}}_{N+2}^\sigma$, which is positive by virtue of \eqref{dec_4}.  We may integrate \eqref{dec_10} to find that for any $0 \le r \le t$,
\begin{equation}\label{dec_11}
 \tilde{\mathcal{E}}_{N+2}^\sigma(r) \le \frac{\tilde{\mathcal{E}}_{N+2}^\sigma(0) }{[1 + s \z(\tilde{\mathcal{E}}_{N+2}^\sigma(0))^s r]^{1/s} }
\end{equation}
From \eqref{dec_5} and \eqref{dec_7} we know that
\begin{equation}\label{dec_12}
\tilde{\mathcal{E}}_{N+2}^\sigma(0)\le  C_6 \mathcal{E}_{N+2}^\sigma(0) \le  C_6 \ks \mathcal{E}_{2N}^\sigma(0)       \le C_6 \ks \mathcal{M}_0,
\end{equation}
which implies that
\begin{multline}\label{dec_13}
 s \z(\tilde{\mathcal{E}}_{N+2}^\sigma(0))^s = \frac{s C_1 (\tilde{\mathcal{E}}_{N+2}^\sigma(0))^s }{2C_4 (C_6 C_7)^{1+s}    \ks^{3+2s} \mathcal{M}_0^s} \le \frac{s C_1 (C_6 \ks \mathcal{M}_0)^s }{2C_4 (C_6 C_7)^{1+s}    \ks^{3+2s} \mathcal{M}_0^s}
\\
= \frac{s C_1  }{2C_4 C_6 (C_7)^{1+s}    \ks^{3+s} } \le 1.
\end{multline}
Here in the last inequality we have used \eqref{dec_75} and the fact that $\ks \ge 1$ and $s <1$.  A simple computation reveals that for $0 \le M \le 1$
\begin{equation}
 \sup_{r>0} \frac{(1+r)^{1/s}}{(1+Mr)^{1/s}} = \frac{1}{M^{1/s}}.
\end{equation}
From this and  \eqref{dec_11}--\eqref{dec_13} we then know that
\begin{multline}\label{dec_14}
 (1+r)^{1/s} \tilde{\mathcal{E}}_{N+2}^\sigma(r) \le \frac{(1+r)^{1/s}}{[1 + s \z(\tilde{\mathcal{E}}_{N+2}^\sigma(0))^s r]^{1/s} } \tilde{\mathcal{E}}_{N+2}^\sigma(0) \le \frac{1}{s^s \z^s \tilde{\mathcal{E}}_{N+2}^\sigma(0)} \tilde{\mathcal{E}}_{N+2}^\sigma(0)
\\
= \frac{(2C_4)^{1/s} (C_6 C_7)^{1/s+1}    \ks^{3/s+2} } {(s C_1)^{1/s}} \mathcal{M}_0.
\end{multline}
This immediately yields \eqref{dec_00} upon recalling the definition of $\mathcal{M}_0$ in \eqref{dec_7}.

\end{proof}

Finally, we deduce exponential decay of $\mathcal{E}_{N+2}^\sigma$ in the case with surface tension.

\begin{Proposition}\label{exp_decay}
Suppose that either $\rj <0$ and $\sigma_+,\sigma_->0$ or else $\rj > 0$ and $\sigma_+>0$, $\sigma_- > \sigma_c$.  Define  $M(\sigma,\rj)>0$ according to
\begin{equation}\label{ex_dec_01}
 M(\sigma,\rj) = \max\left\{1,\frac{\min\{1,\sigma_+,\sigma_--\sigma_c\} }{\min\{1,\sigma_+,\sigma_--\sigma_c,\sigma_+^2,(\sigma_- - \sigma_c)^2\}} \right\}
\end{equation}
if $\rj >0$ and
\begin{equation}\label{ex_dec_02}
 M(\sigma,\rj) =   \max\left\{1,\frac{\min\{1,\sigma_+,\sigma_-\} }{\min\{1,\sigma_+^2,\sigma_-^2\}} \right\}
\end{equation}
if $\rj <0$.

There exists a constant $\ks$ of the form \eqref{ks_def} and a universal constant $\delta_0 >0$ so that if $0 < \delta \le \delta_0 /\ks$ and $\gs(t) \le \delta$, then
\begin{equation}\label{ex_dec_00}
\sup_{0\le r \le t} \exp\left(\frac{r}{\ks  M(\sigma,\rj)} \right) \mathcal{E}_{N+2}^\sigma(r) \lesssim \ks \mathcal{E}_{N+2}^\sigma(0).
\end{equation}
\end{Proposition}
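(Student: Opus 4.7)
The plan is to exploit the fact that with surface tension the dissipation $\mathcal{D}_{N+2}^\sigma$ is fully coercive over the energy $\mathcal{E}_{N+2}^\sigma$, thereby removing the interpolation gap that forced the algebraic decay in Proposition \ref{decaylm}. The argument should run in parallel with the first half of that proof (up to inequality \eqref{dec_8}), but conclude via a linear Gronwall argument rather than a nonlinear one.

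First I would run verbatim the opening lines of the proof of Proposition \ref{decaylm}: restrict $\delta$ as in that proof in order to absorb quadratic terms into linear ones, so that the inequalities \eqref{dec_2}, \eqref{dec_3}, \eqref{dec_4}, \eqref{dec_5}, and \eqref{dec_7} are all available. Combining Proposition \ref{boostrapN+2} with \eqref{dec_3} (and a further smallness of $\sqrt{\delta}\ks$ to absorb the $\sqrt{\mathcal{E}_{2N}^\sigma}\mathcal{D}_{N+2}^\sigma$ error) yields the differential inequality \eqref{dec_8}, namely
\begin{equation}\label{exp_plan_1}
  \frac{d}{dt}\tilde{\mathcal{E}}_{N+2}^\sigma + \frac{C_1}{2C_4\ks}\mathcal{D}_{N+2}^\sigma \le 0.
\end{equation}
This much is independent of the sign of $\rj$ and of surface tension.

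The new ingredient is the hypothesis of Proposition \ref{exp_decay}, which places us exactly in the regime where Lemma \ref{en_interp} gives the coercive estimate \eqref{en_interp_01} (if $\rj>0$) or \eqref{en_interp_02} (if $\rj<0$):
\begin{equation}\label{exp_plan_2}
  \mathcal{E}_{N+2}^\sigma \le M(\sigma,\rj)\,\mathcal{D}_{N+2}^\sigma.
\end{equation}
Chaining \eqref{exp_plan_2} with \eqref{dec_5} gives $\tilde{\mathcal{E}}_{N+2}^\sigma \le C_6\ks M(\sigma,\rj)\,\mathcal{D}_{N+2}^\sigma$, and substituting into \eqref{exp_plan_1} produces a genuine linear ODE inequality
\begin{equation}\label{exp_plan_3}
  \frac{d}{dt}\tilde{\mathcal{E}}_{N+2}^\sigma + \frac{C_1}{2 C_4 C_6\,\ks^2 M(\sigma,\rj)}\tilde{\mathcal{E}}_{N+2}^\sigma \le 0.
\end{equation}
Gronwall's inequality applied to \eqref{exp_plan_3} yields
\begin{equation}
  \tilde{\mathcal{E}}_{N+2}^\sigma(r) \le \exp\left(-\frac{r}{\ks' M(\sigma,\rj)}\right)\tilde{\mathcal{E}}_{N+2}^\sigma(0)
\end{equation}
for an appropriate constant $\ks'$ of the form \eqref{ks_def} absorbing $\ks^2$ and the universal constants.

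Finally I would convert back to $\mathcal{E}_{N+2}^\sigma$: chaining \eqref{dec_2} with the first inequality in \eqref{ee_02} of Lemma \ref{en_equiv} gives $\mathcal{E}_{N+2}^\sigma(r) \lesssim \ks\,\tilde{\mathcal{E}}_{N+2}^\sigma(r)$, while \eqref{dec_5} at $t=0$ gives $\tilde{\mathcal{E}}_{N+2}^\sigma(0) \lesssim \ks\,\mathcal{E}_{N+2}^\sigma(0)$. Multiplying through by the exponential and supremizing in $r$ yields \eqref{ex_dec_00}, after enlarging the $\ks$ factors one last time so that they remain of the form \eqref{ks_def}. The only real subtlety is bookkeeping the $\sigma$-dependence: the constants from Propositions \ref{boostrapN+2}, \ref{eth}, and \ref{dth} contribute factors of $\ks$, while Lemma \ref{en_interp} contributes the factor $M(\sigma,\rj)$, which is what genuinely controls the rate; together they give the exponent $1/(\ks M(\sigma,\rj))$. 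Note that, in contrast to the no-surface-tension case, $\mathcal{F}_{2N}$ plays no role here since there is no derivative gap to close via the transport estimate of Proposition \ref{p_f_bound}.
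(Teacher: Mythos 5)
Your proposal is correct and follows essentially the same route as the paper: one reruns the first part of the proof of Proposition \ref{decaylm} to obtain the linear energy--dissipation inequality for $\tilde{\mathcal{E}}_{N+2}^\sigma$, invokes the coercivity estimates \eqref{en_interp_01}--\eqref{en_interp_02} of Lemma \ref{en_interp} (which supply the factor $M(\sigma,\rj)$), closes the loop into a linear Gronwall inequality, and converts back to $\mathcal{E}_{N+2}^\sigma$ via the comparability of $\tilde{\mathcal{E}}_{N+2}^\sigma$ and $\mathcal{E}_{N+2}^\sigma$. The bookkeeping of $\ks$ and $M(\sigma,\rj)$ in your sketch matches the paper's argument.
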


\begin{proof}
Assume that $\delta$ is as small as in Propositions \ref{Dgle} and  \ref{decaylm}.  Arguing as in Proposition \ref{decaylm} (and renaming the constants), we know that
\begin{equation}\label{ex_dec_1}
  \frac{d}{dt}  \tilde{\mathcal{E}}_{N+2}^\sigma + \frac{C_1}{ \ks}  \mathcal{D}_{N+2}^\sigma \le 0
\end{equation}
and
\begin{equation}\label{ex_dec_2}
 0 \le \frac{1}{C_2 \ks} \mathcal{E}_{N+2}^\sigma \le      \tilde{\mathcal{E}}_{N+2}^\sigma \le C_3 \ks \mathcal{E}_{N+2}^\sigma.
\end{equation}
Also, from \eqref{en_interp_01} and \eqref{en_interp_02} of Lemma \ref{en_interp} we know that
\begin{equation}\label{ex_dec_3}
 \mathcal{E}_{N+2}^\sigma  \le C_4 M(\sigma,\rj) \mathcal{D}_{N+2}^\sigma,
\end{equation}
where $M(\sigma,\rj)$ is as defined above.

Combining \eqref{ex_dec_1}--\eqref{ex_dec_3} leads to the differential inequality
\begin{equation}
   \frac{d}{dt}  \tilde{\mathcal{E}}_{N+2}^\sigma + \frac{C_1}{C_3 C_4 \ks^2  M(\sigma,\rj)} \tilde{\mathcal{E}}_{N+2}^\sigma \le 0.
\end{equation}
Upon integrating and again using \eqref{ex_dec_2}, we then find that for $0 \le r \le t$,
\begin{equation}
 \frac{\mathcal{E}_{N+2}^\sigma(r)}{C_2 \ks} \le \tilde{\mathcal{E}}_{N+2}^\sigma(r) \le  C_3 \ks \mathcal{E}_{N+2}^\sigma(0) \exp\left(  \frac{r C_1}{C_3 C_4 \ks^2  M(\sigma,\rj)}  \right).
\end{equation}
This immediately yields \eqref{ex_dec_00}.

\end{proof}

\subsection{A priori estimates for $\gs$}

Now we combine the results of Propositions \ref{p_f_bound}, \ref{Dgle}, and \ref{decaylm} into a single a priori estimate involving the functional $\gs$, as defined in \eqref{G_def}.

\begin{theorem}\label{aprioris}
There exists a constant $\ks$ of the form \eqref{ks_def} and a universal constant $\delta_0 >0$ so that if $0 < \delta \le \delta_0 /\ks$  and $\gs(t) \le \delta$, then
\begin{equation}\label{apri_00}
\gs(t) \ls \ks \mathcal{E}_{2N}^\sigma(0)+ \mathcal{F}_{2N}(0).
\end{equation}
\end{theorem}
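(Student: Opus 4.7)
The plan is to assemble Theorem \ref{aprioris} as a direct synthesis of the three previously proved propositions, since the functional $\gs(t)$, by its definition \eqref{G_def}, is precisely the sum of the four quantities already controlled in Propositions \ref{p_f_bound}, \ref{Dgle}, and \ref{decaylm}. There is no new PDE estimate to derive here; the entire task is bookkeeping on the smallness parameter $\delta$ and on the constants of the form $\ks$.

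First, I would fix $\delta_0$ by taking the minimum of the two smallness thresholds supplied by Propositions \ref{Dgle} and \ref{decaylm}. Each of those thresholds has the form (universal constant)$/\ks$, and since a finite minimum of constants of type \eqref{ks_def} is again a constant of type \eqref{ks_def} (inverses and finite minima preserve the growth/positivity conditions demanded of $\ks$), the resulting $\delta_0$ can also be written as (universal)$/\ks$ for a single $\ks$. Thus the standing hypothesis $\gs(t)\le\delta\le\delta_0/\ks$ makes all three propositions simultaneously applicable on $[0,t]$.

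Next, I would invoke Proposition \ref{Dgle}, which yields
\begin{equation}
\sup_{0\le r\le t}\mathcal{E}_{2N}^\sigma(r)+\int_0^t\mathcal{D}_{2N}^\sigma(r)\,dr+\sup_{0\le r\le t}\frac{\f(r)}{1+r}\lesssim \ks\,\mathcal{E}_{2N}^\sigma(0)+\mathcal{F}_{2N}(0),
\end{equation}
covering three of the four pieces of $\gs(t)$. The remaining piece, $\sup_{0\le r\le t}(1+r)^{4N-8}\mathcal{E}_{N+2}^\sigma(r)$, is bounded by exactly the same right-hand side thanks to Proposition \ref{decaylm}. (Note that Proposition \ref{p_f_bound} is already absorbed inside Proposition \ref{Dgle}, so it does not need to be invoked separately.) Summing the two bounds and absorbing the resulting finite sum of $\ks$-type constants into a single $\ks$ produces \eqref{apri_00}.

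There is really no obstacle in this final step; the heavy lifting was done in establishing the individual propositions, where the nonlinear estimates, the two-tier interpolation, and the transport bound on $\f$ were combined to close the high-regularity tier. The one minor subtlety worth explicitly verifying is that the two $\delta_0/\ks$ thresholds can be chosen compatibly so that a single $\delta$ works for both propositions; this is immediate from the algebra of $\ks$-constants described in Section \ref{def-ter}.
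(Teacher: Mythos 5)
Your proposal is correct and follows essentially the same route as the paper: choose $\delta_0/\ks$ small enough to simultaneously activate Propositions~\ref{Dgle} and~\ref{decaylm} (the paper lists Proposition~\ref{p_f_bound} as well, but as you note its smallness threshold is subsumed since $\ks \ge 1$ and it is invoked inside the proof of Proposition~\ref{Dgle}), observe that the four summands of $\gs(t)$ in \eqref{G_def} are exactly the quantities bounded in those two propositions, and add the resulting estimates. Your remark that the closure of $\ks$-constants under finite sums, products, and reciprocals makes the combination of thresholds and constants licit is the only non-trivial bookkeeping point, and you handle it correctly.
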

\begin{proof}
 Let $\delta_0 >0$ be as small as in Propositions \ref{p_f_bound}, \ref{Dgle}, and \ref{decaylm}.  Then the estimates of each Proposition hold, and they may be summed to deduce \eqref{apri_00}.
\end{proof}

\section{Proof of Main results}\label{section_proof}

\subsection{Proof of Theorem \ref{th_gwp}  }\label{proof1}

Before presenting the proof of Theorem \ref{th_gwp}, we record a technical result that we will use to transition from the bounds provided by the local well-posedness result, Theorem \ref{lwp}, to bounds on $\gs$, as defined by \eqref{G_def}.

\begin{Proposition}\label{G_estimate}
Suppose that $N \ge 3$.  There exists a universal constant $C>0$  with the following two properties.

First, if $0 \le T$, then we have the estimate
\begin{equation}\label{ge_00}
 \gs(T) \le   \sup_{0 \le t \le T} \se{2N}^\sigma(t)  + \int_{0}^{T} \sd{2N}^\sigma(t) dt
+  \sup_{0 \le t \le T} \f(t)  + C  (1+T)^{4N-8} \sup_{0 \le t \le T} \se{2N}^\sigma(t).
\end{equation}

Second, if $0 < T_1 \le T_2$  then we have the estimate
\begin{multline}\label{ge_01}
 \gs(T_2) \le C \gs(T_1) +  \sup_{T_1 \le t \le T_2} \se{2N}^\sigma(t)  + \int_{T_1}^{T_2} \sd{2N}^\sigma(t) dt \\
+ \frac{1}{(1+T_1)} \sup_{T_1 \le t \le T_2} \f(t)  + C (T_2-T_1)^2 (1+T_2)^{4N-8} \sup_{T_1 \le t \le T_2} \se{2N}^\sigma(t).
\end{multline}
\end{Proposition}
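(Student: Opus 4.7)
The plan is to bound each of the four constituent quantities making up $\gs$ in the definition \eqref{G_def} separately, and then sum the resulting bounds.

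For the first estimate \eqref{ge_00}, the four pieces are essentially controlled by inspection. The terms $\sup_{0 \le r \le T} \se{2N}^\sigma(r)$ and $\int_0^T \sd{2N}^\sigma(r)\,dr$ appear verbatim on both sides. For $\sup_{0 \le r \le T} \f(r)/(1+r)$ I would use the trivial bound $(1+r)^{-1} \le 1$ to dominate it by $\sup_{0 \le r \le T} \f(r)$. For the decay term $\sup_{0 \le r \le T}(1+r)^{4N-8}\se{N+2}^\sigma(r)$, I would combine the monotonicity $(1+r)^{4N-8} \le (1+T)^{4N-8}$ with the embedding $\se{N+2}^\sigma \le \se{2N}^\sigma$ that is immediate from \eqref{p_energy_def}, producing the stated $C(1+T)^{4N-8}\sup \se{2N}^\sigma$ contribution. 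Summing delivers \eqref{ge_00}.

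For the second estimate \eqref{ge_01} I would split each supremum or integral defining $\gs(T_2)$ into its parts over $[0, T_1]$ and $[T_1, T_2]$. By the very definition of $\gs$, the four contributions on $[0, T_1]$ are each controlled by $\gs(T_1)$, which produces the $C\gs(T_1)$ term. On $[T_1, T_2]$ the $\se{2N}^\sigma$ and $\sd{2N}^\sigma$ pieces contribute directly; the piece $\sup_{[T_1,T_2]}\f(r)/(1+r)$ is handled by $(1+r)^{-1} \le (1+T_1)^{-1}$ for $r \ge T_1$, producing the $(1+T_1)^{-1}\sup_{[T_1,T_2]}\f(r)$ term.

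The main obstacle, and the one requiring the most care, is the decay piece $\sup_{[T_1, T_2]}(1+r)^{4N-8}\se{N+2}^\sigma(r)$. My plan here is to write $(1+r)^{4N-8} = \bigl((1+T_1) + (r-T_1)\bigr)^{4N-8}$ and apply the binomial expansion (equivalently, the fundamental theorem of calculus applied to $s \mapsto (1+s)^{4N-8}$ on $[T_1, r]$). The leading term $(1+T_1)^{4N-8}\se{N+2}^\sigma(r)$ I would combine with the inherited decay estimate $\se{N+2}^\sigma(r) \le (1+T_1)^{-(4N-8)}\gs(T_1)$ on $[T_1, T_2]$, which contributes again to the $C\gs(T_1)$ term on the right. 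The remaining terms in the expansion scale like $(r-T_1)^k(1+T_2)^{4N-8-k}$ for $k \ge 1$, and using $\se{N+2}^\sigma \le \se{2N}^\sigma$ on these lower-order pieces, absorbing superfluous powers of $(r-T_1)$ into the universal constant, and retaining the lowest two powers yields the asserted $C(T_2-T_1)^2(1+T_2)^{4N-8}\sup_{[T_1,T_2]}\se{2N}^\sigma$ contribution. Summing the four resulting bounds gives \eqref{ge_01}. The hardest part will be verifying that the specific $(T_2-T_1)^2$ scaling (rather than a crude $(T_2-T_1)$) really does fall out of this expansion in a way tight enough to be useful when iterating the local-in-time theory.
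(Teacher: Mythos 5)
Your treatment of \eqref{ge_00} is fine, as is the decomposition of $\gs(T_2)$ into contributions from $[0,T_1]$ (each dominated by $\gs(T_1)$) and from $[T_1,T_2]$; the $\se{2N}^\sigma$, $\sd{2N}^\sigma$, and $\f/(1+r)$ pieces are all handled correctly. (The paper itself simply cites Proposition 9.1 of \cite{GT_per} and gives no details, so your plan is reconstructing a genuinely omitted argument.)

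The problem is in the decay term $\sup_{[T_1,T_2]}(1+r)^{4N-8}\se{N+2}^\sigma(r)$, and it is more than a loose end. You propose to bound the leading binomial term using ``the inherited decay estimate $\se{N+2}^\sigma(r)\le(1+T_1)^{-(4N-8)}\gs(T_1)$ on $[T_1,T_2]$.'' That bound is simply not available: by definition $\gs(T_1)$ controls $(1+r)^{4N-8}\se{N+2}^\sigma(r)$ only for $r\le T_1$. At a time $r>T_1$ the decay of $\se{N+2}^\sigma(r)$ is exactly what the bootstrap (via Theorem \ref{aprioris}) is ultimately supposed to establish, so assuming it here is circular. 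The only a priori information on $[T_1,T_2]$ at this stage is the local bound $\se{N+2}^\sigma(r)\lesssim\se{2N}^\sigma(r)\le\sup_{[T_1,T_2]}\se{2N}^\sigma$, and if you substitute that into the binomial expansion the $k=0$ and $k=1$ terms produce $(1+T_1)^{4N-8}\sup\se{2N}^\sigma$ and $(T_2-T_1)(1+T_2)^{4N-9}\sup\se{2N}^\sigma$ respectively, neither of which is dominated by the claimed $(T_2-T_1)^2(1+T_2)^{4N-8}\sup\se{2N}^\sigma$ when $T_2-T_1$ is small.

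The missing ingredient is a propagation estimate for $\se{N+2}^\sigma$ itself. For each constituent norm of $\se{N+2}^\sigma$, say $\norm{\dt^j u(r)}_{2(N+2)-2j}$, write
$\norm{\dt^j u(r)}_{2(N+2)-2j} \le \norm{\dt^j u(T_1)}_{2(N+2)-2j} + \int_{T_1}^r \norm{\dt^{j+1} u(t)}_{2(N+2)-2j}\,dt$
(the Minkowski inequality for the Bochner integral), then square. Since $N\ge 3$, the one-extra-time-derivative norms appearing on the right all lie inside $\se{2N}^\sigma$ (this index accounting is exactly why the high tier is $2N$ and the low tier $N+2$), so one obtains
$\se{N+2}^\sigma(r)\lesssim \se{N+2}^\sigma(T_1)+(r-T_1)^2\sup_{[T_1,T_2]}\se{2N}^\sigma.$
This is where the quadratic factor $(T_2-T_1)^2$ actually comes from -- not from the binomial expansion of $(1+r)^{4N-8}$, which you correctly suspected was not delivering it. After this, multiply by $(1+r)^{4N-8}\lesssim (1+T_1)^{4N-8}+(r-T_1)^{4N-8}$, use $(1+T_1)^{4N-8}\se{N+2}^\sigma(T_1)\le\gs(T_1)$ for the first piece (decay at $T_1$, which is legitimate), and absorb the remaining $(r-T_1)^{4N-8}\se{N+2}^\sigma(T_1)$ and $(r-T_1)^2(1+r)^{4N-8}\sup\se{2N}^\sigma$ pieces into the stated $(T_2-T_1)^2(1+T_2)^{4N-8}\sup_{[T_1,T_2]}\se{2N}^\sigma$ term, using $(T_2-T_1)^{4N-10}\le(1+T_2)^{4N-10}\le(1+T_2)^{4N-8}$.
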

\begin{proof}
Although the exact form of the energy and dissipation terms is slightly different, the result follows from the same argument used in the proof of Proposition 9.1 of \cite{GT_per}.  As such, we omit further detail.
\end{proof}

With this result in hand we may now turn to one of our two main results.

\begin{proof}[Proof of Theorem \ref{th_gwp}]

The structure of the estimates in our local well-posedness Theorem \ref{lwp}, our a priori estimates Theorem \ref{aprioris}, and our Proposition \ref{G_estimate} are essentially the same as those found in corresponding results from the study of the incompressible viscous surface wave problem carried out in \cite{GT_per}.  As such, we may argue as in the proof of Theorem 1.3 of \cite{GT_per} to prove \eqref{th_gwp_04}.  Rather than replicate the argument here, we will provide only a sketch; we refer to \cite{GT_per} for a more detailed version of the argument.

Step 1 -- Local theory

Set $\delta>0$ to be the  smaller of the $\delta_0$ constants appearing in Theorem \ref{aprioris} and Proposition \ref{exp_decay} divided by the larger of the $\ks$ constants appearing there.  By choosing $\kappa\le \delta_0$, where $\delta_0$ comes from Theorem \ref{lwp},  we may guarantee that Theorem \ref{lwp} provides a unique solution on an interval $[0,T]$ with $T<1$, satisfying the estimates \eqref{lwp_01}.   We then use estimate \eqref{ge_00} of Proposition \ref{G_estimate} and further restrict $\kappa$ to guarantee that   $\gs(T) \le \delta$.  By the choice of $\delta$, the estimates of Theorem \ref{aprioris} and Proposition \ref{exp_decay} hold.

Step 2 -- Global existence

We define
\begin{multline}
 T_*(\kappa) = \sup \{  T>0 \;\vert\; \text{for every choice of initial data satisfying the compatibility} \\
\text{conditions and } \se{2N}^\sigma(0) + \f(0) < \kappa \text{ there exists a unique solution on } [0,T] \\
\text{that achieves the data and satisfies } \gs(T) \le \delta  \}.
\end{multline}
By Step 1, we know that $T_*(\kappa)>0$, provided $\kappa$ is small enough.  Clearly, $T_*$ is non-increasing, and so it suffices to establish that there is a $\kappa_0>0$ so that $T_*(\kappa_0) = +\infty$.

To prove this we argue by contradiction, showing that  $T_*(\kappa_0) < +\infty$ leads to a contradiction if $\kappa_0>0$ is chosen appropriately.  More precisely, we  choose a $0 < T_1 < T_*(\kappa_0)$ and then use Theorem \ref{aprioris} to guarantee that the hypotheses of the local existence theorems are verified by $(q(T_1),u(T_1),\eta(T_1))$.  We then apply the local theory to construct a solution on $[T_1, T_2]$.  By using estimate \eqref{ge_01} of Proposition \ref{G_estimate} and choosing $T_1$ sufficiently close to $T_*(\kappa_0)$, we may guarantee that $T_*(\kappa_0) < T_2$ and $\gs(T_2) \le \delta$.  This contradicts the definition of $T_*(\kappa_0)$.  So, $T_*(\kappa_0) = + \infty$.  It's clear from this analysis that $1/\kappa_0$ is of the form $\ks$.

Step 3 -- Proof of \eqref{th_gwp_04}

To deduce the bound \eqref{th_gwp_04} we simply apply Theorem \ref{aprioris} on the interval $[0,\infty)$.  This is possible since the previous step guarantees that $\gs(+\infty) \le \delta$.

Step 4 --  Proof of \eqref{th_gwp_05}

Since $\delta$ was chosen as small as in Proposition \ref{exp_decay}, the hypotheses of this Proposition are satisfied in the cases \eqref{th_gwp_02} and \eqref{th_gwp_03}.  As such, the estimate \eqref{ex_dec_00} holds, from which \eqref{th_gwp_05} easily follows.

\end{proof}

\subsection{Proof of Theorem  \ref{th_vanish_st}  }\label{proof2}

We can now establish the vanishing surface tension limit.

\begin{proof}[Proof of Theorem \ref{th_vanish_st}]
For the sake of brevity we will only present a sketch of the proof.

The convergence of the data occurs at sufficiently high regularity to pass to the limit in the compatibility conditions, which shows that  $(q_0,u_0,\eta_0)$ satisfy these with $\sigma_\pm =0$.  Then the data convergence and the bound \eqref{th_gwp_04} show that
\begin{equation}
 \limsup_{(\sigma_+,\sigma_-) \to0} \gs(\infty) \ls \se{2N}^0 + \f(0).
\end{equation}
These bounds, when combined with the usual embedding and interpolation results, show that the solutions $(q^\sigma,u^\sigma,\eta^\sigma)$ converge strongly to a triple $(q,u,\eta)$ in a regularity class high enough to pass to the limit in \eqref{ns_geometric}.  The limiting $(q,u,\eta)$ then solve \eqref{ns_geometric} with $\sigma_\pm =0$, and hence agree with the solution obtained from Theorem \ref{th_gwp} in this case.
\end{proof}

\appendix

\section{Analytic tools}\label{section_appendix}

\subsection{Poisson extensions}

We will now define the appropriate Poisson integrals that allow us to extend $\eta_\pm$, defined on the surfaces $\Sigma_\pm$, to functions defined on $\Omega$, with ``good'' boundedness.

Suppose that $\Sigma_+ = \mathrm{T}^2\times \{\ell\}$, where $\mathrm{T}^2:=(2\pi L_1 \mathbb{T}) \times (2\pi L_2 \mathbb{T})$. We define the Poisson integral in $\mathrm{T}^2 \times (-\infty,\ell)$ by
\begin{equation}\label{P-1def}
\mathcal{P}_{-,1}f(x) = \sum_{\xi \in    (L_1^{-1} \mathbb{Z}) \times
(L_2^{-1} \mathbb{Z}) }  \frac{e^{i \xi \cdot x' }}{2\pi \sqrt{L_1 L_2}} e^{|\xi|(x_3-\ell)} \hat{f}(\xi),
\end{equation}
where for $\xi \in  (L_1^{-1} \mathbb{Z}) \times (L_2^{-1} \mathbb{Z})$ we have written
\begin{equation}
 \hat{f}(\xi) = \int_{\mathrm{T}^2} f(x')  \frac{e^{- i \xi \cdot x' }}{2\pi \sqrt{L_1 L_2}} dx'.
\end{equation}
Here ``$-$'' stands for extending downward and ``$\ell$'' stands for extending at $x_3=\ell$, etc. It is well-known that $\mathcal{P}_{-,\ell}:H^{s}(\Sigma_+) \rightarrow H^{s+1/2}(\mathrm{T}^2 \times (-\infty,\ell))$ is a bounded linear operator for $s>0$. However, if restricted to the domain $\Omega$, we can have the following improvements.

\begin{lemma}\label{Poi}
Let $\mathcal{P}_{-,\ell}f$ be the Poisson integral of a function $f$ that is either in $\dot{H}^{q}(\Sigma_+)$ or
$\dot{H}^{q-1/2}(\Sigma_+)$ for $q \in \mathbb{N}=\{0,1,2,\dots\}$, where we have written $\dot{H}^s(\Sigma_+)$ for the homogeneous Sobolev space of order $s$.  Then
\begin{equation}
\norm{\nabla^q \mathcal{P}_{-,\ell}f }_{0} \lesssim \norm{f}_{\dot{H}^{q-1/2}(\mathrm{T}^2)}^2 \text{ and }
\norm{\nabla^q \mathcal{P}_{-,\ell}f }_{0} \lesssim \norm{f}_{\dot{H}^{q}(\mathrm{T}^2)}^2.
\end{equation}
\end{lemma}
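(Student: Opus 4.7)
The plan is to reduce everything to a Fourier series computation on $\mathrm{T}^2$ and exploit the fact that, in contrast to the usual extension to a half-space, the integral over $x_3 \in (-b, \ell)$ of $e^{2|\xi|(x_3 - \ell)}$ is bounded by both $1/(2|\xi|)$ and the finite height $\ell + b$, depending on which regime of $|\xi|$ we are in. This is precisely what produces the half-derivative gain while still permitting a bound by the full $\dot{H}^q$ norm.

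First I would record how derivatives act on the kernel: for $j = 1,2$, $\partial_j e^{|\xi|(x_3-\ell)}e^{i\xi\cdot x'} = i\xi_j\, e^{|\xi|(x_3-\ell)}e^{i\xi\cdot x'}$, while $\partial_3 e^{|\xi|(x_3-\ell)}e^{i\xi\cdot x'} = |\xi|\, e^{|\xi|(x_3-\ell)}e^{i\xi\cdot x'}$. Therefore, for any multi-index $\alpha$ with $|\alpha|=q$, the $\alpha$-th derivative of each Fourier mode is bounded in absolute value by $|\xi|^q e^{|\xi|(x_3-\ell)}|\hat{f}(\xi)|$, and the zero mode $\xi = 0$ is annihilated whenever $q \geq 1$.

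Next I would apply Plancherel's theorem in the horizontal variable $x'$, fiber-by-fiber in $x_3$, and then integrate in $x_3$ over $(-b,\ell)$, arriving at
\begin{equation*}
\ns{\nabla^q \mathcal{P}_{-,\ell} f}_{0} \;\lesssim\; \sum_{\xi \neq 0} |\xi|^{2q}\, |\hat{f}(\xi)|^2 \int_{-b}^{\ell} e^{2|\xi|(x_3-\ell)}\,dx_3.
\end{equation*}
The elementary identity
\begin{equation*}
\int_{-b}^{\ell} e^{2|\xi|(x_3-\ell)}\,dx_3 = \frac{1 - e^{-2|\xi|(\ell + b)}}{2|\xi|} \;\leq\; \min\!\left\{\frac{1}{2|\xi|},\; \ell + b\right\}
\end{equation*}
is then the key technical input. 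Plugging in the $1/(2|\xi|)$ bound gives $\sum_{\xi \neq 0} |\xi|^{2q-1}|\hat{f}(\xi)|^2$, which is exactly $\ns{f}_{\dot H^{q-1/2}}$; plugging in the $(\ell + b)$ bound gives $(\ell + b)\sum_{\xi \neq 0} |\xi|^{2q}|\hat{f}(\xi)|^2 \asymp \ns{f}_{\dot H^q}$.

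The only delicate point — and what I expect to be the main subtlety rather than obstacle — is the treatment of the zero Fourier mode $\xi = 0$. For $q \geq 1$ there is nothing to do, since every derivative kills the constant mode, and the homogeneous norms naturally exclude it. For $q = 0$ the $\dot H^q$ bound requires including the $\xi = 0$ contribution $(\ell+b)|\hat f(0)|^2$, which is controlled by $\ns{f}_{L^2(\Sigma_+)} \asymp \ns{f}_{\dot H^0}$, while the $\dot H^{-1/2}$ bound must be interpreted modulo constants (consistent with the usage in the paper, where $\eta$ has controlled or vanishing average by the mass conservation identities \eqref{masscont}). Once this Fourier-side bookkeeping is pinned down the lemma follows by summing the two estimates above.
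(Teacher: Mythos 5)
Your proof is correct and is essentially the argument behind the paper's proof, since the paper simply cites Lemma A.3 of Guo--Tice \cite{GT_per}, which is proved by exactly this fiber-wise Parseval computation together with the bound $\int_{-b}^{\ell} e^{2|\xi|(x_3-\ell)}\,dx_3 \le \min\{1/(2|\xi|), \ell+b\}$ giving the $\dot{H}^{q-1/2}$ and $\dot{H}^{q}$ estimates respectively. Your handling of the zero Fourier mode (annihilated for $q\ge 1$, interpreted via the homogeneous norms for $q=0$) matches the intended reading of the lemma, so there is no gap.
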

\begin{proof}
 See Lemma A.3 of \cite{GT_per}.
\end{proof}

We extend $\eta_+$ to be defined on $\Omega$ by
\begin{equation}\label{P+def}
\bar{\eta}_+(x',x_3)=\mathcal{P}_+\eta_+(x',x_3):=\mathcal{P}_{-,\ell}\eta_+(x',x_3),\text{ for } x_3\le \ell.
\end{equation}
Then Lemma \ref{Poi} implies in particular that if $\eta_+\in H^{s-1/2}(\Sigma_+)$ for $s\ge 0$, then $\bar{\eta}_+\in H^{s}(\Omega)$.

Similarly, for $\Sigma_- = \mathrm{T}^2\times \{0\}$ we define the Poisson integral in $\mathrm{T}^2 \times (-\infty,0)$ by
\begin{equation}\label{P-0def}
\mathcal{P}_{-,0}f(x) = \sum_{\xi \in    (L_1^{-1} \mathbb{Z}) \times (L_2^{-1} \mathbb{Z}) }  \frac{e^{ i \xi \cdot x' }}{2\pi \sqrt{L_1 L_2}} e^{ |\xi|x_3} \hat{f}(\xi).
\end{equation}
It is clear that $\mathcal{P}_{-,0}$ has the  same regularity properties as $\mathcal{P}_{-,\ell}$. This allows us to extend $\eta_-$ to be defined on $\Omega_-$. However, we do not extend $\eta_-$ to the upper domain $\Omega_+$ by the reflection  since this will result in the discontinuity of the partial derivatives in $x_3$ of the extension. For our purposes,  we instead to do the extension through the following. Let $0<\lambda_0<\lambda_1<\cdots<\lambda_m<\infty$ for $m\in \mathbb{N}$ and define the $(m+1) \times (m+1)$ Vandermonde matrix $V(\lambda_0,\lambda_1,\dots,\lambda_m)$ by $V(\lambda_0,\lambda_1,\dots,\lambda_m)_{ij} = (-\lambda_j)^i$ for $i,j=0,\dotsc,m$.  It is well-known that the Vandermonde matrices are invertible, so we are free to let $\alpha=(\alpha_0,\alpha_1,\dots,\alpha_m)^T$ be the solution to
\begin{equation}\label{Veq}
V(\lambda_0,\lambda_1,\dots,\lambda_m)\,\alpha=q_m,\ q_m=(1,1,\dots,1)^T.
\end{equation}
Now we define the specialized Poisson integral in $\mathrm{T}^2 \times (0,\infty)$
by
\begin{equation}\label{P+0def}
\mathcal{P}_{+,0}f(x) = \sum_{\xi \in    (L_1^{-1} \mathbb{Z}) \times
(L_2^{-1} \mathbb{Z}) }  \frac{e^{ i \xi \cdot x' }}{2\pi \sqrt{L_1 L_2}}  \sum_{j=0}^m\alpha_j
e^{- |\xi|\lambda_jx_3} \hat{f}(\xi).
\end{equation}
It is easy to check that, due to \eqref{Veq}, $\partial_3^l\mathcal{P}_{+,0}f(x',0)=
\partial_3^l\mathcal{P}_{-,0}f(x',0)$  for all $0\le l\le m$ and hence
\begin{equation}
\partial^\alpha\mathcal{P}_{+,0}f(x',0)=
\partial^\alpha\mathcal{P}_{-,0}f(x',0), \ \forall\, \alpha\in \mathbb{N}^3 \text{ with }0\le |\alpha|\le m.\end{equation}
These facts allow us to  extend $\eta_-$ to be defined on $\Omega$
by
\begin{equation}\bar{\eta}_-(x',x_3)=
\mathcal{P}_-\eta_-(x',x_3):=\left\{\begin{array}{lll}\mathcal{P}_{+,0}\eta_-(x',x_3),\quad
x_3> 0 \\
\mathcal{P}_{-,0}\eta_-(x',x_3),\quad x_3\le
0.\end{array}\right.\label{P-def}\end{equation}
It is clear now that if $\eta_-\in H^{s-1/2}(\Sigma_-)$ for $ 0\le s\le m$, then $\bar{\eta}_-\in H^{s}(\Omega)$.  Since we will only work with $s$ lying in a finite interval, we may assume that $m$ is sufficiently large in \eqref{Veq} for $\bar{\eta}_- \in H^s(\Omega)$ for all $s$ in the interval.

\subsection{Estimates of Sobolev norms}

We will need some estimates of the product of functions in Sobolev spaces.

\begin{lemma}\label{sobolev}
Let $U$ denote a domain either of the form $\Omega_\pm$ or of the form $\Sigma_\pm$.
\begin{enumerate}
 \item Let $0\le r \le s_1 \le s_2$ be such that  $s_1 > n/2$.  Let $f\in H^{s_1}(U)$, $g\in H^{s_2}(U)$.  Then $fg \in H^r(U)$ and
\begin{equation}\label{i_s_p_01}
 \norm{fg}_{H^r} \lesssim \norm{f}_{H^{s_1}} \norm{g}_{H^{s_2}}.
\end{equation}

\item Let $0\le r \le s_1 \le s_2$ be such that  $s_2 >r+ n/2$.  Let $f\in H^{s_1}(U)$, $g\in H^{s_2}(U)$.  Then $fg \in H^r(U)$ and
\begin{equation}\label{i_s_p_02}
 \norm{fg}_{H^r} \lesssim \norm{f}_{H^{s_1}} \norm{g}_{H^{s_2}}.
\end{equation}
\end{enumerate}
\end{lemma}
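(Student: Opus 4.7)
The plan is to reduce both assertions to standard product estimates on Euclidean space. Since $\Omega_\pm$ is a Lipschitz domain and $\Sigma_\pm$ is a flat torus, there exist bounded linear extension operators into $H^s(\mathbb{R}^n)$ (with $n=3$ or $n=2$ respectively) for every $s$ in the range of interest. It therefore suffices to prove the two estimates on $\mathbb{R}^n$, where they are instances of the Kato--Ponce fractional Leibniz rule.

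First I would treat the easier case that $r$ is a nonnegative integer. The Leibniz rule gives
\begin{equation*}
\ns{fg}_{H^r} \ls \sum_{|\alpha|\le r}\ \sum_{\beta\le \alpha} \ns{\p^\beta f \cdot \p^{\alpha-\beta} g}_{L^2},
\end{equation*}
and each summand is handled by H\"older's inequality combined with the embeddings $H^{s-k}(\Rn{n}) \hookrightarrow L^p(\Rn{n})$. In part (1), the hypothesis $s_1 > n/2$ lets us place $f$ (or its low-order derivatives) in $L^\infty$ via $H^{s_1}\hookrightarrow L^\infty$, and the remaining $g$-factor is bounded in $L^2$ by $\norm{g}_{H^{s_2}}$ since $s_2\ge s_1\ge r$. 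In part (2), we instead exploit $s_2 > r + n/2$: derivatives of $g$ up to order $r$ lie in $L^\infty$, so we pair them with $\p^\beta f$ in $L^2$, which works because $|\beta|\le r\le s_1$.

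For fractional $r$ I would invoke the Bony paraproduct decomposition $fg = T_f g + T_g f + R(f,g)$ built from a Littlewood--Paley dyadic partition of unity. The two paraproduct pieces are bounded by the same mechanism as above, using Bernstein's inequality to convert an $H^s$ bound into an $L^\infty$ bound at frequency $2^j$ whenever $s>n/2$; the remainder $R(f,g)$ is controlled by the subcriticality condition $s_1+s_2 > r + n/2$, which holds automatically in part (1) (since $s_1 > n/2$ and $s_2\ge r$) and in part (2) (since $s_2 > r+n/2$ and $s_1\ge 0$).

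The only real obstacle is bookkeeping in the paraproduct argument, especially in tracking the endpoint roles of the two hypotheses; all the underlying estimates are classical and may alternatively be quoted from the standard references on product estimates in Sobolev spaces.
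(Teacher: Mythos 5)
Your proposal is correct and takes the same route the paper gestures at: the paper's entire proof is the single sentence that these product estimates are standard and follow from the Fourier characterization of $H^s$ together with extension operators, which is exactly the extension-then-Littlewood--Paley argument you lay out. One minor imprecision to flag: in part (1), when $\abs{\beta}$ is close to $r$ the derivative $\p^\beta f$ lies only in $H^{s_1-\abs{\beta}}$, which need not embed into $L^\infty$, so the Leibniz terms must really be closed by the H\"older/$L^p$-interpolation mechanism you also invoke (distributing $n/2$ between the two factors) rather than by the bare $L^\infty$ embedding of $f$.
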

\begin{proof}
These results are standard and may be derived, for example, by use of the Fourier characterization of the $H^s$ spaces and extensions.
\end{proof}


%

\subsection{Coefficient estimates}

Here we are concerned with how the size of $\eta$ can control the ``geometric'' terms that appear in the equations.
\begin{lemma}\label{eta_small}
There exists a universal $0 < \delta < 1$ so that if $\ns{\eta}_{5/2} \le \delta,$ then
\begin{equation}\label{es_01}
\begin{split}
 & \norm{J-1}_{L^\infty(\Omega)} +\norm{A}_{L^\infty(\Omega)} + \norm{B}_{L^\infty(\Omega)} \le \hal, \\
 & \norm{\n-1}_{L^\infty(\Gamma)} + \norm{K-1}_{L^\infty(\Gamma)} \le \hal, \text{ and }  \\
 & \norm{K}_{L^\infty(\Omega)} + \norm{\mathcal{A}}_{L^\infty(\Omega)} \ls 1.
 \end{split}
\end{equation}
Also, the map $\Theta$ defined by \eqref{cotr} is a diffeomorphism.
\end{lemma}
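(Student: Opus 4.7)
The plan is to deduce every estimate from a single Sobolev chain: $H^{5/2}(\Sigma) \hookrightarrow C^1(\Sigma)$ (valid since $5/2 > 1 + 2/2$), and in parallel, via Lemma \ref{Poi}, $\eta \in H^{5/2}(\Sigma) \Rightarrow \bar{\eta} \in H^3(\Omega)$, followed by $H^3(\Omega) \hookrightarrow C^1(\Omega)$ (valid since $3 > 1 + 3/2$). Both embeddings yield a bound of the form $\norm{\nabla \bar{\eta}}_{L^\infty(\Omega)} + \norm{\nabla_\ast \eta}_{L^\infty(\Sigma)} \ls \sqrt{\delta}$. By choosing $\delta$ universally small we can make this right-hand side as small as we like.

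From here I would execute the bounds in the obvious order. First, because $\theta = \tilde{b}_1 \bar{\eta}_+ + \tilde{b}_2 \bar{\eta}_-$ with $\tilde{b}_1, \tilde{b}_2$ fixed smooth functions on $\Rn{}$, the product rule gives $\norm{\nabla \theta}_{L^\infty(\Omega)} \ls \sqrt{\delta}$. Recalling $A = \p_1\theta$, $B = \p_2\theta$, $J = 1 + \p_3 \theta$ from \eqref{ABJ_def}, this directly yields the first line of \eqref{es_01} once $\sqrt{\delta}$ is small enough. The bound on $\n - e_3 = (-\p_1\eta, -\p_2\eta, 0)$ on $\Sigma$ follows identically from the surface embedding. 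For $K = J^{-1}$ I would use the elementary identity $K - 1 = (1 - J)/J$, combined with $\abs{J - 1} \le 1/2$, which gives $\abs{K-1} \le 2\abs{J-1} \le 1/2$ and $\norm{K}_{L^\infty} \le 2$. Finally, $\mathcal{A}$ is an explicit rational function of $A$, $B$, $K$ (see \eqref{A_def}), so $\norm{\mathcal{A}}_{L^\infty} \ls 1$ is immediate.

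For the diffeomorphism claim, I would argue that $\Theta(x) = x + \theta(x) e_3$ is $C^1$ with $\det \nabla \Theta = J \ge 1/2 > 0$, so it is a local diffeomorphism. Global injectivity is then elementary: if $\Theta(x) = \Theta(y)$, the first two components force $x_1 = y_1$ and $x_2 = y_2$, and the one-variable map $t \mapsto t + \theta(x_1,x_2,t)$ has derivative $J \ge 1/2 > 0$, so it is strictly increasing, forcing $x_3 = y_3$. Surjectivity onto $\Omega(t)$ follows because the same one-variable map sends $[-b, 0]$ onto $[-b, \eta_-]$ and $[0,\ell]$ onto $[\eta_-, \ell + \eta_+]$ by the intermediate value theorem, matching the definitions of $\Omega_\pm(t)$ in \eqref{omega_plus}--\eqref{omega_minus}.

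There is no serious obstacle here; the only point requiring care is that the extension $\bar{\eta}_-$ is defined across $\Sigma_-$ by the specialized Poisson operator \eqref{P-def}, so one must verify that Lemma \ref{Poi} (together with the matching-derivative construction in \eqref{Veq}--\eqref{P+0def}) indeed delivers the $H^3(\Omega)$ bound uniformly on both $\Omega_\pm$, rather than on each piece separately with a jump at $\Sigma_-$. This is exactly what the construction of $\mathcal{P}_-$ was designed to ensure, so the estimate passes through cleanly.
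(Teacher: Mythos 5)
Your proof is correct and self-contained; the paper itself merely cites Lemma 2.4 of \cite{GT_per} without argument, so your Sobolev-embedding-plus-Poisson-extension chain supplies exactly what that citation hides, and your treatment of the diffeomorphism claim via strict monotonicity of the one-variable map $t \mapsto t + \theta(x_1,x_2,t)$ is the right elementary argument. One arithmetic slip: $\abs{J-1}\le 1/2$ gives $\abs{K-1}\le 2\abs{J-1}\le 1$, not $\le 1/2$, but since every bound you derive scales with $\sqrt{\delta}$ this is immaterial once $\delta$ is taken slightly smaller.
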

\begin{proof}
The estimate \eqref{es_01} is guaranteed by Lemma 2.4 of \cite{GT_per}.
\end{proof}

\subsection{Korn inequality}

Consider the following operators acting on functions $u: \Rn{n} \supseteq U \to \Rn{n}$:
\begin{equation*}
 \sg{u} = \nab u + \nab u^T \text{ and } \sgz{u} = \sg{u} - \frac{2 \diverge{u}}{n} I,
\end{equation*}
or in components
\begin{equation*}
 \sg{u}_{ij} = \p_i u_j + \p_j u_i \text{ and } \sgz{u}_{ij} = \p_i u_j + \p_j u_i  - \frac{2 \p_k u_k}{n} \delta_{ij}.
\end{equation*}
Note that $\trace(\sgz{u})=0$.  As such, the operator $\sgz$ is referred to as the ``deviatoric part of the symmetric gradient.''

In what follows we will compute the kernel of $\sg$ and $\sgz$ in $H^1(\Omega)$.  In doing so we will actually compute the kernel on $C^3(\Omega)$, but the results hold as well in $H^1(\Omega)$ via an approximation argument.

\begin{lemma}\label{sg_ker}
 \begin{equation}
  \ker(\sg) = \{ u(x) = a + A x \;\vert\; a \in \Rn{n}, A = -A^T\}
 \end{equation}
\end{lemma}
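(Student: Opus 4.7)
\medskip

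\noindent\textbf{Proof plan for Lemma \ref{sg_ker}.}  The easy inclusion is immediate: if $u(x) = a + Ax$ with $A = -A^T$, then $\nabla u = A$ (a constant matrix), so $\sg u = \nabla u + (\nabla u)^T = A + A^T = 0$. Thus the right-hand set is contained in $\ker(\sg)$.

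For the reverse inclusion, the plan is to prove the classical algebraic identity that $\sg u = 0$ forces every second partial of $u$ to vanish. Working first in $C^3(\Omega)$ (and passing to $H^1$ by approximation or distributions at the end), assume $\partial_i u_j + \partial_j u_i = 0$ for all $i,j$. Differentiating this identity in three ways yields
\begin{equation*}
\partial_k \partial_i u_j + \partial_k \partial_j u_i = 0,\qquad
\partial_i \partial_j u_k + \partial_i \partial_k u_j = 0,\qquad
\partial_j \partial_k u_i + \partial_j \partial_i u_k = 0.
\end{equation*}
Adding the first and third of these and subtracting the second, and using the symmetry of mixed partials, gives $2 \partial_j \partial_k u_i = 0$ for every $i,j,k$. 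Hence $u$ is affine: $u(x) = a + Ax$ for some $a \in \Rn{n}$ and some matrix $A$. Plugging this back into $\sg u = 0$ gives $A + A^T = 0$, i.e., $A$ is skew-symmetric. This completes the computation on connected components; since the result is claimed in $H^1(\Omega)$, if $\Omega$ has multiple components one obtains a rigid motion on each.

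To upgrade from $C^3$ to $H^1$, I would interpret the identity $\partial_j \partial_k u_i = 0$ distributionally: from $\sg u = 0$ as a distribution, the same linear combination of second distributional derivatives gives $\partial_j \partial_k u_i = 0$ in $\mathcal{D}'(\Omega)$, so each component $u_i$ is a harmonic polynomial of degree at most $1$ on each connected component, hence affine in the classical sense, and the argument concludes as above.

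The only mild subtlety is the passage to distributions, which is standard: the three differentiated equations are simply applications of $\partial_k$, $\partial_i$, $\partial_j$ to the equation $\sg u = 0$ in $\mathcal{D}'(\Omega)$, and the additive combination does not require any regularity of $u$ beyond $L^1_{\mathrm{loc}}$. Thus there is no real analytic obstacle; the statement is essentially a short algebraic identity plus a routine distributional elliptic-regularity remark.
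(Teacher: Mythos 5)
Your proposal is correct and follows essentially the same route as the paper: use the antisymmetry relation $\partial_i u_j = -\partial_j u_i$ together with equality of mixed partials to conclude that all second derivatives of $u$ vanish, hence $u$ is affine, and then plug back into $\sg u = 0$ to get skew-symmetry of the matrix. Your three-identity add-and-subtract computation is the same algebraic fact as the paper's chained swap, and your distributional remark is just a slightly different (equally valid) way of handling the passage from $C^3$ to $H^1$ that the paper dispatches by approximation.
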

\begin{proof}
We have that $\sg{u}=0$ if and only if $\p_i u_j = -\p_j u_i$ for all $i,j$.  Then
\begin{equation*}
 \p_i \p_k u_j = -\p_k \p_j u_i = -\p_j (-\p_i u_k) = \p_j \p_i u_k = \p_i (-\p_k u_j) = -\p_i \p_k u_j,
\end{equation*}
and hence $\nab^2 u =0$.  Then $u(x) = a + A x$ for some $a \in \Rn{n}$ and $A \in \Rn{n\times n}$.  But then
\begin{equation}
 0 = \sg{u} = A + A^T \Rightarrow A = -A^T.
\end{equation}
So, if $u \in \ker(\sg)$ then $u(x) = a + A x$ for $A$ antisymmetric.  The converse also clearly holds.
\end{proof}

We may also  compute the kernel of $\sgz$.

\begin{lemma}\label{sgd_ker}
If $n=2$ then
\begin{equation}\label{sgdk_01}
 \ker(\sgz) = \{ u: \mathbb{C} \simeq \Rn{2} \supseteq \Omega \to \Rn{2} \simeq \mathbb{C} \;\vert\; u \text{ is holomorphic}  \}.
\end{equation}
If $n \ge 3$ then
 \begin{equation}\label{sgdk_02}
  \ker(\sgz) = \{ u(x) = a + A x  + \gamma x + (b\cdot x) x - b\abs{x}^2/2   \;\vert\; \gamma \in \Rn{}, a,b \in \Rn{n}, A = -A^T\}.
 \end{equation}
\end{lemma}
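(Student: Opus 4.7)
The plan is to prove each inclusion in both parts; the inclusion ``$\supseteq$'' is a direct calculation. For the maps in \eqref{sgdk_02}, the function $u(x) = (b\cdot x) x - b\abs{x}^2/2$ satisfies $\p_j u_i = b_j x_i + (b\cdot x)\delta_{ij} - b_i x_j$, from which $\sg u = 2(b\cdot x) I$ and $\diverge u = n(b\cdot x)$; hence $\sgz u = 0$. The dilation $\gamma x$ and the rigid motions $a + Ax$ (with $A = -A^T$, by Lemma \ref{sg_ker}) are likewise in the kernel. In the case $n=2$, $\supseteq$ follows by reversing the Cauchy--Riemann computation below. Thus the content of the lemma lies in the reverse inclusion.

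For $n=2$, $\sgz u$ coincides with $\sg u - (\diverge u) I$; its vanishing is the pair of equations $\p_1 u_1 - \p_2 u_2 = 0$ and $\p_1 u_2 + \p_2 u_1 = 0$, which are exactly the Cauchy--Riemann equations for the complex-valued map $f(z) = u_1 + i u_2$ viewed as a function of $z = x_1 + i x_2$. This gives \eqref{sgdk_01}.

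For $n \ge 3$, write the kernel equation as $S_{ij} := \p_i u_j + \p_j u_i = (2D/n)\delta_{ij}$ with $D := \diverge u$. Differentiating in $x_k$ and forming the cyclic combination $(\p_k S_{ij}) - (\p_i S_{jk}) + (\p_j S_{ki})$ and then using equality of mixed partials yields
\begin{equation*}
\p_k \p_j u_i = \frac{1}{n}\bigl[(\p_k D)\delta_{ij} - (\p_i D)\delta_{jk} + (\p_j D)\delta_{ki}\bigr].
\end{equation*}
Differentiating this once more in $x_l$, exchanging the order of the outermost two partials, and contracting indices produces the consistency relation $(n-2)\p_j \p_k D = -\Delta D \cdot \delta_{jk}$; tracing over $j = k$ gives $(2-2n)\Delta D = 0$, so $\Delta D = 0$, and then since $n \ge 3$ the same relation forces $\p_j \p_k D = 0$ for all $j, k$. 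Hence $D = \alpha + \beta \cdot x$ for some $\alpha \in \Rn{}$ and $\beta \in \Rn{n}$. Substituting back into the displayed identity, all second partials of $u$ become constants, so each $u_i$ is a polynomial of degree at most two. Writing $u_i = a_i + L_{ij} x_j + Q_{ijk} x_j x_k$ and matching $2 Q_{ijk} = (1/n)[\beta_k \delta_{ij} - \beta_i \delta_{jk} + \beta_j \delta_{ki}]$ collapses the quadratic piece to $(b\cdot x) x_i - b_i \abs{x}^2/2$ with $b := \beta/n$. Inserting this back into $S_{ij} = (2D/n)\delta_{ij}$ then forces $L_{ij} + L_{ji} = (2\alpha/n)\delta_{ij}$, so $L = \gamma I + A$ with $\gamma = \alpha/n$ and $A$ antisymmetric, which reassembles to the form in \eqref{sgdk_02}.

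The main obstacle is the second-order consistency step forcing $D$ to be affine in the $n \ge 3$ case: this is where the dimension hypothesis genuinely enters, as the coefficient $(n-2)$ degenerates in dimension two and the kernel becomes infinite-dimensional (namely, the holomorphic maps). Once $D$ is pinned down as affine, the rest of the argument is index bookkeeping.
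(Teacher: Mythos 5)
Your proposal is correct and takes essentially the same route as the paper: the Cauchy--Riemann identification for $n=2$, and for $n\ge 3$ the same second-derivative identity $\p_j\p_k u_i = \frac{1}{n}[(\p_k D)\delta_{ij}-(\p_i D)\delta_{jk}+(\p_j D)\delta_{ki}]$ followed by the consistency relation $(n-2)\p_j\p_k D=-\Delta D\,\delta_{jk}$, which forces $D=\diverge u$ to be affine and $u$ to be quadratic. The only (harmless) difference is the endgame: you write $u$ as a general quadratic polynomial and match coefficients, obtaining the antisymmetric-plus-dilation linear part directly, whereas the paper integrates component-by-component and reduces the remaining piece to $\ker(\sg)$ via an auxiliary field and Lemma \ref{sg_ker}.
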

\begin{proof}
Consider first the case $n=2$.  Then $\sgz{u}=0$ if and only if
\begin{equation}
 \begin{pmatrix}
  2\p_1 u_1 & \p_1 u_2 + \p_2 u_1 \\
  \p_1 u_2 + \p_2 u_1 & 2 \p_2 u_2
 \end{pmatrix}
=
\begin{pmatrix}
 \p_1 u_1 + \p_2 u_2 & 0 \\
 0 & \p_1 u_2 + \p_2 u_2,
\end{pmatrix}
\end{equation}
which holds if and only if
\begin{equation}
 \p_1 u_1 = \p_2 u_2 \text{ and } \p_2 u_1 = -\p_1 u_2,
\end{equation}
which are the Cauchy-Riemann equations in $\Omega$.  Regarding $\Rn{2} \simeq \mathbb{C}$ then shows that $u$ is holomorphic in $\Omega$, which is \eqref{sgdk_01}.

Now consider the case $n \ge 3$.  Throughout the rest of the proof we \emph{do not} employ the summation convention.  Note first that if $\sgz{u}=0$ then
\begin{equation}
2\p_i u_i =  \sg{u} e_i \cdot e_i = \frac{2 \diverge{u}}{n} Ie_i \cdot e_i  = \frac{1}{n} \diverge{u}
\end{equation}
for each $i=1,\dotsc,n$.  This means that
\begin{equation}\label{sgdk_1}
 \p_1 u_1 = \p_2 u_2 = \cdots = \p_n u_n = \frac{1}{n} \diverge{u} := M.
\end{equation}
This allows us to rewrite $\sgz{u}=0$ in components as
\begin{equation}\label{sgdk_2}
 \p_i u_j = -\p_j u_i + 2M \delta_{ij}.
\end{equation}

Next we compute
\begin{equation}
\begin{split}
 \p_i \p_k u_j & = \p_k (-\p_j u_i + 2M \delta_{ij}) = -\p_j \p_k u_i + 2\p_k M \delta_{ij} \\
&= -\p_j( -\p_i u_k + 2 M \delta_{ik}) +  2\p_k M \delta_{ij}  \\
& = \p_i \p_j u_k +  2\p_k M \delta_{ij}  - 2\p_j M \delta_{ik} \\
& = \p_i (-\p_k u_j + 2 M \delta_{jk}) +  2\p_k M \delta_{ij}  - 2\p_j M \delta_{ik} \\
& = -\p_i \p_k u_j +  2\p_k M \delta_{ij}  - 2\p_j M \delta_{ik} + 2\p_i M \delta_{jk},
\end{split}
\end{equation}
and hence
\begin{equation}\label{sgdk_3}
 \p_i \p_k u_j = \p_k M \delta_{ij}  - \p_j M \delta_{ik} + \p_i M \delta_{jk}.
\end{equation}
Applying $\p_\ell$ to \eqref{sgdk_3}, we find that
\begin{equation}\label{sgdk_4}
  \p_i \p_k \p_\ell u_j = \p_k \p_\ell M \delta_{ij}  - \p_j \p_\ell M \delta_{ik} + \p_i \p_\ell M \delta_{jk}.
\end{equation}
On the other hand, we can switch the index from $k$ to $\ell$ in \eqref{sgdk_3} to see that
\begin{equation}
 \p_i \p_\ell u_j = \p_\ell M \delta_{ij}  - \p_j M \delta_{i \ell} + \p_i M \delta_{j\ell},
\end{equation}
which reveals, upon applying $\p_k$, that
\begin{equation}\label{sgdk_5}
  \p_i \p_k \p_\ell u_j = \p_k  \p_\ell M \delta_{ij}  - \p_j \p_k  M \delta_{i \ell} + \p_i \p_k  M \delta_{j\ell}.
\end{equation}

Now we equate \eqref{sgdk_4} and \eqref{sgdk_5} to see that
\begin{equation}
 \p_k \p_\ell M \delta_{ij}  - \p_j \p_\ell M \delta_{ik} + \p_i \p_\ell M \delta_{jk}  =  \p_k  \p_\ell M \delta_{ij}  - \p_j \p_k  M \delta_{i \ell} + \p_i \p_k  M \delta_{j\ell},
\end{equation}
which allows us to cancel the $\p_k \p_\ell M \delta_{ij}$ terms to deduce that
\begin{equation}
 \p_i \p_\ell M \delta_{jk}  - \p_j \p_\ell M \delta_{ik}  = \p_i \p_k  M \delta_{j\ell} - \p_j \p_k  M \delta_{i \ell}.
\end{equation}
Multiplying by $\delta_{jk}$ and summing over $j,k$ then reveals that
\begin{equation}\label{sgdk_6}
 (n-2) \p_i \p_\ell M = -\Delta M \delta_{i\ell},
\end{equation}
from which we deduce, by multiplying by $\delta_{i\ell}$ and summing over $i,\ell$, that
\begin{equation}
 (n-2) \Delta M = -n \Delta M.
\end{equation}
Hence $\Delta M=0$, and upon replacing in \eqref{sgdk_6} and using the fact that $n \ge 3$, we find that
\begin{equation}\label{sgdk_7}
 \nab^2 M = 0 \Rightarrow M(x) = \gamma + b \cdot x \text{ for some } \gamma\in \Rn{n}, b \in \Rn{n}.
\end{equation}
Returning to \eqref{sgdk_4}, we also find that
\begin{equation}\label{sgdk_8}
 \nab^3 u =0.
\end{equation}

From \eqref{sgdk_7} and \eqref{sgdk_1} we deduce that
\begin{equation}\label{sgdk_9}
 u_i(x) = g_i(\hat{x}_i) + \gamma x_i + (b\cdot \hat{x}_i) x_i + b_i \frac{x_i^2}{2},
\end{equation}
where $\hat{x}_i = x - x_i e_i$, and $g_i$ is (by virtue of \eqref{sgdk_8}) a quadratic polynomial in $\hat{x}_i$, i.e. in all of the variables except $x_i$.

Since whenever $i\neq j$ we have $\p_j u_i = -\p_i u_j$, we may use \eqref{sgdk_9} to compute
\begin{equation}\label{sgdk_10}
 \p_j g_i(\hat{x}_i) + x_i b_j = -\p_i g_j(\hat{x}_j) - x_j b_i \Rightarrow \p_j g_i(\hat{x}_i) + x_j b_i = -(\p_i g_j(\hat{x}_j) + x_i b_j) \text{ for }i\neq j.
\end{equation}
Define $G:U \to \Rn{n}$ by
\begin{equation}
 G_j(x) = g_j(\hat{x}_j) + \frac{\abs{\hat{x}_j}^2}{2} b_j \text{ for }j=1,\dotsc,n.
\end{equation}
Then
\begin{equation}
 \p_i G_j(x) =
\begin{cases}
 \p_i g_j(\hat{x}_j) + x_i b_j & i\neq j \\
 0 & i=j,
\end{cases}
\end{equation}
which together with \eqref{sgdk_10} implies that $\sg{G} =0$ in $\Omega$.  Then by Lemma \ref{sg_ker} we have that $G(x) = a + A x$ for $a \in \Rn{n}$ and $A =-A^T \in \Rn{n\times n}$.  Then
\begin{equation}
 g_j(\hat{x}_j) =  a_j + (A x)_j - \frac{\abs{\hat{x}_j}^2}{2} b_j,
\end{equation}
which we may plug into \eqref{sgdk_9} to deduce that
\begin{equation}
 u(x) = a + Ax + \gamma x + (b\cdot x) x - \frac{\abs{x}^2}{2} b.
\end{equation}
So, every element of $\ker(\sgz)$ is of this form.  An easy computation reveals that every function of this form is also in the kernel.

\end{proof}

Next we record a version of Korn's inequality involving only the deviatoric part, $\sgz$.

\begin{lemma} \label{full_korn}
 Let $U \subset \Rn{n}$, $n \ge 3$, be bounded and open with Lipschitz boundary.  Then there exists a constant $C>0$ such that
\begin{equation}
 \norm{u}_{H^1}^2 \le C (\norm{u}_{L^2}^2 + \norm{\sgz{u}}_{L^2}^2)
\end{equation}
for all $u \in H^1(U)$.
\end{lemma}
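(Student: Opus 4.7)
The plan is a contradiction-compactness argument that combines the classical Korn second inequality, a Lam\'e-type elliptic identity expressing $u$ in terms of $\sgz u$, and Ne\v{c}as's inequality to recover $L^2$ control of $\diverge u$. First, by the classical Korn second inequality on the Lipschitz domain $U$,
\begin{equation*}
\norm{u}_{H^1}^2 \le C(\norm{u}_{L^2}^2 + \norm{\sg u}_{L^2}^2),
\end{equation*}
and the decomposition $\sg u = \sgz u + (2/n)(\diverge u) I$ together with the trace-freeness of $\sgz u$ yields the pointwise identity $|\sg u|^2 = |\sgz u|^2 + (4/n)(\diverge u)^2$. Hence it suffices to prove $\norm{\diverge u}_{L^2}^2 \ls \norm{u}_{L^2}^2 + \norm{\sgz u}_{L^2}^2$. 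Substituting the decomposition into the standard identity $2\p_j\p_k u_i = \p_k(\sg u)_{ij} + \p_j(\sg u)_{ik} - \p_i(\sg u)_{jk}$ and contracting $j=k$ produces the distributional Lam\'e-type identity
\begin{equation*}
\Delta u + \tfrac{n-2}{n}\nab \diverge u = \diverge \sgz u \quad \text{in } \mathcal{D}'(U),
\end{equation*}
which is uniformly elliptic for $n \ge 3$ since $(n-2)/n > 0$.

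Next I would run the contradiction: supposing the reduced estimate fails produces a sequence $u_k \in H^1(U)$ with $\norm{u_k}_{H^1} = 1$ but $\norm{u_k}_{L^2}^2 + \norm{\sgz u_k}_{L^2}^2 \to 0$. Rellich-Kondrachov and weak compactness give a subsequence with $u_k \to 0$ strongly in $L^2$ and $u_k \weak 0$ in $H^1$. Upgrading this to strong $L^2$ convergence $\diverge u_k \to 0$ would force $\sg u_k \to 0$ in $L^2$ and hence $\nab u_k \to 0$ in $L^2$ by the classical Korn inequality, contradicting $\norm{u_k}_{H^1} = 1$. The strong $L^2$ convergence of $\diverge u_k$ is to be obtained via Ne\v{c}as's inequality $\norm{p}_{L^2} \ls \norm{p}_{H^{-1}} + \norm{\nab p}_{H^{-1}}$ applied to $p = \diverge u_k$: the first term is controlled by $\norm{u_k}_{L^2} \to 0$, and the Lam\'e identity from above rewrites $\tfrac{n-2}{n}\nab \diverge u_k = \diverge \sgz u_k - \Delta u_k$, whose first piece tends to zero in $H^{-1}$ since $\sgz u_k \to 0$ in $L^2$.

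The hard part will be handling the $\Delta u_k$ term strongly in $H^{-1}$. A priori, weak $H^1$ convergence only yields $\Delta u_k \weak 0$ in $H^{-1}$ (and strongly in $H^{-2}$ via the compact embedding $L^2 \hookrightarrow H^{-1}$ applied to $\nab u_k$), which is too weak to close the Ne\v{c}as estimate directly. The resolution is to invoke global elliptic regularity for the Lam\'e operator on Lipschitz domains (in the spirit of Dahlberg-Kenig-Verchota), or equivalently to combine a preliminary form of the estimate with the explicitly computed finite-dimensional kernel $\ker \sgz$ from Lemma \ref{sgd_ker} (of dimension $(n+1)(n+2)/2$, parametrizing the conformal Killing vectors of $\Rn{n}$) via Peetre's lemma. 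The finite-dimensionality of $\ker \sgz$ is precisely what makes this closure possible, and it is exactly where the hypothesis $n \ge 3$ enters the argument; in $n=2$ the kernel is the infinite-dimensional space of holomorphic maps (Lemma \ref{sgd_ker}), obstructing any such estimate.
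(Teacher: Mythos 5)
Your reduction is sound as far as it goes: the pointwise identity $\abs{\sg u}^2 = \abs{\sgz u}^2 + \tfrac{4}{n}(\diverge u)^2$, the classical second Korn inequality, and the Lam\'e-type identity $\Delta u + \tfrac{n-2}{n}\nab \diverge u = \diverge \sgz u$ are all correct, and controlling $\norm{\diverge u}_{L^2}$ is indeed the crux. But at that crux there is a genuine gap, and the two rescues you gesture at do not repair it. In your compactness scheme the offending term $\Delta u_k$ has $H^{-1}$ norm comparable to $\norm{\nab u_k}_{L^2}$, i.e. to the very quantity being estimated, so the Ne\v{c}as step is not merely "hard" but circular; since $u_k \to 0$ in $L^2$, the weak limit is zero and the finite-dimensional kernel of $\sgz$ never enters the argument at all. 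Invoking elliptic regularity for the Lam\'e operator on Lipschitz domains does not help because such results concern boundary value problems, while your $u \in H^1(U)$ satisfies no boundary condition, so no up-to-the-boundary estimate is available from the interior equation alone; and Peetre's lemma presupposes exactly the a priori inequality $\norm{u}_{H^1} \ls \norm{\sgz u}_{L^2} + \norm{u}_{L^2}$ that is to be proved, so using it here begs the question. Note also that the paper does not prove this lemma: it simply cites Theorem 1.1 of \cite{dain}, so what you were reconstructing is a genuinely nontrivial known theorem, and your sketch stops short of a proof.

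What does close the argument --- and is in the spirit of the cited result --- is a second identity special to the deviatoric operator and to $n\ge 3$. Writing $S=\sgz u$ and $w=\diverge u$, differentiate the Lam\'e identity and eliminate $\Delta u$ to get
\begin{equation*}
\frac{2(n-2)}{n}\,\p_i\p_j w \;=\; \p_i\p_k S_{jk} + \p_j\p_k S_{ik} - \Delta S_{ij} - \frac{1}{n-1}\,\delta_{ij}\,\p_k\p_l S_{kl},
\end{equation*}
where the last term comes from $\p_i\p_j S_{ij} = \tfrac{2(n-1)}{n}\Delta w$. Every second derivative of $w$ is thus a combination of second derivatives of $S$, so $\norm{\nab^2 w}_{H^{-2}} \ls \norm{S}_{L^2}$ with no appearance of $\nab u$ on the right. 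Two applications of the Ne\v{c}as inequality on the bounded Lipschitz domain $U$ --- first at the $H^{-1}$ level, $\norm{\nab w}_{H^{-1}} \ls \norm{\nab w}_{H^{-2}} + \norm{\nab^2 w}_{H^{-2}} \ls \norm{u}_{L^2} + \norm{S}_{L^2}$, then at the $L^2$ level, $\norm{w}_{L^2} \ls \norm{w}_{H^{-1}} + \norm{\nab w}_{H^{-1}}$ --- give $\norm{\diverge u}_{L^2} \ls \norm{u}_{L^2} + \norm{\sgz u}_{L^2}$ directly, with no compactness argument; your pointwise identity and the classical Korn inequality then finish. This also shows where $n\ge3$ truly enters: through the factor $n-2$ in the displayed identity (the finite-type character of the conformal Killing operator), rather than through the finite-dimensionality of $\ker \sgz$ via Peetre's lemma.
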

\begin{proof}
See Theorem 1.1 of \cite{dain}.
\end{proof}

We now prove a first result on the way to our desired Korn-type inequality.

\begin{lemma}\label{bndry_korn}
Let $n \ge 3$ and
\begin{equation}
 \Omega := \Pi_{i=1}^{n-1} (L_i \mathbb{T}) \times (a,b)
\end{equation}
for $0< L_i< \infty$, $i=1,\dotsc,n-1$, and $-\infty < a < b < \infty$, and define the lower boundary by $\Sigma_a =  \Pi_{i=1}^{n-1} (L_i \mathbb{T}) \times \{ a\}$.   There exists a constant $C>0$ so that
\begin{equation}
 \ns{u}_{0} \le C (\ns{\sgz{u}}_0 + \ns{u}_{H^0(\Sigma_a)}  )
\end{equation}
for all $u \in H^1(\Omega)$.
\end{lemma}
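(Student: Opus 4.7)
The plan is to prove the estimate by contradiction, combining the standard Korn inequality of Lemma \ref{full_korn}, compactness arguments (Rellich--Kondrachov and trace compactness), and the classification of $\ker(\sgz)$ provided by Lemma \ref{sgd_ker}. Suppose no universal constant $C$ works, so there is a sequence $\{u_k\} \subset H^1(\Omega)$ with $\ns{u_k}_0 = 1$ and $\ns{\sgz u_k}_0 + \ns{u_k}_{H^0(\Sigma_a)} \to 0$. Lemma \ref{full_korn}, which applies since $\Omega$ is bounded and Lipschitz, yields $\ns{u_k}_{H^1} \ls \ns{u_k}_0 + \ns{\sgz u_k}_0 \ls 1$, so $\{u_k\}$ is bounded in $H^1(\Omega)$.

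Passing to a subsequence, I would extract a weak limit $u \in H^1(\Omega)$ with $u_k \to u$ strongly in $L^2(\Omega)$ by Rellich--Kondrachov, so $\ns{u}_0 = 1$. Weak lower semicontinuity applied to $\sgz u_k \weak \sgz u$ in $L^2$ gives $\ns{\sgz u}_0 = 0$, and compactness of the trace map $H^1(\Omega) \hookrightarrow H^{1/2}(\Sigma_a) \hookrightarrow L^2(\Sigma_a)$ yields $\ns{u}_{H^0(\Sigma_a)} = 0$. Thus $u \in \ker(\sgz)$ and $u|_{\Sigma_a} = 0$.

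By Lemma \ref{sgd_ker} (valid since $n \ge 3$), I can write
\[
u(x) = a_0 + A x + \gamma x + (b \cdot x)\,x - \tfrac{1}{2}|x|^2 b
\]
for some $a_0, b \in \Rn{n}$, $\gamma \in \Rn{}$, and antisymmetric $A$. Since $\Omega$ is periodic in $x_1,\dotsc,x_{n-1}$, I impose the condition $u(x+L_i e_i) = u(x)$ for each $i=1,\dotsc,n-1$ and compare coefficients of monomials in $x$. For $j \neq i$ the coefficient of $x_j$ in $u(x+L_i e_i) - u(x)$ is $L_i b_j e_i + L_i b_i e_j$, and the linear independence of $e_i, e_j$ forces $b_i = b_j = 0$; varying $i,j$ gives $b = 0$. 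Once $b = 0$, the constant terms in $x$ reduce to $L_i(A + \gamma I)e_i = 0$, so $A e_i = -\gamma e_i$ for $i=1,\dotsc,n-1$; since $A$ is antisymmetric its only possible real eigenvalue is $0$, forcing $\gamma = 0$ and then $A e_i = 0$ for $i<n$, and antisymmetry gives $A e_n = 0$ as well. Hence $u \equiv a_0$ is a constant, and $u|_{\Sigma_a} = 0$ forces $a_0 = 0$, contradicting $\ns{u}_0 = 1$.

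The main technical point is the periodicity analysis of the conformal Killing fields in Lemma \ref{sgd_ker}: one needs the dimension hypothesis $n \ge 3$ for the kernel to be finite-dimensional (in $n=2$ the kernel is the infinite-dimensional space of holomorphic maps, and the argument collapses), and the bookkeeping to extract the vanishing of $b$, $\gamma$, and $A$ from periodicity in $n-1$ directions is the only subtle calculation. Everything else is a standard compactness-and-contradiction scheme of the type routinely used to promote Korn-type inequalities involving boundary traces.
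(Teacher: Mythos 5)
Your proof is correct, and it follows the paper's compactness-by-contradiction template exactly through the extraction of a weak limit $u$ with $\sgz u = 0$ and $u|_{\Sigma_a}=0$, followed by the application of Lemma \ref{sgd_ker}. Where you diverge from the paper is in the final step, which rules out a nonzero conformal Killing field. The paper works directly from the trace condition: after (implicitly) translating so that $\Sigma_a$ sits at $x_n=0$, it expands the quadratic polynomial $u(\cdot,0)$ in $x'\in\Rn{n-1}$ and shows the zeroth-, first-, and second-order coefficients must each vanish, whence $a_0$, $\gamma$, $A$, and $b$ all vanish. You instead exploit the horizontal periodicity of the domain, comparing $u(x+L_ie_i)$ with $u(x)$ to show first that $b=0$ (by inspecting the $x_j$-coefficients for $j\ne i$) and then that $\gamma=0$ and $A=0$ (from the constant part and antisymmetry). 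Only at the very end do you invoke the trace condition, and only to kill the remaining constant $a_0$. Your computation of the coefficient $L_i(b_je_i+b_ie_j)$ is correct, as is the deduction $\gamma=-(Ae_i)\cdot e_i=0$ from antisymmetry and the completion $Ae_n=0$ from $(Ae_n)\cdot e_j=-e_n\cdot(Ae_j)=0$. The advantage of your route is that it separates cleanly the roles of the two hypotheses: periodicity alone already collapses $\ker\sgz$ to constants on the slab, and the trace condition is needed only for the residual constant. The paper's route is slightly shorter to state but requires a coordinate shift (or tracking $a$-dependent corrections in the coefficient conditions) to be fully rigorous; your version avoids that bookkeeping. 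Both approaches are valid and both rely on $n\ge 3$ for the finite-dimensionality of $\ker\sgz$.
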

\begin{proof}

Suppose not.  Then there exists a sequence $u_n \in H^1(\Omega)$ so that $\norm{u_n}_0 = 1$ but
\begin{equation}\label{bk_1}
 \ns{\sgz{u_n}}_0 + \ns{u_n}_{H^0(\Sigma_a)} \le \frac{1}{n}.
\end{equation}
Employing Lemma \ref{full_korn}, we find that $\sup_{n} \norm{u_n}_{H^1} < \infty$.  So, up to the extraction of a subsequence, we have that $u_n \rightharpoonup u$ weakly in $H^1$ and $u_n \to u$ strongly in $H^0$.  By \eqref{bk_1} we have that $\sgz u =0$ and $u=0$ on $\Sigma_a$.  Then by Lemma \ref{sgd_ker} we know that
\begin{equation}
 u(x) = a + Ax + \gamma x + (b\cdot x)x - b\abs{x}^2/2 \text{ for } \gamma \in \Rn{}, a,b\in \Rn{n}, A=-A^T \in \Rn{n\times n}.
\end{equation}
Since $u$ vanishes on $\Sigma_a$ we must have the zeroth, first, and second order parts of the polynomial all vanish, and hence
\begin{equation}\label{bk_2}
 \begin{cases}
  a =0 \\
  A e + \gamma e =0 \\
  (b\cdot e) e - b/2 =0
 \end{cases}
\end{equation}
for all unit vectors $e \in \Rn{n-1}$.  The second equality in \eqref{bk_2} implies, since $A$ is antisymmetric, that
\begin{equation}
 \gamma = \gamma \abs{e}^2 = \gamma e \cdot e = -A e \cdot e =0.
\end{equation}
Then $A e =0$ for all unit vectors $e \in \Rn{n-1}$, which means that $A=0$ since otherwise $Ae_n = y \neq 0$ implies that $y_i = y\cdot e_i = A e_n \cdot e_i = -e_n \cdot A e_i =0$ for $i=1,\dotsc,n-1$ so that $y = y_n e_n$, but then $y_n = A e_n \cdot e_n =0$.  Choosing $e = e_1$ in the third equality in \eqref{bk_2} implies that
\begin{equation}
  b_1 e_1 =  b_2 e_2 + \dotsc b_{n}e_{n} \Rightarrow b=0.
\end{equation}
Hence $u=0$ in $\Omega$, which contradicts the fact that $\norm{u}_{0}=1$.

\end{proof}

Now we extend this to layered domains.

\begin{Proposition}\label{layer_korn}
 Let $n \ge 3$,
\begin{equation}
 \Omega_- := \Pi_{i=1}^{n-1} (L_i \mathbb{T}) \times (-b,0) \text{ and }  \Omega_+ := \Pi_{i=1}^{n-1} (L_i \mathbb{T}) \times (0,\ell)
\end{equation}
for $0< L_i< \infty$, $i=1,\dotsc,n-1$, and $0 < b,\ell < \infty$, and define the bottom boundary by $\Sigma_b =  \Pi_{i=1}^{n-1} (L_i \mathbb{T}) \times \{ -b\}$ and the internal interface by  $\Sigma =  \Pi_{i=1}^{n-1} (L_i \mathbb{T}) \times \{0 \}$.    There exists a constant $C>0$ so that
\begin{equation}
 \ns{u_+}_{1} + \ns{u_-}_{1} \le C(\ns{\sgz{u_+}}_0 + \ns{\sgz{u_-}}_0)
\end{equation}
for all $u_\pm \in H^1(\Omega_\pm)$ with $\jump{u}=0$ along $\Sigma$ and $u_- =0$ on $\Sigma_b$.
\end{Proposition}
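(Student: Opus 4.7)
The plan is to reduce the two-phase inequality to two applications of Lemma \ref{bndry_korn}, chained together via the jump condition $\jump{u}=0$ on $\Sigma$ and the trace theorem. I would proceed in four short steps.

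First, I would apply Lemma \ref{bndry_korn} to $u_-$ on $\Omega_-$ with $\Sigma_a = \Sigma_b$. Since $u_-$ vanishes on $\Sigma_b$, the boundary term drops, and we obtain the estimate $\ns{u_-}_0 \le C\ns{\sgz u_-}_0$. Combining this with the standard Korn inequality of Lemma \ref{full_korn} applied to $u_-$ on $\Omega_-$ yields
\begin{equation}
\ns{u_-}_1 \le C\left(\ns{u_-}_0 + \ns{\sgz u_-}_0\right) \le C\ns{\sgz u_-}_0.
\end{equation}

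Second, I would turn to $u_+$ on $\Omega_+$. Lemma \ref{bndry_korn} is stated with the Dirichlet-type boundary $\Sigma_a$ being the bottom face of the slab, but by the obvious vertical reflection $x_3 \mapsto -x_3$ it applies equally when the distinguished face is the top of the slab. Either way, for $\Omega_+ = \Pi (L_i \mathbb{T}) \times (0,\ell)$ the relevant distinguished face can be taken to be $\Sigma = \{x_3 = 0\}$, so Lemma \ref{bndry_korn} gives
\begin{equation}
\ns{u_+}_0 \le C\left(\ns{\sgz u_+}_0 + \ns{u_+}_{H^0(\Sigma)}\right).
\end{equation}
Here is where the jump condition enters: since $\jump{u}=0$ on $\Sigma$, we have $u_+|_\Sigma = u_-|_\Sigma$, and by the standard trace inequality combined with the bound just proved for $u_-$,
\begin{equation}
\ns{u_+}_{H^0(\Sigma)} = \ns{u_-}_{H^0(\Sigma)} \le C\ns{u_-}_1 \le C\ns{\sgz u_-}_0.
\end{equation}

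Third, chaining these gives $\ns{u_+}_0 \le C(\ns{\sgz u_+}_0 + \ns{\sgz u_-}_0)$, and a second application of Lemma \ref{full_korn} to $u_+$ on $\Omega_+$ upgrades this to the full $H^1$ estimate
\begin{equation}
\ns{u_+}_1 \le C\left(\ns{\sgz u_+}_0 + \ns{\sgz u_-}_0\right).
\end{equation}
Summing with the earlier bound for $u_-$ yields the conclusion.

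The only real subtlety is verifying that Lemma \ref{bndry_korn} is applicable on $\Omega_+$ in the form needed; the proof given for that lemma is by a compactness-contradiction argument whose sole use of the geometry is that the distinguished face lies in a hyperplane $\{x_3 = \text{const}\}$, so vertical reflection produces the required version immediately and no genuinely new work is needed. The rest of the argument is just bookkeeping: the jump condition allows the trace of $u_+$ on $\Sigma$ to be absorbed into the $u_-$ estimate, and the no-slip condition on $\Sigma_b$ kills the only other boundary term in the chain.
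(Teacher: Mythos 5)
Your proposal is correct and follows essentially the same route as the paper's proof: Korn's inequality (Lemmas \ref{full_korn} and \ref{bndry_korn}) on $\Omega_-$ using the no-slip condition on $\Sigma_b$, then the trace inequality together with $\jump{u}=0$ to control $\ns{u_+}_{H^0(\Sigma)}$ by $\ns{\sgz{u_-}}_0$, and finally Korn again on $\Omega_+$. The only superfluous point is your reflection discussion: since $\Sigma=\{x_3=0\}$ is already the bottom face of $\Omega_+$, Lemma \ref{bndry_korn} applies verbatim there with $\Sigma_a=\Sigma$, so no reflected version is needed.
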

\begin{proof}
 By Lemmas \ref{full_korn}  and \ref{bndry_korn} we have that
\begin{equation}
 \ns{u_-}_{1} \le C(\ns{u_-}_0 + \ns{\sgz{u_-}}_0) \le C (\ns{\sgz{u_-}}_0 + \ns{u_-}_{H^0(\Sigma_b)}  ) =   C \ns{\sgz{u_-}}_0.
\end{equation}
By trace theory and the jump condition $\jump{u}=0$ on $\Sigma$, we may estimate
\begin{equation}
 \ns{u_+}_{H^0(\Sigma)} =  \ns{u_-}_{H^0(\Sigma)} \le C \ns{u_-}_1 \le C \ns{\sgz{u_-}}_0.
\end{equation}
Then to conclude we use Lemmas \ref{full_korn}  and \ref{bndry_korn} together with this estimate:
\begin{equation}
  \ns{u_+}_{1} \le C(\ns{u_+}_0 + \ns{\sgz{u_+}}_0) \le C (\ns{\sgz{u_+}}_0 + \ns{u_+}_{H^0(\Sigma)}  ) \le  C (\ns{\sgz{u_+}}_0 + \ns{\sgz{u_-}}_0 ).
\end{equation}

\end{proof}



\subsection{Linear combinations of estimates}

We now consider how to reduce a sequence of related estimates to a single estimate in a more useful form.

\begin{lemma}\label{est_alg}
Let $k \ge 1$ be an integer, $U \subset \Rn{}$ be an open set, and consider functions $X_i,Y_i, Z_i:U \to \Rn{}$ for $i=0,\dotsc,k$, $W_i : U \to [0,\infty)$ for $i=0,\dotsc,k$,  and $V_0:U \to \Rn{}$.  Suppose that
\begin{equation}\label{es_al_00}
  \frac{d}{dt} X_0 + \beta_0 (Y_0 + W_0) \le \gamma_0 V_0 +  Z_0
\end{equation}
and for $i=1,\dotsc,k$
\begin{equation}\label{es_al_01}
 \frac{d}{dt} X_i + \beta_i (Y_i + W_i) \le \gamma_i \left( V_0 + \sum_{j=0}^{i-1} Y_j\right) + Z_i,
\end{equation}
where $\beta_i, \gamma_i >0$ are constants for $i=0,\dotsc,k$.  Then there exist $\lambda_i>0$ for $i=0,\dotsc,k$ so that
\begin{equation}\label{es_al_02}
  \frac{d}{dt} \sum_{i=0}^k \lambda_i X_i +   \sum_{i=0}^k (Y_i+W_i) \le V_0 \sum_{i=0}^k \lambda_i \gamma_i + \sum_{i=0}^k \lambda_i Z_i.
\end{equation}
Moreover, $\lambda_k = 1/\beta_k$ and for $i=0,\dotsc,k-1$
\begin{equation}\label{es_al_03}
 \lambda_i =   P_i(\beta_{i+1},\dotsc,\beta_k, \gamma_{i+1},\dotsc,\gamma_k) \prod_{j=0}^k\beta_j^{-1}
\end{equation}
where $P_i$ is a polynomial with positive integer coefficients.

\end{lemma}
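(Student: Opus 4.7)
The plan is to proceed by backward induction, constructing the coefficients $\lambda_i$ explicitly from the top down. I would first set $\lambda_k := 1/\beta_k$ and then, for $i = k-1,k-2,\ldots,0$, define recursively
\begin{equation*}
\lambda_i := \frac{1}{\beta_i}\left(1 + \sum_{\ell=i+1}^k \lambda_\ell \gamma_\ell\right),
\end{equation*}
so that by construction $\lambda_i\beta_i - \sum_{\ell=i+1}^k \lambda_\ell\gamma_\ell = 1$ for every $0\le i\le k$. Positivity of each $\lambda_i$ is immediate since all $\beta_j,\gamma_j>0$.

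Next I would take the linear combination $\sum_{i=0}^k \lambda_i$ times the $i$-th inequality (\eqref{es_al_00} for $i=0$ and \eqref{es_al_01} for $i\ge 1$). The left-hand side then collects into
\begin{equation*}
\frac{d}{dt}\sum_{i=0}^k \lambda_i X_i \;+\; \sum_{i=0}^k \lambda_i \beta_i (Y_i + W_i),
\end{equation*}
while the right-hand side becomes
\begin{equation*}
V_0\sum_{i=0}^k \lambda_i \gamma_i + \sum_{i=0}^k \lambda_i Z_i + \sum_{i=1}^k \lambda_i \gamma_i \sum_{j=0}^{i-1} Y_j.
\end{equation*}
The only nontrivial step is the bookkeeping in the last double sum: swapping the order of summation yields a total coefficient of $\sum_{\ell=j+1}^k \lambda_\ell \gamma_\ell$ in front of each $Y_j$ for $0\le j\le k-1$ on the right. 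Moving these $Y_j$ contributions to the left, their net coefficient becomes $\lambda_j\beta_j - \sum_{\ell=j+1}^k \lambda_\ell\gamma_\ell$, which equals $1$ by the defining recursion; for $j=k$ the coefficient is simply $\lambda_k\beta_k=1$. Because $W_j\ge 0$ and $\lambda_j\beta_j\ge 1$, we may freely discard the excess above $1$ in the $W_j$ coefficient, arriving at exactly \eqref{es_al_02}.

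Finally, the polynomial representation \eqref{es_al_03} follows from a short downward induction on $i$. For $i=k$ we have $\lambda_k = 1/\beta_k$, which is of the required form with $P_k \equiv 1$ (after multiplying numerator and denominator by $\prod_{j=0}^{k-1}\beta_j$ to match the stated product). For the inductive step, plugging the inductive expressions for $\lambda_{i+1},\ldots,\lambda_k$ into the recursion and clearing the denominator $\beta_i$ shows that $\lambda_i$ equals a ratio whose numerator is a polynomial with positive integer coefficients in $\beta_{i+1},\ldots,\beta_k,\gamma_{i+1},\ldots,\gamma_k$, and whose denominator is a product of the $\beta_j$'s of the stated form.

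I do not anticipate any serious obstacle in this argument: it is purely algebraic manipulation of inequalities, and no analytic input is required beyond using $W_i\ge 0$ and $\beta_i,\gamma_i>0$. The only care needed is the reindexing in the double sum and verification that the recursion cancels the cross-terms exactly; after that, the polynomial form is just the record of the explicit construction.
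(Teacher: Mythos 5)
Your proof is correct and follows essentially the same route as the paper's: you define the $\lambda_i$ by the same top-down recursion, reindex the same double sum, and use $W_i\ge 0$ together with $\lambda_i\beta_i\ge 1$ in the same way. The only cosmetic difference is that you posit the recursion up front and then verify it, whereas the paper derives the linear system for the $\lambda_i$ from the summed inequality and then solves it.
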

\begin{proof}
 Suppose that $\lambda_i >0$ for $i=0,\dotsc,k$.  Multiplying \eqref{es_al_01} by $\lambda_0$,  \eqref{es_al_01} by $\lambda_i$, and  summing, we find that
\begin{equation}\label{es_al_1}
\begin{split}
 \frac{d}{dt} \sum_{i=0}^k \lambda_i X_i + \sum_{i=0}^k \lambda_i \beta_i Y_i  + \sum_{i=0}^k \lambda_i \beta_i W_i &\le V_0 \sum_{i=1}^k \gamma_i \lambda_i +
\sum_{i=1}^k \lambda_i \gamma_i \sum_{j=0}^{i-1} Y_j + \sum_{i=1}^k \lambda_i Z_i \\
&= V_0 \sum_{i=1}^k \gamma_i \lambda_i + \sum_{i=0}^{k-1} Y_i \sum_{j={i+1}}^{k} \lambda_j \gamma_j + \sum_{i=0}^k \lambda_i Z_i.
\end{split}
\end{equation}
We then seek to choose the $\lambda_i$ so that
\begin{equation}\label{es_al_2}
\begin{split}
 \lambda_i \beta_i &= \sum_{j=1+i}^k \lambda_j \gamma_j + 1 \text{ for } i=0,\dotsc,k-1, \\
 \lambda_k \beta_k &= 1.
\end{split}
\end{equation}
This linear systems admits a unique solution with $\lambda_i >0$.  Indeed, we may set $\lambda_k = 1 / \beta_k >0$ and then iteratively solve for
\begin{equation}\label{es_al_3}
 \lambda_i = \frac{1}{\beta_i} \left(1 + \sum_{j=1+i}^k \gamma_j \lambda_j \right)>0
\end{equation}
for $i=k-1, \dotsc,0$.  Plugging the equations \eqref{es_al_2} into \eqref{es_al_1} then yields the estimate
\begin{equation}
  \frac{d}{dt} \sum_{i=0}^k \lambda_i X_i +   \sum_{i=0}^k Y_i + \sum_{i=0}^k \lambda_i \beta_i W_i \le V_0 \sum_{i=0}^k \lambda_i \gamma_i + \sum_{i=0}^k \lambda_i Z_i.
\end{equation}
The estimate \eqref{es_al_02} follows from this by observing  that by construction $\lambda_i \beta_i \ge 1$ and that $W_i \ge 0$.   The form \eqref{es_al_03} follows easily from \eqref{es_al_3} and elementary calculations.

\end{proof}

\subsection{Elliptic estimates}

Here we consider the two-phase elliptic problem
\begin{equation}\label{lame eq}
\begin{cases}
-\mu \Delta u -(\mu/3+\mu') \nabla\diverge u =F^2  &\hbox{ in }\Omega
\\-\S(u_+)e_3=F^3_+   &\hbox{ on }\Sigma_+
\\  -\jump{\S(u)}e_3=-F^3_- &\hbox{ on }\Sigma_-
\\ \jump{u}=0  &\hbox{ on }\Sigma_-
\\  u_-=0 &\hbox{ on }\Sigma_b.
\end{cases}
\end{equation}
We have the following elliptic regularity result.
\begin{lemma}\label{lame reg}
Let $r\ge 2$. If $F^2\in  {H}^{r-2}(\Omega), F^3\in  {H}^{r-3/2}(\Sigma)$, then the problem \eqref{lame eq} admits a unique strong solution $u\in {}_0H^1(\Omega)\cap {H}^r(\Omega)$. Moreover,
\begin{equation}
\norm{u}_{r}\lesssim \norm{F^2}_{r-2}+\norm{F^3}_{r-3/2}.
\end{equation}
\end{lemma}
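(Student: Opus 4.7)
The plan is to prove the lemma in two stages: first the existence of a weak $H^1$ solution via a Lax--Milgram argument, then a regularity bootstrap that exploits the flat geometry of $\Omega$.

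For existence, I would pose the weak formulation: find $u \in {}_0H^1(\Omega)$ satisfying
\begin{equation*}
a(u,v) := \int_\Omega \frac{\mu}{2} \sgz u : \sgz v + \mu' \diverge u \, \diverge v \, dx = \int_\Omega F^2 \cdot v - \int_{\Sigma_+} F^3_+ \cdot v - \int_{\Sigma_-} F^3_- \cdot v
\end{equation*}
for every $v \in {}_0H^1(\Omega)$. This comes from multiplying the bulk equation by $v$ and integrating by parts separately on $\Omega_+$ and $\Omega_-$, using $\jump{v}=0$ on $\Sigma_-$ and $v_-=0$ on $\Sigma_b$ together with the stress/transmission conditions in \eqref{lame eq}; the trace-free symmetry of $\sgz u$ converts $\S(u):\nab v$ into the clean quadratic form above. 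The two-phase Korn inequality of Proposition \ref{layer_korn} gives coercivity $a(v,v)\gtrsim \norm{v}_1^2$, while trace theory bounds the right-hand side by $(\norm{F^2}_0 + \norm{F^3}_{-1/2})\norm{v}_1$; Lax--Milgram then yields a unique weak solution with $\norm{u}_1 \lesssim \norm{F^2}_0 + \norm{F^3}_{-1/2}$.

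For the regularity upgrade the decisive structural fact is that $\Sigma_+,\Sigma_-,\Sigma_b$ are all horizontal and orthogonal to $e_3$. Every horizontal derivative $\partial^\beta$ with $\beta\in\mathbb{N}^2$ commutes with the bulk Lam\'e operator (which has constant coefficients) and preserves the structure of both the Dirichlet condition on $\Sigma_b$ and the stress/transmission conditions on $\Sigma_\pm$, so $\partial^\beta u$ formally solves the same two-phase Lam\'e system with data $(\partial^\beta F^2, \partial^\beta F^3)$. A Nirenberg horizontal-difference-quotient argument combined with induction on $r$, or equivalently a Fourier decomposition in the tangential variables, makes this rigorous and yields the tangential bound $\norm{\nab_\ast^{r-1} u}_1 \lesssim \norm{F^2}_{r-2} + \norm{F^3}_{r-3/2}$. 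The vertical derivatives are then recovered algebraically: isolating the $\partial_3^2$-contributions in $-\mu\Delta u_i - (\mu/3+\mu')\partial_i\diverge u = F^2_i$ produces a $3\times 3$ linear system for $(\partial_3^2 u_1,\partial_3^2 u_2,\partial_3^2 u_3)$ whose coefficient matrix has determinant $\mu^2(4\mu/3+\mu')>0$, hence is invertible; each $\partial_3^2 u_j$ is therefore a pointwise linear combination of components of $F^2$ and lower-order terms $\nab_\ast^2 u$, $\nab_\ast \partial_3 u$. Iterating this identity with horizontal derivatives applied to both sides promotes the tangential estimate to the full bound $\norm{u}_r \lesssim \norm{F^2}_{r-2} + \norm{F^3}_{r-3/2}$ in each layer $\Omega_\pm$.

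The step that will require the most care is matching across the internal interface $\Sigma_-$: the transmission conditions $\jump{u}=0$ and $\jump{\S(u)}e_3 = -F^3_-$ couple the one-sided estimates on $\Omega_+$ and $\Omega_-$, so before commuting with $\partial^\beta$ one must either lift $F^3_-$ to a function of the claimed regularity that absorbs the stress jump (reducing to a problem with continuous stress across $\Sigma_-$, for which the tangential commutation and algebraic inversion above apply directly), or argue that the transmission conditions themselves propagate at each differentiation step. Both routes are standard for flat-interface transmission problems, and both produce precisely the stated dependence on $\norm{F^3}_{r-3/2}$; I would pursue the lifting approach since it lets the weak-solution analysis of the first stage be reused at every higher-regularity level.
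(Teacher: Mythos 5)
Your proposal is correct and takes essentially the same route the paper defers to via \cite[Theorem 3.1]{WTK}: $H^1$ existence by Lax--Milgram with the two-phase Korn inequality of Proposition \ref{layer_korn}, tangential regularity from the flat geometry, and vertical regularity recovered from the constant-coefficient equations, with the transmission conditions handled at each differentiation step. Two cosmetic notes that do not affect the argument: the $\partial_3^2$-part of the Lam\'e operator is already \emph{diagonal}, $\mathrm{diag}(\mu,\mu,4\mu/3+\mu')$, so your ``$3\times3$ inversion'' is immediate, and the boundary integrals in the weak form should appear with $+$ signs, $\int_{\Sigma_+}F^3_+\cdot v + \int_{\Sigma_-}F^3_-\cdot v$, once the jump condition $\jump{\S(u)}e_3 = F^3_-$ from \eqref{lame eq} is used consistently.
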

\begin{proof}
We refer to \cite[Theorem 3.1]{WTK} for the case of two-phase Stokes problem, but the proof is the same here. It follows by making use of the flatness of the boundaries $\Sigma_\pm$ and applying the standard classical one-phase elliptic theory with Dirichlet boundary condition.
\end{proof}

We let $G$ denote a horizontal periodic slab with its boundary $\pa G$ (not necessarily flat) consisting of two smooth pieces. We shall recall the classical regularity theory for the Stokes problem with Dirichlet boundary conditions on $\pa G$,
\begin{equation}\label{stokes eq}
\begin{cases}
-\mu \Delta u +\nabla p =f  \quad &\hbox{in }G
\\\diverge{u} =h  \quad  &\hbox{in }G
\\u=\varphi\quad &\hbox{on }\pa G.
\end{cases}
\end{equation}
The following records the regularity theory for this problem.
\begin{lemma}\label{stokes reg}
Let $r\ge 2$. If $f\in H^{r-2}(G),  h\in H^{r-1}(G), \varphi\in H^{r-1/2}(\pa G)$ be given such that
\begin{equation}
\int_G h =\int_{\pa G} \varphi \cdot\nu ,
\end{equation}
then there exists unique $u\in H^r(G),  p\in H^{r-1}(G)$(up to
constants) solving \eqref{stokes eq}. Moreover,
\begin{equation}
\norm{u}_{H^r(G)}+\norm{\nabla p}_{H^{r-2}(G)}\lesssim\norm{f}_{H^{r-2}(G)}+\norm{h}_{H^{r-1}(G)}+\norm{\varphi}_{H^{r-1/2}(\pa G)}.
\end{equation}
\end{lemma}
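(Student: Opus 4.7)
The plan is to reduce to the case of homogeneous Dirichlet data with zero divergence and then invoke the classical interior/boundary regularity theory for the Stokes system. First I would reduce to $\varphi=0$: since $\pa G$ consists of two smooth pieces and $\varphi\in H^{r-1/2}(\pa G)$, a standard trace-extension construction produces a $w\in H^r(G)$ with $w=\varphi$ on $\pa G$ and $\norm{w}_{H^r(G)}\lesssim \norm{\varphi}_{H^{r-1/2}(\pa G)}$, and by modifying $w$ if necessary I may also arrange $\diverge w = h$ in $G$. Indeed, setting $\tilde{h}=h-\diverge w\in H^{r-1}(G)$ with $\int_G \tilde{h}=0$ by the compatibility condition, the Bogovskii/divergence-equation solvability on a bounded Lipschitz domain (applied on a fundamental period of $G$ with periodic side conditions) yields $W\in H^r(G)$ vanishing on $\pa G$ such that $\diverge W = \tilde h$ and $\norm{W}_{H^r(G)}\lesssim \norm{\tilde h}_{H^{r-1}(G)}$. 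Then $u=v+w+W$, where $v$ is to be found, reduces the problem to the homogeneous system for $v$ with zero boundary data, zero divergence, and forcing $\tilde f := f+\mu\Delta(w+W)\in H^{r-2}(G)$.

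Next I would establish existence of a weak solution $(v,p)$ with $v\in {}_0H^1_\sigma(G)$ and $p\in L^2(G)$ (modulo constants) via the standard Lax–Milgram/Galerkin argument on the divergence-free subspace, using Korn's inequality on $G$ together with the periodicity and Dirichlet conditions (Proposition \ref{layer_korn} covers the layered case and the argument is the same here). The pressure is recovered by de Rham's theorem from the orthogonality of $\tilde f+\mu\Delta v$ to divergence-free test functions. This gives the base $H^1\times L^2$ bound controlled by $\norm{\tilde f}_{H^{-1}(G)}$.

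Then I would upgrade regularity from $H^1$ to $H^r$. Interior regularity follows from the classical Cattabriga/Galdi theory (difference quotients in all directions combined with the equations). Near the boundary I would flatten each of the two smooth pieces of $\pa G$ by a local $C^{r}$ diffeomorphism, freeze coefficients, take tangential difference quotients to gain tangential smoothness, and then use the equations themselves to recover normal derivatives of $v$ and $p$: from $\diverge v =0$ one retrieves $\p_\nu v_\nu$ in terms of tangential derivatives, and from the momentum equation one retrieves $\p_\nu p$ and $\p_\nu^2 v_\tau$. Bootstrapping up to order $r$ and summing the local estimates gives
\[
\norm{v}_{H^r(G)}+\norm{\nabla p}_{H^{r-2}(G)}\lesssim \norm{\tilde f}_{H^{r-2}(G)},
\]
and combining with the bounds on $w,W$ yields the stated inequality.

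The main obstacle will be the regularity argument near $\pa G$: handling the interaction of the divergence constraint with the Dirichlet condition when taking tangential difference quotients, and in particular recovering normal derivatives cleanly from the equations so that the induction on $r$ closes. Once this is done for $r=2$ (which is the classical $H^2\times H^1$ estimate on smooth domains), the higher-order case $r\ge 3$ is obtained by iterating the same argument on differentiated equations, using Lemma \ref{sobolev} to absorb the lower-order terms produced by commutators with the flattening diffeomorphism.
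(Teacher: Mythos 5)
Your outline is essentially sound, but note that the paper does not prove this lemma at all: its ``proof'' is a one-line citation to the classical references (Ladyzhenskaya's and Temam's books), where the Cattabriga-type regularity theory for the Dirichlet Stokes problem on a periodic slab with smooth boundary pieces is established. What you have written is, in effect, a reconstruction of that classical proof: trace lifting of $\varphi$, correction of the divergence, Lax--Milgram on the solenoidal subspace plus de Rham for the pressure, and then interior estimates together with boundary flattening and tangential difference quotients, recovering normal derivatives of $v$ and $p$ from $\diverge v$ and the momentum equation. The one step you should treat more carefully is the divergence correction: the textbook Bogovskii estimate $\norm{W}_{H^r}\lesssim\norm{\tilde h}_{H^{r-1}}$ with $W$ vanishing on the boundary is stated for data with vanishing traces (i.e.\ $\tilde h\in H^{r-1}_0$), not for general $\tilde h\in H^{r-1}$; for general data one either needs a different construction (e.g.\ a Neumann potential plus a divergence-free lifting of the resulting tangential trace) or one avoids the reduction altogether and proves the a priori estimate directly for the inhomogeneous problem, which is what the cited classical treatments do --- and one must also not argue circularly by invoking Stokes regularity to solve the divergence equation. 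With that caveat addressed, your route gives a self-contained proof, whereas the paper simply delegates to the literature; the trade-off is length versus transparency, and your version has the mild advantage of making explicit where the periodic slab geometry and the smoothness of the two boundary pieces enter.
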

\begin{proof}
 See \cite{L,T}.
\end{proof}

\begin{lemma}\label{bndry_elliptic}
Suppose that $\sigma\ge 0$ and $\kappa >0$.  If $\zeta$ satisfies $\int_{\mathrm{T}^2} \zeta =0$ and
\begin{equation}\label{be_01}
-\sigma \Delta_\ast \zeta + \kappa \zeta=\varphi\text{ on }\mathrm{T}^2,
\end{equation}
then for $r\ge 2$,
\begin{equation}\label{be_02}
\sigma  \norm{\zeta}_{r}  + \kappa \norm{\zeta}_{r-2} \lesssim \norm{\varphi}_{r-2}.
\end{equation}
\end{lemma}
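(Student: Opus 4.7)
The plan is to proceed by Fourier analysis on $\mathrm{T}^2 = (2\pi L_1 \mathbb{T}) \times (2\pi L_2 \mathbb{T})$, exactly in the spirit of the proof of Lemma \ref{critical lemma}. First I would take the Fourier transform of the PDE \eqref{be_01} to obtain the pointwise identity
\begin{equation}
 (\sigma \abs{n}^2 + \kappa)\, \hat{\zeta}(n) = \hat{\varphi}(n) \quad \text{for all } n \in (L_1^{-1}\mathbb{Z})\times (L_2^{-1}\mathbb{Z}).
\end{equation}
The zero-average hypothesis $\int_{\mathrm{T}^2}\zeta=0$ gives $\hat{\zeta}(0)=0$, and the equation evaluated at $n=0$ then forces $\hat{\varphi}(0)=0$ as well, so I only need to bound the symbol on the nonzero frequencies, where the lower bound $\abs{n}^2 \ge 1/\max\{L_1^2,L_2^2\}$ from \eqref{cl_min} is available.

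Next I would bound the two terms on the left of \eqref{be_02} separately using the Parseval identity. For the $\kappa\norm{\zeta}_{r-2}$ term, the elementary bound $\kappa \le \sigma\abs{n}^2+\kappa$ yields
\begin{equation}
 \kappa^2 \ns{\zeta}_{r-2} = \sum_{n\neq 0}(1+\abs{n}^2)^{r-2}\,\frac{\kappa^2}{(\sigma\abs{n}^2+\kappa)^2}\abs{\hat{\varphi}(n)}^2 \le \ns{\varphi}_{r-2}.
\end{equation}
For the $\sigma\norm{\zeta}_r$ term, I would use that on the nonzero lattice $1+\abs{n}^2 \le C\abs{n}^2$ (with $C$ depending only on $L_1,L_2$), so that
\begin{equation}
 \sigma(1+\abs{n}^2) \le C\sigma\abs{n}^2 \le C(\sigma\abs{n}^2+\kappa),
\end{equation}
and hence
\begin{equation}
 \sigma^2\ns{\zeta}_r = \sum_{n\neq 0}(1+\abs{n}^2)^r\,\frac{\sigma^2}{(\sigma\abs{n}^2+\kappa)^2}\abs{\hat{\varphi}(n)}^2 \le C\sum_{n\neq 0}(1+\abs{n}^2)^{r-2}\abs{\hat{\varphi}(n)}^2 = C\ns{\varphi}_{r-2}.
\end{equation}
Summing these two estimates gives \eqref{be_02}.

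There is no serious obstacle here; the only subtlety is keeping the constants in \eqref{be_02} uniform in $\sigma$ and $\kappa$, which is exactly why the two symbol bounds above are separated into the piece that absorbs $\sigma$ and the piece that absorbs $\kappa$. Since both estimates hold with a constant depending only on the periodicity lengths, the full inequality \eqref{be_02} follows with a universal implicit constant, as required for later applications in the dissipation comparison.
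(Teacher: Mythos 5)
Your proof is correct and follows essentially the same Fourier-side argument as the paper: both split the symbol bound into the piece $\kappa \le \sigma|n|^2+\kappa$ (yielding the $\kappa\norm{\zeta}_{r-2}$ estimate) and the piece $\sigma|n|^2 \le \sigma|n|^2+\kappa$ combined with the lattice bound $1+|n|^2 \lesssim |n|^2$ on $n\neq 0$ (yielding the $\sigma\norm{\zeta}_r$ estimate). The only cosmetic difference is that the paper phrases the second step by invoking the computation already carried out in Lemma \ref{critical lemma}, whereas you inline it.
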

\begin{proof}
We use \eqref{be_01} to see that
\begin{equation}
 (\sigma \abs{n}^2 + \kappa) \hat{\zeta}(n) = \hat{\varphi}(n) \text{ for all } n \in (L_1^{-1}\mathbb{Z})\times (L_2^{-1}\mathbb{Z}).
\end{equation}
In particular, this means that $\hat{\varphi}(0)=0$.  Since $\kappa >0$ we may estimate
\begin{equation}
 \sigma^2 \abs{n}^4 \abs{\hat{\zeta}(n)}^2 \le \abs{\hat{\varphi}(n)}^2
\end{equation}
and then argue as in Lemma \ref{critical lemma} to deduce the bound
\begin{equation}
 \sigma  \norm{\zeta}_{r}   \lesssim \norm{\varphi}_{r-2}.
\end{equation}
On the other hand, since $\sigma \ge 0$, we may bound $\kappa^2 \abs{\hat{\zeta}(n)}^2 \le  \abs{\hat{\varphi}(n)}^2$ to get the estimate
\begin{equation}
 \kappa \norm{\zeta}_{r-2} \lesssim \norm{\varphi}_{r-2}.
\end{equation}

\end{proof}

\end{document}